\numberwithin{equation}{section}
\theoremstyle{plain}
\newtheorem{theorem}{Theorem}[section]
\newtheorem{proposition}[theorem]{Proposition}
\newtheorem{lemma}[theorem]{Lemma}
\newtheorem{definition}[theorem]{Definition}
\newtheorem{corollary}[theorem]{Corollary}
\theoremstyle{definition}
\newtheorem{remark}[theorem]{Remark}
\newtheorem{exams}[theorem]{Examples}
\newtheorem{example}[theorem]{Example}
\begin{document}

\newcommand{\nd}{{\ensuremath d}}
\newcommand{\M}{{\mathcal M}_{u,p}}
\newcommand{\Mf}{{\mathcal M}_{\varphi,p}}
\newcommand{\MB}{{\mathcal N}^{s}_{u,p,q}}
\newcommand{\MA}{{\mathcal A}^{s}_{u,p,q}}
\newcommand{\MfA}{{\mathcal A}^{s}_{\varphi,p,q}}
\newcommand{\MfB}{{\mathcal N}^{s}_{\varphi,p,q}}
\newcommand{\MF}{{\mathcal E}^{s}_{u,p,q}}
\newcommand{\MfF}{{\mathcal E}^{s}_{\varphi,p,q}}
\newcommand{\MBa}{{\mathcal N}^{s_1}_{u_1,p_1,q_1}}
\newcommand{\MBb}{{\mathcal N}^{s_2}_{u_2,p_2,q_2}}
\newcommand{\MFa}{{\mathcal E}^{s_1}_{u_1,p_1,q_1}}
\newcommand{\MFb}{{\mathcal E}^{s_2}_{u_2,p_2,q_2}}
\newcommand{\MfBa}{{\mathcal N}^{s_1}_{\varphi_1,p_1,q_1}}
\newcommand{\MfBb}{{\mathcal N}^{s_2}_{\varphi_2,p_2,q_2}}
\newcommand{\MfFa}{{\mathcal E}^{s_1}_{\varphi_1,p_1,q_1}}
\newcommand{\MfFb}{{\mathcal E}^{s_2}_{\varphi_2,p_2,q_2}}

\newcommand{\wvp}{\widetilde{\varphi}}
\newcommand{\B}{\ensuremath{B^s_{p,q}}}
\newcommand{\F}{\ensuremath{F^s_{p,q}}}
\newcommand{\bt}{{B}_{p,q}^{s,\tau}}
\newcommand{\ft}{{F}_{p,q}^{s,\tau}}
\newcommand{\at}{{A}_{p,q}^{s,\tau}}
\newcommand{\btt}{{B}_{p,q}^{s,\varphi}}
\newcommand{\ftt}{{F}_{p,q}^{s,\varphi}}
\newcommand{\btta}{{B}_{p_1,q_1}^{s_1,\varphi_1}}
\newcommand{\ftta}{{F}_{p_1,q_1}^{s_1,\varphi_1}}
\newcommand{\bttb}{{B}_{p_2,q_2}^{s_2,\varphi_2}}
\newcommand{\fttb}{{F}_{p_2,q_2}^{s_2,\varphi_2}}
\newcommand{\Att}{{A}_{p,q}^{s,\varphi}}
\newcommand{\att}{{a}_{p,q}^{s,\varphi}}

\newcommand{\A}{\mathcal{A}}
\newcommand{\AF}{\A^{s}_{\varphi,p,q}}
\newcommand{\mb}{{n}^{s}_{u,p,q}}
\newcommand{\mf}{{e}^{s}_{u,p,q}}
\newcommand{\mfb}{{n}^{s}_{\varphi,p,q}}
\newcommand{\mff}{{e}^{s}_{\varphi,p,q}}
\newcommand{\mfFa}{{e}^{s_1}_{\varphi_1,p_1,q_1}}
\newcommand{\mfFb}{{e}^{s_2}_{\varphi_2,p_2,q_2}}
\newcommand{\mfBa}{{n}^{s_1}_{\varphi_1,p_1,q_1}}
\newcommand{\mfBb}{{n}^{s_2}_{\varphi_2,p_2,q_2}}
\newcommand{\btts}{{b}_{p,q}^{s,\varphi}}
\newcommand{\ftts}{{f}_{p,q}^{s,\varphi}}
\newcommand{\atts}{{a}_{p,q}^{s,\varphi}}

\newcommand{\kjm}{\chi_{j,m}}
\newcommand{\ijm}{I_{J}^j(m)}
\newcommand{\hijm}{\hat{I}_{J}^j(M)}
\newcommand{\lz}{\lambda}
\newcommand{\lzjm}{\lambda_{j,m}}
\newcommand{\mujm}{\mu_{J,M}}
\newcommand{\ajm}{a_{j,m}}
\newcommand{\qjm}{Q_{j,m}}
\newcommand{\qjjm}{Q_{J,M}}
\newcommand{\mq}{\mathcal{Q}}
\newcommand{\wq}{\widetilde{Q}}
\newcommand{\wcc}{\widetilde{C}}
\newcommand{\wgam}{\widetilde{\gamma}}

\newcommand{\dint}{\,\mathrm{d}}
\newcommand{\Dd}{\;\mathrm{D}}
\newcommand{\supp}{\mathrm{supp}\,}
\newcommand{\id}{\mathrm{id}}
\newcommand{\eb}{\hookrightarrow}

\newcommand{\nat}{\ensuremath{\mathbb{N}}}
\newcommand{\no}{\ensuremath{\nat_0}}
\newcommand{\real}{\ensuremath{{\mathbb R}}}
\newcommand{\rd}{\ensuremath{{\mathbb R}^\nd}}
\newcommand{\zd}{\ensuremath{\mathbb{Z}^\nd}}
\newcommand{\sd}{{\mathcal S}(\mathbb{R}^\nd)}
\newcommand{\sdd}{{\mathcal S}'(\mathbb{R}^\nd)}
\newcommand{\Gp}{{\mathcal G}_p}
\newcommand{\mhl}{\mathcal{M}_{\mathrm{HL}}}
\newcommand{\jjp}{j_P\vee0}
\newcommand{\fuff}{(\theta_j\widehat{f})^\vee}
\newcommand{\feff}{(\eta_j\widehat{f})^\vee}
\newcommand{\Lloc}{\ensuremath L_1^{\mathrm{loc}}}

\newcommand{\btr}{\blacktriangleright}
\newcommand{\ljm}{\lambda_{j,m}}
\newcommand{\rphi}[1]{\ensuremath{\mathsf{r}_{#1}}}
\newcommand{\whole}[1]{\ensuremath\lfloor #1 \rfloor}

\title{Embeddings of generalised Morrey smoothness spaces}    

\author{Dorothee D. Haroske\footnotemark[1], Zhen Liu\footnotemark[2], Susana D. Moura\footnotemark[3], and Leszek Skrzypczak\footnotemark[1]}

\footnotetext[1]{The first and last author were partially supported by the German Research Foundation (DFG), Grant no. Ha 2794/8-1.}

\footnotetext[2]{The second author was supported by the China Scholarship Council (CSC), Grant no. 202006350058. }

\footnotetext[3]{The third author was partially supported by the Center for Mathematics of the University of Coimbra-UIDB/00324/2020, funded by the Portuguese Government through FCT/MCTES.}

\maketitle%

\begin{abstract}
  We study embeddings between generalised Triebel-Lizorkin-Morrey spaces ${\mathcal E}^{s}_{\varphi,p,q}({\mathbb R}^d)$ and within the scales of further generalised Morrey smoothness spaces like ${\mathcal N}^{s}_{\varphi,p,q}({\mathbb R}^d)$, ${B}_{p,q}^{s,\varphi}({\mathbb R}^d)$ and ${F}_{p,q}^{s,\varphi}({\mathbb R}^d)$. The latter have been investigated in a recent paper by the first two authors (2023), while the embeddings of the scale ${\mathcal N}^{s}_{\varphi,p,q}({\mathbb R}^d)$ were mainly obtained in a paper of the first and last two authors (2022). Now we concentrate on the characterisation of the spaces ${\mathcal E}^{s}_{\varphi,p,q}({\mathbb R}^d)$.
  Our approach requires a wavelet characterisation of those spaces which we establish for the system of Daubechies' wavelets. Then we prove necessary and sufficient conditions for the embedding ${\mathcal E}^{s_1}_{\varphi_1,p_1,q_1}({\mathbb R}^d)\hookrightarrow {\mathcal E}^{s_2}_{\varphi_2,p_2,q_2}({\mathbb R}^d)$. We can also provide some almost final answer to the question when ${\mathcal E}^{s}_{\varphi,p,q}({\mathbb R}^d)$ is embedded into $C({\mathbb R}^d)$, complementing our recent findings in case of ${\mathcal N}^{s}_{\varphi,p,q}({\mathbb R}^d)$. 
\end{abstract}

{\bfseries Keywords:} generalised Morrey spaces, generalised Besov-type space, generalised Triebel-Lizorkin-type space, generalised Besov-Morrey space, generalised Triebel-Lizorkin-Morrey space, embeddings, wavelet decompositions\\

{\bfseries 2020 MSC:} 46E35

\section*{Introduction}

In this paper we study smoothness function spaces built upon generalised Morrey spaces $\Mf(\rd)$, $0<p<\infty$, $\varphi \in \Gp$, that is, $\varphi:(0,\,\infty)\rightarrow(0,\,\infty)$ is nondecreasing and $t^{-d/p}\varphi(t)$ is nonincreasing in $t>0$. 
These spaces generalise the  Morrey spaces $\mathcal{M}_{u,p}(\rd)$, $0 < p \le u < \infty$, and were introduced by  Mizuhara \cite{mi} and Nakai \cite{nak94} in the beginning of the 1990's. The spaces were applied successfully   to PDEs, e.g.  to nondivergence elliptic differential problems, cf. \cite{FHS,KMR,WNTZ}, to parabolic differential equations \cite{ZJSZ} or Schr\"odinger equations \cite{KNS}.  We refer to \cite{Saw18} and the monograph \cite{sdh20b} for further information about the spaces and the historical remarks. 
 
In the recent years, smoothness function spaces built upon Morrey spaces $\mathcal{M}_{u,p}(\rd)$,  in particular Besov-Morrey spaces  $\mathcal{N}^s_{u,p,q}(\rd)$ and Triebel-Lizorkin-Morrey spaces $\mathcal{E}^s_{u,p,q}(\rd)$, $0 < p \le u < \infty$, $0 < q \le \infty$, $s \in \real$, were studied intensively. Netrusov  was the first who combined the Besov and Morrey norms  cf. \cite{Net}. He considered function spaces on domains and proved some embedding theorem, but the spaces attracted further attention mainly by possible applications to PDEs. We refer to the famous paper by Kozono and Yamazaki \cite{ky94} related to the study of the Navier-Stokes equations, as well as to the results by Mazzucato \cite{Maz}, by  Ferreira, Postigo \cite{FP} or by Yang, Fu, Sun \cite{YFS}.

In the general setting, when $\varphi\in\Gp$, the generalised Besov-Morrey spaces  $\MfB(\rd)$  and Triebel-Lizorkin-Morrey spaces $\MfF(\rd)$ were introduced and studied by Nakamura, Noi and Sawano  \cite{nns16}. Note that in case of $\varphi(t) = t^{d/u}$, $0<p\leq u<\infty$, those generalised spaces $\MfB(\rd)$ and $\MfF(\rd)$ coincide with the Besov-Morrey spaces $\mathcal{N}^s_{u,p,q}(\rd)$ and {the Triebel-Lizorkin-Morrey spaces} $\mathcal{E}^s_{u,p,q}(\rd)$, respectively.

Recently, in \cite{hms22,hms23} we studied the necessary and sufficient conditions for the embeddings $\MfBa(\rd)\hookrightarrow \MfBb(\rd)$ and $\MfB(\rd)\hookrightarrow C(\rd)$. In our recent paper \cite{hl23} we introduced and characterised the scales $\btt(\rd)$ and $\ftt(\rd)$ which can be seen as generalisations of the spaces $\bt(\rd)$ and $\ft(\rd)$ when $\varphi(t)=t^{d(\frac1p-\tau)}$, $t>0$, $\tau\in [0,\frac1p]$; we refer to the monograph \cite{ysy10} for an extensive treatment of these scales of spaces. As it is well-known that in case of $\varphi(t)=t^{d/u}$, $0<p\leq u<\infty$, the corresponding scales $\bt(\rd)$, $\ft(\rd)$, $\MfB(\rd)$ and $\MfF(\rd)$ are closely linked for suitable choices of $\tau$ and $u$, we shall also focus on their interrelation in our general approach. {To make this relationship more transparent we  slightly change the notation related to    $\btt(\rd)$ and $\ftt(\rd)$ spaces, used in \cite{hl23}. }

Our main tools are wavelet characterisations and atomic decompositions. Our new results in this context can be found in Section~\ref{sec-wavelet} below, in particular in Theorem~\ref{bwd} (for the spaces $\btt(\rd)$), Theorem~\ref{fwd} (for the spaces $\ftt(\rd)$), and Theorem~\ref{mew} (for the spaces $\MfF(\rd)$). The main clue to obtain it are the known atomic decompositions together with the general wavelet approach in \cite{hst18} -- following the same strategy as already done successfully in \cite{hms22} in case of $\MfB(\rd)$. As we already know, such wavelet characterisations are of much wider interest and applicability than presented in this paper. This will be done in the future.

Comparing the different scales we find { that $\ftt(\rd)=\MfF(\rd)$, see Theorem~\ref{rswm}(i),}
which is the perfect counterpart of the well-known coincidence $\ft(\rd)=\MF(\rd)$ in case of $\tau=\frac1p-\frac1u$, cf. \cite[Corollary~3.3]{ysy10}. In case of the generalised Besov-Morrey smoothness spaces we obtain in Theorem~\ref{rswm}(ii), that $\MfB(\rd)\hookrightarrow \btt(\rd)$, with coincidence if, and only if, $q=\infty$ {or
\begin{equation}\label{neg-int-1}
 \lim_{t\to 0^+} \varphi(t)t^{-\frac{d}{p}} <\infty \qquad\text{and}\qquad  
  \lim_{t\to \infty} \varphi(t)t^{-\frac{d}{p}} > 0.
  \end{equation}
  }
{When $q<\infty$ and {\eqref{neg-int-1} is not satisfied, i.e.,}
\begin{equation*}\label{int-1}
  \lim_{t\to 0^+} \varphi(t)t^{-\frac{d}{p}} = \infty \qquad\text{or}\qquad  
  \lim_{t\to \infty} \varphi(t)t^{-\frac{d}{p}} = 0,
\end{equation*}
then the embedding is proper.}
This result does not only cover the well-known results in case of $\bt(\rd)$ and $\MB(\rd)$, but, in addition, gives a better insight into the r\^ole played by the function $\varphi$ near $0$ and $\infty$. This was not visible before in case of $\varphi(t)=t^{d/u}$, $0<p\leq u<\infty$,  as then the behaviour of $\varphi(t)$ is everywhere the same.

A similar phenomenon appeared related to the embeddings  $\mathcal{N}^s_{p,q,q_0}(\rd) \hookrightarrow \MF(\rd)$ and $\MF(\rd)\hookrightarrow \mathcal{N}^s_{p,q,q_1}(\rd)$, respectively. In the classical case, when $\varphi(t)=t^{d/p}$, $t>0$, it is well-known that
\[ B^s_{p,\min(p,q)}(\rd) \hookrightarrow F^s_{p,q}(\rd) \hookrightarrow B^s_{p,\max(p,q)}(\rd)
\]
and the fine indices $q_0= \min(p,q)$, $q_1=\max(p,q)$ are known to be sharp, cf. \cite{SiT}. However, this changes when $\varphi(t)=t^{d/u}$, $0<p<u<\infty$, $t>0$, as then the sharp result reads as
\[
  \mathcal{N}^s_{p,q,\min(p,q)}(\rd) \hookrightarrow \MF(\rd) \hookrightarrow \mathcal{N}^s_{p,q,\infty}(\rd),
  \]
  cf. \cite{saw08}. Now we found in Theorem~\ref{Nphi-Ephi} that the link between the above two situations is again characterised by the behaviour of $\varphi(t) t^{-d/p}$ near $0$ and $\infty$: the fine index $q_1=\max(p,q)$ can only be obtained when {\eqref{neg-int-1} is satisfied}. Otherwise $q_1=\infty$ is the best possible fine index.

  Dealing with the embedding $\MfFa(\rd)\hookrightarrow \MfFb(\rd)$ we observed that the outcome reads different in the cases $p_1\geq p_2$ or $p_1<p_2$, respectively. 
  We deal with these different cases separately and compare it with the previously known results as well as for special examples. 
  {If $p_1<p_2$, then the fine parameters $q_1$ and $q_2$ do not influence the embedding  which is in good agreement with the classical situation for spaces of type $\F(\rd)$ -- in contrast to spaces $\B(\rd)$.  Moreover, in this case}
  it turns out that the (new) number $\rphi{\varphi}$, measuring the best possible $\Gp$-class for $\varphi$ locally, has some important influence on the characterisation of the embedding. This phenomenon can be observed, too, when dealing with the embedding $\MfF(\rd) \hookrightarrow C(\rd)$ at the end of the paper.

The paper is organised as follows. In  Section~\ref{prelim} we recall the main  definitions and findings -- in case of the spaces $\bt(\rd)$, $\ft(\rd)$, $\MB(\rd)$, $\MF(\rd)$ --, together with some examples. 
In Section~\ref{sec-wavelet} we present the wavelet characterisation of the spaces $\btt(\rd)$, $\ftt(\rd)$ and $\MfF(\rd)$. 
Section~\ref{rel-scales} deals with the  embeddings between the different scales of generalised Morrey smoothness spaces, while Section~\ref{sect-emb} contains our new characterisations for the embeddings of type $\MfFa(\rd)\hookrightarrow \MfFb(\rd)$, $\ftta(\rd)\hookrightarrow \fttb(\rd)$ and, finally, $\MfF(\rd)\hookrightarrow C(\rd)$. We  always illustrate and discuss our results with several concrete examples.

\section{Preliminaries}\label{prelim}
First we fix some notation. 
Let $\rd$ be $d$-dimensional Euclidean space. Let $\nat$ be the set of all natural numbers and $\no:=\nat\cup\{0\}$. Let $\zd$ be the set of all lattice points in $\rd$ having integer components. For $a\in\real$, let $a_+:=\max(a,0)$ and $\lfloor a\rfloor:=\max\{k\in\mathbb{Z}: k\leq a\}$. 
For each cube $Q\subset\rd$, we denote by $\ell(Q)$ the side length of $Q$: $\ell(Q):=|Q|^\frac{1}{d}$, where $|Q|$ denotes the volume of the cube $Q$. 
For $x\in\rd$ and $r\in(0,\infty)$, we denote by $Q(x,r)$ the compact cube centred at $x$ with side length $r$, whose sides are parallel to the axes of coordinates. We write simply $Q(r)=Q(0,r)$ when $x=0$.
Let $\mathcal{Q}$ denote the set of all dyadic cubes in $\rd$, namely, $\mathcal{Q}:=\{Q_{j,k}:=2^{-j}([0,1)^d+k):j\in\mathbb{Z}, k\in\zd\}$. For all $Q\in\mathcal{Q}$, let $j_Q:=-\log_2\ell(Q)$, and let $j_Q\vee 0:=\max(j_Q,\,0)$.
All unimportant positive constants will be denoted by $C$, occasionally the same letter $C$ is used to denote different constants in the same chain of inequalities.
By the notation $A\lesssim B$, we mean that there exists a positive constant $c$ such that $A\leq c\, B$, whereas the symbol $A\sim B$ stands for $A\lesssim B\lesssim A$. Given two (quasi-)Banach spaces $X$ and $Y$, we write $X\eb Y$ if $X\subset Y$ and the natural embedding of $X$ into $Y$ is continuous.

\subsection{ Generalised Morrey spaces}

In \cite{mor38} Morrey introduced the Morrey spaces $\M(\rd)$, $0<p\leq u<\infty$, which are defined to be the set of all functions $f\in L_p^{\rm{loc}}(\rd)$ such that
        \begin{align*}
            \|f\mid \mathcal{M}_{u,p}(\rd)\|:=
                        \sup_{{x\in\rd,r>0}} {r^{\frac{d}{u}}}
            \left({\frac{1}{|{Q(x,r)}|}}\int_{{Q(x,r)}}|f(y)|^p\,\mathrm{d}y\right)^\frac{1}{p}<\infty.
        \end{align*}
These spaces refine the scale of Lebesgue spaces $L_p(\rd)$, $0<p<\infty$, where in the particular case of $p=u$ they coincide.

In this paper, we consider generalised Morrey spaces where the parameter $u$ is replaced by a function $\varphi$ according to the following definition.

\begin{definition} \label{def-gen-Morrey}
 Let $0<p<\infty$ and $\varphi:(0,\,\infty)\rightarrow [0,\,\infty)$ be a function which does not satisfy $\varphi\equiv0$. 
        The generalised Morrey space $\Mf(\rd)$ is defined to be the set of all functions $f\in L_p^{\mathrm{loc}}(\rd)$ such that 
        \begin{align*}
             \|f \mid \Mf(\rd)\|_\star :=\sup_{x\in\rd,r>0} \varphi(r)
             \left(\frac{1}{|{Q}(x,r)|}\int_{{Q}(x,r)} |f(y)|^p \dint y \right)^{\frac{1}{p}}<\infty.
        \end{align*}
\end{definition}

\begin{remark}
The above definition goes back to \cite{nak94}. When $\varphi(t):=t^\frac{d}{u}$ for $t>0$ and $0<p\leq u<\infty$, then $\Mf(\rd)$ coincides with $\M(\rd)$.
Furthermore, when $\varphi(t):=t^\frac{d}{p}$ for $t>0$ and $0<p<\infty$, then $\Mf(\rd)$ coincides with $L_p(\rd)$.
When $\varphi(t):=t^{-\sigma}\chi_{(0,1)}(t)$ where $-\frac{d}{p}\leq \sigma<0$, then $\Mf(\rd)$ coincides with the local Morrey space $\mathcal{L}_p^\sigma(\rd)$ introduced by Triebel in \cite{tri11}, cf. also \cite[Section 1.3.4]{tri13}.
If $\sigma=-\frac{d}{p}$, then the space is a uniform Lebesgue space $\mathcal{L}_p(\rd)$.
Moreover, let $\varphi_0\equiv1$, then $\mathcal{M}_{\varphi_0,p}(\rd)$ coincides with $L_{\infty}(\rd)$ due to Lebesgue's differentiation theorem.
\end{remark}

A natural assumption for $\Mf(\rd)$ is that $\varphi$ belongs to the class $\Gp$, where $\Gp$ is the set of all non-decreasing functions $\varphi:(0,\,\infty)\rightarrow(0,\,\infty)$ such that 
\begin{align}\label{Gp-def}
t^{-\frac{d}{p}}\varphi(t)\geq s^{-\frac{d}{p}}\varphi(s),
\end{align}
for all $0<t\leq s<\infty$.

\begin{remark}
A justification for the use of the class $\Gp$ comes from \cite[Lemma 2.2]{nns16}.  {More precisely, for $p$ and $\varphi$ as in Definition~\ref{def-gen-Morrey}, it holds $\mathcal{M}_{\varphi,p}(\rd)\neq \{0\}$ if, and only if, $\displaystyle\sup_{t>0} \varphi(t)  \min (t^{-\frac{\nd}{p}},1) < \infty$. 
  Moreover, if $\displaystyle\sup_{t>0} \varphi(t)  \min (t^{-\frac{d}{p}},1) < \infty$, then there exists $\varphi^*\in\Gp$ such that $\mathcal{M}_{\varphi,p}(\rd)  = {\mathcal M}_{\varphi^*,p}(\rd) $ in the sense of equivalent (quasi-)norms. We refer to \cite{sdh20b} for the proofs. }
  
 One can easily check that $\mathcal{G}_{p_1}\subset \mathcal{G}_{p_2}$ if $0<p_2\le p_1<\infty$.  We refer the reader to \cite[Section 12.1.2]{sdh20b} for more details about the class $\Gp$. 
  
  Note that $\varphi\in\Gp$ enjoys a doubling property, i.e., $\varphi(r)\leq \varphi(2r)\leq 2^\frac{d}{p}\varphi(r)$, $0<r<\infty$, {therefore} we can define an equivalent quasi-norm in $\Mf(\rd)$ by taking the supremum over the collection of all dyadic cubes, namely,
    \begin{align}\label{Mf-norm}
        \|f \mid \Mf(\rd)\| :=\sup_{P\in \mathcal{Q}} \varphi(\ell(P))
             \left(\frac{1}{|P|}\int_{P} |f(y)|^p \dint y \right)^{\frac{1}{p}}.
    \end{align}
    In the sequel, we always assume that $\varphi\in\Gp$ and consider the quasi-norm \eqref{Mf-norm}.   This choice is natural and covers all interesting cases. {Moreover, we shall usually assume in the sequel that $\varphi(1)=1$.}
\end{remark}

Here we illustrate some examples from \cite{sdh20b}. Other examples can be found e.g. in \cite[Example 3.15]{Saw18}.

\begin{exams}\label{ex-phi}
  \begin{enumerate}[\bfseries\upshape  (i)]
\item Let $u\in\real$, $0<p<\infty$ and let $\varphi(t)=t^u$ for $t>0$. Then $\varphi$ belongs to $\Gp$ if, and only if, $0\leq u\leq \frac{d}{p}$.
\item Let $0<u,\,v<\infty$. Then
\begin{equation}\label{ex-u-v}
\varphi_{u,v}(t)=\begin{cases}
t^{\frac{d}{u}},\quad{\text{if}}\quad t\leq 1\\
t^{\frac{d}{v}},\quad{\text{if}}\quad t> 1
\end{cases}
\end{equation}
belongs to $\Gp$ with $p=\min(u,v)$. 
In particular, the function $\varphi(t)=t^{\frac{d}{u}}$ belongs to $\Gp$ whenever $0<p\leq u<\infty$ by taking $u=v$, and the functions $\varphi(t)=\max(1, t^{\nd/v})$ and $\varphi(t)=\min(t^{\nd/u}, 1)$ belong to $\Gp$ by taking $u=\infty$ and $v=\infty$, respectively. 
\item Let $0<p<\infty$, $a\leq 0$ and {let $L$  be a sufficiently large constant. Then} $\varphi(t)=t^{\frac{d}{p}}(\log(L+t))^a$ for $t>0$ 
belongs to $\Gp$.
\item Let $0<p<\infty$,  {let $u$ be a sufficiently small positive  constant and  }
 $\varphi(t)=t^u(\log(e+t))^{-1}$ for $t>0$. Then $\varphi\notin\Gp$.
\item Let
  \begin{equation}\label{ex-log}
\varphi(t)=\begin{cases} \frac{1}{\log 2} \log(1+t), & 0<t<1, \\ t, & t\geq 1.\end{cases}
  \end{equation}
Then $\varphi \in \mathcal{G}_{\nd}$.
\end{enumerate}
\end{exams}

      \subsection{ Some generalised Morrey smoothness spaces}

Let $\mathcal{S}(\rd)$ be the space of all Schwartz functions on $\rd$ endowed with the classical topology and denote by $\mathcal{S}'(\rd)$ its topological dual, namely, the space of all continuous linear functionals on $\mathcal{S}(\rd)$ endowed with the weak $\ast$-topology.  
If $f\in\sd$, then
\begin{align}\label{ft}
    \widehat{f}(x)=\mathcal{F}f(x)=(2\pi)^{-\frac{d}{2}}\int_{\rd}\mathrm{e}^{-ix\cdot\xi}f(\xi)\,\mathrm{d}\xi,\qquad x\in\rd,
\end{align}
denotes the Fourier transform of $f$. As usual, $f^\vee$ and $\mathcal{F}^{-1}f$ stands for the inverse Fourier transform, which is given by the right-hand side of \eqref{ft} with $i$ instead of $-i$. First defined on $\mathcal{S}(\rd)$, one then extends $\mathcal{F}$ and $\mathcal{F}^{-1}$ to $\mathcal{S}'(\rd)$ in the usual way.

We first define the generalised Besov-type and Triebel-Lizorkin-type spaces.
\begin{definition}
    Let $s\in\real$, $0<p<\infty$, $0<q\leq\infty$ and ${\varphi}\in\Gp$. Let $\theta_0\in\sd$ with 
\begin{align*}
   \theta_0(x)=1\quad\text{if}\quad |x|\leq 1\qquad\text{and}\qquad \theta_0(x)=0\quad\text{if}\quad |x|\geq 3/2,
\end{align*}
and let $\theta_k(x):=\theta_0(2^{-k}x)-\theta_0(2^{-k+1}x)$, $x\in\rd$ and $k\in\nat$.
    \begin{enumerate}[\bfseries\upshape  (i)]
        \item The generalised Besov-type space ${B}_{p,q}^{s,{\varphi}}(\rd)$ is defined to be the set of all $f\in\sdd$ such that
        \begin{align*}
            \|f\mid {B}_{p,q}^{s,{\varphi}}(\rd)\|:=\sup_{P\in\mq} {\frac{\varphi(\ell(P))}{|P|^\frac{1}{p}}}
            \left[\sum_{j=\jjp}^\infty 2^{j s q}\left(\int_P
            |\fuff(x)|^p \dint x\right)^\frac{q}{p}\right]^\frac{1}{q}<\infty
        \end{align*}
        with the usual modification for $q=\infty$.
        \item  
        Assume that there exists $C,\,\varepsilon>0$ such that 
        \begin{align}\label{intc}
            \frac{t^\varepsilon}{\varphi(t)}\leq \frac{C r^\varepsilon}{\varphi(r)}
            \quad\text{holds for}\quad t\geq r,
        \end{align}
        when $0<q<\infty$. 
        The generalised Triebel-Lizorkin-type space ${F}_{p,q}^{s,{\varphi}}(\rd)$ is defined to be the set of all $f\in\sdd$ such that 
        \begin{align*}
            \|f\mid {F}_{p,q}^{s,{\varphi}}(\rd)\|:=\sup_{P\in\mq} {\frac{\varphi(\ell(P))}{|P|^\frac{1}{p}}}
            \left[\int_P\left(\sum_{j=\jjp}^\infty 2^{j s q}
            |\fuff(x)|^q\right)^\frac{p}{q}\dint x\right]^\frac{1}{p}<\infty
        \end{align*}
        with the usual modification for $q=\infty$.
        \item The space ${A}_{p,q}^{s,\varphi}(\rd)$ denotes either $\btt(\rd)$ or $\ftt(\rd)${, assuming} that $\varphi$ satisfies \eqref{intc} when $q<\infty$ and $\Att(\rd)=\ftt(\rd)$.
    \end{enumerate}
\end{definition}

\begin{remark} \label{rmk-tau-spaces}
The spaces ${B}_{p,q}^{s,{\varphi}}(\rd)$ and ${F}_{p,q}^{s,{\varphi}}(\rd)$ are equivalent, respectively, to the spaces ${B}_{p,q}^{s,{\wvp}}(\rd)$ and ${F}_{p,q}^{s,{\wvp}}(\rd)$ introduced in \cite{hl23}, if one replaces $\varphi(\ell(P))^{-1}|P|^\frac{1}{p}$ by a function $\wvp(\ell(P))$ in the above definition, where $\wvp\in\Gp$ and satisfies $t^{\varepsilon-\frac{d}{p}}\wvp(t)\leq C r^{\varepsilon-\frac{d}{p}}\wvp(r)$ for $t\geq r$ when $q<\infty$ in case of the $F$-spaces. 
 In this paper, we consider the above definition of the generalised Besov-type and Triebel-Lizorkin-type spaces for convenience.

 When $\varphi(t):=t^\frac{d}{p}$ for $t>0$, then $B_{p,q}^{s,\varphi}(\rd)$ and $F_{p,q}^{s,\varphi}(\rd)$ coincide with the classical Besov space $B_{p,q}^s(\rd)$ and {the} Triebel-Lizorkin space $F_{p,q}^s(\rd)$, respectively;  see, for example, \cite{t83,t92,t06,t20} for more details about these spaces.
 When {$\varphi(t):=t^{d(\frac{1}{p}-\tau)}$ for $t>0$ and $\tau\in[ 0,\frac{1}{p}]$, then $B_{p,q}^{s,\varphi}(\rd)$ and $F_{p,q}^{s,\varphi}(\rd)$} coincide with, respectively,  the Besov-type space $B_{p,q}^{s,\tau}(\rd)$ and the Triebel-Lizorkin-type space $F_{p,q}^{s,\tau}(\rd)$ introduced in \cite{ysy10}.
\end{remark}

Now we recall {the} generalised Besov-Morrey and Triebel-Lizorkin-Morrey spaces introduced in \cite{nns16}.

\begin{definition} \label{def-E-N-spaces}
Let $s\in\mathbb{R}$, $0<p<\infty$, $0<q\leq\infty$ and $\varphi\in\Gp$. 
Let $\eta_0,\eta\in\mathcal{S}(\rd)$ be non-negative compactly supported functions satisfying
\begin{align*}
        \eta_0(x)>0,\qquad&\text{if}\qquad x\in Q(2) \\
        0\notin\supp \eta\quad\text{and}\quad\eta(x)>0,\qquad&\text{if}\qquad x\in Q(2)\setminus Q(1). 
    \end{align*}
    For $j\in\mathbb{N}$, let $\eta_j(x):=\eta(2^{-j}x)$, $x\in\rd$.
\begin{enumerate}[\bfseries\upshape  (i)]
    \item The {generalised Besov-Morrey space} $\MfB(\rd)$ is defined to be the set of all  $f\in\mathcal{S}'(\rd)$ such that
        \begin{align*}
            \|f\mid \MfB(\rd)\|:=\left(\sum_{j=0}^\infty 2^{j s q} \|\feff\mid \Mf(\rd)\|^q\right)^\frac{1}{q}<\infty
        \end{align*}
        with the usual modification for $q=\infty$.
    \item Assume that $\varphi$ satisfies \eqref{intc} when $q<\infty$. The generalised Triebel-Lizorkin-Morrey space $\MfF(\rd)$ is defined to be the set of all $f\in\mathcal{S}'(\rd)$ such that
         \begin{align*}
            \|f\mid \MfF(\rd)\|:=\left\|\left(\sum_{j=0}^\infty 2^{j s q} |\feff(\cdot)|^q\right)^\frac{1}{q}\mid\Mf(\rd)\right\|<\infty
        \end{align*}
        with the usual modification for $q=\infty$.
    \item The space $\mathcal{A}_{\varphi,p,q}^s(\rd)$ denotes either $\MfB(\rd)$ or $\MfF(\rd)$, assuming  that $\varphi$ satisfies \eqref{intc} when $q<\infty$ and $\mathcal{A}_{\varphi,p,q}^s(\rd)=\MfF(\rd)$.
    \end{enumerate}
\end{definition}

\begin{remark}\label{equnv}
   \begin{enumerate}[\bfseries\upshape  (i)]
  \item  When $\varphi(t):=t^\frac{d}{u}$ for $t>0$ and $0<p\leq u<\infty$, then 
    \begin{align*}
        \MfB(\rd)=\MB(\rd)\qquad\text{and}\qquad\MfF(\rd)=\MF(\rd)
    \end{align*}
    are the usual Besov-Morrey and Triebel-Lizorkin-Morrey spaces introduced in \cite{ky94, tx05}.
    For more details about the spaces $\MB(\rd)$ and $\MF(\rd)$, we refer the reader to the monographs \cite{ysy10, sdh20a} and the survey papers by W. Sickel \cite{sic12, sic13}. 
    We can also recover the classical Besov spaces $B_{p,q}^s(\rd)$ and Triebel-Lizorkin spaces $F_{p,q}^s(\rd)$ for any $0<p<\infty$, $0<q\leq \infty$ and $s\in\real$, since
    \begin{align*}
        B_{p,q}^s(\rd)=\mathcal{N}_{p,p,q}^s(\rd)
        \qquad\text{and}\qquad
        F_{p,q}^s(\rd)=\mathcal{E}_{p,p,q}^s(\rd).
    \end{align*}
    Moreover, let $0<p<u<\infty$, then according to \cite[Proposition 1.3]{saw08}, we have a sharp embedding
    \begin{align}\label{se-au}
        \mathcal{N}_{u,p,\min(p,q)}^s(\rd)\eb\MF(\rd)\eb\mathcal{N}_{u,p,\infty}^s(\rd).
    \end{align}
    The index $\infty$ on the right-hand side of \eqref{se-au} cannot be replaced by any finite number; 
    see \cite[Proposition 1.6]{saw08}. However, when $u=p$, we have
    \begin{align*}
        \mathcal{N}_{p,p,\min(p,q)}^s(\rd)\eb\mathcal{E}_{p,p,\min(p,q)}^s(\rd)
        \eb\mathcal{N}_{p,p,\max(p,q)}^s(\rd),
    \end{align*}
    or equivalently,
    \begin{align}\label{se-bf}
        B_{p,\min(p,q)}^s(\rd)\eb F_{p,q}^s(\rd)\eb B_{p,\max(p,q)}^s(\rd).
    \end{align}
  Moreover,
  \begin{equation}\label{E=Mf}
    \mathcal{E}^{0}_{\varphi,p,2} (\rd)=\Mf(\rd), \quad \varphi\in\Gp, \quad 1<p<\infty,
  \end{equation}
  {if} \eqref{intc} holds, cf. \cite{nns16} with reference to \cite[Proposition~5.1]{SHG-15}, see also \cite[Proposition~3.18]{YZY-BJMA15} in a more general context.  
    This generalises the Mazzucato result \cite[Proposition~4.1]{Maz} related to the case $\varphi(t)=t^\frac{d}{u}$ for $t>0$, which reads as
\[
\mathcal{E}^0_{u,p,2}(\rd)=\M(\rd),\quad 1<p\leq u<\infty.
\]
In particular, this covers the well-known classical outcome that 
\begin{equation}\label{E-Lp}
\mathcal{E}^0_{p,p,2}(\rd)=L_p(\rd)=F^0_{p,2}(\rd),\quad 1<p<\infty,
\end{equation}
always understood in the sense of equivalent norms.

\item
 If $\varphi_0\equiv1$, then   $\mathcal{M}_{\varphi_0,p}(\rd)=L_{\infty}(\rd)$  for any $0<p<\infty$. Therefore, in such a case and for any $0<q\leq\infty$ and $s\in\real$, it holds
    \begin{align*}
     \mathcal{N}_{\varphi_0,p,q}^s(\rd) = B_{\infty,q}^s(\rd)
        \quad\text{and}\quad
 \mathcal{E}_{\varphi_0,p,\infty}^s(\rd)= F_{\infty,\infty}^s(\rd)= B_{\infty,\infty}^s(\rd)= \mathcal{N}_{\varphi_0,p,\infty}^s(\rd).
    \end{align*}
Note that   $\varphi_0\equiv1$ does not satisfy condition  \eqref{intc}, hence  $\mathcal{E}_{\varphi_0,p,q}^s(\rd)$ is not defined when $0<q<\infty$. This is coherent with the well-known fact that the  definition of the Triebel-Lizorkin spaces 
$F_{p,q}^s(\rd)$ in case of $p=\infty$ require a different approach from the one in Definition~\ref{def-E-N-spaces}. We refer to Triebel's monograph \cite{t20} for more information about the spaces $F_{\infty,q}^s(\rd)$. But this setting is out of the scope of the present paper.

\item     Let $s\in\real$, $0<p<\infty$, $0<q_1, q_2\leq\infty$, $\varphi\in\Gp$ with \eqref{intc} in case of $\MfF=\MfA$ and $q<\infty$. Then  we also have the following elementary embeddings,
    \begin{align*}
        \A_{\varphi,p,q_1}^s(\rd)\eb\A_{\varphi,p,q_2}^s(\rd),\qquad q_1\leq q_2,
    \end{align*}
    and
    \begin{align*}
        \A_{\varphi,p,q_1}^{s+\varepsilon}(\rd)\eb\A_{\varphi,p,q_2}^s(\rd),\qquad \varepsilon>0,
    \end{align*}
    see \cite[Proposition 3.3]{nns16} for more details.
    \end{enumerate}
\end{remark}

\section{Wavelet decomposition}\label{sec-wavelet}
Let $L\in\nat$ and let $\psi_F,\,\psi_M\in C^L(\real)$ are real-valued compactly supported ($L^2$-normalised) functions with
\begin{align}\label{wave1}
    \int_\real \psi_F^2(t)\dint t=C,\qquad \int_\real t^\ell\psi_M(t)\dint t=0,\quad \ell<L,
\end{align}
for some constant $C>0$. 
The function $\psi_F$ is called scaling function (or father wavelet) and $\psi_M$ is called associated function
(or mother wavelet).

Let $G=(G_1,\cdots,G_d)\in G^\ast=\{F,\,M\}^{d\ast}$, where $\ast$ indicates that at least one of the
components of $G$ must be an $M$. Then we set
\begin{align}\label{wave2}
    \psi_{j,m}^G(x):=2^{\frac{jd}{2}}\prod_{r=1}^d\psi_{G_r}(2^jx_r-m_r),\qquad
    \psi_m(x):=\prod_{r=1}^d\psi_F(x_r-m_r), \qquad x\in \rd,
\end{align}
where $j\in\no$, $m\in\zd$, $G\in G^\ast$. The family $\{\psi_m,\,\psi_{j,m}^G: j\in\no,m\in\zd,G\in G^\ast\}$
is called a (Daubechies) wavelet system.

To obtain the wavelet decomposition of ${B}_{p,q}^{s,{\varphi}}(\rd)$, ${F}_{p,q}^{s,{\varphi}}(\rd)$ and $\MfF(\rd)$, 
we need the following notion of $\varkappa$-sequence spaces. We start with some notation.
Let $m\in\zd$, $j,\,J\in\no$, $r>1$ and $C>0$. Let
\begin{align*}
    I_J^j(m)=\{M\in\zd: r\,Q_{J,M}\cap C\,Q_{j,m}\neq \emptyset\}.
\end{align*}
Note that the cardinality of $I_J^j(m)$ satisfies
\begin{align}\label{ijm}
    \sharp I_J^j(m)\sim
\begin{cases}
    1,&\quad J\leq j,         \\
    2^{d(J-j)},&\quad J>j,     
\end{cases}
\end{align}
with the implicit constants independent of $j,\,J\in\no$ and $m\in\zd$.

\begin{definition}\label{vks}
Let $\varkappa>0$. Then a quasi-normed space $a(\rd)$, consisting of sequences
$$\mu=\{\mu_{jm}\in\mathbb{C}:j\in\no,m\in\zd\}$$
 with $\|\mu \mid a(\rd)\|<\infty$,
 is called a $\varkappa$-sequence space if
\begin{enumerate}[\bfseries\upshape  (i)]
\item for any $r>1$, $C>0$, and all $\mu\in a(\rd)$ any sequence $\lambda=\{\lzjm\in\mathbb{C}:j\in\no,m\in\zd\}$ with
\begin{equation}\label{vke1}
|\lzjm|\leq C\sum_{J\in\no}2^{-\varkappa|J-j|}\sum_{M\in I_J^j(m)}2^{-d(J-j)_+}|\mujm|,\quad j\in\no,\,\,m\in\zd,
\end{equation}
belongs to $a(\rd)$ and satisfies
\begin{equation}\label{vke2}
\|\lambda\mid a(\rd)\|\leq c\,\|\mu\mid a(\rd)\|
\end{equation}
for some $c>0$ which may depend on $r$, $C$, $\varkappa$ and $d\in\nat$;
\item for any cube $Q$ there is a constant $C_Q>0$ such that for all $\mu\in a(\rd)$,
\begin{equation}\label{vke3}
|\mujm|\leq C_Q 2^{J\varkappa}\|\mu\mid a(\rd)\|\qquad \text{for all}\,\,\,J\in\no\,\,\text{and}\,\,
M\in\zd\,\,\text{with}\,\,Q_{J,M}\subset Q.
\end{equation}
\end{enumerate}
\end{definition}

\begin{remark}
    The above definition was  introduced in \cite{hst18}, which is a tool to show how to construct compactly supported wavelets if one has an atomic decomposition, cf.  \cite[Theorem 5.1]{hst18}. { Besov sequence spaces $b_{p,q}^s(\rd)$ and  Triebel-Lizorkin sequence spaces $f_{p,q}^s(\rd)$ are examples of  $\varkappa$-sequence spaces, cf. \cite[Propositions 6.1 and 6.5]{hst18}.}
\end{remark} 

    We  recall the following notation for later use:
    \begin{align*}
    \hat{I}_J^j(M):=\{m\in\zd: r\,Q_{J,M}\cap C\,Q_{j,m}\neq \emptyset\}.
    \end{align*}
    The cardinality of $\hat{I}_J^j(M)$ satisfies
    \begin{align*}
    \sharp \hijm\sim
    \begin{cases}
    1,&\quad j\leq J,         \\
    2^{d(j-J)},&\quad j>J.     
    \end{cases}
    \end{align*}

\subsection{ Wavelet decomposition of ${B}_{p,q}^{s,{\varphi}}(\rd)$ and ${F}_{p,q}^{s,{\varphi}}(\rd)$}

We first recall the corresponding sequence spaces for ${B}_{p,q}^{s,{\varphi}}(\rd)$ and ${F}_{p,q}^{s,{\varphi}}(\rd)$. For $j\in\no$ and $m\in\zd$ we denote by $\chi_{j,m}$ the characteristic function of the cube $\qjm$.
\begin{definition}[\cite{hl23}]  \label{def-seq-b-f}
    Let $s\in\real$, $0<p<\infty$, $0<q\leq\infty$ and ${\varphi}\in\Gp$.
    \begin{enumerate}[\bfseries\upshape  (i)]
        \item The sequence space ${b}_{p,q}^{s,{\varphi}}(\rd)$ is defined to be the set of all sequences $\lz:=\{\lzjm\}_{j\in\no,m\in\zd}$ such that
        \begin{align*}
            \|\lz\mid {b}_{p,q}^{s,{\varphi}}(\rd)\|:=
            \sup_{P\in\mq} {\frac{\varphi(\ell(P))}{|P|^\frac{1}{p}}} 
            \left\{\sum_{j=\jjp}^\infty \left[\sum_{\substack{m\in\zd\\\qjm\subset P}} (2^{j (s-\frac{d}{p})}
            |\lzjm|)^p\right]^\frac{q}{p}\right\}^\frac{1}{q}
        \end{align*}
        is finite (with the usual modification for $q=\infty$).
        \item Assume in addition that $\varphi$ satisfies \eqref{intc} when $0<q<\infty$. The sequence space ${f}_{p,q}^{s,{\varphi}}(\rd)$ is defined to be the set of all sequences $\lz:=\{\lzjm\}_{j\in\no,m\in\zd}$ such that
        \begin{align*}
            \|\lz\mid {f}_{p,q}^{s,{\varphi}}(\rd)\|:=\sup_{P\in\mq} {\frac{\varphi(\ell(P))}{|P|^\frac{1}{p}}}   
            \left\{\int_P\left[\sum_{j=\jjp}^\infty\sum_{\substack{m\in\zd\\\qjm\subset P}}
            (2^{j s}|\lzjm|\kjm(x))^q\right]^\frac{p}{q}\,\mathrm{d}x\right\}^\frac{1}{p}
        \end{align*}
        is finite (with the usual modification for $q=\infty$).
        \item The sequence space ${a}_{p,q}^{s,{\varphi}}(\rd)$ denotes either ${b}_{p,q}^{s,{\varphi}}(\rd)$ or ${f}_{p,q}^{s,{\varphi}}(\rd)$, assuming in addition that $\varphi$ satisfies \eqref{intc} when $0<q<\infty$ and ${a}_{p,q}^{s,{\varphi}}(\rd)$ denotes ${f}_{p,q}^{s,{\varphi}}(\rd)$.
    \end{enumerate}
\end{definition}

Next we recall the atomic decomposition for ${B}_{p,q}^{s,{\varphi}}(\rd)$ and ${F}_{p,q}^{s,{\varphi}}(\rd)$ obtained in \cite{hl23}.
\begin{definition}\label{defi-atom}
    Let $c>1$, $L\in\no\cup\{-1\}$ and $K\in\no$. A $C^K$-function $\ajm:\rd\rightarrow\mathbb{C}$ is said to be a $(K,L,c)$-atom supported near $\qjm$ with $j\in\no$ and $m\in\zd$, if
    \begin{align*}
     2^{-j |\alpha|}|{\Dd}^\alpha \ajm(x)|\leq \chi_{c \qjm}(x)  
    \end{align*}
    for all $x\in\rd$ and for all $\alpha\in\no^d$ with $|\alpha|\leq K$ and
    \begin{align*}
        \int_{\rd}x^\beta \ajm(x)\,\mathrm{d} x=0
    \end{align*}
    for all $\beta\in\no^d$ with $|\beta|\leq L$ when $L\geq 0$ and $j\in\nat$.
\end{definition}

We use the notation
\begin{align*}
    \sigma_p:=d\left(\frac{1}{p}-1\right)_+\qquad\text{and}\qquad
    \sigma_{p,q}:=\max(\sigma_p,\sigma_q)
\end{align*}
in the sequel.

\begin{theorem}[\cite{hl23}]\label{atde}
    Let $s\in\real$, $0<p<\infty$, $0<q\leq\infty$ and ${\varphi}\in\Gp$. Let also $c>1$, $L\in\no\cup\{-1\}$ and $K\in\no$. Assume that
    \begin{align*}
\begin{cases}
    K\geq \lfloor 1+s\rfloor_+,\quad L\geq \max(-1,\lfloor\sigma_p-s\rfloor),&\qquad\text{if}\qquad A_{p,q}^{s,\varphi}(\mathbb{R}^d)=B_{p,q}^{s,\varphi}(\mathbb{R}^d);         \\
    K\geq \lfloor 1+s\rfloor_+,\quad L\geq \max(-1,\lfloor\sigma_{p,q}-s\rfloor),&\qquad\text{if}\qquad A_{p,q}^{s,\varphi}(\mathbb{R}^d)=F_{p,q}^{s,\varphi}(\mathbb{R}^d).     
\end{cases}
    \end{align*}
    Assume in addition that $\varphi$ satisfies \eqref{intc} when $0<q<\infty$ and $A_{p,q}^{s, {\varphi}}(\rd)=F_{p,q}^{s, {\varphi}}(\rd)$, $a_{p,q}^{s, {\varphi}}(\rd)=f_{p,q}^{s, {\varphi}}(\rd)$.
    \begin{enumerate}[\bfseries\upshape  (i)]
        \item Let $f\in A_{p,q}^{s, {\varphi}}(\rd)$. Then there exist a family 
        $\{\ajm\}_{j\in\no,m\in\zd}$ of $(K,L,c)$-atoms and a sequence $\lz=\{\lzjm\}_{j\in\no,m\in\zd}\in a_{p,q}^{s, {\varphi}}(\rd)$ such that
        \begin{align*}
            f=\sum_{j=0}^\infty\sum_{m\in\zd}\lzjm \ajm\quad\text{in}\quad\sdd
        \end{align*}
        and
        \begin{align*}
            \|\lz\mid a_{p,q}^{s, {\varphi}}(\rd)\|\lesssim\|f\mid A_{p,q}^{s, {\varphi}}(\rd)\|.
        \end{align*}
        \item Let $\{\ajm\}_{j\in\no,m\in\zd}$ be a family of $(K,L,c)$-atoms and 
        $\lz=\{\lzjm\}_{j\in\no,m\in\zd}\in a_{p,q}^{s, {\varphi}}(\rd)$. Then
        \begin{align*}
            f=\sum_{j=0}^\infty\sum_{m\in\zd}\lzjm \ajm
        \end{align*}
        converges in $\sdd$ and belongs to $A_{p,q}^{s, {\varphi}}(\rd)$. Furthermore,
        \begin{align*}
            \|f\mid A_{p,q}^{s, {\varphi}}(\rd)\|\lesssim\|\lz\mid a_{p,q}^{s, {\varphi}}(\rd)\|.
        \end{align*}
    \end{enumerate}
\end{theorem}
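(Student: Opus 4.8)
The plan is to prove the two parts by the Frazier--Jawerth building-block machinery adapted to the supremum-over-cubes structure of the Morrey-type quasi-norm, treating the synthesis part (ii) first since it is the more routine of the two. For (ii) I would start from the atomic series $f=\sum_{j,m}\lzjm\ajm$ and estimate the defining quasi-norm directly. The decisive ingredient is a pointwise kernel estimate: for a $(K,L,c)$-atom $\ajm$ and the resolution functions $\theta_k$ one shows
\begin{align*}
2^{ks}\bigl|(\theta_k\widehat{\ajm})^\vee(x)\bigr|\lesssim 2^{-|k-j|\varkappa}\,2^{js}\,\bigl(1+2^{\min(j,k)}|x-2^{-j}m|\bigr)^{-N},\qquad x\in\rd,
\end{align*}
for any prescribed $N$ and for some $\varkappa$ larger than the threshold needed in the summation below, the latter being guaranteed once $K$ and $L$ meet the stated bounds. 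For $k\ge j$ one Taylor-expands the atom $\ajm$ and exploits the infinitely many vanishing moments of $\theta_k^\vee$ (valid for $k\ge1$, since $\theta_k$ vanishes near the origin), extracting decay from the $C^K$-smoothness; for $k<j$ one Taylor-expands the kernel $\theta_k^\vee$ and uses the $L$ vanishing moments of $\ajm$.

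Substituting this estimate into the quasi-norm reduces the claim to a convolution-type inequality on the sequence level: the factor $2^{-|k-j|\varkappa}$ together with the cardinality bound \eqref{ijm} reproduces exactly the kernel on the right-hand side of \eqref{vke1}, so that the $b$- or $f$-quasi-norm of the convolved sequence is controlled by that of $\lz$. In the $B$-case this is carried out scale by scale; in the $F$-case one must in addition move an $\ell_q$-sum through the $L_p$-integral inside each cube, which requires a vector-valued Fefferman--Stein maximal inequality on $\Mf(\rd)$. Its boundedness is precisely what condition \eqref{intc} secures, which is why \eqref{intc} is imposed for the $F$-scale when $q<\infty$. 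Collecting these estimates yields $\|f\mid\Att(\rd)\|\lesssim\|\lz\mid\atts(\rd)\|$, and the same kernel bound applied to tails of the series gives convergence in $\sdd$.

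For the analysis part (i) the plan is to use a smooth dyadic resolution of unity to write $f=\sum_k(\theta_k\widehat f)^\vee$ in $\sdd$ and then discretise each band-limited block. I would define the coefficients $\lzjm$ through local means, namely a discrete sampling of $(\theta_j\widehat f)^\vee$ against a suitable kernel localised near $\qjm$, and take the $\ajm$ to be the correspondingly normalised building blocks. One then checks that each $\ajm$ is a genuine $(K,L,c)$-atom: the required $C^K$-smoothness comes from $(\theta_j\widehat f)^\vee$ being entire of exponential type, while the vanishing moments follow from the frequency support of $\theta_j$ being bounded away from the origin for $j\ge1$. The identity $f=\sum_{j,m}\lzjm\ajm$ in $\sdd$ is a Calder\'on-type reproducing formula, and the bound $\|\lz\mid\atts(\rd)\|\lesssim\|f\mid\Att(\rd)\|$ follows by dominating the local means by the Peetre maximal function and, in the $F$-case, invoking the same maximal inequality on $\Mf(\rd)$ as in (ii).

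The step I expect to be the main obstacle is the uniform control of the supremum over dyadic cubes $P$, which is the feature separating this Morrey-type scale from the classical spaces $\B(\rd)$ and $\F(\rd)$. The kernel estimates must be summable not merely across the scales $j$ but \emph{uniformly} in the choice of $P$; in particular one has to account for atoms $\ajm$ whose support cube $\qjm$ is not contained in $P$ but whose polynomial tails still contribute to the integral over $P$. Organising these tail contributions so that, after summation, they survive the outer supremum over $P$ --- and confirming that the relevant maximal operator is bounded on $\Mf(\rd)$ exactly under \eqref{intc} --- is the technical heart of the argument, and the place where membership of $\varphi$ in $\Gp$ and the doubling property of $\varphi$ are essential.
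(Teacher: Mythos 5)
You should first be aware that this paper contains no proof of Theorem~\ref{atde} at all: it is recalled verbatim from \cite{hl23} (with forerunners in \cite{ysy10} for the $\tau$-scale and in \cite{nns16}, cf.\ Theorem~\ref{mfad}, for the Morrey scale), so the comparison below is with the standard Frazier--Jawerth-type argument that those references implement. Your part (ii) is essentially that argument and is sound: the kernel estimate with the smoothness/moment duality (all moments of $\theta_k^\vee$ against the $C^K$-smoothness of the atom when $k\ge j$; the $L$ moments of the atom against the smoothness of $\theta_k^\vee$ when $k<j$), the reduction to a sequence-level convolution inequality via the cardinality bound \eqref{ijm}, and, in the $F$-case, a vector-valued Hardy--Littlewood maximal inequality adapted to the sup-over-cubes structure, which is exactly what \eqref{intc} secures (cf.\ Lemma~\ref{hlp0}). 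Your closing paragraph also correctly isolates the genuinely Morrey-specific difficulty: uniformity of all estimates in the outer cube $P$, including the tails of atoms whose supporting cube is not contained in $P$, which is where the $\Gp$-properties and the doubling of $\varphi$ enter.

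The genuine gap is in part (i), in the sentence claiming that ``the vanishing moments follow from the frequency support of $\theta_j$ being bounded away from the origin''. Frequency support away from the origin gives $\int x^\beta (\theta_j\widehat f)^\vee(x)\dint x=0$ only for the \emph{global} band-limited function; the atoms of Definition~\ref{defi-atom} are compactly supported localised pieces, and localisation destroys this cancellation --- a cut-off of a function with vanishing moments has, in general, no vanishing moments at all. With the construction as you describe it one only obtains $(K,-1,c)$-atoms, which proves the theorem in the range $s>\sigma_p$ (resp.\ $s>\sigma_{p,q}$), but not in the remaining range, where the hypothesis $L\geq\lfloor\sigma_p-s\rfloor\geq 0$ (resp.\ $L\geq\lfloor\sigma_{p,q}-s\rfloor\geq 0$) demands genuine moment conditions --- and it is precisely there that part (ii) cannot do without them. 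The standard repair, and what the cited proofs do, is to run the Calder\'on reproducing formula with a pair of \emph{compactly supported} kernels, $f=\sum_j \Psi_j\ast\Phi_j\ast f$, where $\Psi$ is $C^K$, compactly supported and has vanishing moments up to order $L$ (for instance $\Psi=\Delta^N\Phi$ for suitable $\Phi$ and $N$ large), and to set $a_{j,m}(x)=\lambda_{j,m}^{-1}\int_{Q_{j,m}}\Psi_j(x-y)\,(\Phi_j\ast f)(y)\dint y$ with $\lambda_{j,m}$ a local-mean/Peetre-maximal quantity. Then \emph{both} the scaled $C^K$-bounds and the moment conditions of $a_{j,m}$ are inherited from the kernel $\Psi$; neither comes from $(\theta_j\widehat f)^\vee$ being entire of exponential type, nor from the frequency support of $\theta_j$. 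With that correction your outline matches the proof in \cite{hl23}.
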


We will need the following modified version $\tilde{a}_{p,q}^{s,{\varphi}}(\rd)$  of $a_{p,q}^{s,{\varphi}}(\rd)$ spaces. The space $\tilde{a}_{p,q}^{s,{\varphi}}(\rd)$   collects all sequences 
\[
\lambda= \left\{ \lambda_m\in \mathbb{C}, \lambda_{j,m}^G \in \mathbb{C} : \; m\in \zd, \; j \in \mathbb{N}_0,\; G\in G^* \right\}
\]
such that
\begin{equation*}\label{norm-tilde-a}
\| \lambda\mid \tilde{a}_{p,q}^{s,{\varphi}}(\rd)\|:= \bigg\| \sum_{m\in\zd} \lambda_{m} \chi_{Q_{m}} \,\Big|\, \mathcal{M}_{\varphi,p}(\rd) \bigg\|  + \sum_{G\in G^*} \big\|
\{\lambda_{j,m}^G\}_{j\in\no, m\in\zd} \mid a_{p,q}^{s,{\varphi}} (\rd)\big\| 
\end{equation*} 
is finite.

Now we show the main results.
\begin{theorem}\label{bwd}
    Let $0<p<\infty$, $0<q\leq\infty$, $s\in\real$ and ${\varphi}\in\Gp$. For the wavelets defined in \eqref{wave2} we take
\begin{equation}\label{lsp}
L>\max(\lfloor 1+s\rfloor_+,\,\frac{3d}{p}-s).
\end{equation}
Let $f\in\mathcal{S}'(\rd)$. Then $f\in B_{p,q}^{s, {\varphi}}(\rd)$ if, and only if, it can be represented as
\begin{align*}
    f=\sum_{m\in\zd}\lambda_m\psi_m+\sum_{G\in G^\ast}\sum_{j\in\no}\sum_{m\in\zd}\lzjm^{G}2^{-\frac{jd}{2}}\psi_{j,m}^G,
    \qquad\lambda\in \tilde{b}_{p,q}^{s,{\varphi}}(\rd),
\end{align*}
and {the series} converges unconditionally in $\mathcal{S}'(\rd)$. 
The representation is unique with
\begin{align*}
    \lzjm^G=\lzjm^G(f)=2^{\frac{jd}{2}}(f,\,\psi_{j,m}^G)\quad\text{and}\quad\lambda_m=\lambda_m(f)=(f,\,\psi_m),
\end{align*}
and
\begin{align*}
    I:f\mapsto\{\lambda_m{f},\,2^{\frac{jd}{2}}(f,\,\psi_{j,m}^G)\}
\end{align*}
is a linear isomorphism of $B_{p,q}^{s, {\varphi}}(\rd)$ onto $\tilde{b}_{p,q}^{s,{\varphi}}(\rd)$.

Furthermore, $\|I(f)\mid \tilde{b}_{p,q}^{s,{\varphi}}(\rd)\|$ may be used as an equivalent quasi-norm in ${B}_{p,q}^{s,{\varphi}}(\rd)$.
\end{theorem}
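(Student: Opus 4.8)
The plan is to deduce the theorem from the general wavelet machinery of \cite{hst18}, following exactly the strategy already used for the scale $\MfB(\rd)$ in \cite{hms22}. That machinery reduces a wavelet isomorphism to two ingredients: an atomic decomposition of the function space (already available as Theorem~\ref{atde}) and the verification that the associated sequence space is a $\varkappa$-sequence space in the sense of Definition~\ref{vks}. Once both are in place, \cite[Theorem~5.1]{hst18} yields that the Daubechies system \eqref{wave2}, suitably rescaled, reproduces the space and that the coefficient map $I$ is an isomorphism. Accordingly I would split the proof into: (a) checking that $\btts(\rd)$ is a $\varkappa$-sequence space for a suitable $\varkappa$; (b) checking that the rescaled wavelets $2^{-jd/2}\psi_{j,m}^G$ qualify as admissible atoms under \eqref{lsp}; and (c) assembling these, with the father-wavelet coefficients $\lambda_m$ collected in the Morrey norm defining $\tilde{b}_{p,q}^{s,\varphi}(\rd)$.

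For (a), condition (ii) of Definition~\ref{vks} is the easy one. Testing $\|\mu\mid\btts(\rd)\|$ against the single dyadic cube $P=\qjjm$ retains only the term $j=J$, $m=M$ and gives $\varphi(2^{-J})\,2^{Js}|\mujm|\le\|\mu\mid\btts(\rd)\|$. Since $\varphi\in\Gp$ is nondecreasing with $\varphi(1)=1$, the defining inequality \eqref{Gp-def} forces $\varphi(2^{-J})\ge 2^{-Jd/p}$ for $J\in\no$, whence $|\mujm|\lesssim 2^{J(d/p-s)}\|\mu\mid\btts(\rd)\|$; this is \eqref{vke3} with any $\varkappa\ge d/p-s$. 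The substantial work is condition (i): one must show that the operator on the right-hand side of \eqref{vke1} is bounded on $\btts(\rd)$. I would insert the bound for $|\lzjm|$ into the norm, fix a reference cube $P\in\mq$, and split the sum over $J$ into the coarse range $J\le j$ and the fine range $J>j$. Using the cardinality estimate \eqref{ijm}, the factor $2^{-d(J-j)_+}$ balances the number of contributing cubes $M\in I_J^j(m)$, while Hölder's and Hardy's inequalities in the $\ell_q$-sum let the geometric weight $2^{-\varkappa|J-j|}$ absorb the scale-shift factor $2^{(J-j)(d/p-s)}$ coming from the differing smoothness normalisations. The delicate point, and the main obstacle, is that the supremum over cubes couples different reference cubes: passing from a sum over $m$ with $\qjm\subset P$ and $M\in I_J^j(m)$ to a sum over $M$ with $\qjjm$ near $P$ alters the weight $\varphi(\ell(P))/|P|^{1/p}$, and one controls these ratios by the doubling and monotonicity properties of $\varphi\in\Gp$. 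This is precisely where a large $\varkappa$ is needed, which is the reason the threshold $\tfrac{3d}{p}-s$ appears in \eqref{lsp}.

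For (b), I would verify that $2^{-jd/2}\psi_{j,m}^G$ is a $(K,L',c)$-atom. The compact support of $\psi_F,\psi_M$ gives the localisation near $\qjm$; the $C^L$-regularity in \eqref{wave1}, together with the dyadic rescaling in \eqref{wave2}, yields the derivative bounds with $K=L$ (the factor $2^{-j|\alpha|}$ being built into the dilation); and the vanishing-moment conditions in \eqref{wave1} supply cancellation up to order $L-1$. The hypothesis \eqref{lsp} guarantees simultaneously $K=L\ge\lfloor 1+s\rfloor_+$ and $L-1\ge\lfloor\sigma_p-s\rfloor$, which are exactly the requirements of Theorem~\ref{atde} in the $B$-case, and in addition $L>\tfrac{3d}{p}-s$ exceeds the $\varkappa$ fixed in (a).

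Finally, for (c), with $\btts(\rd)$ shown to be a $\varkappa$-sequence space and the wavelets admissible, \cite[Theorem~5.1]{hst18} applies and delivers the unique wavelet representation, the unconditional convergence in $\sdd$ (inherited from the $\mathcal{S}'$-convergence in Theorem~\ref{atde}), and the isomorphism $I$. The separation of the coarsest scale into the father-wavelet part $\sum_{m\in\zd}\lambda_m\psi_m$, measured in $\Mf(\rd)$, and the genuine wavelet part, measured in $\btts(\rd)$, reproduces exactly the norm of $\tilde{b}_{p,q}^{s,\varphi}(\rd)$, so that $\|I(f)\mid\tilde{b}_{p,q}^{s,\varphi}(\rd)\|$ is an equivalent quasi-norm on $B_{p,q}^{s,\varphi}(\rd)$, as claimed.
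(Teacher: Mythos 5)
Your overall strategy coincides with the paper's: reduce to \cite[Theorem~5.1]{hst18} by combining the atomic decomposition of Theorem~\ref{atde} with the verification that $\btts(\rd)$ is a $\varkappa$-sequence space, exactly as in \cite{hms22} for $\MfB(\rd)$. Moreover, your treatment of condition (ii) of Definition~\ref{vks} is correct and in fact cleaner and sharper than the paper's: testing the norm on the single reference cube $P=\qjjm$ gives $\varphi(2^{-J})2^{Js}|\mujm|\le\|\mu\mid\btts(\rd)\|$, hence $|\mujm|\le 2^{J(\frac{d}{p}-s)}\|\mu\mid\btts(\rd)\|$ by \eqref{Gp-def}, so \eqref{vke3} holds with a constant independent of the cube for any $\varkappa\ge\frac{d}{p}-s$. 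The paper argues differently in its Step~2 (it keeps an arbitrary cube $P\supset\qjjm$ and replaces $\varphi(\ell(P))$ by $2^{-Jd/p}$), and it is precisely that argument which produces the requirement $\varkappa>(\frac{3d}{p}-s)_+$ and hence the term $\frac{3d}{p}-s$ in \eqref{lsp}.

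The genuine gap lies in condition (i), which is the substantive estimate, and your account of it is both incomplete and misattributed. In the paper's Step~1, condition (i) is established by the very manipulations you gesture at (splitting $J\le j$ and $J>j$, the cardinality bound \eqref{ijm}, H\"older's inequality, then summation of geometric series), and it requires only $\varkappa>\max(\sigma_p-s,\,s)$; the threshold $\frac{3d}{p}-s$ plays no role whatsoever in condition (i). Your proposal claims the opposite: that the ``coupling of reference cubes'' in condition (i) is ``precisely where'' $\varkappa>\frac{3d}{p}-s$ is needed. No computation supports this, and it is false; it also makes the proposal internally inconsistent, since with your (correct) bound $\varkappa\ge\frac{d}{p}-s$ for condition (ii) and this unproved assertion for condition (i), the hypothesis \eqref{lsp} is never actually connected to a verified choice of $\varkappa<L$. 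What is missing is the estimate itself: after inserting \eqref{vke1} into the norm one must show that the cubes $\qjjm$ arising from $I_J^j(m)$ with $Q_{j,m}\subset P$ (which need not lie in $P$) can be reorganised so that the weight $\varphi(\ell(P))/|P|^{1/p}$ reappears, and that the resulting series converge exactly when $\varkappa-\varepsilon>s$ and $\varkappa-\varepsilon>\sigma_p-s$, as in \eqref{bwe1}--\eqref{bwe5} of the paper. Without carrying this out, the existence of an admissible $\varkappa\in\bigl(\max(\sigma_p-s,\,s,\,\tfrac{d}{p}-s),\,L\bigr)$ --- the heart of the proof --- is asserted rather than proved.
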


\begin{proof}
    Since the space $B_{p,q}^{s, {\varphi}}(\rd)$ is an (isotropic, inhomogeneous) quasi-Banach {function} space satisfying $\sd\eb B_{p,q}^{s, {\varphi}}(\rd)\eb\sdd$ (cf. \cite[Theorem 4.10]{hl23}) 
    and the inequality $L>3d/p-s$ implies $L>\sigma_p-s$, which means that $B_{p,q}^{s, {\varphi}}(\rd)$ can be characterised in terms of an $L$-atomic decomposition with $L=K$ and coefficients in $b_{p,q}^{s, {\varphi}}(\rd)$ (cf. Theorem~\ref{atde}), 
    by Theorem 5.1 in \cite{hst18}, we only need to prove that the sequence space $b_{p,q}^{s, {\varphi}}=b_{p,q}^{s, {\varphi}}(\rd)$ is a $\varkappa$-sequence space for some $\varkappa$, 
    $0<\varkappa<L$ (cf. Definition~\ref{vks}). 
    The proof is similar to the proof of {Proposition 6.1 in \cite{hst18}}. We give some details for the reader's convenience.

    \emph{Step 1.} {Let  a sequence $\lambda$ satisfy the condition  \eqref{vke1}. } We divide {the {right}-hand side of  the estimate} \eqref{vke1} into two parts,
    \begin{align}\label{bwem}
        |\lzjm|\lesssim \sum_{J=0}^{j}2^{-(j-J)\varkappa}\sum_{M\in\ijm}|\mujm|
        +\sum_{J>j}2^{-(J-j)(\varkappa+d)}\sum_{M\in\ijm}|\mujm|.
    \end{align}
    If $0<p\leq 1$, then by the monotonicity of the $\ell^q$-norm in $q$ and \eqref{ijm}, we have 
    \begin{align}\label{bwe1}
        \sum_{m\in\zd}|\lzjm|^p&\lesssim \sum_{J=0}^j 2^{-(j-J) \varkappa p}\sum_{m\in\zd}\sum_{M\in\ijm}|\mujm|^p \notag\\
        &\quad +\sum_{J>j}2^{-(J-j)(\varkappa+d)p}\sum_{m\in\zd}\sum_{M\in\ijm}|\mujm|^p \notag\\
        &\lesssim \sum_{J=0}^j 2^{-(j-J) \varkappa p}\sum_{M\in\zd}|\mujm|^p\sum_{m\in\hijm}1 \notag\\
        &\quad +\sum_{J>j}2^{-(J-j)(\varkappa+d)p}\sum_{M\in\zd}|\mujm|^p\sum_{m\in\hijm}1 \notag\\
        &\lesssim \sum_{J=0}^j 2^{-(j-J)(\varkappa-\frac{d}{p})p}\sum_{M\in\zd}|\mujm|^p \notag\\
        &\quad +\sum_{J>j}2^{-(J-j)(\varkappa+d)p}\sum_{M\in\zd}|\mujm|^p.
    \end{align}
    If $1<p<\infty$, by applying the H\"older inequality twice and \eqref{ijm}, we obtain for some $\varepsilon>0$,
    \begin{align}\label{bwe2}
        &\sum_{J=0}^j 2^{-(j-J)\varkappa(j-J)}\sum_{M\in\ijm}|\mujm| \notag\\
        &\quad \lesssim \left[\sum_{J=0}^j 2^{-(j-J)(\varkappa-\varepsilon)p}\left(\sum_{M\in\ijm}|\mujm|\right)^p\right]^\frac{1}{p}
        \left(\sum_{J=0}^j2^{-(j-J)\varepsilon p'}\right)^\frac{1}{p'} \notag\\
        &\quad \lesssim \left[\sum_{J=0}^j 2^{-(j-J)(\varkappa-\varepsilon)p}
        \sum_{M\in\ijm}|\mujm|^p\left(\sum_{M\in\ijm} 1^{p'}\right)^\frac{p}{p'}\right]^\frac{1}{p} \notag\\
        &\quad \lesssim \left(\sum_{J=0}^j 2^{-(j-J)(\varkappa-\varepsilon)p}
        \sum_{M\in\ijm}|\mujm|^p\right)^\frac{1}{p}.
    \end{align}
    Similar to \eqref{bwe2}, we have 
    \begin{align}\label{bwe3}
        &\sum_{J>j} 2^{-(J-j)(\varkappa+d)}\sum_{M\in\ijm}|\mujm| \notag\\
        &\quad \lesssim \left[\sum_{J>j} 2^{-(J-j)(\varkappa+d-\varepsilon)p}\sum_{M\in\ijm}|\mujm|^p
        \left(\sum_{M\in\ijm}1^{p'}\right)^\frac{p}{p'}\right]^\frac{1}{p} \notag\\
        &\quad \lesssim \left(\sum_{J>j} 2^{-(J-j)(\varkappa-\varepsilon+\frac{d}{p})p}\sum_{M\in\ijm}|\mujm|^p\right)^\frac{1}{p}.
    \end{align}
    Thus, using \eqref{ijm} again, we conclude that
    \begin{align}\label{bwe4}
        \sum_{m\in\zd}|\lzjm|^p&\lesssim \sum_{J=0}^j2^{-(j-J)(\varkappa-\varepsilon-\frac{d}{p})p}\sum_{M\in\zd}|\mujm|^p \notag\\
        &\quad +\sum_{J>j}2^{-(J-j)(\varkappa-\varepsilon+\frac{d}{p})p}\sum_{M\in\zd}|\mujm|^p.
    \end{align}
    With the aid of the notation $\sigma_p=d(\frac{1}{p}-1)_+$, {we combine \eqref{bwe1} with \eqref{bwe4} and get}  
    \begin{align}\label{bwe5}
        \sum_{m\in\zd}|\lzjm|^p&\lesssim \sum_{J=0}^j2^{-(j-J)(\varkappa-\varepsilon-\frac{d}{p})p}\sum_{M\in\zd}|\mujm|^p \notag\\
        &\quad +\sum_{J>j}2^{-(J-j)(\varkappa-\varepsilon+\frac{d}{p}-\sigma_p)p}\sum_{M\in\zd}|\mujm|^p.
    \end{align}

    Assume that $0<q\leq p$, then
    \begin{align*}
        &2^{j (s-\frac{d}{p}) q}\left(\sum_{m\in\zd}|\lzjm|^p\right)^\frac{q}{p}\\
        &\quad \lesssim \sum_{J=0}^j 2^{(j-J)(s-\frac{d}{p}) q} \,2^{-(j-J)(\varkappa-\varepsilon-\frac{d}{p}) q}
        \,2^{J (s-\frac{d}{p}) q}\left(\sum_{M\in\zd}|\mujm|^p\right)^\frac{q}{p}\\
        &\quad\quad +  \sum_{J>j} 2^{(j-J)(s-\frac{d}{p}) q} \,2^{-(J-j)(\varkappa-\varepsilon+\frac{d}{p}-\sigma_p) q}
        \,2^{J (s-\frac{d}{p}) q}\left(\sum_{M\in\zd}|\mujm|^p\right)^\frac{q}{p}\\
        &\quad = \sum_{J=0}^j 2^{-(j-J)(\varkappa-\varepsilon-s) q}
        \,2^{J (s-\frac{d}{p}) q}\left(\sum_{M\in\zd}|\mujm|^p\right)^\frac{q}{p}\\
        &\quad\quad +  \sum_{J>j} 2^{-(J-j)(\varkappa-\varepsilon+s-\sigma_p) q}
        \,2^{J (s-\frac{d}{p}) q}\left(\sum_{M\in\zd}|\mujm|^p\right)^\frac{q}{p}.
    \end{align*}
    Thus, we have
    \begin{align*}
        &\frac{\varphi(\ell(P))^q}{|P|^\frac{q}{p}}\sum_{j=\jjp}^\infty 2^{j (s-\frac{d}{p}) q}\left(\sum_{\substack{m\in\zd\\\qjm\subset P}}|\lzjm|^p\right)^\frac{q}{p}\\
        &\quad \lesssim \frac{\varphi(\ell(P))^q}{|P|^\frac{q}{p}}\sum_{j=\jjp}^\infty\sum_{J=0}^j 2^{-(j-J)(\varkappa-\varepsilon-s) q}
        \,2^{J (s-\frac{d}{p}) q}\left(\sum_{\substack{M\in\zd\\Q_{J,M}\subset P}}|\mujm|^p\right)^\frac{q}{p}\\
        &\quad\quad +  \frac{\varphi(\ell(P))^q}{|P|^\frac{q}{p}}\sum_{j=\jjp}^\infty\sum_{J>j} 2^{-(J-j)(\varkappa-\varepsilon+s-\sigma_p) q}
        \,2^{J (s-\frac{d}{p}) q}\left(\sum_{\substack{M\in\zd\\Q_{J,M}\subset P}}|\mujm|^p\right)^\frac{q}{p}\\
        &\quad \lesssim \frac{\varphi(\ell(P))^q}{|P|^\frac{q}{p}}\sum_{J=\jjp}^\infty 2^{J (s-\frac{d}{p}) q} 
        \left(\sum_{\substack{M\in\zd\\Q_{J,M}\subset P}}|\mujm|^p\right)^\frac{q}{p}
        \sum_{j\geq J} 2^{-(j-J)(\varkappa-\varepsilon-s) q}\\
        &\quad\quad +  \frac{\varphi(\ell(P))^q}{|P|^\frac{q}{p}}\sum_{J=\jjp}^\infty 2^{J (s-\frac{d}{p}) q} 
        \left(\sum_{\substack{M\in\zd\\Q_{J,M}\subset P}}|\mujm|^p\right)^\frac{q}{p}
        \sum_{j< J} 2^{-(J-j)(\varkappa-\varepsilon+s-\sigma_p) q}\\
        &\quad \lesssim \|\mu\mid\btts(\rd)\|^q
    \end{align*}
    if we choose $\varepsilon>0$ such that $\varkappa-\varepsilon+s-\sigma_p>0$ and $\varkappa-\varepsilon-s>0$. 
    This is always possible if $\varkappa>\max(\sigma_p-s,\,s)$, that is, we need $L>\max(\sigma_p-s,\,s)$ here which is implied by \eqref{lsp}.
    In the case of $p<q\leq\infty$, by a similar argument as above and choosing $\varepsilon$ sufficiently small such that $2\varepsilon<\varkappa-\max(\sigma_p-s,\,s)$, we finish the proof of Definition~\ref{vks}{(i)}.

    \emph{Step 2.} Let $P$ be some dyadic cube, $J\in\no$ and $M\in\zd$ such that $Q_{J,M}\subset P$. 
    Since $2^{-J}\leq \ell(P)$ and $2^{-J}\leq 1$, $J\in\no$, then
    $\varphi(\ell(P))\geq\varphi(2^{-J})\geq 2^{-J\frac{d}{p}}$.
    Thus, we have
    \begin{align*}
        &\frac{\varphi(\ell(P))}{|P|^\frac{1}{p}}\cdot\sup_{\nu\in\no\cap[j_P,\infty)}2^{\nu (s-\frac{d}{p})}\left(\sum_{k\in\zd}|\mu_{\nu,k}|^p\right)^\frac{1}{p}\\
        &\qquad\geq \ell(P)^{-\frac{d}{p}}2^{-J\frac{d}{p}}\cdot2^{J(s-\frac{d}{p})}|\mujm|\\
        &\qquad= \ell(P)^{-\frac{d}{p}}2^{J(s-\frac{2d}{p}+\varkappa)}2^{-J \varkappa}|\mujm|\\
        &\qquad\geq \ell(P)^{-(s-\frac{3d}{p}+\varkappa)}2^{-J \varkappa}|\mujm|\\
        &\qquad=: \frac{1}{C}\,2^{-J \varkappa}|\mujm|.
    \end{align*}
    This is always possible by choosing $\varkappa$ such that $L>\varkappa>(\frac{3d}{p}-s)_+$. Hence, we have
    \begin{align*}
        |\mujm|\leq C\, 2^{J \varkappa}\|\mu\mid b_{p,\infty}^{s,{\varphi}}\|
        \leq C\, 2^{J \varkappa}\|\mu\mid b_{p,q}^{s,{\varphi}}\|.
    \end{align*}
    This {confirms condition (ii) in Definition~\ref{vks} and thus} completes the proof of Theorem~\ref{bwd}.
\end{proof}

\begin{theorem}\label{fwd}
    Let $0<p<\infty$, $0<q\leq\infty$, $s\in\real$ and ${\varphi}\in\Gp$. Assume in addition that  $\varphi$ satisfies \eqref{intc} when $q<\infty$. For the wavelets defined in \eqref{wave2} we take
\begin{equation}\label{lspq}
L>\max(\lfloor 1+s\rfloor_+,\,\frac{3d}{p}-s,\,\sigma_{p,q}-s).
\end{equation}
Let $f\in\mathcal{S}'(\rd)$. Then $f\in F_{p,q}^{s, {\varphi}}((\rd)$ if, and only if, it can be represented as
\begin{align*}
    f=\sum_{m\in\zd}\lambda_m\psi_m+\sum_{G\in G^\ast}\sum_{j\in\no}\sum_{m\in\zd}\lzjm^{G}2^{-\frac{jd}{2}}\psi_{j,m}^G,
    \qquad\lambda\in {\tilde{f}}_{p,q}^{s,{\varphi}}(\rd),
\end{align*}
{and the series} converges unconditionally in $\mathcal{S}'(\rd)$. The representation is unique with
\begin{align*}
    \lzjm^G=\lzjm^G(f)=2^{\frac{jd}{2}}(f,\,\psi_{j,m}^G)\quad\text{and}\quad\lambda_m=\lambda_m(f)=(f,\,\psi_m),
\end{align*}
and
\begin{align*}
    I:f\mapsto\{\lambda_m{f},\,2^{\frac{jd}{2}}(f,\,\psi_{j,m}^G)\}
\end{align*}
is a linear isomorphism of $F_{p,q}^{s, {\varphi}}((\rd)$ onto ${\tilde{f}}_{p,q}^{s,{\varphi}}(\rd)$.

Furthermore, $\|I(f)\mid {\tilde{f}}_{p,q}^{s,{\varphi}}(\rd)\|$ may be used as an equivalent quasi-norm in $F_{p,q}^{s, {\varphi}}((\rd)$.
\end{theorem}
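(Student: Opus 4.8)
The plan is to follow verbatim the scheme used for Theorem~\ref{bwd}, transferring it from the Besov to the Triebel-Lizorkin structure. Since $F_{p,q}^{s,{\varphi}}(\rd)$ is an (isotropic, inhomogeneous) quasi-Banach function space with $\sd\eb F_{p,q}^{s,{\varphi}}(\rd)\eb\sdd$ (cf. \cite{hl23}), and since \eqref{lspq} guarantees both $L>\sigma_{p,q}-s$ --- so that Theorem~\ref{atde} supplies an $L$-atomic decomposition with $K=L$ and coefficients in $f_{p,q}^{s,{\varphi}}(\rd)$ --- and $L>3d/p-s$, the abstract machinery of \cite[Theorem 5.1]{hst18} reduces everything to a single assertion: the sequence space $f_{p,q}^{s,{\varphi}}=f_{p,q}^{s,{\varphi}}(\rd)$ is a $\varkappa$-sequence space in the sense of Definition~\ref{vks} for some $\varkappa$ with $0<\varkappa<L$. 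Note that the three terms in \eqref{lspq} are exactly what make the two conditions of Definition~\ref{vks} simultaneously attainable: $\lfloor 1+s\rfloor_+$ fixes the smoothness $K$ of the atoms (and forces $L>s$), $\sigma_{p,q}-s$ will be needed in condition~(i), and $3d/p-s$ in condition~(ii). This is the Triebel-Lizorkin counterpart of \cite[Proposition~6.5]{hst18}, just as Theorem~\ref{bwd} was the counterpart of \cite[Proposition~6.1]{hst18}.

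For condition~(i) I would begin, exactly as in \eqref{bwem}, by splitting the right-hand side of \eqref{vke1} into the parts $J\leq j$ and $J>j$. The essential difference from the Besov case is that the $f_{p,q}^{s,{\varphi}}$-norm couples the index $m$ and the space variable $x$ through the characteristic functions $\kjm$ inside an $L_p(\ell^q)$ expression, so the inner sums over $m$ can no longer be resolved by the elementary counting \eqref{ijm} alone. Instead, viewed as a function of $x\in\qjm$, the inner sum $\sum_{M\in\ijm}2^{-d(J-j)_+}|\mujm|$ should be dominated pointwise by the Hardy-Littlewood maximal function $\mhl\big(\sum_{M}|\mujm|\,\chi_{J,M}\big)$. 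The scale shift, i.e.\ the factor $2^{-\varkappa|J-j|}$ together with the weights $2^{js}$, is then absorbed by a convolution argument in $j$ of Hölder/Young type, precisely as in \eqref{bwe2}--\eqref{bwe3}, which forces $\varkappa>\max(\sigma_{p,q}-s,\,s)$; here $\sigma_{p,q}$, rather than $\sigma_p$, enters, reflecting the $F$-case hypotheses of Theorem~\ref{atde}.

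The crux, and the step I expect to be the main obstacle, is then the application of the \emph{Fefferman-Stein vector-valued maximal inequality} in the generalised Morrey scale $\Mf(\rd)$, namely
\[
\Big\|\big(\textstyle\sum_k|\mhl g_k|^q\big)^{1/q}\mid \Mf(\rd)\Big\|\lesssim\Big\|\big(\textstyle\sum_k|g_k|^q\big)^{1/q}\mid \Mf(\rd)\Big\|,
\]
which is what allows one to pass from the maximal-function bound back to the $f_{p,q}^{s,{\varphi}}$-norm of $\mu$. What makes the $\varphi$-weighted version delicate, compared with the purely elementary computation that sufficed for Theorem~\ref{bwd}, is the supremum over dyadic cubes $P$ together with the restriction $Q_{J,M}\subset P$ built into the norm; controlling the maximal operator uniformly in $P$ is exactly where condition~\eqref{intc} is used. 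For $1<p<\infty$ and $1<q\leq\infty$ this is available for $\varphi\in\Gp$ under \eqref{intc}, while for the remaining ranges $p\leq1$ or $q\leq1$ one must first replace $\mhl$ by $\mhl(|\cdot|^r)^{1/r}$ with $0<r<\min(p,q)$ and invoke the inequality on $\mathcal{M}_{\varphi^{r},\,p/r}(\rd)$, checking that $\varphi^{r}\in\mathcal{G}_{p/r}$ still satisfies the analogue of \eqref{intc}.

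Finally, condition~(ii) of Definition~\ref{vks} is essentially identical to \emph{Step 2} of the proof of Theorem~\ref{bwd}: for a dyadic cube $P$ and $Q_{J,M}\subset P$ one has $\varphi(\ell(P))\geq\varphi(2^{-J})\geq 2^{-Jd/p}$, and estimating the $f_{p,q}^{s,{\varphi}}$-norm from below by the single term indexed by $(J,M)$ yields $|\mujm|\leq C_Q\,2^{J\varkappa}\|\mu\mid f_{p,q}^{s,{\varphi}}\|$ as soon as $\varkappa>(3d/p-s)_+$, which \eqref{lspq} provides. Once both conditions of Definition~\ref{vks} are verified, \cite[Theorem 5.1]{hst18} delivers the claimed unique wavelet representation, the linear isomorphism $I$, and the equivalent quasi-norm, completing the proof.
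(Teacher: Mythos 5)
Your proposal is correct and takes essentially the same route as the paper's own proof: reduction via \cite[Theorem~5.1]{hst18} and Theorem~\ref{atde} to showing that $\ftts(\rd)$ is a $\varkappa$-sequence space, the splitting of \eqref{vke1} into the parts $J\leq j$ and $J>j$, pointwise domination by the Hardy--Littlewood maximal function with the $r$-power trick for small $p,q$, a Fefferman--Stein type vector-valued maximal inequality in the generalised Morrey setting (which is exactly where \eqref{intc} enters), and condition (ii) of Definition~\ref{vks} handled as in the $b$-case with $\varkappa>\frac{3d}{p}-s$. The only deviation is cosmetic: the paper applies the maximal inequality in the space $L_p^{\varphi}(\rd)$ adapted to the $P$-localised, $\jjp$-truncated structure of the $\ftts$-norm (Lemma~\ref{hlp0}, deduced from \cite[Theorem~3.6(ii)]{hl23}), whereas you state it over $\Mf(\rd)$ (that is Lemma~\ref{mhlm}, used in the paper for Theorem~\ref{mew}), which would bound the $\mff(\rd)$-norm and hence additionally needs the (easy, valid under \eqref{intc}) isomorphism of the sequence spaces $\mff(\rd)$ and $\ftts(\rd)$ noted after Theorem~\ref{mew} --- a matter of bookkeeping, not substance.
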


{ To prove} Theorem~\ref{fwd}, we need the following lemma.

\begin{lemma}\label{hlp0}
    Let $0<p<\infty$, $0<q\leq\infty$, $s\in\real$, $0<r<\min(p,q)$ and $\varphi\in\Gp$. 
    Assume in addition that $\varphi$ satisfies \eqref{intc} when $q<\infty$. Then there exists a {positive} constant $C$ such that for any sequence $\{f_k\}_{k\in\no}$ of $L_p^{\varphi}(\rd)$-functions,
    \begin{align*}
        \left\|\left(\sum_{j=\jjp}^\infty\mhl(|f_k|^r)(\cdot)^\frac{q}{r}\right)^\frac{1}{q}\mid L_p^{\varphi}(\rd)\right\|
        \leq C \,\left\|\left(\sum_{j=\jjp}^\infty|f_k|^q\right)^\frac{1}{q}\mid L_p^{\varphi}(\rd)\right\|,
    \end{align*}
with the usual modification made when $q=\infty$,  where $L_p^{\varphi}(\rd)$ is given by
    \begin{align*}
    L_p^{\varphi}(\rd):=\left\{g\in L_p^{\mathrm{loc}}(\rd):\,
        \|g\mid L_p^{\varphi}(\rd)\|:=\sup_{P\in\mq,|P|\geq 1}\frac{\varphi(\ell(P))}{|P|^\frac{1}{p}}
        \left(\int_P|g(x)|^p\dint x\right)^\frac{1}{p}<\infty\right\}
    \end{align*}
    and $\mhl$ stands for the Hardy-Littlewood maximal function, namely,
    \begin{align*}
        \mhl(g)(x):=\sup_{Q\ni x}\frac{1}{|Q|}\int_Q|g(y)|\,\mathrm{d}y,\quad x\in\rd,
    \end{align*}
    and the supremum is taken over all cubes which contain $x$.
\end{lemma}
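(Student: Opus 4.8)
The plan is to remove the exponent $r$ by a power trick, reducing the assertion to a vector-valued Fefferman--Stein maximal inequality in the ``easy'' parameter range, and then to treat that inequality by a local/global splitting adapted to the class $\Gp$.

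First I would set $\bar p:=p/r$ and $\bar q:=q/r$. Since $0<r<\min(p,q)$ we have $\bar p,\bar q\in(1,\infty]$, and because $\varphi\in\Gp$ obeys \eqref{Gp-def} one checks at once that $\varphi^{r}$ is nondecreasing and $t^{-d\bar p^{-1}}\varphi^r(t)=(t^{-d/p}\varphi(t))^r$ is nonincreasing, i.e. $\varphi^{r}\in\mathcal G_{\bar p}$; moreover if $\varphi$ satisfies \eqref{intc} then so does $\varphi^{r}$ (with $\varepsilon$ replaced by $\varepsilon r$). Writing $F_k:=|f_k|^{r}\ge0$, $H:=(\sum_k\mhl(F_k)^{\bar q})^{1/\bar q}$ and $G:=(\sum_k F_k^{\bar q})^{1/\bar q}$, the elementary identity $\|h^{1/r}\mid L_p^{\varphi}(\rd)\|^{r}=\|h\mid L_{\bar p}^{\varphi^{r}}(\rd)\|$ for $h\ge0$ (which rests on $r/p=1/\bar p$) shows, since $\bar q\,r=q$, that the claimed inequality is equivalent to
\[
\|H\mid L_{\bar p}^{\varphi^{r}}(\rd)\|\le C\,\|G\mid L_{\bar p}^{\varphi^{r}}(\rd)\|,
\]
that is, to the vector-valued Fefferman--Stein inequality for $\mhl$ with $\bar p,\bar q>1$ and weight $\varphi^{r}\in\mathcal G_{\bar p}$. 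Thus it suffices to prove this last inequality.

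For that I would fix a dyadic cube $P$ with $|P|\ge1$ and split each $F_k=F_k\chi_{cP}+F_k\chi_{(cP)^c}$ with a fixed large dilation factor $c=c(d)$. For the near part the classical Euclidean vector-valued Fefferman--Stein inequality (available since $\bar p,\bar q>1$) gives $\|(\sum_k\mhl(F_k\chi_{cP})^{\bar q})^{1/\bar q}\mid L_{\bar p}(\rd)\|\lesssim(\int_{cP}G^{\bar p})^{1/\bar p}$; multiplying by $\varphi^{r}(\ell(P))|P|^{-1/\bar p}$, using $|cP|\sim|P|$ and the doubling of $\varphi^{r}$, and bounding $(\int_{cP}G^{\bar p})^{1/\bar p}$ by the Morrey norm of $G$ over $cP$ (legitimate as $|cP|\ge1$) controls the near contribution by $\|G\mid L_{\bar p}^{\varphi^r}(\rd)\|$. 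For the far part one uses that, if $c$ is large enough, every cube meeting both $P$ and $(cP)^c$ has side length $\gtrsim\ell(P)$, so for $x\in P$ one may dominate $\mhl(F_k\chi_{(cP)^c})(x)$ by $\sup_{\nu\ge0}|2^{\nu}cP|^{-1}\int_{2^{\nu}cP}F_k$. Passing the $\ell^{\bar q}$-quasinorm inside through $\sup_\nu b_\nu\le(\sum_\nu b_\nu^{\bar q})^{1/\bar q}$ and Minkowski's integral inequality, each average is estimated via Hölder and the Morrey norm by $|2^{\nu}cP|^{-1}\int_{2^\nu cP}G\lesssim \varphi^{r}(2^{\nu}c\,\ell(P))^{-1}\|G\mid L_{\bar p}^{\varphi^r}(\rd)\|$.

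Assembling the far part and multiplying by $\varphi^{r}(\ell(P))$ leaves, as the decisive quantity, the series $\sum_{\nu\ge0}\big(\varphi^{r}(\ell(P))/\varphi^{r}(2^{\nu}c\,\ell(P))\big)^{\bar q}$, and the estimate closes once this is bounded uniformly in $P$. This is exactly where \eqref{intc} enters: when $q<\infty$ it yields $\varphi^{r}(2^{\nu}c\,\ell(P))\gtrsim (2^{\nu}c)^{\varepsilon r}\varphi^{r}(\ell(P))$, so the ratios decay geometrically and the series converges with a bound independent of $\ell(P)\ge1$; when $q=\infty$ a supremum replaces the sum and mere monotonicity of $\varphi$ suffices. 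I expect this far-part summation to be the main obstacle, since the class $\Gp$ alone forces no decay of $\varphi^{r}(\ell(P))/\varphi^{r}(2^{\nu}c\,\ell(P))$ (take $\varphi\equiv1$, which lies in $\Gp$ but violates \eqref{intc}); it is precisely \eqref{intc} that supplies the lower growth rate making the local/global scheme work, in agreement with the hypothesis of the lemma. The usual modifications for $q=\infty$ are handled analogously with suprema in place of sums.
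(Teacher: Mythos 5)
Your proposal is correct, but it takes a genuinely different route from the paper. The paper's entire proof is one line: since $p/r>1$ and $q/r>1$, the lemma follows from the vector-valued maximal inequality of \cite[Theorem 3.6(ii)]{hl23}; the reduction to exponents above $1$ that you carry out explicitly (passing to $\bar p=p/r$, $\bar q=q/r$, $F_k=|f_k|^r$ and the weight $\varphi^r\in\mathcal G_{\bar p}$, which inherits \eqref{intc} with $\varepsilon$ replaced by $\varepsilon r$) is precisely what that citation encapsulates, and the paper stops there. What you do differently is to then prove the resulting Fefferman--Stein inequality for $L_{\bar p}^{\varphi^r}(\rd)$ from scratch: the near/far splitting $F_k\chi_{cP}+F_k\chi_{(cP)^c}$, the classical Euclidean vector-valued inequality for the near part, the domination of the far part by averages over the dilates $2^\nu cP$, and the uniform bound on $\sum_\nu\bigl(\varphi^r(\ell(P))/\varphi^r(2^\nu c\,\ell(P))\bigr)^{q/r}$ supplied by \eqref{intc}. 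This argument is sound — the far-part geometry, Minkowski's inequality in $\ell^{\bar q}$, and the $q=\infty$ modification are all handled correctly — and it has the merit of being self-contained and of isolating exactly where \eqref{intc} is indispensable; your observation that $\varphi\equiv 1\in\Gp$ makes the far-part series diverge is consistent with the paper's remark that $\varphi_0\equiv 1$ is excluded when $q<\infty$. The only points you gloss over are routine: $cP$ and $2^\nu cP$ are not dyadic, so the Morrey-type bounds over them must be obtained by covering with boundedly many dyadic cubes of volume at least $1$ (here the restriction $|P|\geq 1$ in the definition of $L_p^\varphi(\rd)$ helps) together with the doubling property of $\varphi^r$ and its monotonicity from \eqref{Gp-def}; also the classical Fefferman--Stein inequality needs $\bar p<\infty$, which holds since $p<\infty$. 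In exchange for brevity, the paper's citation hides all of this inside \cite{hl23}, whereas your version makes the mechanism visible within the present paper.
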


\begin{proof}
    Note that $q/r>1$ and $p/r>1$, then \cite[Theorem 3.6(ii)]{hl23} yields Lemma~\ref{hlp0}.
\end{proof}


\begin{proof}[Proof of Theorem~\ref{fwd}.]
    Similar to the proof of Theorem~\ref{bwd}, we prove that the theorem follows from Theorem 5.1 in \cite{hst18}. 
    The space $\ftt(\rd)$ is an (isotropic, inhomogeneous) quasi-Banach {function}  space satisfying $\sd\eb\ftt(\rd)\eb\sdd$ (cf. \cite[Theorem 4.10]{hl23}) and the inequality $L>\sigma_{p,q}-s$ implies that $\ftt(\rd)$ can be characterised in terms of an $L$-atomic decomposition with $L=K$ and coefficients in $\ftts(\rd)$ (cf. Theorem~\ref{atde}). 
    Then it is sufficient to prove that the sequence space $\ftts=\ftts(\rd)$ is a $\varkappa$-sequence space for some $\varkappa$, $0<\varkappa<L$ (cf. Definition~\ref{vks}). 
    The proof is similar to the proof of \cite[Proposition 6.5]{hst18}. For the reader's convenience, we give some details.

    We divide \eqref{vke1} into two parts,
\begin{equation}\label{fwem}
\begin{split}
|\lzjm|&\lesssim \sum_{J=0}^{j}2^{-\varkappa(j-J)}\sum_{M\in\ijm}|\mujm|
+\sum_{J>j}2^{-(J-j)(\varkappa+n)}\sum_{M\in\ijm}|\mujm|\\&=:\Lambda_{j,m}+\Gamma_{j,m}.
\end{split}
\end{equation}

To estimate $\Lambda_{j,m}$, we assume $q<\infty$ and $\varepsilon>0$, then
$$
2^{jsq}\Lambda_{j,m}^q\kjm(x)\lesssim \sum_{J=0}^j2^{-(j-J)(\varkappa-s-\varepsilon)q}
\sum_{M\in\ijm}2^{Jsq}|\mujm|^q\kjm(x).
$$
Summation over $m\in\zd$ gives
\begin{align*}
\sum_{m\in\zd}2^{jsq}\Lambda_{j,m}^q\kjm(x)\lesssim \sum_{J=0}^j2^{-(j-J)(\varkappa-s-\varepsilon)q}\sum_{M\in\zd}2^{Jsq}|\mujm|^q\sum_{m\in\zd:M\in\ijm}\kjm(x).
\end{align*}
Analogously to the proof of Theorem 1.15 in \cite{tri08}, we argue that for fixed $x\in\rd$, $j\in\no\cap[j_P,\infty)$, $J\in\no$ and $M\in\zd$ the summation $\sum\kjm(x)$ over those $m\in\zd$ with $M\in\ijm$ is comparable with $\chi_{J,M}(x)$ which can be estimated from above by its maximal function. Hence we obtain for any $\varrho>0$,
\begin{align}\label{fwe1}
\sum_{m\in\zd}2^{jsq}\Lambda_{j,m}^q\kjm(x)\lesssim \sum_{J=0}^j2^{-(j-J)(\varkappa-s-\varepsilon)q}\sum_{M\in\zd}\mhl(2^{Js\varrho}|\mu_{J,M}|^\varrho\chi_{J,M}(\cdot))(x)^{\frac{q}{\varrho}}.
\end{align}
As for $\Gamma_{j,m}$, responsible for the terms with $J>j$, we have
\begin{equation*}
2^{js}\Gamma_{j,m}\kjm(x)\lesssim \kjm(x)\sum_{J>j}2^{-(J-j)(\varkappa+s+d)}\sum_{M\in\ijm}2^{Js}|\mujm|.
\end{equation*}
Assume $0<\varrho<1$ and $x\in\rd$ with $\kjm(x)=1$, then we can estimate the last sum by
\begin{align*}
\left(\sum_{M\in\ijm}2^{Js}|\mu_{J,M}|\right)^\varrho&\leq \sum_{M\in\ijm}2^{Js\varrho}|\mu_{J,M}|^\varrho\cdot
\frac{1}{|Q_{J,M}|}\int_{\rd}\chi_{J,M}(y) \dint y\\
&\lesssim 2^{Jd}2^{-jd}2^{jd}\int_{\rd}\sum_{M\in\ijm}2^{Js\varrho}|\mu_{J,M}|^\varrho\chi_{J,M}(y) \dint y\\
&\lesssim 2^{(J-j)d}\mhl\left(\sum_{M\in\ijm}2^{Js\varrho}|\mu_{J,M}|^\varrho\chi_{J,M}(\cdot)\right)(x).
\end{align*}
We insert { the last inequality into the last but one}. Assuming again $q<\infty$ one obtains for any fixed $\varepsilon>0$ that
\begin{align*}
2^{jsq}\Gamma_{j,m}^q\kjm(x)\lesssim \sum_{J>j}2^{-(J-j)(\varkappa+s+d-\frac{d}{\varrho}-\varepsilon)q}
\mhl\left(\sum_{M\in\ijm}2^{Js\varrho}|\mujm|^\varrho\chi_{J,M}(\cdot)\right)(x)^{\frac{q}{\varrho}}.
\end{align*}
First we consider the summation over $m\in\zd$. With $J=j+t$ we have
\begin{align}\label{fwe2}
\begin{split}
\sum_{M\in\zd}2^{jsq}\Gamma_{j,m}^q\kjm(x)&\lesssim\sum_{t=1}^{\infty}2^{-t(\varkappa+s+d-\frac{d}{\varrho}-\varepsilon)q}\\
&\,\,\,\,\,\,\times
\sum_{m\in\zd}\mhl
\left(\sum_{M\in I_{j+t}^{j}(m)}2^{(j+t)s\varrho}|\mu_{j+t,M}|^{\varrho}
\chi_{j+t,M}(\cdot)\right)(x)^{\frac{q}{\varrho}}.
\end{split}
\end{align}
Let us consider the sum
$$
\sum_{M\in I_{j+t}^{j}(m)}\left(2^{(j+t)s}|\mu_{j+t,M}|
\chi_{j+t,M}(x)\right)^\varrho
$$
for fixed $j\in\no\cap[j_P,\infty)$, $t\in\mathbb{N}$, $m\in\zd$ and $x\in\rd$. 
The cube $\qjm$ is divided in the cubes $Q_{j+t, M}$. Note that the ``small" cubes $Q_{j+t,M}$ are all disjoint. So we have
\begin{align*}
\sum_{M\in I_{j+t}^{j}(m)}\left(2^{(j+t)s}|\mu_{j+t,M}|
\chi_{j+t,M}(x)\right)^\varrho=\left(\sum_{M\in I_{j+t}^{j}(m)}2^{(j+t)s}|\mu_{j+t,M}|
\chi_{j+t,M}(x)\right)^\varrho=:g_{j,m}^t(x)^\varrho.
\end{align*}
Thus we get with \eqref{fwem}, \eqref{fwe1}, \eqref{fwe2} and summation over $j$ that
\begin{align*}
&\sum_{j=\jjp}^\infty\sum_{m\in\zd}2^{jsq}|\lzjm|^q\kjm(x)\\
&\lesssim \sum_{J=\jjp}^\infty\sum_{M\in\zd}\mhl\left(2^{Js\varrho}|\mujm|^\varrho\chi_{J,M}(\cdot)\right)(x)^{\frac{q}{\varrho}}
\sum_{j\geq J}2^{-(j-J)(\varkappa-s-\varepsilon)q}\\
&\,\,\,\,\,\,+\sum_{t=1}^{\infty}2^{-t(\varkappa+s+d-\frac{d}{\varrho}-\varepsilon)q}\sum_{j=\jjp}^{\infty}\sum_{m\in\zd}
\mhl\left(g_{j,m}^t(\cdot)^\varrho\right)(x)^{\frac{q}{\varrho}},
\end{align*}
with $0<\varepsilon<\varkappa-s$. Finally we have
\begin{align*}
\|\lz\mid f_{p,q}^{s,\varphi}(\rd)\|&\lesssim \left\|\left(\sum_{J=\jjp}^\infty\sum_{M\in\zd}\mhl
\left(2^{Js\varrho}|\mujm|^\varrho\chi_{J,M}\right)(\cdot)^{\frac{q}{\varrho}}\right)^{\frac{1}{q}}
\mid L_p^{\varphi}(\rd)\right\|\\
&\,\,\,\,\,\,+\left\|\left(\sum_{t=1}^\infty2^{-t(\varkappa+s+d-\frac{d}{\varrho}-\varepsilon)q}
\sum_{j=\jjp}^\infty\sum_{m\in\zd}\mhl\left((g_{j,m}^t)^\varrho\right)(\cdot)^{\frac{q}{\varrho}}\right)^{\frac{1}{q}}
\mid L_p^{\varphi}(\rd)\right\|.
\end{align*}
With an additional use of H\"older's inequality (for $0<q<1$) and an additional $\varepsilon$ we can take the sum over $t$ out of the $L_p^{\varphi}$-norm,
\begin{align*}
\|\lz\mid f_{p,q}^{s,\varphi}(\rd)\|&\lesssim \left\|\left(\sum_{J=\jjp}^\infty\sum_{M\in\zd}\mhl
\left(2^{Js\varrho}|\mujm|^\varrho\chi_{J,M}\right)(\cdot)^{\frac{q}{\varrho}}\right)^{\frac{1}{q}}
\mid L_p^{\varphi}(\rd)\right\|\\
&\,\,\,\,\,\,+\sum_{t=1}^\infty2^{-t(\varkappa+s+d-\frac{d}{\varrho}-2\varepsilon)}\left\|\left(
\sum_{j=\jjp}^\infty\sum_{m\in\zd}\mhl\left((g_{j,m}^t)^\varrho\right)(\cdot)^{\frac{q}{\varrho}}\right)^{\frac{1}{q}}
\mid L_p^{\varphi}(\rd)\right\|.
\end{align*}
Since $0<\varrho<\min(1,p,q)$ we can use Lemma~\ref{hlp0},
\begin{align*}
\|\lz\mid f_{p,q}^{s,\varphi}(\rd)\|&\lesssim \left\|\left(\sum_{J=\jjp}^\infty\sum_{M\in\zd}2^{Jsq}
|\mujm|^q\chi_{J,M}(\cdot)\right)^{\frac{1}{q}}\mid L_p^{\varphi}(\rd)\right\|+\sum_{t=1}^\infty
2^{-t(\varkappa+s+d-\frac{d}{\varrho}-2\varepsilon)}\\
&\,\,\,\,\,\,\times
\left\|\left(\sum_{j=\jjp}^\infty\sum_{m\in\zd}\left(\sum_{M\in I_{j+t}^j(m)}2^{(j+t)s}|\mu_{j+t,M}|\chi_{j+t,M}(\cdot)\right)^q\right)^{\frac{1}{q}}\mid L_p^{\varphi}(\rd)\right\|\\
&=\left\|\left(\sum_{J=\jjp}^\infty\sum_{M\in\zd}2^{Jsq}
|\mujm|^q\chi_{J,M}(\cdot)\right)^{\frac{1}{q}}\mid L_p^{\varphi}(\rd)\right\|+\sum_{t=1}^\infty
2^{-t(\varkappa+s+d-\frac{d}{\varrho}-2\varepsilon)}\\
&\,\,\,\,\,\,\times
\left\|\left(\sum_{j=\jjp}^\infty\sum_{m\in\zd}\sum_{M\in I_{j+t}^j(m)}2^{(j+t)sq}|\mu_{j+t,M}|^q\chi_{j+t,M}(\cdot)\right)^{\frac{1}{q}}\mid L_p^{\varphi}(\rd)\right\|\\
&\lesssim \|\mu\mid f_{p,q}^{s,\varphi}(\rd)\|\\
&\,\,\,\,\,\,+ \sum_{t=1}^\infty
2^{-t(\varkappa+s+d-\frac{d}{\varrho}-2\varepsilon)}
\left\|\left(\sum_{J=\jjp}^\infty\sum_{M\in\zd}2^{Jsq}|\mujm|^q\chi_{J,M}(\cdot)\right)^{\frac{1}{q}}
\mid L_p^{\varphi}(\rd)\right\|\\
&\lesssim \|\mu\mid f_{p,q}^{s,\varphi}(\rd)\|,
\end{align*}
where $0<2\varepsilon<\varkappa+s+d-\frac{d}{\varrho}$ and $0<\varrho<\min(1,p,q)$. Recall that we have
additionally $0<\varepsilon<\varkappa-s$. Then we choose
\begin{equation}\label{fwe3}
\varkappa>\max(\whole{1+s}_+,\sigma_{p,q}-s).
\end{equation}
Finally, it is easy to show (ii) in Definition~\ref{vks}. It holds $b_{p,q}^{s,\varphi}\eb f_{p,\infty}^{s,\varphi}$.
So we have here the same condition for $\varkappa$ as in the $b$-case,
\begin{equation}\label{fwe4}
\varkappa>\frac{3d}{p}-s.
\end{equation}
Then \eqref{fwe3} and \eqref{fwe4} lead to \eqref{lspq}. With some modifications and a similar proof we obtain the same result for $q=\infty$. Hence we finish the proof of Theorem~\ref{fwd}.
\end{proof}

\subsection{ Wavelet decomposition of $\MfF(\rd)$}

{
We first recall the corresponding sequence spaces for $\MfB(\rd)$  and $\MfF(\rd)$ .

\begin{definition}[\cite{nns16}] \label{def-seq-n-e}
Let $s\in\real$, $0<p<\infty$, $0<q\leq\infty$ and ${\varphi}\in\Gp$. 
    \begin{enumerate}[\bfseries\upshape  (i)]
        \item The sequence space $n^{s}_{\varphi,p,q}(\rd)$ is the set  of all doubly indexed sequences $\lz=\{\lzjm\}_{j\in\no,m\in\zd}$ for which
\begin{align*}
 \|\lambda\mid n^{s}_{\varphi,p,q}(\rd)\|:=\Biggl(\sum_{j=0}^{\infty}2^{jsq}   \bigg\| \sum_{m\in\zd} \lambda_{j,m} \chi_{Q_{j,m}} \,\Big|\, {\Mf}(\rd) \bigg\|^q \Biggr)^{\frac{1}{q}} 
\end{align*}
is finite (with the usual modification if $q=\infty$).        
\item Assume in addition that $\varphi$ satisfies \eqref{intc} when $0<q<\infty$. The sequence space $\mff(\rd)$ is defined to be the set of all doubly indexed sequences $\lz=\{\lzjm\}_{j\in\no,m\in\zd}$ for which
       \begin{align*}
    \|\lambda\mid\mff(\rd)\|:=\Bigg\|\Biggl(\sum_{j=0}^{\infty}2^{jsq}\sum_{m\in\zd}\left|\lzjm\kjm(\cdot)
\right|^q\Biggr)^{\frac{1}{q}} \Big| \Mf(\rd)\Bigg\|
\end{align*}
        is finite (with the usual modification for $q=\infty$).
        \item The sequence space ${a}_{\varphi,p,q}^{s}(\rd)$ denotes either ${n}_{\varphi,p,q}^{s}(\rd)$  or ${e}_{\varphi,p,q}^{s}(\rd)$, assuming in addition that $\varphi$ satisfies \eqref{intc} when $0<q<\infty$ and 
        ${a}_{\varphi,p,q}^{s}(\rd)$ denotes ${e}_{\varphi,p,q}^{s}(\rd)$.
\item The space ${\tilde{a}}_{\varphi,p,q}^{s}(\rd)$    collects all sequences 
\[
\lambda= \left\{ \lambda_m\in \mathbb{C}, \lambda_{j,m}^G \in \mathbb{C} : \; m\in \zd, \; j \in \mathbb{N}_0,\; G\in G^* \right\}
\]
such that
\begin{equation*}
\| \lambda\mid {\tilde{a}}_{\varphi,p,q}^{s}(\rd)\|:= \bigg\| \sum_{m\in\zd} \lambda_{m} \chi_{Q_{m}} \,\Big|\, \mathcal{M}_{\varphi, p}(\rd) \bigg\|  + \sum_{G\in G^*} \big\|
\{\lambda_{j,m}^G\}_{j\in\no, m\in\zd} \mid {a}_{\varphi,p,q}^{s}(\rd)\big\| 
\end{equation*} 
is finite.        
    \end{enumerate}
\end{definition}
}

Now we recall the wavelet decomposition of $\MfB(\rd)$ proved in \cite[Theorem 3.1]{hms22}.

\begin{theorem}\label{nwdp}
Let $0<p<\infty$, $0<q\leq\infty$, $s\in\real$ and $\varphi\in\Gp$. For the wavelets defined in \eqref{wave2} we take
\begin{equation*}
L>\max(\lfloor 1+s\rfloor_+,\,\frac{d}{p}-s).
\end{equation*}
Let $f\in\mathcal{S}'(\rd)$. Then $f\in\MfB(\rd)$ if, and only if, it can be represented as
\begin{align*}
    f=\sum_{m\in\zd}\lambda_m\psi_m+\sum_{G\in G^\ast}\sum_{j\in\no}\sum_{m\in\zd}\lzjm^{G}2^{-\frac{jd}{2}}\psi_{j,m}^G,
    \qquad\lambda\in {\tilde{n}}_{\varphi,p,q}^s(\rd),
\end{align*}
{convergence being} unconditional in $\mathcal{S}'(\rd)$. The representation is unique with
\begin{align*}
    \lzjm^G=\lzjm^G(f)=2^{\frac{jd}{2}}(f,\,\psi_{j,m}^G)\quad\text{and}\quad\lambda_m=\lambda_m(f)=(f,\,\psi_m),
\end{align*}
and
\begin{align*}
    I:f\mapsto\{\lambda_m{f},\,2^{\frac{jd}{2}}(f,\,\psi_{j,m}^G)\}
\end{align*}
is a linear isomorphism of $\MfB(\rd)$ onto ${\tilde{n}}_{\varphi,p,q}^s(\rd)$.

Furthermore, $\|I(f)\mid {\tilde{n}}_{\varphi,p,q}^s(\rd)\|$ may be used as an equivalent quasi-norm in $\MfB(\rd)$.
\end{theorem}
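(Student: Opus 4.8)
The plan is to prove the statement exactly as the authors proved Theorems~\ref{bwd} and \ref{fwd}, namely by reducing to \cite[Theorem~5.1]{hst18}; the result is of course \cite[Theorem~3.1]{hms22}. First I would record the two structural prerequisites. The space $\MfB(\rd)$ is an (isotropic, inhomogeneous) quasi-Banach function space with $\sd\eb\MfB(\rd)\eb\sdd$, and it admits an atomic decomposition with $L=K$ atoms and coefficients in $\mfb(\rd)$ (cf.\ \cite{nns16}), which is available as soon as $L>\sigma_p-s$. Since $\sigma_p=d(\frac{1}{p}-1)_+<\frac{d}{p}$, the hypothesis $L>\frac{d}{p}-s$ already secures this moment condition, while $L\ge\whole{1+s}_+$ supplies the smoothness $K=L$. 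By \cite[Theorem~5.1]{hst18} it then suffices to show that the sequence space $\mfb=\mfb(\rd)$ is a $\varkappa$-sequence space in the sense of Definition~\ref{vks} for some $\varkappa$ with $0<\varkappa<L$.

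To verify Definition~\ref{vks}(i), I would take a sequence $\lambda$ dominated as in \eqref{vke1} and split its right-hand side into the coarse part $J\le j$ and the fine part $J>j$, as in \eqref{bwem}. For each fixed $j$ the quantity to control is the Morrey norm $\|\sum_m\lambda_{j,m}\chi_{\qjm}\mid\Mf(\rd)\|$. In the coarse regime $J\le j$ one has $\sharp\ijm\sim1$ by \eqref{ijm}, so $\sum_m\big(\sum_{M\in\ijm}|\mujm|\big)\chi_{\qjm}$ is pointwise dominated by a fixed dilate of $\sum_M|\mujm|\chi_{\qjjm}$; by the doubling of $\varphi$ and the bounded overlap of dilated cubes this has Morrey norm $\lesssim\|\sum_M|\mujm|\chi_{\qjjm}\mid\Mf(\rd)\|$. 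In the fine regime $J>j$ roughly $2^{d(J-j)}$ small cubes $\qjjm$ lie in each $\qjm$, and $\sum_{M\in\ijm}|\mujm|$ equals $2^{d(J-j)}$ times the $\qjm$-average of $\sum_M|\mujm|\chi_{\qjjm}$, hence is $\lesssim 2^{d(J-j)}\,\mhl\big(\sum_M|\mujm|\chi_{\qjjm}\big)$; the factor $2^{d(J-j)}$ is exactly cancelled by the weight $2^{-(\varkappa+d)(J-j)}$ carried by this part of \eqref{vke1}. Multiplying each level by $2^{js}$, taking the Morrey norm, and summing over $j$ in $\ell^q$, the level shifts $2^{(j-J)s}$ against the geometric factors $2^{-\varkappa|J-j|}$ leave summable convolutions provided $\varkappa>\max(s,-s)$, which is guaranteed since $L>\whole{1+s}_+>s$ and $L>\frac{d}{p}-s>-s$. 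This establishes \eqref{vke2}.

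For Definition~\ref{vks}(ii) I would test the Morrey norm on the single cube $\qjjm$. Since the cubes at a fixed scale are disjoint, a single bump has Morrey norm not exceeding that of the full level-$J$ sum; and since $t^{-d/p}\varphi(t)$ is nonincreasing with $\varphi(1)=1$, one has $\varphi(2^{-J})\ge2^{-Jd/p}$, so $\|\,|\mujm|\,\chi_{\qjjm}\mid\Mf(\rd)\|\ge2^{-Jd/p}|\mujm|$. Therefore $|\mujm|\le2^{J(\frac{d}{p}-s)}\|\mu\mid n^{s}_{\varphi,p,\infty}(\rd)\|\le2^{J(\frac{d}{p}-s)}\|\mu\mid\mfb(\rd)\|$, which is precisely \eqref{vke3} with $\varkappa=\frac{d}{p}-s$, matching the hypothesis $L>\frac{d}{p}-s$. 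The absence here of the extra normalising factor $|P|^{-1/p}$ present in the $B$-type norm is exactly what lowers the threshold from the $\frac{3d}{p}-s$ of Theorem~\ref{bwd} to $\frac{d}{p}-s$.

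The main obstacle is the fine regime $J>j$ of part (i): recasting the weighted sum over the many small cubes as a maximal-function bound and then controlling it in the Morrey norm. For $p>1$ this rests on the boundedness of $\mhl$ on $\Mf(\rd)$; for $p\le1$ it requires the usual $r$-trick with $0<r<p$, replacing $\mhl(g)$ by $[\mhl(g^r)]^{1/r}$ and invoking the maximal inequality on the Morrey space of the larger exponent $p/r>1$, as available from \cite{nns16}. Once these per-level Morrey estimates are in hand, the remaining $2^{jsq}$-weighted $\ell^q$ summation over $j$ is routine geometric-series bookkeeping, exactly as in the proofs of Theorems~\ref{bwd} and \ref{fwd}; for $q=\infty$ and $0<q<1$ the convolution step is handled by the usual elementary modifications.
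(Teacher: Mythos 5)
Your proposal is correct and takes essentially the same approach as the paper: Theorem~\ref{nwdp} is recalled there from \cite[Theorem 3.1]{hms22}, whose proof (mirrored in the paper's own proofs of Theorems~\ref{bwd} and \ref{fwd}) is precisely the reduction to \cite[Theorem 5.1]{hst18} via the atomic decomposition from \cite{nns16} together with the verification that $n^{s}_{\varphi,p,q}(\rd)$ is a $\varkappa$-sequence space in the sense of Definition~\ref{vks}. Your treatment of the two regimes in \eqref{vke1} (bounded overlap for $J\le j$, the maximal-function estimate with the $r$-trick and scalar maximal inequality on the generalised Morrey space for $J>j$), and the single-cube test giving the threshold $\varkappa\ge\frac{d}{p}-s$ for Definition~\ref{vks}(ii), match that argument, including the correct explanation of why the bound drops from $\frac{3d}{p}-s$ in Theorem~\ref{bwd} to $\frac{d}{p}-s$ here.
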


We recall {below}  the atomic decomposition for $\MfF(\rd)$, we refer to Definition~\ref{defi-atom} for the definition of atoms.

\begin{theorem}[{\cite[Theorem 4.5]{nns16}}]\label{mfad}
Let $0<p<\infty$, $0<q\leq\infty$, $s\in\mathbb{R}$ and $\varphi\in\Gp$.
Let also $c>1$, $L\in\{-1\}\cup\no$ and $K\in\no$. Assume
\begin{align*}
    K\geq\lfloor 1+s\rfloor_+,\qquad L\geq \max(-1, \lfloor\sigma_{p,q}-s\rfloor)
\end{align*}
and $\varphi$ satisfies \eqref{intc} when $q<\infty$.
\begin{enumerate}[\bfseries\upshape  (i)]
\item Let $\lambda=\{\lzjm\}_{j\in\no,m\in\zd}\in e_{\varphi,p,q}^s(\rd)$ and $\{a_{j,m}\}_{j\in\no,m\in\zd}$
be a family of $(K,L,c)$-atoms. Then
\begin{align*}
    f=\sum_{j\in\no}\sum_{m\in\zd}\lzjm a_{j,m}
\end{align*}
converges in $\mathcal{S}'(\rd)$. Furthermore,
\begin{align*}
    \|f\mid \mathcal{E}_{\varphi,p,q}^s(\rd)\|\lesssim\|\lambda\mid e_{\varphi,p,q}^s(\rd)\|.
\end{align*}
\item Let $f\in \mathcal{E}_{\varphi,p,q}^s(\rd)$. Then there exists a family $\{a_{j,m}\}_{j\in\no,m\in\zd}$ of $(K,L,c)$-atoms and a sequence $\lambda=\{\lzjm\}_{j\in\no,m\in\zd}\in e_{\varphi,p,q}^s(\rd)$ such that
\begin{align*}
    f=\sum_{j\in\no}\sum_{m\in\zd}\lzjm a_{j,m}\qquad\text{in}\qquad\mathcal{S}'(\rd)
\end{align*}
    and
    \begin{align*}
        \|\lambda\mid e_{\varphi,p,q}^s(\rd)\|\lesssim\|f\mid \mathcal{E}_{\varphi,p,q}^s(\rd)\|.
    \end{align*}
    
\end{enumerate}
\end{theorem}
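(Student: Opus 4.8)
The plan is to prove both directions by the standard atomic-decomposition machinery, now transplanted to the generalised Triebel-Lizorkin-Morrey scale, with the vector-valued maximal inequality of Lemma~\ref{hlp0} serving as the decisive analytic tool. It is precisely this Fefferman--Stein-type inequality for the Morrey-type space $L_p^{\varphi}(\rd)$ that replaces its classical counterpart and renders all Littlewood-Paley sums controllable in the quasi-norm; once it is available, the argument parallels the Euclidean case.

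For part~(i) I would first check that $f=\sum_{j\in\no}\sum_{m\in\zd}\lzjm\ajm$ converges in $\sdd$: since the atoms are uniformly controlled with $\supp\ajm\subset c\,\qjm$ and $\lz\in\mff(\rd)$, pairing $f$ against any test function in $\sd$ yields an absolutely convergent double series. For the quasi-norm bound I would use the Fourier-analytic building blocks from Definition~\ref{def-E-N-spaces} and estimate $(\eta_\ell\,\widehat{\ajm})^\vee$ pointwise, splitting according to whether $j\le\ell$ or $j>\ell$. The smoothness bound $2^{-j|\alpha|}|\Dd^\alpha\ajm(x)|\le\chi_{c\qjm}(x)$ for $|\alpha|\le K$ governs the case $j\le\ell$, while the vanishing moments up to order $L$ govern the case $j>\ell$; in both regimes one obtains, for any fixed $r\in(0,\min(p,q))$, an almost-diagonal estimate
\begin{equation*}
\big|(\eta_\ell\,\widehat{\ajm})^\vee(x)\big|\lesssim 2^{-|\ell-j|\,N}\,\mhl\!\big(\chi_{\qjm}\big)(x)^{1/r},
\end{equation*}
where $N>0$ stays positive exactly under the hypotheses $K\ge\lfloor 1+s\rfloor_+$ and $L\ge\max(-1,\lfloor\sigma_{p,q}-s\rfloor)$. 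Inserting this bound, summing the geometric factor $2^{-|\ell-j|N}$ in $\ell$ (a discrete Young inequality), and then invoking Lemma~\ref{hlp0} to pull the maximal operators out of the $\Mf$-quasi-norm yields $\|f\mid\MfF(\rd)\|\lesssim\|\lz\mid\mff(\rd)\|$.

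For part~(ii) I would start from a smooth Calder\'on reproducing formula $f=\sum_{j\in\no}\Phi_j*\Psi_j*f$ built from a dual pair of compactly (Fourier-)supported kernels, where the analysing kernels $\Psi_j$ furnish local means and the synthesising kernels $\Phi_j$ play the role of (rescaled) atoms. Discretising each frequency block over the dyadic cubes $\qjm$ and setting $\lzjm$ equal to a suitable multiple of the local mean of $f$ on $\qjm$, the Fourier support of $\Phi_j$ away from the origin (for $j\ge1$) forces the required vanishing moments up to order $L$, while the smoothness of $\Phi_j$ delivers the derivative bounds up to order $K$. The estimate $\|\lz\mid\mff(\rd)\|\lesssim\|f\mid\MfF(\rd)\|$ then reduces to a characterisation of $\MfF(\rd)$ by local means, which once more---via the standard pointwise domination of local means by a Peetre maximal function---follows from Lemma~\ref{hlp0}.

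The main obstacle is the derivation and bookkeeping of the almost-diagonal estimate and its passage through the Morrey-type quasi-norm: one must verify that the exponent $N$ is genuinely positive in \emph{both} summation regimes under the stated thresholds on $K$ and $L$, and that the auxiliary integrability $r<\min(p,q)$ can be fixed so that Lemma~\ref{hlp0} applies simultaneously to the $j$- and the $m$-summations (the latter exploiting the disjointness of the cubes $\qjm$ for fixed $j$). Once the almost-diagonal bound is secured, the remaining steps are routine, precisely because the hard analytic core---the maximal inequality in the Morrey setting---has already been isolated in Lemma~\ref{hlp0}.
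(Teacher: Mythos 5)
The first thing to note is that the paper contains no proof of this statement: Theorem~\ref{mfad} is quoted verbatim from \cite[Theorem 4.5]{nns16} (as the bracketed attribution indicates) and is used in the paper only as a known ingredient for the wavelet characterisation in Theorem~\ref{mew}. So there is no in-paper argument to compare against; your proposal has to be measured against the standard proof in the cited reference, whose overall strategy — almost-diagonal estimates for frequency-localised atoms combined with a Fefferman--Stein type maximal inequality for part (i), and a Calder\'on reproducing formula discretised over dyadic cubes for part (ii) — your outline does reproduce.

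That said, two points in your write-up are genuinely wrong as stated. First, the inequality you single out as ``the decisive analytic tool'', Lemma~\ref{hlp0}, is the maximal inequality for the space $L_p^{\varphi}(\rd)$, whose quasi-norm only involves dyadic cubes with $|P|\geq 1$; it is the tool tailored to the spaces $F^{s,\varphi}_{p,q}(\rd)$ and their sequence spaces. The quasi-norms in the present theorem, $\|\cdot\mid\MfF(\rd)\|$ and $\|\cdot\mid\mff(\rd)\|$, are built on $\Mf(\rd)$ (supremum over \emph{all} dyadic cubes), and an estimate in $L_p^{\varphi}(\rd)$ does not control them. The correct tool is the vector-valued maximal inequality in $\Mf(\rd)$, i.e.\ Lemma~\ref{mhlm} (itself taken from \cite{nns16}); with that replacement your part (i) goes through. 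Second, part (ii) is internally inconsistent: kernels that are compactly supported on the Fourier side are never compactly supported in space, so discretising $\Phi_j*\Psi_j*f$ over the cubes $\qjm$ produces molecules, not $(K,L,c)$-atoms supported in $c\,\qjm$; conversely, a spatially compactly supported $\Phi_j$ has an entire Fourier transform, which cannot vanish on a neighbourhood of the origin, so the vanishing moments can never be ``forced by the Fourier support away from the origin''. The standard repair — and what the cited reference in effect does — is to use a Calder\'on-type reproducing formula in which both kernels are compactly supported in space and the synthesising kernel is constructed from the outset with vanishing moments up to order $L$ (the local means machinery); the coefficient estimate then follows from the Peetre maximal function characterisation together with, again, Lemma~\ref{mhlm}.
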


{To show} the main theorem, we need the following vector-valued maximal inequality due to \cite{nns16}.

\begin{lemma}\label{mhlm}
Let $0<p<\infty$, $0<q\leq\infty$, $0<r<\min(p,q)$ and $\varphi\in\Gp$. Assume \eqref{intc} {when $q<\infty$}.
Then for any sequence $\{g_k\}_{k=0}^{\infty}$ of $\Mf(\rd)$-functions,
\begin{equation*}
\left\|\left(\sum_{k=0}^{\infty}\mhl(|g_k|^r)(\cdot)^{q/r}\right)^{\frac{1}{q}}\mid\Mf(\rd)
\right\|\lesssim\left\|\left(\sum_{k=0}^{\infty}|g_k|^q\right)^{\frac{1}{q}}\mid\Mf(\rd)\right\|,
\end{equation*}
with the usual modification made for $q=\infty$.
\end{lemma}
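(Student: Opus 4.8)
The plan is to recognise this as the Fefferman--Stein vector-valued maximal inequality adapted to the generalised Morrey scale, and to reduce it by the standard power (rescaling) trick to the regime where both the integrability exponent and the fine index exceed $1$, which is exactly the range covered by \cite{nns16}. First I would record the elementary rescaling identity for the Morrey quasi-norm: for any $0<r<p$ and any $f\in\Lloc$,
\[
\big\| f \mid \Mf(\rd)\big\|^{r} = \big\| |f|^{r} \mid {\mathcal M}_{\varphi^{r},p/r}(\rd)\big\|,
\]
since raising the defining supremum to the power $r$ turns $\varphi(\ell(P))$ into $\varphi(\ell(P))^{r}$ and the integrability exponent $p$ into $p/r$.

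Next I would set $h_k:=|g_k|^{r}$ and $\tilde q:=q/r$. Using $\mhl(|g_k|^{r})=\mhl(h_k)$ together with the identity above, applied with $f$ equal to $\big(\sum_k\mhl(h_k)^{\tilde q}\big)^{1/\tilde q}$ on the left and to $\big(\sum_k h_k^{\tilde q}\big)^{1/\tilde q}$ on the right, the assertion becomes equivalent, after raising both sides to the power $r$, to
\[
\bigg\| \Big(\sum_{k=0}^{\infty}\mhl(h_k)^{\tilde q}\Big)^{1/\tilde q}\,\Big|\,{\mathcal M}_{\varphi^{r},p/r}(\rd)\bigg\|
\lesssim
\bigg\| \Big(\sum_{k=0}^{\infty} h_k^{\tilde q}\Big)^{1/\tilde q}\,\Big|\,{\mathcal M}_{\varphi^{r},p/r}(\rd)\bigg\|.
\]
Because $0<r<\min(p,q)$ we now have $p/r>1$ and $\tilde q>1$. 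It then remains to verify that the rescaled data are admissible: $\varphi^{r}$ is still nondecreasing and $t^{-d/(p/r)}\varphi^{r}(t)=\big(t^{-d/p}\varphi(t)\big)^{r}$ is nonincreasing, so $\varphi^{r}\in{\mathcal G}_{p/r}$; moreover \eqref{intc} for $\varphi$ with parameter $\varepsilon$ yields \eqref{intc} for $\varphi^{r}$ with parameter $r\varepsilon$, so the hypothesis required when $q<\infty$ is preserved under the rescaling.

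With these reductions in place the displayed inequality is precisely the Fefferman--Stein vector-valued maximal inequality on generalised Morrey spaces in the range $p/r>1$, $1<\tilde q\le\infty$, which is established in \cite{nns16}; invoking it settles the case $q<\infty$. The case $q=\infty$ I would handle separately and more cheaply: from $\mhl(h_k)\le\mhl\big(\sup_j h_j\big)$ for every $k$ one gets $\sup_k\mhl(h_k)\le\mhl\big(\sup_j h_j\big)$ pointwise, and the claim follows from the scalar boundedness of $\mhl$ on ${\mathcal M}_{\varphi^{r},p/r}(\rd)$ (which needs only $p/r>1$ and $\varphi^{r}\in{\mathcal G}_{p/r}$, consistent with \eqref{intc} not being assumed here). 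The one genuinely hard ingredient is the vector-valued estimate for $1<\tilde q<\infty$: unlike the endpoint case it does not reduce to scalar boundedness of $\mhl$, but rests on the duality structure of the Morrey scale (boundedness of $\mhl$ on a suitable associate/block predual combined with an $\ell^{\tilde q'}$ duality argument in the spirit of Stein's proof of Fefferman--Stein). This is exactly the core supplied by \cite{nns16}, so in the present paper I would carry out the rescaling above and cite it.
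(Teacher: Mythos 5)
Your proposal is correct and follows essentially the same route as the paper: the power/rescaling trick to pass to the space ${\mathcal M}_{\varphi^{r},p/r}(\rd)$ with exponents $p/r>1$ and $q/r>1$, followed by an appeal to the vector-valued maximal inequality of Nakamura--Noi--Sawano (the paper cites \cite[Theorem 2.9]{nns16}). If anything, your bookkeeping is slightly more careful than the paper's: you correctly record that the rescaled space carries $\varphi^{r}$ (checking $\varphi^{r}\in\mathcal{G}_{p/r}$ and that \eqref{intc} survives the rescaling), whereas the paper's display writes $\mathcal{M}_{\varphi,\frac{p}{r}}(\rd)$, and you handle $q=\infty$ explicitly where the paper leaves the modification to the reader.
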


\begin{proof}
We show the proof for reader's convenience. Assume first $q<\infty$. By $1<\frac{q}{r}\leq\infty$, $1<\frac{p}{r}\leq\infty$
and Theorem 2.9 in \cite{nns16},
\begin{align*}
\left\|\left(\sum_{k=0}^{\infty}\mhl(|g_k|^r)(\cdot)^{q/r}\right)^{\frac{1}{q}}\mid\Mf(\rd)\right\|
&=\left\|\left(\sum_{k=0}^{\infty}\mhl(|g_k|^r)(\cdot)^{q/r}\right)^{\frac{r}{q}}\mid
\mathcal{M}_{\varphi,\frac{p}{r}}(\rd)\right\|^{\frac{1}{r}}\\
&\lesssim\left\|\left(\sum_{k=0}^{\infty}(|g_k|^q)\right)^{\frac{r}{q}}\mid
\mathcal{M}_{\varphi,\frac{p}{r}}(\rd)\right\|^{\frac{1}{r}}\\
&=\left\|\left(\sum_{k=0}^{\infty}|g_k|^q\right)^{\frac{1}{q}}\mid\Mf(\rd)\right\|.
\end{align*}
The modification of the argument in case of $q=\infty$ is obvious and left to the reader. Thus we finish the proof.
\end{proof}


\begin{theorem}\label{mew}
Let $0<p<\infty$, $0<q\leq\infty$, $s\in\real$ and $\varphi\in\Gp$. Assume in addition that $\varphi$ satisfies \eqref{intc} when $q<\infty$.
Let
\begin{equation*}
L>\max(\lfloor 1+s\rfloor_+,\,\frac{d}{p}-s,\,\sigma_{p,q}-s).
\end{equation*}
Let $f\in\mathcal{S}'(\rd)$. Then $f\in\MfF(\rd)$ if, and only if, it can be represented as
\begin{align*}
    f=\sum_{m\in\zd}\lambda_m\psi_m+\sum_{G\in G^\ast}\sum_{j\in\no}\sum_{m\in\zd}\lzjm^{G}2^{-\frac{jd}{2}}\psi_{j,m}^G,
    \qquad\lambda\in \tilde{e}_{\varphi,p,q}^s(\rd),
\end{align*}
{convergence being} unconditional in $\mathcal{S}'(\rd)$. The representation is unique with
\begin{align*}
    \lzjm^G=\lzjm^G(f)=2^{\frac{jd}{2}}(f,\,\psi_{j,m}^G)\qquad\text{and}\qquad\lambda_m=\lambda_m(f)=(f,\,\psi_m),
\end{align*}
and
\begin{align*}
    I:f\mapsto\{\lambda_m{f},\,2^{\frac{jd}{2}}(f,\,\psi_{j,m}^G)\}
\end{align*}
is a linear isomorphism of $\MfF(\rd)$ onto ${\tilde{e}}_{\varphi,p,q}^s(\rd)$.

Furthermore, $\|I(f)\mid {\tilde{e}}_{\varphi,p,q}^s(\rd)\|$ may be used as an equivalent quasi-norm in $\MfF(\rd)$.
\end{theorem}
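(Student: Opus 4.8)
The plan is to mirror the proofs of Theorems~\ref{bwd} and~\ref{fwd}, replacing the atomic decomposition of $\ftt(\rd)$ used there by the one for $\MfF(\rd)$ from Theorem~\ref{mfad} and the maximal inequality of Lemma~\ref{hlp0} by its Morrey counterpart in Lemma~\ref{mhlm}. First I would note that $\MfF(\rd)$ is an isotropic, inhomogeneous quasi-Banach function space with $\sd\eb\MfF(\rd)\eb\sdd$ (cf.\ \cite{nns16}), and that the hypotheses $L\geq\lfloor 1+s\rfloor_+$ and $L>\sigma_{p,q}-s$ guarantee, through Theorem~\ref{mfad}, an $L$-atomic decomposition of $\MfF(\rd)$ with $K=L$ and coefficients in $\mff(\rd)$. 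Theorem~5.1 in \cite{hst18} then reduces the entire statement to showing that $\mff=\mff(\rd)$ is a $\varkappa$-sequence space in the sense of Definition~\ref{vks} for some $\varkappa$ with $0<\varkappa<L$.

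To verify condition~(i) of Definition~\ref{vks} I would split the right-hand side of \eqref{vke1} as in \eqref{fwem} into the near-diagonal part $\Lambda_{j,m}$, gathering the indices $J\leq j$, and the tail $\Gamma_{j,m}$, gathering $J>j$. For $\Lambda_{j,m}$ the estimate follows the $\ftt$-case almost verbatim: after multiplication by $2^{jsq}\kjm(x)$ and summation over $m$, the sum $\sum_m\kjm(x)$ taken over those $m$ with $M\in\ijm$ is comparable to $\chi_{J,M}(x)$, hence dominated by its Hardy--Littlewood maximal function, producing the factor $2^{-(j-J)(\varkappa-s-\varepsilon)q}$ together with $\mhl(2^{Js\varrho}|\mujm|^\varrho\chi_{J,M})(x)^{q/\varrho}$. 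For $\Gamma_{j,m}$ I would exploit the disjointness of the children $Q_{J,M}\subset\qjm$ to collapse the inner $\ell^\varrho$-sum into a single indicator before passing to the maximal function, which yields the shifted, geometrically decaying factor $2^{-(J-j)(\varkappa+s+d-d/\varrho-\varepsilon)q}$ indexed by $t=J-j$.

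The decisive step is then to take the $\Mf(\rd)$-quasi-norm of the resulting expressions and invoke Lemma~\ref{mhlm}, the vector-valued (Fefferman--Stein type) maximal inequality in $\Mf(\rd)$, with $0<\varrho<\min(1,p,q)$, in place of Lemma~\ref{hlp0}; an additional H\"older argument (for $0<q<1$) together with one more $\varepsilon$ lets me pull the summation over $t$ out of the Morrey norm, the resulting $t$-series converging as soon as $\varkappa+s+d-d/\varrho-2\varepsilon>0$. Combined with the constraint $0<\varepsilon<\varkappa-s$, this delivers condition~(i) for every $\varkappa>\max(\lfloor 1+s\rfloor_+,\sigma_{p,q}-s)$. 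Condition~(ii) is the easy part: for $Q_{J,M}\subset Q$ I would test the $\mff$-norm on the single cube $P=Q_{J,M}$, where the inner $\ell^q$-sum reduces to the single term $2^{Js}|\mujm|\chi_{J,M}$, and use $\varphi(2^{-J})\geq 2^{-Jd/p}$ to obtain $|\mujm|\lesssim 2^{J(\frac{d}{p}-s)}\|\mu\mid\mff(\rd)\|$; this forces only $\varkappa>\frac{d}{p}-s$, which is milder than the $\frac{3d}{p}-s$ of Theorems~\ref{bwd} and~\ref{fwd} precisely because the intrinsic Morrey average replaces the factor $\varphi(\ell(P))\,|P|^{-1/p}$. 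Collecting the two conditions gives exactly $L>\max(\lfloor 1+s\rfloor_+,\frac{d}{p}-s,\sigma_{p,q}-s)$, and the case $q=\infty$ follows with the usual routine modifications. I expect the main obstacle to lie in the bookkeeping for $\Gamma_{j,m}$: keeping the exponent $\varkappa+s+d-d/\varrho-2\varepsilon$ strictly positive while simultaneously respecting $0<\varrho<\min(1,p,q)$ and $0<\varepsilon<\varkappa-s$ is exactly the point at which the interplay of $s$, $q$ and the Morrey maximal inequality is genuinely needed.
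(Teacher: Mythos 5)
Your proposal takes essentially the same route as the paper: reduction via \cite[Theorem 5.1]{hst18} to the atomic decomposition of Theorem~\ref{mfad} plus the verification that $\mff(\rd)$ is a $\varkappa$-sequence space, the latter obtained by adapting the proof of Theorem~\ref{fwd} (equivalently, of \cite[Proposition 6.5]{hst18}) with Lemma~\ref{mhlm} in place of Lemma~\ref{hlp0}. The paper leaves these modifications implicit, and your filled-in version is correct --- including the observation that condition (ii) of Definition~\ref{vks} now only forces $\varkappa>\frac{d}{p}-s$ rather than $\frac{3d}{p}-s$, which is precisely why the hypothesis on $L$ in Theorem~\ref{mew} is milder than in Theorems~\ref{bwd} and~\ref{fwd}.
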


\begin{proof}
The proof follows {once more} from \cite[Theorem 5.1]{hst18}. { If $\varphi$ satisfies \eqref{intc} when $q<\infty$, then the space $\MfF(\rd)$ is an isotropic, inhomogeneous, quasi-Banach function space, in particular  
$$
\mathcal{S}(\rd)\eb\MfF(\rd)\eb\mathcal{S}'(\rd),
$$
 see \cite{nns16} for more details.} Note that the inequality $L>\sigma_{p,q}-s$ implies {that} $\MfF(\rd)$ can be characterised
in terms of an $L$-atomic decomposition with $L=K$ and coefficients in $e^s_{\varphi,p,q}(\rd)$.
We now only need to show that the sequence space $e^s_{\varphi,p,q}(\rd)$ is a $\varkappa$-sequence space for some
$\varkappa$, $0<\varkappa<L$. To this end, we just make some simple modifications to the proof of \cite[Proposition 6.5]{hst18} and apply Lemma~\ref{mhlm}. Hence we omit details and finish the proof.
\end{proof}

{
\begin{remark}
	One should also mention that Theorem \ref{mew} follows  from Theorem \ref{fwd} and  Theorem \ref{rswm} that is recalled below. It can be easily checked that the sequence spaces $\tilde{e}_{\varphi,p,q}^s(\rd)$ and $\tilde{f}_{p,q}^{s,\varphi} (\rd)$ are isomorphic if $\varphi$ satisfies the condition \eqref{intc} in the case $q<\infty$.    
	\end{remark}
}

\section[Different scales of generalised Morrey smoothness spaces]{Relations between the different scales of generalised Morrey smoothness spaces}\label{rel-scales}

  In the previous sections we introduced the spaces $\Att(\rd)$ and $\MfA(\rd)$ as generalisations of the well-known scales $\at(\rd)$ and $\MA(\rd)$, respectively, recall Remarks~\ref{rmk-tau-spaces} and \ref{equnv}. For those latter spaces it is well-known that for $s\in\real$, $0<p\leq u<\infty$, $0<q\leq\infty$, $\tau\geq 0$,
	\begin{equation}
	\MB(\rd) \hookrightarrow  \bt(\rd) \qquad \text{with}\qquad \tau={1}/{p}- {1}/{u}, 
	\label{N-BT-emb}
	\end{equation}
    cf. \cite[Corollary~3.3]{ysy10}, and the above embedding is proper, if $\tau>0$ and $q<\infty$, while for $\tau=0$ or $q=\infty$, both spaces coincide,
	\begin{equation}
	\mathcal{N}^{s}_{u,p,\infty}(\rd)  =  B^{s,\frac{1}{p}-\frac{1}{u}}_{p,\infty}(\rd).
	\label{N-BT-equal}
	\end{equation}
	As for the $F$-spaces, if $0\le \tau <{1}/{p}$,
	then
	\begin{equation}\label{fte}
	\ft(\rd)\, = \, \MF(\rd)\quad\text{with }\quad \tau =
	{1}/{p}-{1}/{u}\, ,\quad 0 < p\le u < \infty, 
	\end{equation}
	cf. \cite[Corollary~3.3]{ysy10}.

Now we would like to study the counterparts of the above results in the more general setting of $\Att(\rd)$ and $\MfA(\rd)$.

\begin{theorem}\label{rswm}
    Let $s\in\real$, {$0<p<\infty$, $0<q\leq\infty$, and $\varphi\in\Gp$.}
    \begin{enumerate}[\bfseries\upshape  (i)]
        \item Assume that $\varphi$ satisfies \eqref{intc} when $q<\infty$. Then 
        $F_{p,q}^{s, {\varphi}}(\rd)=\MfF(\rd)$ with equivalent norms.
        \item It holds
        $$ \MfB(\rd) \hookrightarrow B_{p,q}^{s, \varphi}(\rd). $$
        When $q=\infty$, both spaces coincide with each other, namely,
        $$
        \mathcal{N}^s_{\varphi, p, \infty}(\rd) = B_{p,\infty}^{s, \varphi}(\rd),
        $$
        and when $q<\infty$ and 
        \begin{equation}\label{ls1}
        \lim_{t\rightarrow 0^+} \varphi(t)t^{-\frac{d}{p}}=\infty \quad \text{or} \quad  \lim_{t\rightarrow +\infty} \varphi(t)t^{-\frac{d}{p}}=0,
        \end{equation}
        then $\MfB(\rd)$ is a proper subspace of $B_{p,q}^{s, \varphi}(\rd)$.
    \end{enumerate}
\end{theorem}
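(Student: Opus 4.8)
The plan is to reduce both statements to the level of the wavelet coefficient sequence spaces and to compare the associated (quasi-)norms. By the wavelet isomorphisms of Theorems~\ref{fwd} and \ref{mew} (for part (i)) and Theorems~\ref{nwdp} and \ref{bwd} (for part (ii)) it suffices to compare $\ftts(\rd)$ with $\mff(\rd)$, respectively $\btts(\rd)$ with $\mfb(\rd)$, since in all four ``tilde'' spaces the father-wavelet coefficients $\lambda_m$ contribute through the identical Morrey term $\|\sum_{m}\lambda_m\chi_{Q_m}\mid\Mf(\rd)\|$. I shall use repeatedly that for a dyadic cube $P$ and $j\geq\jjp$ every $\qjm$ is either contained in $P$ or disjoint from it, so that no boundary cubes occur.

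For part (i), the inclusion $\MfF(\rd)\eb\ftt(\rd)$ is immediate: for each $P$ the integrand defining $\ftts$, where the sum runs only over $j\geq\jjp$ and $\qjm\subset P$, is a pointwise sub-sum of the integrand defining $\mff$, so $\|\lambda\mid\ftts(\rd)\|\leq\|\lambda\mid\mff(\rd)\|$. For the converse I fix $P$ and split the $j$-sum in the $\mff$-norm into the blocks $j\geq\jjp$ and $0\leq j<j_P$. By the nesting remark the first block reproduces exactly the $\ftts$-integrand over $P$. For the coarse block, if $x\in P$ and $j<j_P$ the unique cube $\qjm\ni x$ contains $P$, so this block equals the $x$-independent quantity $\sum_{0\leq j<j_P}2^{jsq}|\lambda_{j,m_j(P)}|^q$, where $Q_{j,m_j(P)}\supset P$; testing $\|\lambda\mid\ftts\|$ on the single cube $Q_{j,m_j(P)}$ gives $2^{js}|\lambda_{j,m_j(P)}|\leq\varphi(2^{-j})^{-1}\|\lambda\mid\ftts(\rd)\|$. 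Summing these bounds is exactly the point where \eqref{intc} is needed: with $t=2^{-j}\geq r=\ell(P)$ it yields $\sum_{0\leq j<j_P}\varphi(2^{-j})^{-q}\lesssim\varphi(\ell(P))^{-q}$, so after multiplication by $\varphi(\ell(P))^q$ the coarse block is controlled by $\|\lambda\mid\ftts(\rd)\|^q$ uniformly in $P$. This coarse-scale estimate is the main obstacle of part (i): without \eqref{intc} the crude bound $\varphi(2^{-j})\geq\varphi(\ell(P))$ only gives an extra factor $j_P$, so the geometric decay furnished by \eqref{intc} is indispensable (for $q=\infty$ the sum becomes a supremum and the monotonicity of $\varphi$ alone suffices).

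For part (ii) I first prove the embedding $\MfB(\rd)\eb\btt(\rd)$. For each $P$ and each $j\geq\jjp$ the $j$-th block of the $\btts$-norm equals $2^{jsq}\big[\varphi(\ell(P))(|P|^{-1}\int_P|\sum_m\lzjm\kjm|^p)^{1/p}\big]^q$, which is bounded by $2^{jsq}\|\sum_m\lzjm\kjm\mid\Mf(\rd)\|^q$; enlarging the $j$-range to all of $\no$ and then taking the supremum over $P$ moves the supremum outside the $\ell^q$-sum and gives $\|\lambda\mid\btts(\rd)\|\leq\|\lambda\mid\mfb(\rd)\|$. When $q=\infty$ this becomes an equivalence. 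The key observation is that the Morrey norm of the scale-$j$ step function $g_j=\sum_m\lzjm\kjm$ is attained on cubes of side at least $2^{-j}$: for a smaller cube $P$ lying in a single $\qjm$ the monotonicity of $\varphi$ gives $\varphi(\ell(P))|\lzjm|\leq\varphi(2^{-j})|\lzjm|$, the value at $\qjm$ itself. Hence $\sup_j 2^{js}\|g_j\mid\Mf(\rd)\|$ and $\sup_P\sup_{j\geq\jjp}2^{js}\varphi(\ell(P))(|P|^{-1}\int_P|g_j|^p)^{1/p}$ coincide, which is the asserted identity $\mathcal N^{s}_{\varphi,p,\infty}(\rd)=B^{s,\varphi}_{p,\infty}(\rd)$.

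It remains to prove properness when $q<\infty$ and \eqref{ls1} holds, for which I would construct a sequence with finite $\btts$-norm but infinite $\mfb$-norm. In the case $\lim_{t\to0^+}\varphi(t)t^{-d/p}=\infty$ take a single nonzero, nested coefficient per fine scale, $\lambda_{j,m_j}=2^{-j(s-d/p)}c_j$ with $Q_{0,m_0}\supset Q_{1,m_1}\supset\cdots$. Writing $w_j=[\varphi(2^{-j})2^{jd/p}]^q$, which tends to $\infty$ by \eqref{ls1}, one checks $\|\lambda\mid\btts(\rd)\|^q\sim\sup_{j_0}w_{j_0}\sum_{j\geq j_0}c_j^q$ while $\|\lambda\mid\mfb(\rd)\|^q\sim\sum_j w_j c_j^q$; choosing $c_j$ so that the tail-supremum stays bounded but the weighted series diverges---possible precisely because $w_j\to\infty$ and $q<\infty$---yields strictness. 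The alternative $\lim_{t\to\infty}\varphi(t)t^{-d/p}=0$ is handled symmetrically, now spreading the coefficients over increasingly large cubes so that the relevant weight involves $\ell(P)\to\infty$. Conversely, under \eqref{neg-int-1} the weight $w_j$ is bounded above and below, the two norms are equivalent, and the embedding is an equality; this, together with the counterexamples, completes the characterisation.
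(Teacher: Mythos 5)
Your reduction to sequence spaces via the wavelet theorems is sound, and most of the proposal is correct. For part (i) you give an actual proof where the paper simply cites \cite[Proposition A.2]{nns16}: the splitting of the $\mff(\rd)$-quantity over a cube $P$ into the fine block $j\geq\jjp$ (which by dyadic nesting reproduces the $\ftts(\rd)$-integrand) and the coarse block $0\leq j<j_P$, the latter estimated by testing on the single ancestor cubes $Q_{j,m_j(P)}\supset P$ and summed geometrically using \eqref{intc}, is correct; this self-contained route is exactly the one the paper alludes to in the remark after Theorem~\ref{mew}, and it is non-circular since Theorem~\ref{mew} is proved there independently of Theorem~\ref{rswm}. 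For part (ii), your per-block domination giving $n^{s}_{\varphi,p,q}(\rd)\eb b^{s,\varphi}_{p,q}(\rd)$ and the exchange of suprema for $q=\infty$ are the facts the paper calls ``easily checked''. Your first properness construction is in substance the paper's sequence \eqref{example1}: one coefficient per scale with weights $w_j=[\varphi(2^{-j})2^{jd/p}]^q$, your nested cubes playing the role of the cubes $R_{k_\nu}$ accumulating at the origin. One small point there: you assert that suitable $(c_j)$ exist ``because $w_j\to\infty$'' without exhibiting them; this should be made explicit, e.g.\ as in the paper, choosing $k_\nu$ lacunary as in \eqref{cond-knu-1} (so $w_{k_{\nu+1}}>2^q w_{k_\nu}$) and $c_{k_\nu}^q=w_{k_\nu}^{-1}$, $c_j=0$ otherwise, which makes every tail-supremum a bounded geometric series while each term of the weighted series equals $1$. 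This is a fillable gap.

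The genuine gap is the second properness case, $\lim_{t\to+\infty}\varphi(t)t^{-d/p}=0$, which you dismiss as ``handled symmetrically''. It is not, and no mirror image of your case-1 computation can work, because your case-1 mechanism only manipulates magnitudes of coefficients confined to a bounded region, while the Morrey norm is translation invariant and, on such configurations, blind to the behaviour of $\varphi$ at infinity. Concretely, take $\varphi=\varphi_{p,v}$ from \eqref{ex-u-v} with $u=p<v$, i.e.\ $\varphi(t)=t^{d/p}$ for $t\leq 1$ and $\varphi(t)=t^{d/v}$ for $t>1$: this satisfies the second alternative of \eqref{ls1} but not the first, and for any sequence supported in $Q_{0,0}$ one checks that both the $b^{s,\varphi}_{p,q}(\rd)$- and the $n^{s}_{\varphi,p,q}(\rd)$-quasi-norms coincide with the classical $b^{s}_{p,q}$-norm (for cubes $P$ with $\ell(P)>1$ the weight $\varphi(\ell(P))\ell(P)^{-d/p}<1$ is beaten by the value at $P=Q_{0,0}$, since $\varphi(t)t^{-d/p}$ is non-increasing). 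Hence no choice of magnitudes alone separates the two spaces; the counterexample must spread mass over arbitrarily large regions, with the geometry calibrated against the decay of $\varphi(t)t^{-d/p}$. That is what the paper's sequence \eqref{example2} does: at scale $j$ it fills a whole unit cube $P_j=Q_{0,m_{k_j}}$ with coefficients $2^{-js}$, and the cubes $P_j$ are placed at mutual distances $\sim 2^{k_j}$ chosen via \eqref{example_2_0} so that $\varphi(2^{k_\ell})2^{-k_\ell d/p}<\ell^{-1/q}$. Then each scale contributes $\sim 1$ to the $n$-norm (whence divergence for $q<\infty$), while any dyadic cube $P$ meeting $\nu$ of the $P_j$ necessarily has $\ell(P)>2^{k_\nu}$, so its contribution to the $b$-norm is at most $\varphi(\ell(P))\ell(P)^{-d/p}\nu^{1/q}\leq\varphi(2^{k_\nu})2^{-k_\nu d/p}\nu^{1/q}\leq 1$. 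Both this geometric idea (calibrated spatial spreading, counting captured cubes against $\ell(P)$) and its verification are absent from your sketch, so this half of the properness claim remains unproved.
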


\begin{proof} 
{(i)} has been obtained in \cite[Proposition A.2]{nns16}. We now prove {(ii)} {by} borrowing some ideas from the proof of \cite[Theorem 1.1(i)]{syy10}.  
{Due to the wavelet decomposition, cf. Theorems~\ref{bwd} and \ref{nwdp}, it is enough to prove it for the related} sequence spaces ${\tilde{b}}_{p,q}^{s, \varphi}(\rd)$ and ${\tilde{n}}_{\varphi,p,q}^{s}(\rd)$, which is still equivalent to prove  it for the sequence spaces ${b}_{p,q}^{s, \varphi}(\rd)$ and ${n}_{\varphi,p,q}^{s}(\rd)$. Recall the definition of the latter spaces given in Definitions~\ref{def-seq-b-f} and \ref{def-seq-n-e}.

The embedding $n_{\varphi,p,q}^{s}(\rd)\hookrightarrow b_{p,q}^{s,\varphi}(\rd)$ and the identity $n_{\varphi,p,\infty}^{s}(\rd) =b_{p,\infty}^{s,\varphi}(\rd)$ can be easily checked.
 
Now let $q<\infty$ and assume first that $\lim_{t\rightarrow 0^+} \varphi(t)t^{-\frac{d}{p}}=\infty$. Then we can consider a strictly increasing sequence of natural numbers $(k_{\nu})_{\nu\in \nat_0}$ so that $k_0=1$ and 
\begin{equation} \label{cond-knu-1}
\varphi(2^{-k_{\nu+1}})2^{k_{\nu+1}\frac{d}{p}}> 2\, \varphi(2^{-k_{\nu}})2^{k_{\nu}\frac{d}{p}} , \quad \nu\in\nat_0.
\end{equation}
Let $R_j:=\underbrace{ [2^{-j},2^{1-j})\times\cdots\times[2^{-j},2^{1-j}) }_{d\ \text{times}}$ for $j\in\no$ and set
 \begin{align}\label{example1}
    \lzjm:=
        \begin{cases}
          2^{-j s}\varphi(2^{-j})^{-1},&\quad \qjm=R_j\text{  for}\;  j=k_{\nu} \text{  for some } \nu \in\no,  \\
          0,&\text{ \,\, otherwise.}
        \end{cases}
    \end{align}   
In the calculation of $ \|\lz\mid b_{p,q}^{s,\varphi}(\rd)\|$,  the inner summation is not zero only for cubes $P$  in one of the following cases: (a) $P=R_{k_{\nu}}$ for some $\nu\in\nat_0$; (b) $P$ contains all $R_{k_{\nu}}$  for $\nu\geq \nu_0$ and some $\nu_0\in\nat_0$. The case (a) can be covered by case (b). Under hypothesis (b), we have
 \begin{align*}
& \frac{\varphi(\ell(P))}{|P|^\frac{1}{p}}
        \biggl\{\sum_{j=\jjp}^\infty   2^{(s-\frac{d}{p})jq}  \biggl(  \sum_{\substack{m\in\zd\\ \qjm\subset P}}  |\lzjm|^p   \biggr)^{\frac{q}{p}}\biggr\}^\frac{1}{q} \\
&\qquad   \leq  \frac{\varphi(2^{-k_{\nu_0}+1})}{(2^{-k_{\nu_0}+1})^{\frac{d}{p}}}  \left\{\sum_{\nu=\nu_0}^{\infty} 2^{k_{\nu}(s-\frac{d}{p})q} \bigl(  2^{-k_{\nu} s}\varphi(2^{-k_{\nu}})^{-1} \bigr)^q\right\}^{\frac{1}{q}} \\
& \qquad  \leq  \frac{\varphi(2^{-k_{\nu_0}})}{2^{-\frac{d}{p} k_{\nu_0}}}  \left\{\sum_{\nu=\nu_0}^{\infty}   2^{-(\nu-\nu_0)q} 2^{-\frac{d}{p} k_{\nu_0}q}   \varphi(2^{-k_{\nu_0}})^{-q} \right\}^{\frac{1}{q}} \leq C.
\end{align*}

Here we used in the first inequality the assumption on $P$ in (b) which leads to $\ell(P)\geq 2\times 2^{-k_{\nu_0}}$ together with  \eqref{Gp-def}, while in the second inequality we applied 
again    \eqref{Gp-def} and \eqref{cond-knu-1}. 
Therefore,  $\lz \in b_{p,q}^{s,{\varphi}}(\rd)$. On the other hand,
     \begin{align*}
        \|\lz\mid n_{\varphi,p,q}^{s}(\rd)\|& =
\Biggl(\sum_{\nu=0}^\infty  2^{sk_{\nu}q}
        \sup_{l\leq k_{\nu}} \varphi( 2^{-l})^q 2^{(l-k_{\nu})\frac{d}{p}q}   2^{-s k_{\nu} q}  \varphi( 2^{-k_\nu})^{-q}   \Biggr)^\frac{1}{q}=\infty.
\end{align*}

	Assume now that $\lim_{t\rightarrow +\infty} \varphi(t)t^{-\frac{\nd}{p}}=0$ and $q<\infty$. Let $(k_{\ell})_{\ell\in \nat}$ be a  strictly increasing sequence of natural numbers and 	
	 consider cubes  $P_\ell= Q_{0, m_{k_\ell}}$, $m_{k_\ell}=(2^{k_\ell},\ldots, 0)$,  if $\ell\in\nat$ and $m_{{k_0}}=(0,\ldots,0)$. Moreover we set
	\begin{align}\label{example2}
		\lzjm:=
		\begin{cases}
			2^{-j s}, & \quad \text{if\,\,} \;\qjm  \subset P_j\\  
			0,& \quad\text{otherwise.}
		\end{cases}
	\end{align}
	Then, for a dyadic cube $P$, it holds 
	\begin{align}
		\biggl(  \sum_{\substack{m\in\zd\\ \qjm\subset P}}  |\lzjm|^p   \biggr)^{\frac{1}{p}} \sim  
		\begin{cases}\label{example2a}
			0, & \qquad\text{if}\quad P\cap P_\ell=\emptyset\,\,\text{for any}\,\, \ell,\\
			2^{-j(s-\frac{d}{p})} |P|^{1/p}, & \qquad\text{if}\quad 2^{-j}\le \ell(P)\,\,\text{and}\,\, P\subset P_j\; , \\ 
			2^{-j(s-\frac{d}{p})},& \qquad\text{if} \quad P_j\subset P.  \end{cases}
	\end{align}
          Now we calculate $\| \lambda \mid n^s_{\varphi,p,q}(\rd)\|$. If $0\leq l\leq j$, we may choose $P=Q_{l,k}$ and thus obtain by the second line of \eqref{example2a} that
\begin{align*}
		\sup_{\substack{0\leq l\leq j \\ k\in\zd}} \varphi( 2^{-l}) 2^{(l-j)\frac{d}{p}}
		\Biggl(\sum_{\substack{m\in\zd\\\qjm\subset Q_{l, k}}}
  |\lzjm|^p\Biggr)^\frac{1}{p}  \ge & \sup_{\substack{0\leq l\leq j \\ k\in\zd}} \varphi( 2^{-l}) 2^{(l-j)\frac{d}{p}} 2^{-j(s-\frac{d}{p})} 2^{-l\frac{d}{p}} \\
   = & \sup_{0\leq l\leq j} \varphi( 2^{-l}) 2^{-js} = 2^{-js},
	\end{align*}
using $\varphi(1)=1$, while in case of $l\leq 0$ and $P=Q_{l,k}$ we apply the last line of      \eqref{example2a} to conclude that       
\begin{align*}
		\sup_{\substack{l\leq 0 \\ k\in\zd}} \varphi( 2^{-l}) 2^{(l-j)\frac{d}{p}}
		\Biggl(\sum_{\substack{m\in\zd\\\qjm\subset Q_{l, k}}}
  |\lzjm|^p\Biggr)^\frac{1}{p}  \ge \ & \ \sup_{l\leq 0} \ \varphi( 2^{-l}) 2^{(l-j)\frac{d}{p}} 2^{-j(s-\frac{d}{p})}  \\
   = & \ 2^{-js} \sup_{l\leq 0} \varphi( 2^{-l}) 2^{l\frac{d}{p}} = 2^{-js},
	\end{align*}
        by the assumption on $\varphi$. Hence, together we arrive at 
       \begin{align*}
		\sup_{\substack{l\leq j \\ k\in\zd}} \varphi( 2^{-l}) 2^{(l-j)\frac{d}{p}}
		\Biggl(\sum_{\substack{m\in\zd\\\qjm\subset Q_{l, k}}}
		|\lzjm|^p\Biggr)^\frac{1}{p}  \ge 2^{-js}, 
	\end{align*}
which results in $\|\lz\mid n_{\varphi,p,q}^{s}(\rd)\|=\infty$ if $q<\infty$.
	
Now assume further that the sequence $(k_{\ell})_{\ell\in \nat}$ satisfies
	\begin{equation}\label{example_2_0}
		\varphi(2^{k_{\ell}})2^{-k_{\ell}\frac{d}{p}}< \ell^{-\frac{1}{q}} , \quad \ell \in\nat,
	\end{equation}
this is possible since $\displaystyle{\lim_{t\rightarrow +\infty} \varphi(t)t^{-\frac{\nd}{p}}=0}$. 
Then we have $\|\lz\mid b^{s,\varphi}_{p,q}(\rd)\|<\infty$,  which can be seen as follows: If $\ell(P)\le 1$, then the sum $\sum_{j=\jjp}^\infty \dots$ consists of at most one element {and, by the second line of \eqref{example2a}, we have}
	\begin{align*}
		& \frac{\varphi(\ell(P))}{|P|^\frac{1}{p}}
		\biggl\{\sum_{j=\jjp}^\infty   2^{(s-\frac{d}{p})jq}  \biggl(  \sum_{\substack{m\in\zd\\ \qjm\subset P}}  |\lzjm|^p   \biggr)^{\frac{q}{p}}\biggr\}^\frac{1}{q}=  \frac{\varphi(\ell(P))}{|P|^\frac{1}{p}} |P|^{1/p}\le \varphi(\ell(P)) \le 1.    
	\end{align*}	
	On the other hand, if {$\ell(P) > 1$ and}   the sum $\sum_{j=\jjp}^\infty \dots$ consists of $\nu$ summands, $\nu>1$, then the cube $P$ contains $\nu$ cubes $Q_{0,k_\ell}$ and  $\ell(P)> 2^{k_\nu }$. Moreover, 
	\begin{align*}
		& \frac{\varphi(\ell(P))}{|P|^\frac{1}{p}}
		\biggl\{\sum_{j=\jjp}^\infty   2^{(s-\frac{d}{p})jq}  \biggl(  \sum_{\substack{m\in\zd\\ \qjm\subset P}}  |\lzjm|^p   \biggr)^{\frac{q}{p}}\biggr\}^\frac{1}{q} 
		\leq \frac{\varphi(\ell(P))}{|P|^\frac{1}{p}} \nu^{1/q} 
		\le \varphi(2^{k_\nu})2^{-k_\nu\frac{d}{p}} \nu^{1/q} \le 1,
	\end{align*} 
where we {used the fact that $\varphi \in \Gp$ and} benefited from our assumption \eqref{example_2_0}.       
\end{proof}

\begin{remark}\label{Rem-N=B=phi}
  In the particular case when $\varphi(t):=t^{\frac{d}{u}}$, $0<p\leq u$, we recover the results obtained in \cite[Theorem~1.1]{syy10}, as recalled in \eqref{N-BT-emb}, \eqref{N-BT-equal} and \eqref{fte}.
  If the function $\varphi$ does not satisfy the condition \eqref{ls1}, then
\begin{align*}
  0< \lim_{t\rightarrow 0^+} \varphi(t)t^{-\frac{d}{p}}<\infty \qquad\text{and}\qquad   0<\lim_{t\rightarrow +\infty} \varphi(t)t^{-\frac{d}{p}}<\infty,
\end{align*}
since the function $\varphi(t)t^{-\frac{d}{p}}$ is decreasing. But this means that both spaces $\MfB(\rd)$ and  $B_{p,q}^{s, \varphi}(\rd)$ coincide with a classical Besov space, 
\begin{align*}
  \MfB(\rd) = B_{p,q}^{s, \varphi}(\rd) = \B(\rd), \qquad \varphi(t) \sim t^{\frac{d}{p}}, \ t>0.
\end{align*}
So the relation between $\MfA(\rd)$ and $\Att(\rd)$ is completely described by Theorem~\ref{rswm} and the above observation.

Moreover, the condition \eqref{ls1} now confirms that both, the behaviour of $\varphi$ near $0$ (locally) and near $\infty$ (globally) influence the embedding behaviour of $\MfB(\rd) \hookrightarrow \btt(\rd)$, we shall observe this phenomenon later again. In the special setting $\varphi(t)=t^{\frac{d}{u}}$, $t>0$, this was invisible inasmuch as local and global behaviour of $\varphi$ was characterised by the same parameter $u\geq p$. Checking our Examples~\ref{ex-phi} we find that \eqref{ls1} is satisfied for $\varphi_{u,v}$ given by Example~\ref{ex-phi}(ii) whenever $\min(u,v)=p<\max(u,v)\leq\infty$. Example~\ref{ex-phi}(iii) does not satisfy the first (local) part of \eqref{ls1}, but the second (global) one when $a<0$.
\end{remark}

Next we study the counterparts of the classical (sharp) embeddings \eqref{se-bf} in the setting of Morrey smoothness spaces. While in case of spaces $\at(\rd)$ this extends directly to
  \begin{equation}\label{elem-tau}
B^{s,\tau}_{p,\min(p,q)}(\rd)\, \hookrightarrow \, \ft(\rd)\, \hookrightarrow \, B^{s,\tau}_{p,\max(p,q)}(\rd),
\end{equation}
cf. \cite[Proposition~2.1]{ysy10}, we already discussed the different behaviour in the scale $\MA(\rd)$ in \eqref{se-au} in Remark~\ref{equnv}. In \cite[Proposition~4.9]{hl23} we observed that the generalised version of \eqref{elem-tau} remains true,
  \begin{equation}\label{elem-gen}
B^{s,\varphi}_{p,\min(p,q)}(\rd)\, \hookrightarrow \, \ftt(\rd)\, \hookrightarrow \, B^{s,\varphi}_{p,\max(p,q)}(\rd).
\end{equation}
Now we concentrate on the counterpart of \eqref{se-au}.

\begin{theorem}  \label{Nphi-Ephi}
    Let $0<p<\infty$, $0<q,q_0\leq \infty$, $s\in\real$ and $\varphi\in\Gp$. Assume in addition that $\varphi$ satisfies \eqref{intc} for $\MfF(\rd)$ when $q<\infty$. 
    \begin{enumerate}[\bfseries\upshape  (i)]
    \item Then 
    \begin{align}\label{Nphi-Ephi'}
    \mathcal{N}_{\varphi,p,\min(p,q)}^s(\rd)\eb\MfF(\rd).
    \end{align}
    \item Assume that
    \begin{align}\label{een-1'}
      \lim_{t\rightarrow 0^+} t^{-\frac{d}{p}}\varphi(t)<\infty\quad\text{and}\quad
      \lim_{t\rightarrow +\infty}t^{-\frac{d}{p}}\varphi(t)>0.
      \end{align}
      Then
      \begin{align}\label{een''}
    \MfF(\rd)\eb\mathcal{N}_{\varphi,p,q_0}^s(\rd)
\end{align}
if, and only if, $q_0\geq  \max(p,q)$.
\item
  Otherwise, if \eqref{een-1'} is not satisfied, then \eqref{een''} holds if, and only if, $q_0=\infty$.
    \end{enumerate}
\end{theorem}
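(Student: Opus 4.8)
The plan is to transfer every embedding to the wavelet coefficient side. By Theorem~\ref{mew} and Theorem~\ref{nwdp} (applied with a single $L$ large enough to serve both scales), the coefficient maps $f\mapsto\lambda(f)$ are isomorphisms of $\MfF(\rd)$ onto $\tilde{e}^s_{\varphi,p,q}(\rd)$ and of $\mathcal{N}^s_{\varphi,p,q_0}(\rd)$ onto $\tilde{n}^s_{\varphi,p,q_0}(\rd)$ using the \emph{same} Daubechies system; since the father-wavelet contribution $\|\sum_m\lambda_m\chi_{Q_m}\mid\Mf(\rd)\|$ is common to both sequence norms, each embedding reduces to the corresponding embedding between the ``mother'' sequence spaces. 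Writing $F_j:=\sum_{m\in\zd}\lambda_{j,m}\chi_{Q_{j,m}}$ and using that at each fixed level $j$ the cubes are pairwise disjoint, so that $\sum_m|\lambda_{j,m}|^q\kjm(x)=|F_j(x)|^q$, one gets for any $0<\varrho\le\infty$
\begin{align*}
\|\lambda\mid \mff(\rd)\|=\Big\|\big(\textstyle\sum_{j}2^{jsq}|F_j|^q\big)^{1/q}\mid\Mf(\rd)\Big\|,
\qquad
\|\lambda\mid n^{s}_{\varphi,p,\varrho}(\rd)\|=\big(\textstyle\sum_{j}2^{js\varrho}\|F_j\mid\Mf(\rd)\|^{\varrho}\big)^{1/\varrho}.
\end{align*}
Thus the whole theorem is a comparison between a ``Morrey-$L^p(\ell^q)$'' norm and an ``$\ell^\varrho(\text{Morrey})$'' norm of the system $\{2^{js}F_j\}_j$.

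For (i) I would prove $\mathcal{N}^s_{\varphi,p,\min(p,q)}\eb\MfF$ by a Minkowski argument adapted to the fact that $\|\cdot\mid\Mf(\rd)\|$ is a supremum of normalised $L^p$-averages. When $q\le p$ the exponent $p/q\ge1$ lets Minkowski's integral inequality (in the counting measure over $j$) pass the $\ell^q$-sum outside the $L^p$-integral on each dyadic cube $P$; bounding each resulting term by $\|F_j\mid\Mf(\rd)\|$ and taking $\sup_P$ yields $\|\lambda\mid\mff\|\le\|\lambda\mid n^s_{\varphi,p,q}\|$. When $p<q$ I would first use the pointwise inequality $\|\cdot\|_{\ell^q}\le\|\cdot\|_{\ell^p}$ to reduce the inner exponent to $p$, and then the trivial $\sup_P\sum_j\le\sum_j\sup_P$, giving $\|\lambda\mid\mff\|\le\|\lambda\mid n^s_{\varphi,p,p}\|$. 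In both cases the right-hand fine index is $\min(p,q)$, which is (i).

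For (ii) the key observation is that \eqref{een-1'} is exactly the negation of \eqref{ls1}: since $t\mapsto t^{-d/p}\varphi(t)$ is nonincreasing, \eqref{een-1'} forces $0<\inf_t t^{-d/p}\varphi(t)\le\sup_t t^{-d/p}\varphi(t)<\infty$, i.e. $\varphi(t)\sim t^{d/p}$, whence $\Mf(\rd)=L_p(\rd)$ with equivalent norms (Remark~\ref{Rem-N=B=phi}). Consequently $\MfF=F^s_{p,q}(\rd)$ and $\mathcal{N}^s_{\varphi,p,q_0}(\rd)=B^s_{p,q_0}(\rd)$, so \eqref{een''} becomes the classical $F^s_{p,q}(\rd)\eb B^s_{p,q_0}(\rd)$. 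The elementary embedding \eqref{se-bf} gives it for $q_0\ge\max(p,q)$, while its sharpness (the index $\max(p,q)$ cannot be lowered, cf.\ \cite{SiT}) gives the converse.

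Part (iii) is the heart of the matter. Sufficiency of $q_0=\infty$ holds always and irrespective of \eqref{een-1'}: from $\sup_j 2^{js}|F_j(x)|\le(\sum_j 2^{jsq}|F_j(x)|^q)^{1/q}$ one obtains $\sup_j 2^{js}\|F_j\mid\Mf(\rd)\|\le\|\lambda\mid\mff\|$, i.e. $\mff\eb n^s_{\varphi,p,\infty}$. For the necessity I would show that when \eqref{een-1'} fails, \eqref{een''} breaks down for every finite $q_0$, by producing $\lambda$ (with vanishing father part) that lies in $\mff(\rd)$ but not in $n^s_{\varphi,p,q_0}(\rd)$, splitting according to which half of \eqref{een-1'} is violated and reusing the extremal sequences from the proof of Theorem~\ref{rswm}(ii). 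If $\lim_{t\to0^+}t^{-d/p}\varphi(t)=\infty$, take the lacunary single-cube sequence \eqref{example1} on the disjoint cubes $R_{k_\nu}$ with the gap \eqref{cond-knu-1}; then $(\sum_j 2^{jsq}|F_j|^q)^{1/q}=\sum_\nu\varphi(2^{-k_\nu})^{-1}\chi_{R_{k_\nu}}$, and on every large containing cube the geometric growth \eqref{cond-knu-1} makes the tail sum comparable to its first term, which the Morrey prefactor exactly cancels, so $\lambda\in\mff$; meanwhile the single-cube identity $\|c\chi_Q\mid\Mf(\rd)\|=\varphi(\ell(Q))\,|c|$ gives $2^{k_\nu s}\|F_{k_\nu}\mid\Mf(\rd)\|=1$ for every $\nu$, whence $\|\lambda\mid n^s_{\varphi,p,q_0}\|=(\sum_\nu 1)^{1/q_0}=\infty$. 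If instead $\lim_{t\to\infty}t^{-d/p}\varphi(t)=0$, take the spread-out ``filled unit cube'' sequence \eqref{example2} on the far-apart cubes $P_j$, choosing the centres $k_\nu$ so fast that $\varphi(2^{k_\nu})2^{-k_\nu d/p}\nu^{1/p}\le1$ (possible by the global decay, cf.\ \eqref{example_2_0}); then $(\sum_j 2^{jsq}|F_j|^q)^{1/q}=\chi_{\cup_j P_j}$ has bounded $\Mf$-norm while $2^{js}\|F_j\mid\Mf(\rd)\|=1$ again forces infinite $n^s_{\varphi,p,q_0}$-norm. The single genuine difficulty, and the step I expect to absorb most of the work, is verifying that these sequences have \emph{finite} $\mff$-norm: one must control the Morrey (quasi-)norm of the resulting functions over \emph{all} dyadic cubes, using precisely the local (resp.\ global) behaviour of $t^{-d/p}\varphi(t)$ to close the estimate, and it is exactly this behaviour that produces the dichotomy between $q_0=\max(p,q)$ and $q_0=\infty$.
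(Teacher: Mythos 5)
Your proposal is correct and follows essentially the same route as the paper: reduction to the sequence spaces via the wavelet isomorphisms, Minkowski's inequality plus $\ell_q$-monotonicity for (i), the observation that \eqref{een-1'} forces $\varphi(t)\sim t^{d/p}$ so that (ii) collapses to the classical sharp embedding of \cite{SiT}, and for (iii) the elementary embedding $\mff(\rd)\hookrightarrow n^s_{\varphi,p,\infty}(\rd)$ together with exactly the two extremal sequences \eqref{example1} (with the lacunary gap \eqref{cond-knu-1}) and \eqref{example2} (with the exponent $1/p$ in place of $1/q$, as in \eqref{example_2_0'}) reused from the proof of Theorem~\ref{rswm}. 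Your identification of the key verification -- finiteness of the $\mff$-norm of these sequences via the cancellation between the Morrey prefactor and the geometrically decaying (resp.\ sparse) tail -- is precisely how the paper closes the argument.
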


\begin{proof} (i) is deduced by Minkowski's inequality and the monotonicity of $\ell_q$ spaces. As for (ii), it is enough to prove the assertions for the corresponding sequence spaces due to the wavelet characterisation. 
Parallel to Remark~\ref{Rem-N=B=phi}, \eqref{een-1'} implies that $\MfF(\rd)=\F(\rd)$ and $\MfB(\rd)=\B(\rd)$ such that the result coincides with the right-hand side of \eqref{se-bf}, the well-known classical result, {cf. \cite[Theorem~3.1.1]{SiT} for the necessity of $q_0\geq \max(p,q)$ in this case}. \smallskip

Now we deal with (iii). Since  ${e}^s_{\varphi,p,q}(\rd) \eb {n}^s_{\varphi,p,\infty}(\rd)$, the sufficiency is clear and we are left to prove the necessity. %

  Assume first that $\lim_{t\rightarrow 0^+} t^{-\frac{d}{p}}\varphi(t)=\infty$ and consider the sequence  $\lambda=\{\lzjm\}_{j\in\nat_0,m\in\zd}$ defined in \eqref{example1}. As we have seen in the proof of Theorem~\ref{rswm}, $\lambda \not\in {n}^s_{\varphi,p,q_0}(\rd)$ for any $q_0<\infty$. On the other hand, we have
\begin{align*}
   \|\lambda\mid {e}^s_{\varphi,p,q}(\rd)\| &=\bigg\|\bigg(\sum_{\nu=0}^{\infty}\varphi(2^{-k_{\nu}})^{-q} \chi_{R_{k_{\nu}}}(\cdot)\bigg)^{\frac{1}{q}}\mid \Mf(\rd)\bigg\|\\
   & = \sup_{P\in\mq}\frac{\varphi(\ell(P))}{|P|^\frac{1}{p}} \left(\int_P  \bigg(\sum_{\nu=0}^{\infty}\varphi(2^{-k_{\nu}})^{-q} \chi_{R_{k_{\nu}}}(x)\bigg)^{\frac{p}{q}} \dint x \right)^{\frac{1}{p}} \\
    & = \sup_{P\in\mq}\frac{\varphi(\ell(P))}{|P|^\frac{1}{p}} \left(\sum_{\nu=0}^{\infty}\varphi(2^{-k_{\nu}})^{-p} |P\cap R_{k_{\nu}}| \right)^{\frac{1}{p}}. 
\end{align*}
The inner summation vanishes except for cubes $P$  in one of the following three cases: (a) $P\subsetneq R_{k_{\nu}}$ for some $\nu\in\nat_0$; (b) $P=R_{k_{\nu}}$ for some $\nu\in\nat_0$; (c) $P$ contains all $R_{k_{\nu}}$  for $\nu\geq \nu_0$ and some $\nu_0\in\nat_0$.  The case (a) does not give the supremum and case (b) can be incorporated in case (c). In this latter case we have
\begin{align*}
 &\frac{\varphi(\ell(P))}{|P|^\frac{1}{p}} \left(\sum_{\nu=0}^{\infty}\varphi(2^{-k_{\nu}})^{-p} |P\cap R_{k_{\nu}}| \right)^{\frac{1}{p}}  \leq  \frac{\varphi(\ell(P))}{|P|^\frac{1}{p}} \left(\sum_{\nu=\nu_0}^{\infty}\varphi(2^{-k_{\nu}})^{-p}  2^{-k_{\nu}d} \right)^{\frac{1}{p}} \\
&\qquad \leq  \frac{\varphi(2^{-k_{\nu_0}})}{ 2^{-\frac{d}{p}k_{\nu_0}}} \left(\sum_{\nu=\nu_0}^{\infty}\varphi(2^{-k_{\nu}})^{-p}  2^{-dk_{\nu}} \right)^{\frac{1}{p}} 
 \leq   \left(\sum_{\nu=\nu_0}^{\infty}2^{-(\nu-\nu_0)p} \right)^{\frac{1}{p}} \leq C.
\end{align*}
Therefore $\lambda \in {e}^s_{\varphi,p,q}(\rd)$, which disproves (the sequence spaces version of) \eqref{een''} whenever $q_0<\infty$.

Assume now that $\lim_{t\rightarrow +\infty} t^{-\frac{d}{p}}\varphi(t)=0$. We show that the sequence {defined in} \eqref{example2},
{where now the strictly increasing sequence of natural numbers $(k_{\ell})_{\ell\in \nat}$   satisfies
\begin{equation}\label{example_2_0'}
\varphi(2^{k_{\ell}})2^{-k_{\ell}\frac{d}{p}}< \ell^{-\frac{1}{p}} , \quad \ell \in\nat,
\end{equation}}%
belongs to $ {e}^s_{\varphi,p,q}(\rd)$. We already know that it does not belong to $ {n}^s_{\varphi,p,q_0}(\rd)$ if $q_0<\infty$. So this will prove the necessity of the condition.  
	
	Let $\ell(P)\le 1$. If  $P$ is not contained in any $P_j$ cube, then the integral over $P$ used in the definition of the norm of $ {e}^s_{\varphi,p,q}(\rd)$  is zero. 
	If $P\subset P_{j_0}$ for some $j_0$, then the summation over $j$ contains only one summand and   
	\begin{align*}
		\frac{\varphi(\ell(P))}{|P|^\frac{1}{p}} 
		\left(\int_P  \bigg(\sum_{j=0}^{\infty}2^{jsq} \sum_{m\in\zd}
	|\lzjm|^q\kjm(x)\bigg)^{\frac{p}{q}} \dint x \right)^{\frac{1}{p}} = \varphi(\ell(P)) \le 1. 
\end{align*}

Let $\ell(P)>1$. If $P$ contains $\nu$ cubes $P_j$,  then $\ell(P)>2^{k_\nu}$. Assume $P_{j_1},\ldots , P_{j_\nu}$ are these cubes. We have     
\begin{align*}
	&\frac{\varphi(\ell(P))}{|P|^\frac{1}{p}} 
	\left(\int_P  \bigg(\sum_{j=0}^{\infty}2^{jsq} \sum_{m\in\zd}
|\lzjm|^q\kjm(x)\bigg)^{\frac{p}{q}} \dint x \right)^{\frac{1}{p}} \\ 
&\qquad\leq\frac{\varphi(\ell(P))}{|P|^\frac{1}{p}}  \left(\int_P  \bigg(\sum_{k=1}^{\nu} \chi_{P_{j_k}}(x)\bigg)^{\frac{p}{q}} \dint x \right)^{\frac{1}{p}}
\leq  	\frac{\varphi(2^{k_\nu})}{2^{k_\nu\frac{d}{p}} }\nu^{{1/p}}  \leq 1,
\end{align*}
{where we used the fact that $\varphi \in \Gp$ and \eqref{example_2_0'}}.  This implies $\lambda \in {e}^s_{\varphi,p,q}(\rd)$ and finally completes the proof.
\end{proof}


\section{Embeddings between generalised Morrey smoothness spaces}\label{sect-emb}

Now we study embeddings within the different scales of smoothness spaces, where only the case $\MfB(\rd)$ was investigated in \cite{hms22,hms23} already. We briefly recall it as it will be used later for our argument, but will also  serve for comparison.

\begin{theorem}[{\cite[Theorem 5.1]{hms22}}]\label{main-n}
Let $s_i\in\real$, $0<p_i<\infty$, $0<q_i<\infty$ and $\varphi_i\in\mathcal{G}_{p_i}$, $i=1,2$. 
We assume without loss of generality that $\varphi_1(1)=\varphi_2(1)=1$. Let $\varrho=\min(1,\frac{p_1}{p_2})$ and $\alpha_j=\sup\limits_{\nu\leq j}\frac{\varphi_2(2^{-\nu})}{\varphi_1(2^{-\nu})^\varrho}$, $j\in\no$.

There is a continuous embedding
\begin{align}\label{emfb1}
    \MfBa(\rd) \hookrightarrow \MfBb(\rd)
\end{align}
if, and only if,
\begin{align}\label{emfb2}
    \sup_{\nu\leq 0}\frac{\varphi_2(2^{-\nu})}{\varphi_1(2^{-\nu})^\varrho}<\infty,
\end{align}
and
\begin{align}\label{emfb3}
    \left\{2^{j(s_2-s_1)}\alpha_j\frac{\varphi_1(2^{-j})^\varrho}{\varphi_1(2^{-j})}\right\}_j\in \ell_{q^\ast},\qquad\text{where}\quad\frac{1}{q^\ast}=\left(\frac{1}{q_2}-\frac{1}{q_1}\right)_+.
\end{align}

The embedding \eqref{emfb1} is never compact.
\end{theorem}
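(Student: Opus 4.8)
The plan is to transfer the whole problem to the level of sequence spaces and then carry out a scale-by-scale analysis. By the wavelet isomorphism of Theorem~\ref{nwdp} (applied with one common admissible $L$ serving both parameter triples), the embedding \eqref{emfb1} is equivalent to the continuous embedding of the associated sequence spaces $\tilde{n}^{s_1}_{\varphi_1,p_1,q_1}(\rd)\eb \tilde{n}^{s_2}_{\varphi_2,p_2,q_2}(\rd)$. Since the father coefficients $\{\lambda_m\}$ and the genuine coefficients $\{\lambda_{j,m}^G\}$ are independent families, this splits into two requirements: the Morrey embedding $\mathcal{M}_{\varphi_1,p_1}(\rd)\eb\mathcal{M}_{\varphi_2,p_2}(\rd)$ for unit-cube-constant functions, and $\mfBa(\rd)\eb\mfBb(\rd)$. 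First I would record the per-level quantity $N_i^{(j)}(\lambda):=\|\sum_m\lambda_{j,m}\chi_{Q_{j,m}}\mid\mathcal{M}_{\varphi_i,p_i}(\rd)\|$; because the cubes $Q_{j,m}$ tile $\rd$, it equals $\sup_{l\le j,\,k}\varphi_i(2^{-l})2^{(l-j)d/p_i}\bigl(\sum_{Q_{j,m}\subset Q_{l,k}}|\lambda_{j,m}|^{p_i}\bigr)^{1/p_i}$, and the Besov--Morrey sequence norm is then $\|\lambda\mid\mfBa(\rd)\|=\|(2^{js_1}N_1^{(j)}(\lambda))_j\mid\ell_{q_1}\|$.

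For sufficiency I would prove the single pointwise (in $j$) comparison
\[
N_2^{(j)}(\lambda)\le \alpha_j\,\frac{\varphi_1(2^{-j})^\varrho}{\varphi_1(2^{-j})}\,N_1^{(j)}(\lambda),\qquad j\in\no.
\]
When $p_1\ge p_2$ (so $\varrho=1$) this follows from H\"older's inequality on each cube $Q_{l,k}$: converting the $p_2$-sum into a $p_1$-sum costs a factor $2^{(j-l)d(1/p_2-1/p_1)}$ which, combined with $2^{(l-j)d/p_2}$, reduces exactly to the $p_1$-weight $2^{(l-j)d/p_1}$, leaving the running ratio $\sup_{l\le j}\varphi_2(2^{-l})/\varphi_1(2^{-l})=\alpha_j$. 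When $p_1<p_2$ I would instead use $\sum b_m^{p_2}\le(\max_m b_m)^{p_2-p_1}\sum b_m^{p_1}$, bounding $\max_m b_m$ by the $l=j$ constraint $N_1^{(j)}/\varphi_1(2^{-j})$ and the residual $p_1$-sum by the $l=L$ constraint; the bookkeeping yields precisely the factor $\alpha_j\varphi_1(2^{-j})^{\varrho-1}$. The condition \eqref{emfb2} is exactly $\alpha_0<\infty$, i.e. the $j=0$ instance, which simultaneously governs the Morrey embedding of the father wavelets (using $\varphi_1(1)=1$). Inserting the pointwise bound into the outer $\ell_q$ norms reduces the claim to the standard fact that multiplication by $(c_j)_j$ maps $\ell_{q_1}\to\ell_{q_2}$ iff $(c_j)_j\in\ell_{q^\ast}$ with $1/q^\ast=(1/q_2-1/q_1)_+$; with $c_j=2^{j(s_2-s_1)}\alpha_j\varphi_1(2^{-j})^{\varrho-1}$ this is exactly \eqref{emfb3}.

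For necessity I would invert both steps. Testing the embedding on sequences concentrated at a single scale and optimised over the cube realising $\alpha_j=\sup_{\nu\le j}\varphi_2(2^{-\nu})/\varphi_1(2^{-\nu})^\varrho$ forces the multiplier inequality for arbitrary nonnegative scalars, and $\ell_q$-duality then yields \eqref{emfb3}; testing on the characteristic function of a single large cube (a pure father-wavelet configuration) forces the Morrey embedding, hence \eqref{emfb2}. The main obstacle I anticipate lies here: the extremal sequences must be sharp enough to detect both the \emph{running} supremum defining $\alpha_j$ (which requires, for each $j$, placing mass on the scale $\nu^\ast\le j$ attaining the supremum while keeping the source norm uniformly bounded) and the $\ell_{q^\ast}$ borderline, forcing a layered, lacunary construction in the spirit of \eqref{example1} and \eqref{example2} used in the proof of Theorem~\ref{rswm}. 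Reconciling the local ($t\to0^+$) and global ($t\to\infty$) behaviour of $\varphi_1,\varphi_2$ within admissible test configurations is the delicate point, and is precisely where the split into the global condition \eqref{emfb2} and the summability condition \eqref{emfb3} pays off.

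Finally, for non-compactness I would exploit the translation invariance of the Morrey, and hence Besov--Morrey, quasi-norms. Fixing one nonzero wavelet building block at a single level and translating its index $m$ along a sequence tending to infinity produces a bounded sequence in $\MfBa(\rd)$ whose images remain a fixed positive distance apart in $\MfBb(\rd)$, since two widely separated unit bumps each contribute the same fixed amount to the supremum over cubes. As the embedding constant and the per-level norms $N_i^{(j)}$ are unchanged by integer translations, no subsequence can converge, and this argument applies whenever \eqref{emfb1} holds; thus the embedding is never compact.
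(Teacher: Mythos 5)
Your proposal cannot be checked against a proof inside this paper, because the paper only states Theorem~\ref{main-n} as a quotation of \cite[Theorem 5.1]{hms22}; judged on its own merits, its first half is solid. The reduction via Theorem~\ref{nwdp} to the sequence spaces $n^{s_i}_{\varphi_i,p_i,q_i}(\rd)$, the observation that the levels decouple in these quasi-norms, the per-level bound $N_2^{(j)}(\lambda)\le\alpha_j\,\varphi_1(2^{-j})^{\varrho-1}N_1^{(j)}(\lambda)$ (correct in both cases: by H\"older when $\varrho=1$, and by $\|a\|_{\ell_{p_2}}\le\|a\|_{\ell_\infty}^{1-\varrho}\|a\|_{\ell_{p_1}}^{\varrho}$ combined with the $l=j$ and $Q_{l,k}$ constraints when $\varrho<1$), the $\ell_{q_1}\to\ell_{q_2}$ multiplier lemma, and the translation argument for non-compactness are all correct.

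The genuine gap is the necessity half in the case $p_1<p_2$, i.e. $\varrho=p_1/p_2<1$, exactly the point you flag but do not resolve. First, your test for \eqref{emfb2} fails there: for the characteristic function of a cube of side $2^{-\nu}$, $\nu\le 0$, both Morrey norms equal $\varphi_i(2^{-\nu})$, so such tests only force $\sup_{\nu\le0}\varphi_2(2^{-\nu})/\varphi_1(2^{-\nu})<\infty$, which is strictly weaker than \eqref{emfb2} since $\varphi_1(2^{-\nu})^\varrho\le\varphi_1(2^{-\nu})$ for $\nu\le0$; for instance, with $\varphi_1(t)=\varphi_2(t)=t^{d/u}$ and $p_1<p_2\le u$ the indicator tests are uniformly bounded although \eqref{emfb2} fails. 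Second, and for the same reason, single-scale tests given by constant blocks on the near-optimal cube $Q_{\nu^*,k}$, $\nu^*\le j$, only achieve $N_2^{(j)}/N_1^{(j)}\sim\varphi_2(2^{-\nu^*})/\varphi_1(2^{-\nu^*})$, which misses the required value $\alpha_j\varphi_1(2^{-j})^{\varrho-1}$ by the factor $\bigl(\varphi_1(2^{-j})/\varphi_1(2^{-\nu^*})\bigr)^{1-\varrho}$, and this factor can be arbitrarily small. The missing idea is the sharp extremal configuration: at level $j$, place spikes of height $\varphi_1(2^{-j})^{-1}$, one per dyadic cube of side $2^{-\mu}$ inside $Q_{\nu^*,k}$, with the density level $\mu\in[\nu^*,j]$ balanced by $2^{(j-\mu)d/p_1}\sim\varphi_1(2^{-\nu^*})/\varphi_1(2^{-j})$; the $\Gp$-monotonicity then yields $N_1^{(j)}(\lambda^{(j)})\lesssim1$ while $N_2^{(j)}(\lambda^{(j)})\gtrsim\varphi_2(2^{-\nu^*})\varphi_1(2^{-\nu^*})^{-\varrho}\varphi_1(2^{-j})^{\varrho-1}$. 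The lacunary examples \eqref{example1} and \eqref{example2} you invoke serve a different purpose (separating $\ell_q$- from $\sup$-behaviour in Theorem~\ref{rswm}) and contain no such balancing, so the ``only if'' direction remains unproven in your scheme. Note that once these sharp single-scale sequences are available, your cross-level combination $\lambda=\sum_j t_j\lambda^{(j)}$ plus $\ell_q$-duality does finish the argument, and \eqref{emfb2} then comes for free as the $j=0$ instance of \eqref{emfb3} (recall $\varphi_1(1)=1$), so no separate father-wavelet test is needed at all.
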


\subsection{ Embeddings within the scale $\MfF(\rd)$ }

\subsubsection{Some preparation}

As we want to study embeddings within the scale $\MfF(\rd)$, we also recall the special case $\MF(\rd)$ for convenience.

\begin{theorem}[{\cite[Theorem 3.1]{hs14}}]\label{at2}
    Let $s_i\in\real$, $0<q_i\leq\infty$, $0<p_i<u_i<\infty$, $i=1,2$. Let $\varrho=\min(1,\frac{p_1}{p_2})$. 
    
    There is a continuous embedding
    \begin{align}\label{eeu1}
        \MFa(\rd)\eb\MFb(\rd)
    \end{align}
    if, and only if,
    \begin{align}\label{eeu2}
        \frac{u_1}{u_2}\leq\varrho
    \end{align}
    and 
    \begin{align}
        &s_1-\frac{d}{u_1}>s_2-\frac{d}{u_2},\label{eeu3}\\
         or \qquad &s_1-\frac{d}{u_1}=s_2-\frac{d}{u_2}\quad and \quad u_1<u_2,\label{eeu4}\\
         or \qquad &s_1=s_2,\quad u_1=u_2\quad and \quad q_1\leq q_2.\label{eeu5}
    \end{align}
   
    The embedding \eqref{eeu1} is never compact.
\end{theorem}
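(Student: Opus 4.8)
The plan is to reduce the embedding to its counterpart between the associated sequence spaces, to establish sufficiency and necessity of the stated conditions separately, and then to settle the non-compactness. Since $\MF(\rd)=\MfF(\rd)$ for $\varphi(t)=t^{d/u}$, the wavelet isomorphism of Theorem~\ref{mew} (equivalently the atomic decomposition of Theorem~\ref{mfad}) identifies $\MFa(\rd)$ and $\MFb(\rd)$ with the sequence spaces $e^{s_1}_{u_1,p_1,q_1}(\rd)$ and $e^{s_2}_{u_2,p_2,q_2}(\rd)$, so it suffices to characterise the continuous embedding $e^{s_1}_{u_1,p_1,q_1}(\rd)\eb e^{s_2}_{u_2,p_2,q_2}(\rd)$. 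Here the Morrey weight reads $\varphi_i(\ell(P))/|P|^{1/p_i}=\ell(P)^{-d(1/p_i-1/u_i)}$, which makes the two free directions of the norm transparent: the frequency direction (summation and integration over the levels $j$) and the Morrey-scaling direction (the supremum over dyadic cubes $P$).

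For sufficiency I would work directly with the sequence-space norms. Splitting the supremum over $P$ according to $\ell(P)\le 1$ and $\ell(P)>1$, the level-sum is controlled by H\"older's inequality in $m$ together with the monotonicity of the $\ell_q$-(quasi-)norms, while the condition $u_1/u_2\le\varrho$ governs the admissible change of the Morrey-scaling exponent. The three alternatives for the smoothness parameters are exactly what is needed to sum the geometric series in $j$: when $s_1-d/u_1>s_2-d/u_2$ the series converges with room to spare and the fine indices play no role; the two borderline regimes ($s_1-d/u_1=s_2-d/u_2$ with $u_1<u_2$, and $s_1=s_2$, $u_1=u_2$ with $q_1\le q_2$) are handled by an additional H\"older step and by the elementary monotonicity $\ell_{q_1}\eb\ell_{q_2}$, respectively. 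The vector-valued maximal inequality of Lemma~\ref{mhlm}, specialised to $\varphi(t)=t^{d/u}$, can be invoked if one prefers to argue on the function side rather than through the sequence spaces.

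For necessity the strategy is to probe each condition with a tailored extremal sequence and to compute both norms. To force the differential-dimension inequality $s_1-d/u_1\ge s_2-d/u_2$ I would take a sequence concentrated at a single cube $\qjm$ and let the scale $j\to\infty$, reading off the balance of the exponents $2^{j(s_i-d/u_i)}$ from the two norms. To see the necessity of $u_1/u_2\le\varrho$, and of $u_1<u_2$ in the corresponding borderline case, I would spread the sequence over all $2^{jd}$ cubes of a fixed level contained in a large cube $P$, so that the Morrey supremum is attained on $P$ and the averaging exposes the relation between the $u_i$ and the $p_i$ through $\varrho$. Finally, in the full-equality regime $s_1=s_2$, $u_1=u_2$, the sharpness of $q_1\le q_2$ is obtained by a sequence that is lacunary in $j$ (nonzero on a sparse set of levels), reducing the comparison to that of the scalar spaces $\ell_{q_1}$ and $\ell_{q_2}$.

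The non-compactness follows from the (essential) translation invariance of the norms: fixing a single admissible nonzero element and translating it by $m_k\in\zd$ with $|m_k|\to\infty$ produces a bounded sequence in $\MFa(\rd)$ whose image in $\MFb(\rd)$ converges weakly to $0$ while keeping its norm bounded away from $0$, so no subsequence can converge in norm. The main obstacle I anticipate is the necessity part in the two borderline regimes: the spreading and lacunary test sequences must be designed so that each one isolates a single condition while remaining admissible (i.e.\ genuinely lying in the source sequence space), and verifying that the Morrey supremum is really attained on the intended cube $P$ --- rather than on a subcube or on a much larger cube --- requires the careful exponent bookkeeping that distinguishes the genuine Morrey case $p<u$ from the classical case $p=u$.
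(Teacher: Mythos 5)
You should first note that the paper never proves Theorem~\ref{at2} itself --- it recalls it from \cite{hs14} --- but it does prove the $\varphi$-generalisation (Theorems~\ref{E-E-case1} and~\ref{E-E-case2}, which recover the present statement for $\varphi_i(t)=t^{d/u_i}$), and those proofs show which ingredients are unavoidable. Your overall architecture is sound where it overlaps with the case $p_1\geq p_2$: the wavelet reduction to sequence spaces, the H\"older-plus-monotonicity sufficiency argument, the single-cube test for the differential-dimension condition, the lacunary-in-$j$ test for $q_1\leq q_2$, and the translation argument for non-compactness (a correct, even more self-contained alternative to the paper's route via the sandwich \eqref{neen1} and Theorem~\ref{main-n}). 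The genuine gaps both sit in the case $p_1<p_2$, which is exactly where this theorem differs from the classical $F^s_{p,q}$ situation. For sufficiency, when $p_1<p_2$ the embedding \emph{raises} the integrability exponent, and H\"older's inequality on cubes (which only lowers it), monotonicity of $\ell_q$-norms, and the maximal inequality of Lemma~\ref{mhlm} cannot produce such a Sobolev--Jawerth-type estimate; note also that in this case the embedding holds for \emph{all} $q_1,q_2$, including $q_1=\infty$ with $q_2$ finite, which no H\"older argument can explain. An interpolation ingredient is indispensable: in the paper's generalised setting this is the Gagliardo--Nirenberg inequality (Proposition~\ref{GN}) combined with Lemma~\ref{inftyinfty}, i.e.\ with $1-\theta=p_1/p_2$ one uses
\begin{equation*}
\|f\mid \MFb(\rd)\|\lesssim \|f\mid\MFa(\rd)\|^{1-\theta}\,\bigl\|f\mid F^{s_1-\frac{d}{u_1}}_{\infty,\infty}(\rd)\bigr\|^{\theta}
\qquad\text{together with}\qquad
\MFa(\rd)\eb F^{s_1-\frac{d}{u_1}}_{\infty,\infty}(\rd),
\end{equation*}
followed by a factorisation through an intermediate space. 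Nothing in your sketch plays this role.

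Second, your test for the necessity of \eqref{eeu2} would fail. Spreading coefficients over \emph{all} $2^{jd}$ cubes of a fixed level inside a large cube $P$ only yields $u_1\leq u_2$ (letting the large cube grow) and $s_2\leq s_1$ (fixed $P$, level $j\to\infty$, where the sum of indicators is just $\chi_P$); it can never detect the sharper restriction $u_1/u_2\leq p_1/p_2$, equivalently $p_2/u_2\leq p_1/u_1$, which is the whole content of \eqref{eeu2} when $p_1<p_2$. For that one must place the nonzero coefficients on a \emph{sparse}, fractal-like family of roughly $2^{jd\beta}$ level-$j$ cubes, equidistributed at mutual distance about $2^{-j\beta}$ with $\beta=1-p_1/u_1$: then the two competing regimes of the Morrey supremum (small cubes seeing one selected cube versus large cubes averaging many of them) balance, the source norm becomes comparable to $2^{j(s_1-\frac{d}{u_1})}$ while the target norm is comparable to $2^{js_2}\max\bigl(2^{-jd\,p_1/(u_1p_2)},\,2^{-jd/u_2}\bigr)$, and letting $j\to\infty$ in the limiting case $s_1-\frac{d}{u_1}=s_2-\frac{d}{u_2}$ forces precisely $u_1/u_2\leq p_1/p_2$. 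Alternatively one can, as the paper does in the generalised setting, deduce the necessity of \eqref{eeu2} from the two-sided sandwich \eqref{neen1} between Besov--Morrey spaces and the known characterisation of Theorem~\ref{main-n}; but without one of these two devices the ``only if'' part of the theorem is not established.
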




Before we present our main results, we  recall two embeddings. From Theorem~\ref{Nphi-Ephi}, we conclude that 
\begin{align}\label{elme1}
    \mathcal{N}_{\varphi,p,\min(p,q)}^s(\rd)\eb\MfF(\rd)\eb\mathcal{N}_{\varphi,p,\infty}^s(\rd).
\end{align}
{ Moreover, it is shown in  \cite[Corollary 30]{sdh20b} that, if $0<p_i<\infty$, $\varphi_i\in\mathcal{G}_{p_i}$, $i=1,2$ and  
 {$\inf_{t>0}\varphi_1(t)=0$}, then}
\begin{align}\label{elme2}
    \mathcal{M}_{\varphi_1,p_1}(\rd)\eb\mathcal{M}_{\varphi_2,p_2}(\rd)
    \qquad\text{if, and only if,}\qquad p_1\geq p_2\quad\text{and}\quad \varphi_1\gtrsim \varphi_2.
\end{align}

\subsubsection{ The case $p_1\geq p_2$}
We begin with the case $p_1\geq p_2$ which seems the easier one, see also Theorem~\ref{main-n}, where in this setting $\varrho=1$.

 \begin{theorem}\label{E-E-case1}
    Let $0<p_2\leq p_1<\infty$, $s_i\in\real$, $0<q_i\leq\infty${, $\varphi_i\in \mathcal{G}_{p_i}$ and $\varphi_i$ satisfies \eqref{intc} when $q_i<\infty$, for $i=1,2$}. We assume without loss of generality that $\varphi_1(1)=\varphi_2(1)=1$. 
    Then
    	\begin{align}\label{emff_1}
			\MfFa(\rd)\eb\MfFb(\rd)
		\end{align}
		if, and only if,
			\begin{align}
			\sup_{\nu\in\no}\frac{\varphi_2(2^{\nu})}{\varphi_1(2^{\nu})}<\infty, \label{embee0_1} 
		    \end{align}
and	    

		\begin{align}\label{embee00_1}
                  \left\{2^{j(s_2-s_1)}\frac{\varphi_2(2^{-j})}{\varphi_1(2^{-j})}\right\}_{j\in\nat} \in \ell_\infty,
                  \end{align}
with                  
\begin{align}                  & s_1 >  s_2,  \qquad \text{or} \label{cond-s-1}\\
& s_1=s_2\quad \text{and}\quad q_1\le q_2. \label{cond-s-2}
		\end{align}
The embedding \eqref{emff_1} is never compact.  
\end{theorem}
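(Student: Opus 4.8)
The plan is to pass to the wavelet side and reduce \eqref{emff_1} to a sequence space embedding. By Theorem~\ref{mew}, applied to a single Daubechies system with $L$ chosen large enough for both spaces, \eqref{emff_1} is equivalent to $\tilde{e}^{s_1}_{\varphi_1,p_1,q_1}(\rd)\eb\tilde{e}^{s_2}_{\varphi_2,p_2,q_2}(\rd)$. Since $\|\cdot\mid\tilde{e}^{s}_{\varphi,p,q}(\rd)\|$ splits into a Morrey part $\|\sum_m\lambda_m\chi_{Q_m}\mid\Mf(\rd)\|$ (father coefficients) and a genuine part $\|\{\lambda^G_{j,m}\}\mid\mff(\rd)\|$ (mother coefficients), it suffices to treat these pieces separately. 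For the estimates it is convenient to measure the mother part in the localised space $f^{s,\varphi}_{p,q}(\rd)$: by Theorem~\ref{rswm}(i) one has $\MfF(\rd)=\ftt(\rd)$ and the associated sequence spaces are isomorphic (cf.\ Theorem~\ref{fwd}), so I may use the norm whose $P$-term only involves scales $j\ge\jjp$ and subcubes $Q_{j,m}\subset P$.

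For sufficiency I would first dispatch the father part. For $g=\sum_m\lambda_m\chi_{Q_m}$ I split the supremum defining $\|g\mid\mathcal{M}_{\varphi_2,p_2}(\rd)\|$ according to $\ell(P)\le1$ or $\ell(P)>1$. On small cubes $g$ is constant and $\varphi_2(\ell(P))\le\varphi_2(1)=1$, so the contribution is bounded by $\sup_m|\lambda_m|\le\|g\mid\mathcal{M}_{\varphi_1,p_1}(\rd)\|$; on large cubes I use $\varphi_2(\ell(P))\lesssim\varphi_1(\ell(P))$ from \eqref{embee0_1} together with Jensen's inequality $\big(\tfrac1{|P|}\sum_{Q_m\subset P}|\lambda_m|^{p_2}\big)^{1/p_2}\le\big(\tfrac1{|P|}\sum_{Q_m\subset P}|\lambda_m|^{p_1}\big)^{1/p_1}$, valid since $p_2\le p_1$ (this is the mechanism behind \eqref{elme2}). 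For the mother part I fix a dyadic cube $P$, set $c_j(x)=|\lambda_{j,m}|$ for the unique $Q_{j,m}\ni x$, and put $G_{i,P}(x)=\big(\sum_{j\ge\jjp}2^{js_iq_i}c_j(x)^{q_i}\big)^{1/q_i}$. If $s_1>s_2$, writing $2^{js_2}=2^{-j(s_1-s_2)}2^{js_1}$ and summing the geometric series gives $G_{2,P}(x)\lesssim 2^{-(\jjp)(s_1-s_2)}\sup_{j\ge\jjp}2^{js_1}c_j(x)\le(\ell(P)\wedge1)^{s_1-s_2}G_{1,P}(x)$; if $s_1=s_2$ and $q_1\le q_2$, monotonicity of $\ell_q$ gives directly $G_{2,P}(x)\le G_{1,P}(x)$. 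Another application of Jensen ($p_2\le p_1$) then bounds the $\varphi_2$-$P$-term by $\tfrac{\varphi_2(\ell(P))(\ell(P)\wedge1)^{s_1-s_2}}{\varphi_1(\ell(P))}$ times the $\varphi_1$-$P$-term, hence by that factor times $\|\lambda\mid f^{s_1,\varphi_1}_{p_1,q_1}(\rd)\|$. The prefactor equals the $j_P$-th member of \eqref{embee00_1} when $\ell(P)\le1$ and is controlled by \eqref{embee0_1} when $\ell(P)\ge1$, so it is uniformly bounded; taking the supremum over $P$ gives $\|\lambda\mid f^{s_2,\varphi_2}_{p_2,q_2}(\rd)\|\lesssim\|\lambda\mid f^{s_1,\varphi_1}_{p_1,q_1}(\rd)\|$.

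For the converse I would test the embedding on three families of coefficients. A single mother coefficient $\lambda^G_{j,0}=1$ yields $\|\lambda\mid\mff(\rd)\|=2^{js}\varphi(2^{-j})$, because $\|\chi_{Q_{j,0}}\mid\Mf(\rd)\|=\varphi(2^{-j})$, so \eqref{emff_1} forces \eqref{embee00_1}. The father sequence $\lambda_m=1$ for $Q_m\subset Q(2^N)$ gives $\|\lambda\mid\tilde{e}^s_{\varphi,p,q}(\rd)\|=\|\chi_{Q(2^N)}\mid\Mf(\rd)\|=\varphi(2^N)$, whence \eqref{embee0_1}. Finally, filling $Q(1)$ at every scale by $\lambda^G_{j,m}=a_j$ for all $Q_{j,m}\subset Q(1)$ produces a function that is constant on $Q(1)$, equal to $\|(2^{js_i}a_j)_j\mid\ell_{q_i}\|$, and vanishes outside, so $\|\lambda\mid\mff(\rd)\|=\|(2^{js_i}a_j)_j\mid\ell_{q_i}\|$ and \eqref{emff_1} reduces to the weighted embedding $\|(2^{j(s_2-s_1)}c_j)_j\mid\ell_{q_2}\|\lesssim\|(c_j)_j\mid\ell_{q_1}\|$; this holds for all finitely supported $c$ exactly when $s_1>s_2$, or $s_1=s_2$ and $q_1\le q_2$ (the choice $c=e_N$ excludes $s_2>s_1$, and the borderline is the classical $\ell_{q_1}\eb\ell_{q_2}$ criterion), which is \eqref{cond-s-1}--\eqref{cond-s-2}. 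Non-compactness follows from translation invariance of all norms: translating a fixed nonzero compactly supported $f\in\MfFa(\rd)$ along $|x_k|\to\infty$ gives a bounded sequence in $\MfFa(\rd)$ whose images in $\MfFb(\rd)$ stay mutually separated, so no subsequence converges.

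The step I expect to be the main obstacle is the mother-part sufficiency estimate, where one must recognise that \eqref{embee0_1} and \eqref{embee00_1} govern disjoint scale ranges and combine into a single uniform prefactor bound; the delicate point is that the geometric summation produces the gain $\ell(P)^{s_1-s_2}$ only for $\ell(P)\le1$, because the summation starts at $\jjp$ which is capped at $0$ for large cubes, so that the pointwise — rather than supremal — form of \eqref{embee00_1} is precisely what is needed. Should one prefer to argue directly in $\mff(\rd)$ instead of the localised space, the vector-valued maximal inequality of Lemma~\ref{mhlm} (which uses \eqref{intc} when $q<\infty$) is available to absorb the coarse-scale terms.
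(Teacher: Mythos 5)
Your proposal is correct, and its skeleton --- reduction via Theorem~\ref{mew} (and Theorem~\ref{rswm}(i), Theorem~\ref{fwd}) to a sequence-space embedding, a direct prefactor estimate for sufficiency, and test sequences for necessity --- is the same as the paper's. The sufficiency step is essentially the paper's Step~1 in different clothing: where the paper applies H\"older's inequality twice, you bound by $\sup_{j\ge\jjp}2^{js_1}c_j(x)$ and sum a geometric series, and both arguments hinge on the same uniform prefactor $2^{(\jjp)(s_2-s_1)}\varphi_2(\ell(P))/\varphi_1(\ell(P))$, controlled by \eqref{embee00_1} on small cubes and by \eqref{embee0_1} on large ones; likewise your single-mother-coefficient test for \eqref{embee00_1} is literally the paper's family $\lambda^{(j)}$. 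Where you genuinely deviate is in the remaining necessity claims and in non-compactness, and your versions are more self-contained. The paper obtains \eqref{embee0_1} by sandwiching \eqref{emff_1} between Besov--Morrey spaces, \eqref{neen1}, and quoting Theorem~\ref{main-n} from \cite{hms22}, whereas you read it off directly from father coefficients (your computation $\|\chi_{Q(2^N)}\mid\Mf(\rd)\|\sim\varphi(2^N)$ is justified by \eqref{Gp-def}). For \eqref{cond-s-1}--\eqref{cond-s-2} the paper runs two separate constructions --- the family $\lambda^{(k)}$ to force $s_1\ge s_2$, and then, for $s_1=s_2$, a filling construction combined with the reduction $\mathcal{E}^{s}_{\varphi_1,p_1,q_1}(\rd)\eb\mathcal{E}^{s}_{\varphi_2,p_2,q_1}(\rd)$ via \eqref{elme2} --- while your single filling of the unit cube reduces everything at once to the weighted $\ell_{q_1}\to\ell_{q_2}$ inequality, which is cleaner and avoids \eqref{elme2} altogether (only note that with $Q(1)=Q(0,1)$ the scale $j=0$ contributes nothing, so work with $Q_{0,0}$ or start the sums at $j=1$). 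Finally, the paper deduces non-compactness once more from Theorem~\ref{main-n} through \eqref{neen1}, while your translation argument is direct; to make the separation claim rigorous, take $f=\psi^{G}_{0,0}$ and translate by lattice vectors, so that by the isomorphism of Theorem~\ref{mew} each difference $f_k-f_j$ has target norm bounded below by the norm of a single unit coefficient, namely $\varphi_2(1)=1$.
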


\begin{proof}
		\emph{Step 1.}  We prove the sufficiency. Due to Theorem ~\ref{rswm}(i) and the wavelet decomposition, cf. Theorem~\ref{fwd},  it is enough to prove that $f^{s_1,\varphi_1}_{p_1,q_1}(\rd) \hookrightarrow f^{s_2,\varphi_2}_{p_2,q_2}(\rd) $. 
		If $s_1>s_2$  and $q_1>q_2$, then, using twice the H\"older inequality, we obtain for $\nu\in\mathbb{Z}$ and $k\in\zd$, 
		 \begin{align*}
			&\varphi_2(2^{-\nu})\left\{\frac{1}{|Q_{\nu,k}|}\int_{Q_{\nu,k}}\left[\sum_{j={\nu\vee 0}}^\infty
			2^{j s_2 q_2}\left( \sum_{m\in\zd}|\lzjm|\kjm(x) \right)^{q_2}\right]^\frac{p_2}{q_2}\dint x
			\right\}^\frac{1}{p_2}\\
			&\quad\leq\varphi_2(2^{-\nu})\left\{\frac{1}{|Q_{\nu,k}|}\int_{Q_{\nu,k}}\left[\sum_{j={\nu\vee 0}}^\infty
			2^{j s_1 q_1} \left( \sum_{m\in\zd}| \lzjm|\kjm(x) \right)^{q_1} \right]^\frac{p_2}{q_1}
			\underbrace{\left[\sum_{j=\nu\vee 0}^\infty2^{j(s_2-s_1)\frac{q_1q_2}{q_1-q_2}}\right]^\frac{p_2(q_1-q_2)}{q_1q_2}}_{\lesssim {2^{(s_2-s_1)(\nu \vee 0)}}} \dint x 	\right\}^\frac{1}{p_2}\\
			&\quad\lesssim 2^{(\nu\vee 0)(s_2-s_1)} \frac{\varphi_2(2^{-\nu})}{\varphi_1(2^{-\nu})}
		\varphi_1(2^{-\nu})\left\{\frac{1}{|Q_{\nu,k}|}\int_{Q_{\nu,k}}\left[ \sum_{j={\nu\vee 0}}^\infty 2^{j s_1 q_1} \left(\sum_{m\in\zd}|\lzjm|\kjm(x)\right)^{q_1}\right]^\frac{p_1}{q_1}\dint x
			\right\}^\frac{1}{p_1}\\
			&\quad\lesssim\sup_{\nu\in \mathbb{Z}}2^{(\nu\vee 0)(s_2-s_1)}  \frac{\varphi_2(2^{-\nu})}{\varphi_1(2^{-\nu})}
			\|\lz\mid f_{p_1,q_1}^{s_1,\varphi_1}(\rd)\| \le C \|\lz\mid f_{p_1,q_1}^{s_1,\varphi_1}(\rd)\| .
		\end{align*}
		{In the last inequality we have used \eqref{embee0_1},  \eqref{embee00_1} and $s_1\geq s_2$.} 
                The similar arguments hold if $q_1\le q_2$ {and $s_1\ge s_2$}.

\emph{Step 2.} {Now we prove the necessity.}  It follows from {Theorem~\ref{main-n}} that the condition \eqref{embee0_1} is necessary since 
\begin{align}\label{neen1}
    \mathcal{N}_{\varphi_1,p_1,\min(p_1,q_1)}^{s_1}(\rd)\eb\MfFa(\rd)
  \eb\MfFb(\rd)\eb\mathcal{N}_{\varphi_2,p_2,\infty}^{s_2}(\rd).
\end{align}

		We prove the necessity of the assumption $s_1\ge s_2$ . We may assume that $q_2=\infty$.  
		
		Assume  $s_1< s_2$.  
		We consider a family of sequences  $\lambda^{(k)}= (\lambda_{j,m}^{(k)})$, $k\in \nat_0$, defined by  
		\begin{equation*}
			\lambda_{j,m}^{(k)}=\begin{cases}
				2^{-js_1} &\quad \text{if} \qquad   j=k\quad \text{and}\quad Q_{j,m}\subset Q_{0,0}, \\
				0 & \quad \text{otherwise} .
			\end{cases}
		\end{equation*} 
		{We calculate $\|\lambda^{(k)}| e^{{s_1}}_{\varphi_1,p_1,q_1}\|$.} Let $P$ be a dyadic cube. The integral over $P$ is not zero if $P\cap Q_{0,0}\not= \emptyset$.  We may assume $\ell(P)\le 1$ since  bigger cubes do not influence the supremum. For such $P$ we have 
		\begin{align*}
			\frac{\varphi_1(\ell(P))}{|P|^{1/p_1}}
			\left( \int_P \big(\sum_{j=0}^\infty 2^{js_1q_1}\sum_m  |\lambda^{(k)}_{j,m}|^{q_1} \kjm(x)\big)^{\frac{p_1}{q_1}} \dint x \right)^{\frac{1}{p_1}} = 
			\frac{\varphi_1(\ell(P))}{|P|^{1/p_1}}
			|P|^{1/p_1} \le 1 . 
		\end{align*} 
		So $\|\lambda^{(k)}| e^{s_1}_{\varphi_1,p_1,q_1}(\rd)\|\le 1$ for any $k\in\no$. 
		On the other hand, 
		\begin{align*}
			\frac{\varphi_2(\ell(P))}{|P|^{1/p_2}}
			\left( \int_P \big(\sup_{j,m} 2^{js_2} |\lambda^{(k)}_{j,m}| \kjm(x)\big)^{p_2} \dint x \right)^{\frac{1}{p_2}} = 
			\frac{\varphi_2(\ell(P))}{|P|^{1/p_2}}
			2^{k(s_2-s_1)}|P|^{1/p_2}.
		\end{align*}
		Taking $P=Q_{0,0}$ we get $\lim_{k\rightarrow+\infty}\|\lambda^{(k)}| e^{s_2}_{\varphi_2,p_2,\infty}\|=\lim_{k\rightarrow+\infty}2^{k(s_2-s_1)} =\infty$.

		To prove the necessity of \eqref{embee00_1} it is sufficient to consider  sequences $\lambda^{(j)}=(\lambda^{(j)}_{\nu,m})$, $j\in\mathbb{N}$,  defined by
		\[ 
		\lambda^{(j)}_{\nu,m} = \begin{cases}
			1 & \quad \text{if}\quad \nu=j \; \text{and}\; m=0,\\
			0 & \quad \text{otherwise} .
		\end{cases}
	\]
	Then, if the embedding {\eqref{emff_1}} holds, we have the following inequality
	\begin{align*}
		2^{js_2}\varphi_2(2^{-j})= \|\lambda^{(j)} |e^{{s_2}}_{\varphi_2,p_2,q_2}(\rd)\| \le C 
		\|\lambda^{(j)} |e^{s_1}_{\varphi_1,p_1,q_1}(\rd)\| = C 2^{js_1}\varphi_1(2^{-j}).
	\end{align*}
This yields \eqref{embee00_1}.
        
Now let $s_1=s_2=s$. We would like to disprove that there is an embedding \eqref{emff_1} when $q_1> q_2$. Here we use an adapted idea from the proof of \cite[Theorem 3.1]{hs14} and construct {a} function $f\in\mathcal{E}_{\varphi_1,p_1,q_1}^{s}(\rd)$ with $f\notin\mathcal{E}_{\varphi_2,p_2,q_2}^{s}(\rd)$. 
Since \eqref{embee0_1} and \eqref{embee00_1} imply $\varphi_2(t)\leq c\, \varphi_1(t)$ 
and thus by \eqref{elme2}, 
 $\mathcal{E}_{\varphi_1,p_1,q_1}^{s}(\rd)\eb\mathcal{E}_{\varphi_2,{p_2},q_1}^{s}(\rd)$, it will be sufficient to construct $f\in\mathcal{E}_{\varphi_2,{p_2},q_1}^{s}(\rd)\setminus\mathcal{E}_{\varphi_2,p_2,q_2}^{s}(\rd)$.

Hence, it is sufficient to construct $f\in\mathcal{E}_{\varphi,p,q_1}^s(\rd)\setminus\mathcal{E}_{\varphi,p,q_2}^s(\rd)$, $\varphi\in\Gp$ and $q_1> q_2$.
{Assume $q_1> q_2$, and then} there exists a sequence
$\{\lambda_j\}_{j\in\no}$ such that
\begin{align*}
    \{2^{js}\lz_j\}_j\in\ell_{q_1}\backslash\ell_{q_2}.
\end{align*}
We define
\begin{align*}
    \lz_{j,m}=\left\{
\begin{array}{lcl}
\lz_j,      &     & {\text{if}\quad Q_{j,m}\subset Q_{0,0},}\\
0,    &     & {\text{otherwise}.}
\end{array} \right.
\end{align*}
{For convenience, let us assume $q_1<\infty$, otherwise the modification of the argument is obvious.} 
Then for any $x\in Q_{0,0}$, we have
\begin{align*}
    \sum_{j\in\no,m\in\zd}2^{jsq_1}|\lz_{j,m}\kjm(x)|^{q_1}=
\sum_{j\in\no}|2^{js}\lz_j|^{q_1}=c^{q_1}<\infty,
\end{align*}
whereas this sum equals zero if $x\notin Q_{0,0}$. Let
\begin{align*}
    f=\sum_{j\in\no,m\in\zd}\ljm\psi_{j,m}^{G_1}.
\end{align*}
Then,
\begin{align*}
&\|f\mid\mathcal{E}_{\varphi,p,q_1}^s(\rd)\|\\
&\quad\lesssim\left\|\left\{\ljm\right\}_{j\in\no,m\in\zd}\mid e_{\varphi,p,q_1}^s(\rd)\right\|\\
&\quad=
\sup_{\nu\in\no, k\in\zd}\varphi(\ell(Q_{\nu,k}))|Q_{\nu,k}|^{-\frac{1}{p}}\left[\int_{Q_{\nu,k}}\left(\sum_{j\in\no,m\in\zd}2^{j s q_1}
|\ljm|^{q_1}\kjm(x)^{q_1}\right)^{\frac{p}{q_1}}\dint x\right]^{\frac{1}{p}}.
\end{align*}
If $Q_{\nu,k}\cap Q_{0,0}=\emptyset$, then the last integral equals to zero, while in case of $Q_{\nu,k}\subseteq Q_{0,0}$, i.e. $\nu\in\no$, {we have}
\begin{align*}
&\varphi(2^{-\nu})|Q_{\nu,k}|^{-\frac{1}{p}}\left[\int_{Q_{\nu,k}}
\left(\sum_{j\in\no,m\in\zd}2^{j s q_1}
|\lz_{j,m}\kjm(x)|^{q_1}\right)^{\frac{p}{q_1}}\dint x\right]^{\frac{1}{p}}\\
&\qquad
\leq\varphi(2^{-\nu})|Q_{\nu,k}|^{-\frac{1}{p}}\left(\int_{Q_{\nu,k}}c^p\dint x\right)^{\frac{1}{p}}\leq C\, \varphi(1)=C.
\end{align*}
If $Q_{0,0}\subset Q_{\nu,k}$, that is $\nu\leq 0$, we obtain
\begin{align*}
&\varphi(2^{-\nu})|Q_{\nu,k}|^{-\frac{1}{p}}\left[\int_{Q_{\nu,k}}
\left(\sum_{j\in\no,m\in\zd}2^{j s q_1}
|\lz_{j,m}\kjm(x)|^{q_1}\right)^{\frac{p}{q_1}}\dint x\right]^{\frac{1}{p}}\\
&\qquad
\leq\varphi(2^{-\nu})|Q_{\nu,k}|^{-\frac{1}{p}}\left(\int_{Q_{0,0}}c^p\dint x\right)^{\frac{1}{p}}\leq C\, \varphi(2^{-j})2^{j\frac{d}{p}}\leq \wcc,
\end{align*}
due to the properties of $\varphi\in\Gp$, which imply $\varphi(t)t^{-\frac{d}{p}}\leq c$ for $t>1$.
Thus, taking the supremum over all $\nu\in\mathbb{Z}$ and $k\in\zd$,
\begin{align*}
    \|f\mid\mathcal{E}_{\varphi,p,q_1}^s(\rd)\|\leq c<\infty,
\end{align*}
that is, $f\in\mathcal{E}_{\varphi,p,q_1}^s(\rd)$.

On the other hand, for any $x\in Q_{0,0}$, we have
\begin{align*}
    \sum_{j\in\no,m\in\zd}2^{jsq_2}|\lz_{j,m}|^{q_2}\kjm(x)^{q_2}=
\sum_{j\in\no}|2^{js}\lz_j|^{q_2}=\infty
\end{align*}
and thus $f\notin\mathcal{E}_{\varphi,p,q_2}^s(\rd)$.\\

\emph{Step 3.} Finally, we prove that the embedding \eqref{emff_1} is never compact. Note that \eqref{neen1} immediately results in the non-compactness of \eqref{emff_1} in all cases, since otherwise there would appear a contradiction to the non-compactness of the outer embedding \eqref{neen1} in all cases, according to \cite[Theorem 5.1]{hms22}. This finishes the proof.
\end{proof}

\begin{example}\label{rem-class}
Let $\varphi_i(t):=t^\frac{d}{u_i}$, $0<p_i\leq u_i<\infty$, $i=1,2$. {Assume $p_1\geq p_2$, hence $\varrho=1$.} Then \eqref{emff_1} corresponds to \eqref{eeu1}. Condition \eqref{embee0_1} reads as
\begin{align*}
    \sup_{t\geq 1}t^{d(\frac{1}{u_2}-\frac{1}{u_1})}<\infty
\end{align*}
which is equivalent to \eqref{eeu2} with $\varrho =1$.  
Moreover, we have
\begin{align*}
    2^{j(s_2-s_1)}\frac{\varphi_2(2^{-j})}{\varphi_1(2^{-j})}=2^{-j(s_1-s_2-\frac{d}{u_1}+\frac{d}{u_2})}. 
\end{align*}
Hence \eqref{embee00_1} (together with \eqref{embee0_1}) corresponds to $s_1-s_2\geq \frac{d}{u_1}-\frac{d}{u_2}\geq 0$. If $s_1-s_2>\frac{d}{u_1}-\frac{d}{u_2}\geq 0$, then \eqref{cond-s-1} coincides with \eqref{eeu3} in that case, {while $s_1-s_2=\frac{d}{u_1}-\frac{d}{u_2}> 0$ refers to \eqref{eeu4}}. 
Finally, let $s_1=s_2$ and thus $u_1=u_2$. {Then \eqref{cond-s-2} is just \eqref{eeu5}}.
 \end{example}

\begin{example}
  We return to the identity \eqref{E=Mf} in combination with Theorem~\ref{E-E-case1}. 
Let $1<p_2\leq p_1<\infty$, and $\varphi_i\in \mathcal{G}_{p_i}$, for $i=1,2$, satisfying \eqref{intc}. We assume without loss of generality that $\varphi_1(1)=\varphi_2(1)=1$. 
    Then
    \begin{align}
      \mathcal{M}_{\varphi_1,p_1}(\rd) \eb \mathcal{M}_{\varphi_2,p_2}(\rd)
\quad \text{if, and only if,}\quad \varphi_2(t) \lesssim\varphi_1(t), \quad t>0.
	\end{align}
This follows immediately by Theorem~\ref{E-E-case1} with $s_1=s_2=0$, $q_1=q_2=2$, and coincides with \eqref{elme2} in this case $p_1\geq p_2$.
\end{example}

  We can also formulate Theorem~\ref{E-E-case1} in a version which appears even closer to Theorem~\ref{main-n}.

  \begin{corollary}\label{Cor-E-alpha}
    Let $0<p_2\leq p_1<\infty$, $s_i\in\real$, $0<q_i\leq\infty$, $\varphi_i\in \mathcal{G}_{p_i}$ and $\varphi_i$ satisfies \eqref{intc} when $q_i<\infty$, for $i=1,2$. We assume without loss of generality that $\varphi_1(1)=\varphi_2(1)=1$. 
    Let $\alpha_j=\sup\limits_{\nu\leq j}\frac{\varphi_2(2^{-\nu})}{\varphi_1(2^{-\nu})}$, $j\in\no$.

    Then \eqref{emff_1} holds if, and only if, \eqref{embee0_1}  is satisfied and
		\begin{align}\label{embe-a-1}
             \left\{2^{j(s_2-s_1)}\alpha_j\right\}_{j\in\nat} \in \ell_\infty,
                  \end{align}
with $q_1\leq q_2$ in case of $s_1=s_2$.                 
  \end{corollary}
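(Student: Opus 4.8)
The plan is to regard Corollary~\ref{Cor-E-alpha} as a mere reformulation of Theorem~\ref{E-E-case1}: the two statements share the same hypotheses and the condition \eqref{embee0_1}, and differ only in that the growth condition \eqref{embee00_1} together with the separate requirement \eqref{cond-s-1}--\eqref{cond-s-2} on $s_1,s_2$ is replaced by the single condition \eqref{embe-a-1} (plus $q_1\le q_2$ when $s_1=s_2$). Hence it suffices to show that, under \eqref{embee0_1}, the family of conditions in Theorem~\ref{E-E-case1} is equivalent to the one in the corollary. To this end I would abbreviate $r_\nu:=\varphi_2(2^{-\nu})/\varphi_1(2^{-\nu})$ for $\nu\in\mathbb{Z}$, so that \eqref{embee00_1} reads $\sup_{j\ge 1}2^{j(s_2-s_1)}r_j<\infty$, while \eqref{embee0_1} is exactly $A:=\sup_{\nu\le 0}r_\nu<\infty$ (after the substitution $\mu=-\nu$). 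Since $\varphi_1(1)=\varphi_2(1)=1$ we have $r_0=1$, and the quantity governing \eqref{embe-a-1} splits as $\alpha_j=\max(A,\beta_j)$ with $\beta_j:=\sup_{0\le\nu\le j}r_\nu$.

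The first step is the easy implication \eqref{embe-a-1}$\Rightarrow$\eqref{embee00_1}: taking $\nu=j$ in the supremum gives $\alpha_j\ge r_j$, whence $2^{j(s_2-s_1)}r_j\le 2^{j(s_2-s_1)}\alpha_j$ is bounded. I would also record here that \eqref{embe-a-1} already forces $s_1\ge s_2$: indeed $\alpha_j\ge r_0=1$, so boundedness of $2^{j(s_2-s_1)}\alpha_j\ge 2^{j(s_2-s_1)}$ over $j\in\nat$ requires $s_2-s_1\le 0$. Consequently, if the conditions of the corollary hold, then either $s_1>s_2$ (giving \eqref{cond-s-1}) or $s_1=s_2$ together with $q_1\le q_2$ (giving \eqref{cond-s-2}); this, \eqref{embee0_1} and \eqref{embee00_1} are precisely the hypotheses of Theorem~\ref{E-E-case1}, so the embedding \eqref{emff_1} follows.

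The converse implication is where the actual work lies. Assuming the hypotheses of Theorem~\ref{E-E-case1}, so in particular $s_1\ge s_2$ and $r_\nu\lesssim 2^{\nu(s_1-s_2)}$ for $\nu\ge 1$ (and $r_0=1$), I would bound $\beta_j$: since $s_1-s_2\ge 0$ the factor $2^{\nu(s_1-s_2)}$ is nondecreasing in $\nu$, so for $0\le\nu\le j$ one has $r_\nu\lesssim 2^{\nu(s_1-s_2)}\le 2^{j(s_1-s_2)}$, hence $\beta_j\lesssim 2^{j(s_1-s_2)}$. Therefore $\alpha_j=\max(A,\beta_j)\lesssim\max\bigl(A,\,2^{j(s_1-s_2)}\bigr)$ and, multiplying by $2^{j(s_2-s_1)}\le 1$ (valid since $s_1\ge s_2$ and $j\ge 1$), $2^{j(s_2-s_1)}\alpha_j\lesssim\max\bigl(A\,2^{j(s_2-s_1)},1\bigr)\le\max(A,1)<\infty$, which is \eqref{embe-a-1}. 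The main obstacle --- and really the only point requiring care --- is the treatment of the large-scale range $\nu\le 0$ inside the supremum defining $\alpha_j$: this part is not controlled by \eqref{embee00_1} at all, and one must invoke \eqref{embee0_1} to keep its contribution $A$ finite and then observe that the decaying factor $2^{j(s_2-s_1)}$ (for $s_1\ge s_2$) absorbs it. The $q$-parameter condition plays no role in this equivalence and is carried over verbatim, while non-compactness is inherited directly from Theorem~\ref{E-E-case1}.
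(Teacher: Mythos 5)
Your proof is correct, and the sufficiency half coincides with the paper's argument: both observe that $\alpha_j \ge \varphi_2(2^{-j})/\varphi_1(2^{-j})$ yields \eqref{embee00_1}, that $\alpha_j\ge 1$ forces $s_1\ge s_2$ (hence \eqref{cond-s-1} or \eqref{cond-s-2} with the assumed $q$-condition), and then invoke Theorem~\ref{E-E-case1}. For the converse, however, you take a genuinely different route. The paper derives \eqref{embee0_1} and \eqref{embe-a-1} from the embedding \eqref{emff_1} by leaving the $\mathcal{E}$-scale: it passes through the chain \eqref{neen1} to get $\mathcal{N}_{\varphi_1,p_1,\min(p_1,q_1)}^{s_1}(\rd)\hookrightarrow\mathcal{N}_{\varphi_2,p_2,\infty}^{s_2}(\rd)$ and then applies the Besov--Morrey embedding result, Theorem~\ref{main-n}, with $\varrho=1$ and $q^\ast=\infty$, which produces \eqref{embe-a-1} in one stroke. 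You instead stay entirely within Theorem~\ref{E-E-case1}: from its necessary conditions \eqref{embee0_1}, \eqref{embee00_1} and $s_1\ge s_2$ you deduce \eqref{embe-a-1} by the elementary splitting $\alpha_j=\max(A,\beta_j)$ with $A=\sup_{\nu\le 0}r_\nu$ and $\beta_j=\sup_{0\le\nu\le j}r_\nu$, using the monotonicity of $2^{\nu(s_1-s_2)}$ to bound $\beta_j\lesssim 2^{j(s_1-s_2)}$ and the decay of $2^{j(s_2-s_1)}$ to absorb $A$; your identification of $A$ with \eqref{embee0_1} and the normalisation $r_0=1$ are exactly what make this work. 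Both routes are sound. Yours makes transparent that the corollary is a literal reformulation of Theorem~\ref{E-E-case1}, requiring no input beyond it and no detour through the $\mathcal{N}$-scale; the paper's version is shorter on the page because Theorem~\ref{main-n} is already available (it underlies the proof of Theorem~\ref{E-E-case1} in any case) and directly delivers the $\alpha_j$-formulation. The handling of $q_1\le q_2$ in the case $s_1=s_2$ is the same in both, being carried over from the proof of Theorem~\ref{E-E-case1}.
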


  \begin{proof}
Note that $\alpha_j\geq \frac{\varphi_2(2^{-j})}{\varphi_1(2^{-j})}$, $j\in\nat$, so \eqref{embe-a-1} implies \eqref{embee00_1}. On the other hand, $\alpha_j\geq 1$, $j\in\nat$, so \eqref{embe-a-1} also implies \eqref{cond-s-1}, \eqref{cond-s-2} in view of our assumption. This concludes the argument for the sufficiency part in view of Theorem~\ref{E-E-case1}. Conversely, assume that \eqref{emff_1} holds, then by \eqref{neen1} we have the embedding 
$ \mathcal{N}_{\varphi_1,p_1,\min(p_1,q_1)}^{s_1}(\rd) \hookrightarrow  \mathcal{N}_{\varphi_2,p_2,\infty}^{s_2}(\rd)$,
which by Theorem~\ref{main-n} results in \eqref{embee0_1} and \eqref{embe-a-1}, since $q^\ast=\infty$ and $\varrho=1$ in our setting. The argument for $q_1\leq q_2$ in case of $s_1=s_2$ is the same as in the proof of Theorem~\ref{E-E-case1}.
\end{proof}

\subsubsection{ The case $p_1<p_2$}

The more tricky case is  $p_1<p_2$. { Note that the number $\varrho$ appearing in Theorem~\ref{main-n} satisfies $\varrho = \frac{p_1}{p_2}<1$ in this case.} Before we can give our results we would like to introduce some further (local) characteristic of a function $\varphi\in \mathcal{G}=\bigcup_{p>0}\Gp = \lim_{p \to 0} \Gp$. More precisely, let us consider the (local) class
\[
  \mathcal{G}^{\mathrm{loc}} = \bigcup_{p>0}\Gp^{\mathrm{loc}} = \lim_{p \to 0} \Gp^{\mathrm{loc}},
  \]
where $\Gp^{\mathrm{loc}}$ contains all non-decreasing functions $\varphi:(0,\,\infty)\rightarrow(0,\,\infty)$ such that 
\begin{align}\label{Gp-def-local}
t^{-\frac{d}{p}}\varphi(t)\geq s^{-\frac{d}{p}}\varphi(s), \quad 0<t\leq s<1.
\end{align}
Clearly, $\Gp \subset \Gp^{\mathrm{loc}}$, $p>0$, recall \eqref{Gp-def}. Now let $\varphi \in \mathcal{G}^{\mathrm{loc}}$. We introduce
	\begin{align}
		\rphi{\varphi} = \sup\{p:\; \varphi\in\Gp^{\mathrm{loc}}\},\quad \varphi \in \mathcal{G}^{\mathrm{loc}}.
	\end{align}	
	Then 	$0<\rphi{\varphi} <\infty$ or $\rphi{\varphi} =\infty$.  { We first find that $\rphi{\varphi} =\infty$ implies $\varphi= \mathrm{ const}$ on $(0,1)$. }

          \begin{lemma}\label{Gop}
            Let $\varphi \in \mathcal{G}^{\mathrm{loc}}$.
 \begin{enumerate}[\bfseries\upshape  (i)]
 \item
 If $\rphi{\varphi}<\infty$, then $\varphi\in \mathcal{G}_{\rphi{\varphi}}^{\mathrm{loc}}$, that is, $\rphi{\varphi}=\max\{p:\; \varphi\in\Gp^{\mathrm{loc}}\}$. 	
		\item
                  Moreover, $\rphi{\varphi}=\infty$ if, and only if, $\varphi = \mathrm{const}$ on $(0,1)$.
                  \end{enumerate}
	\end{lemma}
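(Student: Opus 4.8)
The plan is to exploit the monotonicity of the family $\{\Gp^{\mathrm{loc}}\}_{p>0}$ in the parameter $p$ and then pass to the limit in the defining inequality \eqref{Gp-def-local}. First I would record the local analogue of the elementary inclusion $\mathcal{G}_{p_1}\subset\mathcal{G}_{p_2}$ for $0<p_2\le p_1<\infty$: rewriting \eqref{Gp-def-local} as $\varphi(t)/\varphi(s)\geq (t/s)^{d/p}$ for $0<t\leq s<1$, and noting that $t/s\leq 1$, the right-hand side increases towards $1$ as $p$ grows, so the condition for larger $p$ is stronger. Consequently $\mathcal{G}_{p_1}^{\mathrm{loc}}\subset\mathcal{G}_{p_2}^{\mathrm{loc}}$ whenever $p_2\le p_1$, and the set $\{p>0:\varphi\in\Gp^{\mathrm{loc}}\}$ is an interval of the form $(0,\rphi{\varphi})$, $(0,\rphi{\varphi}]$, or $(0,\infty)$; in particular $\varphi\in\mathcal{G}_{p}^{\mathrm{loc}}$ for every $p<\rphi{\varphi}$.

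For part (i), assume $\rphi{\varphi}<\infty$ and pick any sequence $p_n\uparrow\rphi{\varphi}$. For each $n$ and each fixed pair $0<t\leq s<1$ the inequality $t^{-d/p_n}\varphi(t)\geq s^{-d/p_n}\varphi(s)$ holds by the monotonicity just noted. Since $p\mapsto t^{-d/p}$ is continuous for fixed $t>0$, letting $n\to\infty$ yields $t^{-d/\rphi{\varphi}}\varphi(t)\geq s^{-d/\rphi{\varphi}}\varphi(s)$; as $t,s$ were arbitrary and the non-decreasing property of $\varphi$ is independent of $p$, this gives $\varphi\in\mathcal{G}_{\rphi{\varphi}}^{\mathrm{loc}}$, so the supremum is attained as a maximum.

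For part (ii), the implication $\varphi=\mathrm{const}$ on $(0,1)\Rightarrow\rphi{\varphi}=\infty$ is immediate: if $\varphi\equiv c$ on $(0,1)$ then for $0<t\leq s<1$ we have $t^{-d/p}\varphi(t)=c\,t^{-d/p}\geq c\,s^{-d/p}=s^{-d/p}\varphi(s)$ for every $p>0$ (using $t^{-d/p}\geq s^{-d/p}$), whence $\varphi\in\Gp^{\mathrm{loc}}$ for all $p$. Conversely, if $\rphi{\varphi}=\infty$, then \eqref{Gp-def-local} holds for every $p$, so for fixed $0<t\leq s<1$ I would let $p\to\infty$ in $\varphi(t)/\varphi(s)\geq (t/s)^{d/p}$; since $(t/s)^{d/p}\to 1$, this gives $\varphi(t)\geq\varphi(s)$. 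Combined with $\varphi$ being non-decreasing (so $\varphi(t)\leq\varphi(s)$), this forces $\varphi(t)=\varphi(s)$, i.e.\ $\varphi$ is constant on $(0,1)$.

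The arguments are entirely elementary, resting on the monotonicity of the scale in $p$ together with two limit passages ($p_n\uparrow\rphi{\varphi}$ and $p\to\infty$). If there is any delicate point, it is only to confirm in (i) that the limiting inequality is genuinely the $\mathcal{G}_{\rphi{\varphi}}^{\mathrm{loc}}$ condition, i.e.\ that nothing is lost in passing from strict $p_n<\rphi{\varphi}$ to the boundary value; this is settled by the continuity of $p\mapsto t^{-d/p}$ for fixed $t>0$, so I expect no substantial obstacle.
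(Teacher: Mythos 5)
Your proof is correct and follows essentially the same route as the paper's: both parts rest on the limit passage $p\to\rphi{\varphi}$ (resp.\ $p\to\infty$) in the inequality $\varphi(t)/\varphi(s)\geq (t/s)^{d/p}$, using continuity of $p\mapsto t^{-d/p}$ and the monotonicity of $\varphi$. The only difference is cosmetic: you make explicit the nesting $\mathcal{G}_{p_1}^{\mathrm{loc}}\subset\mathcal{G}_{p_2}^{\mathrm{loc}}$ for $p_2\le p_1$ (so that $\varphi\in\Gp^{\mathrm{loc}}$ for all $p<\rphi{\varphi}$), which the paper uses implicitly, and you phrase the paper's supremum $\sup_{p<\rphi{\varphi}}(s/t)^{-d/p}=(s/t)^{-d/\rphi{\varphi}}$ as a sequential limit $p_n\uparrow\rphi{\varphi}$.
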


        \begin{proof}
{Note that $\varphi\in\mathcal{G}^{\mathrm{loc}}$ implies that $\varphi:(0,\infty)\to(0,\infty)$ is non-decreasing.} Assume first
  $\rphi{\varphi}<\infty$   and $0<t<s<1$. Then 
	\begin{align*}
		\Big(\frac{s}{t}\Big)^{-\frac{\nd}{p}} \le \frac{\varphi(t)}{\varphi(s)} 
	\end{align*} 
for any $p<\rphi{\varphi}$. So 
	\begin{align*}
	\Big(\frac{s}{t}\Big)^{-\frac{\nd}{\rphi{\varphi}}}= \sup_{p<\rphi{\varphi} } \Big(\frac{s}{t}\Big)^{-\frac{\nd}{p}}  \le \frac{\varphi(t)}{\varphi(s)},  \quad 0<t<s<1,
        \end{align*}
        which proves $\varphi\in \mathcal{G}_{\rphi{\varphi}}^{\mathrm{loc}}$.
        
        {Now we deal with (ii). Obviously, any  $\varphi$ constant on $(0,1)$ belongs to all $\Gp^{\mathrm{loc}}$ which implies $\rphi{\varphi}=\infty$. Conversely, if  $\rphi{\varphi}=\infty$, then $\varphi\in \Gp^{\mathrm{loc}}$ for all $p>0$. Hence          for $0<t\le s<1$} we have 
\begin{align*}
	\varphi(t)\le \varphi(s)= \lim_{p\rightarrow \infty} s^{-\frac{d}{p}}\varphi(s)\le 
	\lim_{p\rightarrow \infty} t^{-\frac{d}{p}}\varphi(t)=\varphi(t), 
\end{align*}
so $\varphi\equiv \mathrm{const}$ on $(0,1)$.
\end{proof}

\begin{remark}
  Clearly one can also consider the global counterpart, that is, $\rphi{\varphi}^\ast = \sup\{p:\; \varphi\in\Gp\}$ and obtain a parallel result to Lemma~\ref{Gop}. Obviously $\Gp \subset \Gp^{\mathrm{loc}}$ implies $\rphi{\varphi}^\ast \leq \rphi{\varphi}$. But as we only need the local version in our later considerations, we restricted ourselves to this setting. Note that in general $\varphi\in\Gp$ implies $p\leq\rphi{\varphi}^\ast\leq \rphi{\varphi}$.
  \end{remark}

We shall assume in the sequel that $\rphi{\varphi} < \infty$.

\begin{exams}
  We consider our examples \eqref{ex-u-v} and \eqref{ex-log}.
  \begin{enumerate}[\bfseries\upshape  (i)]
  \item
    Let $0<u,\,v\leq \infty$. Then for 
$$
\varphi_{u,v}(t)=\begin{cases}
t^{\frac{d}{u}},\quad{\text{if}}\quad t\leq 1,\\
t^{\frac{d}{v}},\quad{\text{if}}\quad t> 1,
\end{cases}
$$
we find that $\rphi{\varphi_{u,v}}=u $.
\item
Then for
$$
\varphi(t)=\begin{cases} \frac{1}{\log 2} \log(1+t), & 0<t<1, \\ t, & t\geq 1.\end{cases}
  $$
we find that $\rphi{\varphi}=\nd$.
\end{enumerate}
\end{exams}

Next we give some Gagliardo-Nirenberg inequalities adapted to our setting. 
The idea of the proof follows from \cite{sic13}.

\begin{proposition}\label{GN}
	Let $-\infty<s_1<s_0<\infty$, {$0<\theta<1$}, $0<q_i\leq\infty$, $0<p_i<\infty$, $\varphi_i\in\mathcal{G}_{p_i}$, and $\varphi_i$ satisfy \eqref{intc} if $q_i<\infty$,  $i=0,1,2$. 
	\begin{enumerate}[\bfseries\upshape  (i)]
		\item Let $0<p_0,p_1<\infty$. Assume that
		\begin{align}\label{GN2}
			\frac{1}{p_2} = \frac{1-\theta}{p_1}+\frac{\theta}{p_0},\qquad
			s_2 = s_1 (1-\theta) +  s_0 \theta \qquad
			\text{and}\quad
			\varphi_2(2^{j}) =\varphi_{1}(2^{j})^{1-\theta}\varphi_{0}(2^{j})^{\theta} , \,\,\, j\in {\mathbb{Z}}.
		\end{align}        
		Then for any $f\in \mathcal{S}'(\real^d)$ and any $q_2$ 
		\[ 
		\|f|F^{s_2,\varphi_2}_{p_2,q_2}(\real^d)\|\le \|f|F^{s_1,\varphi_1}_{p_1,q_1}(\real^d)\|^{1-\theta} \|f|F^{s_0,\varphi_0}_{p_0,q_0}(\real^d)\|^{\theta}.
		\]
		
		\item Let $0<p_1<\infty$. Assume that
		\[ \frac{1}{p_2}= \frac{1-\theta}{p_1},\qquad s_2=(1-\theta)s_1+ \theta s_0,\qquad\varphi_2=\varphi_{1}^{1-\theta}\varphi_{0}^{\theta} .\] 
		Then for any $f\in \mathcal{S}'(\real^d)$ and any $q_2$ 
		\[ 
		\|f|F^{s_2,\varphi_2}_{p_2,q_2}(\real^d)\|\le \|f|F^{s_1,\varphi_1}_{p_1,q_1}(\real^d)\|^{1-\theta} \|f|F^{s_0,\varphi_0}_{\infty,\infty}(\real^d)\|^{\theta}.
		\] 
	\end{enumerate}
\end{proposition}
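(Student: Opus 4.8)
The plan is to reduce everything to the Morrey-smoothness picture and then combine a single pointwise inequality with a Hölder inequality for the generalised Morrey (quasi-)norm. First I would invoke Theorem~\ref{rswm}(i) to replace each space $F^{s_i,\varphi_i}_{p_i,q_i}(\rd)$ by $\mathcal{E}^{s_i}_{\varphi_i,p_i,q_i}(\rd)$ (this is where the hypothesis that $\varphi_i$ obeys \eqref{intc} for $q_i<\infty$ enters), so that, writing $f_j=(\eta_j\widehat f)^\vee$ and
\[
S_i(x):=\Big(\sum_{j=0}^\infty 2^{js_iq_i}|f_j(x)|^{q_i}\Big)^{1/q_i}
\]
(with the usual modification if $q_i=\infty$), one has $\|f\mid \mathcal{E}^{s_i}_{\varphi_i,p_i,q_i}(\rd)\|=\|S_i\mid\mathcal{M}_{\varphi_i,p_i}(\rd)\|$. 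Both parts then follow at once from the pointwise estimate $S_2(x)\lesssim S_1(x)^{1-\theta}S_0(x)^{\theta}$ --- with $S_0$ replaced by the constant $\|f\mid F^{s_0,\varphi_0}_{\infty,\infty}(\rd)\|$ in case (ii) --- together with a Hölder step for the norm $\mathcal{M}_{\varphi_2,p_2}(\rd)$.

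The Hölder step is routine. For a fixed dyadic cube $P$, since $\tfrac1{p_2}=\tfrac{1-\theta}{p_1}+\tfrac{\theta}{p_0}$, Hölder's inequality in the integral over $P$ gives
\[
\Big(\tfrac1{|P|}\int_P S_1^{(1-\theta)p_2}S_0^{\theta p_2}\Big)^{1/p_2}\le\Big(\tfrac1{|P|}\int_P S_1^{p_1}\Big)^{(1-\theta)/p_1}\Big(\tfrac1{|P|}\int_P S_0^{p_0}\Big)^{\theta/p_0},
\]
and multiplying by $\varphi_2(\ell(P))=\varphi_1(\ell(P))^{1-\theta}\varphi_0(\ell(P))^{\theta}$ (which is exactly the assumed factorisation of $\varphi_2$ at the dyadic side-lengths $\ell(P)=2^{-\nu}$) and taking the supremum over $P$ yields
\[
\big\|S_1^{1-\theta}S_0^{\theta}\mid\mathcal{M}_{\varphi_2,p_2}(\rd)\big\|\le\|S_1\mid\mathcal{M}_{\varphi_1,p_1}(\rd)\|^{1-\theta}\,\|S_0\mid\mathcal{M}_{\varphi_0,p_0}(\rd)\|^{\theta}.
\]
In case (ii) the factor $S_0$ is a constant and $p_0=\infty$, so this collapses to a single-factor estimate with $(1-\theta)p_2=p_1$.

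The heart of the matter is the pointwise inequality $S_2\lesssim S_1^{1-\theta}S_0^{\theta}$, and this is the step I expect to be the main obstacle. Writing $\delta:=s_0-s_1>0$ and using $s_2=(1-\theta)s_1+\theta s_0$, I would start from the elementary factorisation $2^{js_2}|f_j(x)|=\big(2^{js_1}|f_j(x)|\big)^{1-\theta}\big(2^{js_0}|f_j(x)|\big)^{\theta}$ together with the two a priori bounds $2^{js_1}|f_j(x)|\le S_1(x)$ and $2^{js_1}|f_j(x)|=2^{-j\delta}\,2^{js_0}|f_j(x)|\le 2^{-j\delta}S_0(x)$, which combine to
\[
2^{js_2}|f_j(x)|\le\min\!\big(2^{j\theta\delta}S_1(x),\,2^{-j(1-\theta)\delta}S_0(x)\big).
\]
Summing the $q_2$-th powers and splitting the sum at the crossover index $j_0\sim\delta^{-1}\log_2\big(S_0(x)/S_1(x)\big)$ turns both tails into convergent geometric series (here the strict gap $\delta>0$ is indispensable, and the peak value is precisely $S_1(x)^{1-\theta}S_0(x)^{\theta}$), giving $S_2(x)\le C\,S_1(x)^{1-\theta}S_0(x)^{\theta}$ for every fixed $q_2\in(0,\infty]$. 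This is exactly the mechanism that lets $q_2$ be arbitrary: the geometric decay supplied by $s_1<s_0$ controls the $\ell^{q_2}$-sum for all $q_2$, with a constant depending only on $q_2$, $\theta$ and $\delta$ (and degenerating as $q_2\to0$, which is harmless since these spaces are normed only up to equivalence). For (ii) the same computation applies verbatim with $S_0(x)$ replaced by the bound $\sup_j2^{js_0}|f_j(x)|\le\|f\mid F^{s_0,\varphi_0}_{\infty,\infty}(\rd)\|$. Minor points to verify along the way are the equivalence of the $\theta_j$- and $\eta_j$-decompositions and the precise meaning of the endpoint space $F^{s_0,\varphi_0}_{\infty,\infty}(\rd)$, but neither affects the structure of the argument.
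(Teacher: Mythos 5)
Your part (i) is correct and is, in substance, the paper's own proof. The crossover/geometric-series argument you give for the pointwise bound $S_2(x)\le C\,S_1(x)^{1-\theta}S_0(x)^{\theta}$ is precisely a proof of Oru's inequality, which the paper simply cites (from \cite{oru98}, \cite{bm01}) and applies to $a_j=|\fuff(x)|$; after that, both arguments run the identical H\"older step on each dyadic cube, use the factorisation $\varphi_2(\ell(P))=\varphi_1(\ell(P))^{1-\theta}\varphi_0(\ell(P))^{\theta}$ at dyadic side lengths, and exploit monotonicity in the fine index. Routing everything through the equivalent Morrey picture $\mathcal{E}^{s_i}_{\varphi_i,p_i,q_i}(\rd)$ via Theorem~\ref{rswm}(i), instead of working with the $F^{s,\varphi}_{p,q}$-quasi-norm directly (where the $j$-sum starts at $\jjp$), is a harmless variant: it costs only equivalence constants, and both proofs produce unspecified constants anyway.

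Part (ii), as written, has a genuine gap. You replace $S_0(x)$ by the global constant $c:=\|f\mid F^{s_0,\varphi_0}_{\infty,\infty}(\rd)\|$ \emph{before} taking the $\mathcal{M}_{\varphi_2,p_2}$-norm, so that the H\"older step ``collapses to a single-factor estimate with $(1-\theta)p_2=p_1$''. After this collapse the quantity you must still control is
\begin{equation*}
c^{\theta}\,\bigl\|S_1^{1-\theta}\mid\mathcal{M}_{\varphi_2,p_2}(\rd)\bigr\|
= c^{\theta}\sup_{P\in\mq}\varphi_0(\ell(P))^{\theta}
\Bigl[\varphi_1(\ell(P))\Bigl(\frac{1}{|P|}\int_P S_1(x)^{p_1}\dint x\Bigr)^{\frac{1}{p_1}}\Bigr]^{1-\theta},
\end{equation*}
and the factor $\varphi_0(\ell(P))^{\theta}$ is now orphaned: $\varphi_0$ is non-decreasing and in general unbounded (e.g.\ $\varphi_0(t)=t^{d/u_0}$), so this supremum is \emph{not} bounded by $\|S_1\mid\mathcal{M}_{\varphi_1,p_1}(\rd)\|^{1-\theta}$ -- the step implicitly needs $\varphi_0\lesssim1$, which is false in general (take $S_1$ for which the bracketed expression does not decay along cubes with $\ell(P)\to\infty$). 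The paper avoids this by keeping the $L_\infty$-factor local: on each cube it bounds $\bigl(\frac{1}{|P|}\int_P S_1^{p_1}S_0^{\theta p_2}\dint x\bigr)^{1/p_2}$ by $\bigl(\sup_{x\in P}S_0(x)\bigr)^{\theta}\bigl(\frac{1}{|P|}\int_P S_1^{p_1}\dint x\bigr)^{(1-\theta)/p_1}$, multiplies by $\varphi_2(\ell(P))$, and pairs $\varphi_0(\ell(P))^{\theta}$ with $\bigl(\sup_{x\in P}S_0(x)\bigr)^{\theta}$; only \emph{then} is the supremum over $P$ taken, estimating the supremum of a product by the product of suprema, the second of which is exactly $\|f\mid F^{s_0,\varphi_0}_{\infty,\infty}(\rd)\|^{\theta}$. (One cannot excuse the collapse by arguing that the right-hand side is infinite whenever $\varphi_0$ is unbounded and $f\neq0$: that hinges on a particular reading of the nowhere-defined endpoint space, and in any case it is not in your argument.) The repair is thus exactly the paper's pairing, and the defect is local to case (ii); but the step as you wrote it fails.
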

\begin{proof}
	The  proof is similar to \cite[Proposition 3.1]{sic13}, we give the details for reader's convenience.
	
	We use the following inequality  introduced by Oru, cf. \cite{oru98}; see also \cite{bm01}. 
	Let $s_2=(1-\theta)s_1+\theta s_0$ and $0<q\leq \infty$. For any sequence $\{a_j\}_j$ of complex numbers,
	\begin{align*}
		\|\{2^{j s_2}a_j\}_j\mid\ell_q\|\leq c\,\|\{2^{j s_1}a_j\}_j\mid\ell_\infty\|^{1-\theta}
		\|\{2^{j s_0}a_j\}_j\mid\ell_\infty\|^{\theta},
	\end{align*}
	where $c=C_{(s_0,s_1,\theta,q)}$, but does not depend on $\{a_j\}_j$. This can be applied to
	$a_j=|\fuff(x)|$. Then, if $p_0<\infty$, conditions \eqref{GN2}, H\"older's inequality and the monotonicity with respect to $q_0,q_1$ yield
	\begin{align*}
		\|f\mid F_{p_2,q_2}^{s_2,\varphi_2}(\rd)\|
		&\lesssim \sup_{P\in\mq}\varphi_2(\ell(P))\left[\frac{1}{|P|}\int_P
		\sup_{j\geq\jjp}2^{j s_1}|\fuff(x)|^{p_1}\dint x\right]^\frac{1-\theta}{p_1}\\
		&\qquad\times \left[\frac{1}{|P|}\int_P
		\sup_{j\geq\jjp}2^{j s_0}|\fuff(x)|^{p_0}\dint x\right]^\frac{\theta}{p_0}\\
		&\lesssim \sup_{P\in\mq}\varphi_1(\ell(P))^{1-\theta}\left[\frac{1}{|P|}\int_P
		\sup_{j\geq\jjp}2^{j s_1}|\fuff(x)|^{p_1}\dint x\right]^\frac{1-\theta}{p_1}\\
		&\qquad\times \sup_{P\in\mq}\varphi_0(\ell(P))^{\theta}\left[\frac{1}{|P|}\int_P
		\sup_{j\geq\jjp}2^{j s_0}|\fuff(x)|^{p_0}\dint x\right]^\frac{\theta}{p_0}\\
		&\lesssim \|f\mid F_{p_1,\infty}^{s_1,\varphi_1}(\rd)\|^{1-\theta}
		\|f\mid F_{p_0,\infty}^{s_0,\varphi_0}(\rd)\|^{\theta}\\
		&\lesssim \|f\mid F_{p_1,q_1}^{s_1,\varphi_1}(\rd)\|^{1-\theta}
		\|f\mid F_{p_0,q_0}^{s_0,\varphi_0}(\rd)\|^{\theta}.
	\end{align*}
	In the case of $p_0=\infty$, we have
	\begin{align*}
		\|f\mid F_{p_2,q_2}^{s_2,\varphi_2}(\rd)\|
		&\lesssim \|f\mid F_{p_1,\infty}^{s_1,\varphi_1}(\rd)\|^{1-\theta}
		\sup_{P\in\mq} \varphi_0(\ell(P))^{\theta}
		\sup_{x\in P} \left(\sup_{j\geq\jjp}|2^{j s_0}\fuff(x)|\right)^\theta\\
		&\lesssim \|f\mid F_{p_1,q_1}^{s_1,\varphi_1}(\rd)\|^{1-\theta}
		\|f\mid F_{\infty,\infty}^{s_0,\varphi_0}(\rd)\|^{\theta}.
	\end{align*}
	Thus, we complete the proof.
\end{proof}


\begin{lemma}\label{inftyinfty} Let $s\in \real$, $0<p<\infty$, $0<q\le \infty$ and $\varphi\in \Gp$ with $\rphi{\varphi}<\infty$.  Assume in addition that \eqref{intc}  holds when $q<\infty$. Then
	\begin{equation}
		\MfF(\rd) \eb F^{s-\frac{\nd}{\rphi{\varphi}}}_{\infty,\infty}(\rd).	
	\end{equation}
\end{lemma}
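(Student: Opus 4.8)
The plan is to reduce the claim to a single frequency-by-frequency estimate. Since \eqref{intc} is assumed when $q<\infty$, the elementary chain \eqref{elme1} gives $\MfF(\rd)\eb\mathcal{N}^s_{\varphi,p,\infty}(\rd)$, so it suffices to prove $\mathcal{N}^s_{\varphi,p,\infty}(\rd)\eb B^{s-\frac{d}{\rphi{\varphi}}}_{\infty,\infty}(\rd)$; recall from Remark~\ref{equnv}(ii) that $F^{t}_{\infty,\infty}(\rd)=B^{t}_{\infty,\infty}(\rd)$ for every $t\in\real$. Both the $\mathcal{N}$-norm and the Besov norm on the right may be computed with one and the same admissible resolution $\{\eta_j\}_{j\in\no}$ from Definition~\ref{def-E-N-spaces}, since each of these norms is, up to equivalence, independent of the chosen admissible system (cf. \cite{nns16} for the $\mathcal{N}$-scale and \cite{t92} for $B^{t}_{\infty,\infty}$). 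Writing $g_j=\feff$, whose spectrum lies in $Q(2^{j+1})$, the task becomes to show that for every $j\in\no$,
\begin{align*}
2^{-j\frac{d}{\rphi{\varphi}}}\,\|g_j\mid L_\infty(\rd)\|\lesssim \|g_j\mid\Mf(\rd)\|,
\end{align*}
with implicit constant independent of $j$ and $f$; multiplying by $2^{js}$ and taking the supremum over $j$ then matches the two norms.

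Next I would establish the local Nikolskii-type bound $\|g_j\mid L_\infty(\rd)\|\lesssim \varphi(2^{-j})^{-1}\|g_j\mid\Mf(\rd)\|$. Because $g_j$ is band-limited at frequency $2^j$, the Plancherel--P\'olya--Nikolskii inequality (cf. \cite{t92}) gives, for a dyadic cube $\qjm$ of side $2^{-j}$, every $x\in\qjm$, and any $N>d$,
\begin{align*}
|g_j(x)|^p\lesssim 2^{jd}\sum_{m'\in\zd}(1+|m-m'|)^{-N}\int_{Q_{j,m'}}|g_j(y)|^p\dint y.
\end{align*}
Applying the dyadic Morrey quasi-norm \eqref{Mf-norm} to each cube $Q_{j,m'}$ of side $2^{-j}$ yields $\int_{Q_{j,m'}}|g_j|^p\le \varphi(2^{-j})^{-p}2^{-jd}\|g_j\mid\Mf(\rd)\|^p$, and summing the rapidly decaying weights, $\sum_{m'}(1+|m-m'|)^{-N}\lesssim 1$, cancels the factor $2^{jd}2^{-jd}$ and removes the dependence on $m$. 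Taking the supremum over $x$ and $m$ gives the claimed bound. The conceptual point is that band-limitation forces only cubes of side $\gtrsim 2^{-j}$ to be relevant, so $\varphi(2^{-j})$ is the correct normalising factor; the case $j=0$ is trivial since $\varphi(1)=1$.

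Finally, the exponent is supplied by $\rphi{\varphi}$. By Lemma~\ref{Gop}(i) we have $\varphi\in\mathcal{G}^{\mathrm{loc}}_{\rphi{\varphi}}$, i.e. $t^{-d/\rphi{\varphi}}\varphi(t)$ is nonincreasing on $(0,1)$; comparing $t=2^{-j}$ with the fixed value $t=\tfrac12$ and using the monotonicity of $\varphi$ with $\varphi(1)=1$ yields $\varphi(2^{-j})\gtrsim 2^{-j\frac{d}{\rphi{\varphi}}}$ for all $j\in\no$. Combined with the previous step, $2^{-j\frac{d}{\rphi{\varphi}}}\varphi(2^{-j})^{-1}\lesssim 1$, which is exactly the frequency-wise estimate needed in the first paragraph, and the embedding follows. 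I expect the main obstacle to be the second step: producing the \emph{local} (rather than the standard global) Nikolskii inequality in a form whose right-hand side is genuinely the Morrey supremum over dyadic cubes of side $2^{-j}$, so that the sharp normalisation $\varphi(2^{-j})^{-1}$ appears. Once this is aligned with the growth $\varphi(2^{-j})\gtrsim 2^{-jd/\rphi{\varphi}}$ from Lemma~\ref{Gop}, the loss in smoothness is precisely $d/\rphi{\varphi}$, confirming that $\rphi{\varphi}$ is the right exponent.
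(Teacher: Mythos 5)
Your proposal is correct, but it follows a genuinely different route from the paper's. The paper proves the lemma as a short corollary of its wavelet machinery: by Theorem~\ref{mew}, any $f\in\MfF(\rd)$ has wavelet coefficients $\lambda\in\tilde{e}^{s}_{\varphi,p,q}(\rd)$, and since $2^{-j\frac{d}{\rphi{\varphi}}}\le\varphi(2^{-j})$ for $j\in\no$ (the function $t^{-\frac{d}{\rphi{\varphi}}}\varphi(t)$ being nonincreasing on $(0,1]$ by Lemma~\ref{Gop}), each coefficient obeys $2^{j(s-\frac{d}{\rphi{\varphi}})}|\lambda_{j,m}|\le\varphi(2^{-j})\,2^{js}|\lambda_{j,m}|\le\|\lambda\mid e^{s}_{\varphi,p,q}(\rd)\|$; taking the supremum over $j$ and $m$ and using the wavelet description of $F^{s-\frac{d}{\rphi{\varphi}}}_{\infty,\infty}(\rd)$ finishes the argument. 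You instead stay entirely on the Fourier-analytic side: the elementary right-hand inclusion of \eqref{elme1} reduces matters to $\mathcal{N}^{s}_{\varphi,p,\infty}(\rd)\eb B^{s-\frac{d}{\rphi{\varphi}}}_{\infty,\infty}(\rd)$, which you prove frequency by frequency via a localized Plancherel--P\'olya--Nikolskii estimate, $\|g_j\mid L_\infty(\rd)\|\lesssim\varphi(2^{-j})^{-1}\|g_j\mid\Mf(\rd)\|$, combined with the same growth bound $\varphi(2^{-j})\gtrsim 2^{-j\frac{d}{\rphi{\varphi}}}$ from Lemma~\ref{Gop}. Both proofs hinge on exactly this quantitative consequence of the definition of $\rphi{\varphi}$; what differs is the mechanism converting an $\Mf$-average at the matching scale $2^{-j}$ into a sup-norm bound --- wavelet coefficients there, band-limited Nikolskii estimates here. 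Your route has the merit of not needing Theorem~\ref{mew} at all (only classical inequalities and the elementary part of \eqref{elme1}), at the price of two external facts that should be pinned down: that the $B^{t}_{\infty,\infty}$-norm may indeed be computed with the non-standard system $\{\eta_j\}$ of Definition~\ref{def-E-N-spaces} --- which the paper itself supplies, since Remark~\ref{equnv}(ii) gives $B^{t}_{\infty,\infty}(\rd)=\mathcal{N}^{t}_{\varphi_0,p,\infty}(\rd)$ with $\varphi_0\equiv 1$ and $\mathcal{M}_{\varphi_0,p}(\rd)=L_\infty(\rd)$ --- and the pointwise Plancherel--P\'olya--Nikolskii inequality for all $0<p<\infty$, whose constant is independent of $j$ by dilation. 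With those two references made explicit, your argument is complete.
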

	
	\begin{proof} Let $f\in\MfF(\rd)$. The wavelet decomposition theorem, in particular Theorem~\ref{mew}, implies
\begin{align*}
	f=\sum_{m\in\zd}\lambda_m\psi_m+\sum_{G\in G^\ast}\sum_{j\in\no}\sum_{m\in\zd}\ljm^{G}2^{-\frac{jd}{2}}\psi_{j,m}^G,
	\qquad\lambda\in {{\tilde{e}}_{\varphi,p,q}(\rd)}.
\end{align*}		
For any $j\in \nat_0$ and  $m\in \mathbb{Z}^d$ 
we have  
\begin{align*}
   2^{j(s-\frac{d}{\rphi{\varphi}})}|\lambda_{j,m}| \le &2^{-j\frac{d}{\rphi{\varphi}}} \frac{1}{|Q_{j,m}|} \int_{Q_{j,m}} 2^{js}|\lambda_{j,m}| \chi_{j,m}(x)  \dint x \le  
    \varphi(2^{-j}) \frac{1}{|Q_{j,m}|} \int_{Q_{j,m}} 2^{js}|\lambda_{j,m}| \chi_{j,m}(x)  \dint x \\
    \le &	\|\lambda\mid\mff(\rd)\|
\end{align*}
since the function  $t^{-\frac{d}{\rphi{\varphi}}}\varphi(t)$ is decreasing {in $(0,1]$}. 
Taking the supremum  over $j$ and $m$ we get the result. 
	\end{proof}

We now give the counterpart of Theorem ~\ref{E-E-case1} {and} Corollary~\ref{Cor-E-alpha}.

        \begin{theorem}\label{E-E-case2}
    Let $0<p_1 < p_2<\infty$, $s_i\in\real$, $0<q_i\leq\infty$, $\varphi_i\in \mathcal{G}_{p_i}$ and $\varphi_i$ satisfies \eqref{intc} when $q_i<\infty$, for $i=1,2$. We assume without loss of generality that $\varphi_1(1)=\varphi_2(1)=1$. 
   Let $\varrho = \frac{p_1}{p_2}<1$ and $\alpha_j=\sup\limits_{\nu\leq j}\frac{\varphi_2(2^{-\nu})}{\varphi_1(2^{-\nu})^\varrho}$, $j\in\no$.
    \begin{enumerate}[\bfseries\upshape  (i)]
\item   
  Then \eqref{emff_1} holds if
\begin{align}
			\sup_{\nu\in\no }\frac{\varphi_2(2^{\nu})}{\varphi_1(2^{\nu})^\varrho}<\infty, \label{embee0_2} 
		    \end{align}
is satisfied and
		\begin{align}\label{embe-a-2}
             \left\{2^{j(s_2-s_1+\frac{\nd}{\rphi{\varphi_1}}(1-\varrho))}\alpha_j\right\}_{j\in\nat} \in \ell_\infty.
                  \end{align}
                \item
                  If \eqref{emff_1} holds, then \eqref{embee0_2}  is satisfied and
              \begin{align}\label{embe-a-3}
             \left\{2^{j(s_2-s_1)}\alpha_j \frac{\varphi_1(2^{-j})^\varrho}{\varphi_1(2^{-j})}\right\}_{j\in\nat} \in \ell_\infty.
                  \end{align}    
  \item
                  Assume that
                  \begin{equation}\label{embe-a-4}
                    \lim_{t\to 0} \varphi_1(t) t^{-\frac{\nd}{\rphi{\varphi_1}}}\leq c<\infty,
                    \end{equation}
                    then \eqref{embe-a-2} and \eqref{embe-a-3} are equivalent, that is, the embedding \eqref{emff_1} holds if, and only if, \eqref{embee0_2} and \eqref{embe-a-2} are satisfied.
  \item
                 The embedding \eqref{emff_1} is never compact.
\end{enumerate}
 \end{theorem}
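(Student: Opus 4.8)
The plan is to exploit the two one-sided tools assembled just before the statement. On the sufficiency side I would combine the Gagliardo--Nirenberg inequality of Proposition~\ref{GN} with the lift $\MfF(\rd)\eb F^{s-\nd/\rphi{\varphi}}_{\infty,\infty}(\rd)$ of Lemma~\ref{inftyinfty} to produce one ``diagonal'' embedding, and then close the gap to a general target by Theorem~\ref{E-E-case1} (the equal-$p$ case); on the necessity side I would sandwich \eqref{emff_1} by the elementary embeddings \eqref{elme1} and read off the conditions from the Besov--Morrey characterisation of Theorem~\ref{main-n}. Throughout I use Theorem~\ref{rswm}(i) to pass freely between $\MfF(\rd)$ and $F^{s,\varphi}_{p,q}(\rd)$.

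\emph{Sufficiency (i).} Write $\varrho=p_1/p_2\in(0,1)$ and set $\varphi_3:=\varphi_1^{\varrho}$, $s_3:=s_1-(1-\varrho)\tfrac{\nd}{\rphi{\varphi_1}}$. First I would note $\varphi_3\in\mathcal{G}_{p_2}$, since $t^{-\nd/p_2}\varphi_1(t)^{\varrho}=(t^{-\nd/p_1}\varphi_1(t))^{\varrho}$ is non-increasing. Applying Proposition~\ref{GN}(ii) with $p_0=\infty$, $\varphi_0\equiv1$, $s_0=s_1-\nd/\rphi{\varphi_1}$ and $\theta=1-\varrho$, and then inserting Lemma~\ref{inftyinfty} for the factor $\|f\mid F^{s_0}_{\infty,\infty}\|$, yields the diagonal embedding $\MfFa(\rd)\eb\mathcal{E}^{s_3}_{\varphi_3,p_2,q_2}(\rd)$. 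As this intermediate space shares the lower index $p_2$ with the target, I would then invoke Theorem~\ref{E-E-case1} for the pair $(\varphi_3,s_3)\to(\varphi_2,s_2)$: substituting $\varphi_3=\varphi_1^{\varrho}$ and $s_2-s_3=s_2-s_1+(1-\varrho)\nd/\rphi{\varphi_1}$, its hypotheses \eqref{embee0_1} and \eqref{embee00_1} become exactly \eqref{embee0_2} and the $\ell_\infty$-bound \eqref{embe-a-2} (using $\varphi_2(2^{-j})\varphi_1(2^{-j})^{-\varrho}\le\alpha_j$), while $\alpha_j\ge1$ forces $s_3\ge s_2$ and hence \eqref{cond-s-1}/\eqref{cond-s-2}. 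Chaining the two embeddings gives \eqref{emff_1}.

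\emph{Necessity (ii).} Sandwiching \eqref{emff_1} by \eqref{elme1} produces
\[
\mathcal{N}^{s_1}_{\varphi_1,p_1,\min(p_1,q_1)}(\rd)\eb\MfFa(\rd)\eb\MfFb(\rd)\eb\mathcal{N}^{s_2}_{\varphi_2,p_2,\infty}(\rd),
\]
and I would apply Theorem~\ref{main-n} (and its $q=\infty$ analogue) to the outer Besov--Morrey embedding. Here $1/q^\ast=(1/\infty-1/\min(p_1,q_1))_+=0$, so $q^\ast=\infty$; thus \eqref{emfb2} is precisely \eqref{embee0_2} and \eqref{emfb3} is precisely the $\ell_\infty$-condition \eqref{embe-a-3}.

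\emph{Parts (iii) and (iv), and the main obstacle.} For (iii) I would compare the two multipliers: the ratio of the weight in \eqref{embe-a-2} to that in \eqref{embe-a-3} is $\big(2^{j\nd/\rphi{\varphi_1}}\varphi_1(2^{-j})\big)^{1-\varrho}$. Since $\varphi_1\in\mathcal{G}^{\mathrm{loc}}_{\rphi{\varphi_1}}$ by Lemma~\ref{Gop}(i), the function $t^{-\nd/\rphi{\varphi_1}}\varphi_1(t)$ is non-increasing on $(0,1)$, so $2^{j\nd/\rphi{\varphi_1}}\varphi_1(2^{-j})\ge\varphi_1(1)=1$; under \eqref{embe-a-4} it is also bounded above, whence the factor is $\sim1$ and \eqref{embe-a-2}$\Leftrightarrow$\eqref{embe-a-3}, which together with (i)--(ii) yields the equivalence. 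For (iv), the same sandwich shows compactness of \eqref{emff_1} would force compactness of the outer Besov--Morrey embedding, contradicting the non-compactness assertion of Theorem~\ref{main-n}. The delicate point is the diagonal step in (i): matching $s_0,\varphi_0$ so Lemma~\ref{inftyinfty} closes the Gagliardo--Nirenberg estimate, and controlling the borderline case $s_3=s_2$ with $q_2<\infty$, where one must ensure $\varphi_3=\varphi_1^{\varrho}$ (or an equivalent substitute) still satisfies \eqref{intc} so that $\mathcal{E}^{s_3}_{\varphi_3,p_2,q_2}(\rd)$ is admissible; away from this borderline one can instead route through the $q=\infty$ intermediate $\mathcal{E}^{s_3}_{\varphi_3,p_2,\infty}(\rd)$ and sidestep \eqref{intc} entirely.
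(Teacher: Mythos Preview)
Your proposal is correct and follows essentially the same strategy as the paper. Both arguments establish (i) by first passing to the diagonal target $F^{s_3,\varphi_1^{\varrho}}_{p_2,q_2}(\rd)$ via Proposition~\ref{GN}(ii) (with $\theta=1-\varrho$, $s_0=s_1-\nd/\rphi{\varphi_1}$) combined with Lemma~\ref{inftyinfty}, and then closing to $F^{s_2,\varphi_2}_{p_2,q_2}(\rd)$ by Theorem~\ref{E-E-case1}; parts (ii) and (iv) are obtained in both by sandwiching with \eqref{elme1} and invoking Theorem~\ref{main-n}, and part (iii) by the elementary comparison of the two weights. Your observation that in the borderline $s_3=s_2$, $q_2<\infty$ one must check \eqref{intc} for $\varphi_1^{\varrho}$ is a point the paper passes over silently; note that when $q_1<\infty$ it follows by raising \eqref{intc} for $\varphi_1$ to the power $\varrho$, and your suggested rerouting through $q=\infty$ handles the complementary strict case $s_3>s_2$.
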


 \begin{proof}
   {\em Step 1}. Using the embeddings \eqref{neen1} together with Theorem~\ref{main-n} immediately yields (ii), while part (iv) is completely parallel to the proof of Theorem~\ref{E-E-case1}.\\

   {\em Step 2}. Next we show that \eqref{embe-a-2} implies
   \begin{align}
     \label{ineq.26}
&     s_2\le s_1 -\frac{\nd}{\rphi{\varphi_1}} (1-\varrho), \\			
& 	\sup_{j\in\nat} 2^{j(s_2-s_1 +\frac{\nd}{\rphi{\varphi_1}}(1-\varrho))} \frac{\varphi_2(2^{-j})}{\varphi_1(2^{-j})^\varrho}<\infty,
          \label{ineq.27}
     \end{align}
     and then \eqref{embee0_2} together with \eqref{ineq.26} and \eqref{ineq.27} are sufficient for \eqref{emff_1}. 

     Note that $\alpha_j\geq 1$ and $\alpha_j\geq \frac{\varphi_2(2^{-j})}{\varphi_1(2^{-j})^\varrho}$, then \eqref{embe-a-2} immediately implies \eqref{ineq.26} and \eqref{ineq.27}. 
     {First} we assume that $\varphi_2=\varphi_1^\varrho$. In that case  $\rphi{\varphi_1}= \varrho \rphi{\varphi_2}$. If \eqref{ineq.27} holds, then $s_2-s_1\le \frac{\nd}{\rphi{\varphi_1}}(\varrho-1)= \frac{\nd}{\rphi{\varphi_2}} -\frac{\nd}{\rphi{\varphi_1}}$. Thus, {Proposition~\ref{GN}(ii), with $1-\theta=\varrho$ and $s_0= s_1-\frac{\nd}{\rphi{\varphi_1}}$,  and Lemma~\ref{inftyinfty} }  imply   
          \[ 
         \|f|F^{s_2,\varphi_2}_{p_2,q_2}(\rd)\|\le \ \|f|F^{s_1,\varphi_1}_{p_1,q_1}(\rd)\|.
         \] 
         
         In the general case we take $s_3= s_1 -\frac{\nd}{\rphi{\varphi_1}} (1-\varrho)$ and use the following factorisation 
         \[F^{s_1,\varphi_1}_{p_1,q_1}(\rd) \hookrightarrow F^{s_3,\varphi_1^\varrho}_{p_2,q_2}(\rd) \hookrightarrow F^{s_2,\varphi_2}_{p_2,q_2}(\rd) , \]
         where the continuity of both embeddings is guaranteed by \eqref{ineq.26} and \eqref{ineq.27} {if we  apply  Theorem \ref{E-E-case1} and Theorem~\ref{rswm}(i). }  \\

         {\em Step 3}. It remains to show that in case of \eqref{embe-a-4} the conditions \eqref{embe-a-2} and \eqref{embe-a-3} are equivalent. Note that \eqref{embe-a-4} yields that $0<c_1\leq \varphi_1(t) t^{-\frac{\nd}{\rphi{\varphi_1}}} \leq c_2<\infty$, $0<t<1$, thus
         \[
           2^{-j \frac{\nd}{\rphi{\varphi_1}}(\varrho-1)} \sim \varphi_1(2^{-j})^{\varrho-1}, \quad j\in\nat,
           \]
proves the equivalence of    \eqref{embe-a-2} and \eqref{embe-a-3}.   
                  \end{proof}

\begin{remark}
Note that in Step~2 of the proof of Theorem~\ref{E-E-case2} we used that \eqref{ineq.26} and \eqref{ineq.27} together with \eqref{embee0_2} are sufficient for the embedding \eqref{emff_1}. Let us mention that in general these two conditions do not imply each other. Take, for instance, our example function $\varphi_{u,v}$ given by \eqref{ex-u-v}. Then \eqref{embee0_2} requires $\varrho\geq \frac{v_1}{v_2}$, and for $\varrho\leq \frac{u_1}{u_2}$, then 
\eqref{ineq.26} implies \eqref{ineq.27}, while in case of $\varrho\geq \frac{u_1}{u_2}$,  \eqref{ineq.27} implies \eqref{ineq.26}. In the special case of $u_i=v_i$, $i=1,2$, referring to the Morrey spaces $\MF$ or the classical spaces $\F$, we are thus always in the situation that \eqref{ineq.27} implies \eqref{ineq.26}. 
\end{remark}

\begin{example}
  Note that our example function $\varphi_{\rphi{\varphi},v}$ given by \eqref{ex-u-v} with $u=\rphi{\varphi}$ obviously satisfies condition \eqref{embe-a-4}, but also example \eqref{ex-log} does so,
  \[
    \lim_{t\to 0} t^{-1} \varphi(t) = \frac{1}{\log 2} \lim_{t\to 0} \frac{\log (1+t)}{t} = \frac{1}{\log 2},
  \]
  recall $\rphi{\varphi}=\nd$.
\end{example}

\begin{example}
    Let $\varphi_i(t):=t^\frac{d}{u_i}$, $0<p_i\leq u_i<\infty$, $i=1,2$. Similar to Remark~\ref{rem-class}, condition \eqref{embee0_2} is equivalent to \eqref{eeu2} with $\varrho< 1$. 
    Moreover, let $\varrho=\frac{p_1}{p_2}<1$, then
    \begin{align*}
        \alpha_j=\sup_{\nu\leq j}2^{-\nu d (\frac{1}{u_2}-\frac{\varrho}{u_1})}
        =2^{-j d (\frac{1}{u_2}-\frac{\varrho}{u_1})},
    \end{align*}
    and then
    \begin{align*}
        2^{j(s_2-s_1+\frac{\nd}{\rphi{\varphi_1}}(1-\varrho))}\alpha_j
        =2^{j(s_2-s_1)}\alpha_j \frac{\varphi_1(2^{-j})^\varrho}{\varphi_1(2^{-j})}
        =2^{-j(s_1-s_2-\frac{d}{u_1}+\frac{d}{u_2})},
    \end{align*}
    since $\rphi{\varphi_1}=u_1$. 
    Assume that $s_1-s_2=d(\frac{1}{u_1}-\frac{1}{u_2})$. Note that $\varrho<1$ means $u_1<u_2$ which corresponds to \eqref{eeu4}. Thus \eqref{embe-a-2} and \eqref{embe-a-3} together with $\varrho<1$ are the counterpart of \eqref{eeu3} and \eqref{eeu4} in this settings.
\end{example}

\begin{remark}\label{R-M-M-emb-2}
We deal again with the special setting \eqref{E=Mf}, now in case of $1<p_1<p_2<\infty$, $\varphi_i\in \mathcal{G}_{p_i}$, for $i=1,2$, satisfying \eqref{intc}. We assume without loss of generality that $\varphi_1(1)=\varphi_2(1)=1$. Note first that the sufficient condition for the embedding $\mathcal{M}_{\varphi_1,p_1}(\rd) \eb \mathcal{M}_{\varphi_2,p_2}(\rd) $, as obtained in Theorem~\ref{E-E-case2}(i) cannot be applied, since \eqref{embe-a-2}  is never satisfied in view of $s_1=s_2=0$, $\rphi{\varphi_1}<\infty$, $\varrho<1$ and $\alpha_j\geq 1$, $j\in\nat$. Conversely, if $\mathcal{M}_{\varphi_1,p_1}(\rd) \eb \mathcal{M}_{\varphi_2,p_2}(\rd) $ holds, then by Theorem~\ref{E-E-case2}(ii), \eqref{embee0_2} and \eqref{embe-a-3} (with $s_1=s_2=0$) have to be satisfied. The latter implies $\varphi_1(t)\geq c>0$, $0<t<1$, in view of $\varrho<1$ and $\alpha_j\geq 1$, $j\in\nat$. In particular, $\inf_{t>0} \varphi_1(t)>0$ which implies $\mathcal{M}_{\varphi_1,p_1}(\rd) \eb L_\infty(\rd)$. Note that this does not lead to a contradiction with \eqref{elme2}, since we do not have $\inf_{t>0} \varphi_1(t)=0$ as required there. Hence, it would be possible, for instance, that $\varphi_2(t)\leq c'$, $t>0$, such that $L_\infty(\rd)\eb \mathcal{M}_{\varphi_2,p_2}(\rd)$ and we have thus the chain of embeddings $\mathcal{M}_{\varphi_1,p_1}(\rd) \eb L_\infty(\rd)\eb  \mathcal{M}_{\varphi_2,p_2}(\rd) $ even in the case $p_1<p_2$ (for appropriate $\varphi_i$, $i=1,2$). This also emphasises the importance of the additional assumption $\inf_{t>0} \varphi_1(t)=0$ in \eqref{elme2}. Likewise it indicates the gap between necessary and sufficient condition in Theorem~\ref{E-E-case2}. Moreover, according to Theorem~\ref{E-E-case2}(iii), if \eqref{embe-a-4} is satisfied, then  $\mathcal{M}_{\varphi_1,p_1}(\rd) \eb \mathcal{M}_{\varphi_2,p_2}(\rd) $ if, and only if, \eqref{embee0_2} and \eqref{embe-a-2} are satisfied, that is, never in our setting -- as explained above (no matter what the $\varphi_i$ satisfy otherwise). This is in good coincidence with \eqref{elme2} again.
\end{remark}

For convenience we formulate a last corollary, covering both cases $p_1\geq p_2$ and $p_1<p_2$.
 \begin{corollary}\label{CorF-F}
 Let $0<p_i<\infty$, $s_i\in\real$, $0<q_i\leq\infty$, $\varphi_i\in \mathcal{G}_{p_i}$ and $\varphi_i$ satisfies \eqref{intc} when $q_i<\infty$, for $i=1,2$. We assume without loss of generality that $\varphi_1(1)=\varphi_2(1)=1$. 
 Let $\varrho=\min(1,\frac{p_1}{p_2})$ and $\alpha_j=\sup\limits_{\nu\leq j}\frac{\varphi_2(2^{-\nu})}{\varphi_1(2^{-\nu})^\varrho}$, $j\in\no$.    Then 
    	\begin{align*}
			\MfFa(\rd)\eb\MfFb(\rd)
		\end{align*}
		if, and only if,
			\begin{align*}
			\sup_{\nu\in\no}\frac{\varphi_2(2^{\nu})}{\varphi_1(2^{\nu})^{\varrho}}<\infty, 
		    \end{align*}
and	    
               \begin{align*}
   \left\{2^{j(s_2-s_1+\frac{\nd}{\rphi{\varphi_1}}(1-\varrho))}\alpha_j\right\}_{j\in\nat} \in \ell_\infty,
                  \end{align*}     
where in case of $p_1<p_2$ we assume, in addition, \eqref{embe-a-4}, and in the case $p_1 \geq p_2$ we assume $q_1\leq q_2$ when $s_1=s_2$.
\end{corollary}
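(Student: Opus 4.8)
The plan is to deduce this unified statement directly from the two case-specific results already established, namely Corollary~\ref{Cor-E-alpha} for $p_1\geq p_2$ and Theorem~\ref{E-E-case2} for $p_1<p_2$; the only real work is to verify that the notation collapses correctly in each regime, so that the single pair of displayed conditions specialises to the right thing on both sides of the threshold $p_1=p_2$.

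First I would treat the case $p_1\geq p_2$. Here $\varrho=\min(1,p_1/p_2)=1$, so $\varphi_1(2^{-\nu})^\varrho=\varphi_1(2^{-\nu})$ and the quantity $\alpha_j=\sup_{\nu\leq j}\varphi_2(2^{-\nu})/\varphi_1(2^{-\nu})^\varrho$ coincides verbatim with the $\alpha_j$ of Corollary~\ref{Cor-E-alpha}. Moreover the shift $\frac{d}{\rphi{\varphi_1}}(1-\varrho)$ in the exponent vanishes because $1-\varrho=0$, so the second displayed condition reduces to $\{2^{j(s_2-s_1)}\alpha_j\}_j\in\ell_\infty$, which is exactly \eqref{embe-a-1}, while the first condition is exactly \eqref{embee0_1}. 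Thus in this regime the corollary is literally Corollary~\ref{Cor-E-alpha}, and the standing assumption $q_1\leq q_2$ when $s_1=s_2$ is inherited word for word. I would also note here that $\alpha_j\geq 1$ together with \eqref{embe-a-1} automatically forces $s_1\geq s_2$, reproducing the monotonicity built into Theorem~\ref{E-E-case1}.

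Next I would treat $p_1<p_2$, where $\varrho=p_1/p_2<1$. In this regime the two displayed conditions are precisely \eqref{embee0_2} and \eqref{embe-a-2}. Under the additional hypothesis \eqref{embe-a-4}, which is assumed exactly in this case, Theorem~\ref{E-E-case2}(iii) asserts that \eqref{emff_1} holds if, and only if, \eqref{embee0_2} and \eqref{embe-a-2} are satisfied, so the corollary follows at once. Internally this already packages the sufficiency from part (i) (via the Gagliardo--Nirenberg factorisation of Proposition~\ref{GN}(ii) and Lemma~\ref{inftyinfty}), the necessity from part (ii) (via the outer Besov--Morrey embeddings \eqref{neen1} and Theorem~\ref{main-n}), and the equivalence of \eqref{embe-a-2} with \eqref{embe-a-3} granted by \eqref{embe-a-4}; so nothing new has to be proved.

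The only point requiring genuine care — and the step I would flag as the main, albeit modest, obstacle — is the bookkeeping that the single formula $\{2^{j(s_2-s_1+\frac{d}{\rphi{\varphi_1}}(1-\varrho))}\alpha_j\}_j\in\ell_\infty$ is the correct condition in both regimes simultaneously. In the first regime one must see that the exponent shift disappears and that $\alpha_j\geq 1$ supplies the monotonicity constraint automatically; in the second the shift is essential, and it is precisely \eqref{embe-a-4} that reconciles the stated condition \eqref{embe-a-2} with the intrinsically necessary condition \eqref{embe-a-3}. Once these two identifications are checked, the corollary is an immediate consequence of Corollary~\ref{Cor-E-alpha} and Theorem~\ref{E-E-case2}, with no further computation.
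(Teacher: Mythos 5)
Your proposal is correct and is precisely the argument the paper intends: the corollary is stated as a unification of Corollary~\ref{Cor-E-alpha} (case $p_1\geq p_2$, where $\varrho=1$ makes the exponent shift vanish) and Theorem~\ref{E-E-case2}(iii) (case $p_1<p_2$ under \eqref{embe-a-4}), and your case-by-case identification of the displayed conditions with \eqref{embee0_1}, \eqref{embe-a-1} and with \eqref{embee0_2}, \eqref{embe-a-2}, respectively, is exactly the required verification. Nothing further is needed.
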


\subsection{ { Some particular embeddings} }
{ In this section we are interested mainly in embeddings that have the  classical Triebel-Lizorkin space as a source or target space. But we start with a statement  that is a counterpart of Corollary \ref{CorF-F}  for  the scale $\ftt(\rd)$. }

\begin{corollary}\label{main-fw}
    Let $0<p_i<\infty$, $s_i\in\real$, $0<q_i\leq \infty$ and $\varphi_i\in\mathcal{G}_{p_i}$ satisfying \eqref{intc} when $q_i<\infty$, $i=1,2$.  
    We assume without loss of generality that $\varphi_1(1)=\varphi_2(1)=1$. Let $\alpha_j=\sup_{\nu\leq j}\frac{\varphi_2(2^{-\nu})}{\varphi_1(2^{-\nu})^\varrho}$, $j\in\no$. 
    \begin{enumerate}[\bfseries\upshape  (i)]
      \item
Let $0<p_2\leq p_1<\infty$. 
        Then we have the continuous embedding 
    \begin{align}\label{efwvp}
        \ftta(\rd)\eb\fttb(\rd)
    \end{align}
    if, and only if, \eqref{embee0_2} and \eqref{embe-a-1} are satisfied with $q_1\leq q_2$ in case of $s_1=s_2$.
  \item
    Let $0<p_1<p_2<\infty$, $\varrho=\frac{p_1}{p_2}<1$. Then \eqref{efwvp} holds if \eqref{embee0_2} and \eqref{embe-a-2} are satisfied. Conversely,  \eqref{efwvp} implies that  \eqref{embee0_2} and \eqref{embe-a-3} are satisfied. 
    If ${\varphi_1}$ satisfies \eqref{embe-a-4}, then \eqref{efwvp} holds if, and only if, \eqref{embee0_2} and \eqref{embe-a-2} are satisfied.
   
\item
  The embedding  \eqref{efwvp} is never compact.
\end{enumerate}
  \end{corollary}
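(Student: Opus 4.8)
The plan is to reduce the entire statement to the already-established embedding theory for the Triebel--Lizorkin--Morrey scale $\MfF(\rd)$, exploiting the coincidence of the two scales. First I would invoke Theorem~\ref{rswm}(i): since by hypothesis each $\varphi_i$ satisfies \eqref{intc} whenever the corresponding $q_i$ is finite, we have the norm equivalences $\ftta(\rd)=\MfFa(\rd)$ and $\fttb(\rd)=\MfFb(\rd)$. Consequently the embedding \eqref{efwvp} holds, and is compact respectively, if and only if the corresponding statement holds for $\MfFa(\rd)\eb\MfFb(\rd)$. This transports the whole problem into the $\mathcal{E}$-scale, where it has already been solved, so no new analytic estimate is needed.

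For part (i), where $0<p_2\le p_1<\infty$ and hence $\varrho=1$, the quantity $\alpha_j=\sup_{\nu\le j}\varphi_2(2^{-\nu})/\varphi_1(2^{-\nu})^\varrho$ collapses to $\sup_{\nu\le j}\varphi_2(2^{-\nu})/\varphi_1(2^{-\nu})$, and likewise \eqref{embee0_2} collapses to its $\varrho=1$ form. Thus the stated conditions \eqref{embee0_2} and \eqref{embe-a-1} (with $q_1\le q_2$ when $s_1=s_2$) are precisely the characterisation of $\MfFa(\rd)\eb\MfFb(\rd)$ supplied by Corollary~\ref{Cor-E-alpha}, and part (i) follows verbatim through the identification above. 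For part (ii), where $0<p_1<p_2<\infty$ and $\varrho=p_1/p_2<1$, the three assertions — sufficiency of \eqref{embee0_2} together with \eqref{embe-a-2}, necessity of \eqref{embee0_2} together with \eqref{embe-a-3}, and their equivalence under the extra hypothesis \eqref{embe-a-4} — are exactly Theorem~\ref{E-E-case2}(i)--(iii), again transported through $\ftt=\MfF$. Finally, part (iii) on non-compactness follows from the concluding statements of Theorem~\ref{E-E-case1} and Theorem~\ref{E-E-case2}(iv).

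I do not expect a genuine obstacle here: all the hard work already resides in Theorems~\ref{E-E-case1} and \ref{E-E-case2} and in the coincidence Theorem~\ref{rswm}(i). The only point requiring care is bookkeeping, namely confirming that the hypotheses guarantee $\ftt=\MfF$ \emph{simultaneously} for both the source and the target space, i.e. that \eqref{intc} is available for each $\varphi_i$ precisely when its fine index $q_i$ is finite, which is exactly what is assumed. A secondary check is that, in case (i), setting $\varrho=1$ makes the present $\alpha_j$ agree with the $\alpha_j$ of Corollary~\ref{Cor-E-alpha} and makes \eqref{embe-a-1} the specialisation of the general exponent $s_2-s_1+\frac{\nd}{\rphi{\varphi_1}}(1-\varrho)$ appearing in case (ii), since then $\frac{\nd}{\rphi{\varphi_1}}(1-\varrho)=0$. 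With these verifications the corollary is immediate.
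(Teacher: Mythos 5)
Your proposal is correct and coincides with the paper's own argument: the authors likewise deduce Corollary~\ref{main-fw} directly from Theorem~\ref{rswm}(i) combined with Corollary~\ref{Cor-E-alpha} (for $p_2\leq p_1$, $\varrho=1$) and Theorem~\ref{E-E-case2} (for $p_1<p_2$), including the non-compactness. Your bookkeeping checks (availability of \eqref{intc} for both $\varphi_i$, and the collapse of the exponent $\frac{\nd}{\rphi{\varphi_1}}(1-\varrho)$ when $\varrho=1$) are exactly the points that make the reduction work.
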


  \begin{proof}
    This is a direct consequence of Corollary~\ref{Cor-E-alpha} and   Theorem~\ref{E-E-case2} together with Theorem~\ref{rswm}(i).
  \end{proof}

\begin{corollary}\label{Cor-Ef-F}
    Let $s_i\in\real$, $0<p_i<\infty$ and $0<q_i \leq \infty$, $i=1,2$. Assume that $\varphi\in\mathcal{G}_{p_1}$ and $\varphi$ satisfies \eqref{intc} when $q_1<\infty$. We consider the embedding
    \begin{align}\label{E-F}
        {F^{s_1,\varphi}_{p_1,q_1}(\rd) =  \mathcal{E}^{s_1}_{\varphi, p_1,q_1}(\rd)} \eb F_{p_2,q_2}^{s_2}(\rd).
    \end{align}
    \begin{enumerate}[\bfseries\upshape  (i)]
    \item If $p_1\geq p_2$, then \eqref{E-F} is continuous if, and only if,
    \begin{align}\label{E-F-1}
        \varphi(t)\gtrsim t^\frac{d}{p_2},\quad t\geq 1,
    \end{align}
    and
    \begin{align*}
        \left\{2^{j(s_2-s_1-\frac{d}{p_2})}\varphi(2^{-j})^{-1}\right\}_{j\in\nat}\in\ell_\infty
    \end{align*}
    with
    \begin{align*}
        s_1>s_2\qquad\text{or}\qquad s_1=s_2\quad\text{and}\quad q_1\leq q_2.
    \end{align*}
    \item Assume that $p_1<p_2$.
    \begin{enumerate}[\bfseries\upshape  (a)]
    \item Then \eqref{E-F} is continuous if
    \begin{align}\label{e-f-c1}
        \varphi(t)\sim t^\frac{d}{p_1},\quad t\geq 1,
    \end{align}
    and 
    \begin{align}\label{e-f-c2}
        s_2\leq s_1-\frac{d}{\rphi{\varphi}}\left(1-\frac{p_1}{p_2}\right)
    \end{align}
    are satisfied.
    \item If \eqref{E-F} is continuous, then \eqref{e-f-c1} is satisfied and 
    \begin{align}\label{e-f-c3}
        \left\{2^{j(s_2-s_1)}\varphi(2^{-j})^{\frac{p_1}{p_2}-1}\right\}_{j\in\nat}\in\ell_\infty.
    \end{align}
  \item Assume that
\begin{equation}\label{embe-a-4'}
  \lim_{t\to 0} \varphi(t) t^{-\frac{d}{\rphi{\varphi}}} \leq c<\infty
  \end{equation}
holds, then \eqref{e-f-c2} and \eqref{e-f-c3} are equivalent, that is, the embedding \eqref{E-F} is continuous if, and only if, \eqref{e-f-c1} and \eqref{e-f-c2} are satisfied.
    \end{enumerate}
    \end{enumerate}
\end{corollary}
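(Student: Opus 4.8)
The plan is to recognise the classical target space $F^{s_2}_{p_2,q_2}(\rd)$ as a special generalised Triebel--Lizorkin--Morrey space and then specialise the general characterisations in Theorem~\ref{E-E-case1}, Theorem~\ref{E-E-case2} and Corollary~\ref{CorF-F}. Concretely, I would set $\varphi_2(t):=t^{d/p_2}$. This function belongs to $\mathcal{G}_{p_2}$ (the boundary case $u=p_2$) and satisfies \eqref{intc} for every $0<\varepsilon<d/p_2$, so $F^{s_2,\varphi_2}_{p_2,q_2}(\rd)$ is well defined; moreover $\mathcal{M}_{\varphi_2,p_2}(\rd)=L_{p_2}(\rd)$ gives $\mathcal{E}^{s_2}_{\varphi_2,p_2,q_2}(\rd)=F^{s_2}_{p_2,q_2}(\rd)$, cf. Remark~\ref{equnv}. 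Combining this with the identification $\mathcal{E}^{s_1}_{\varphi,p_1,q_1}(\rd)=F^{s_1,\varphi}_{p_1,q_1}(\rd)$ from Theorem~\ref{rswm}(i) on the source side, the embedding \eqref{E-F} is exactly $\mathcal{E}^{s_1}_{\varphi_1,p_1,q_1}(\rd)\eb\mathcal{E}^{s_2}_{\varphi_2,p_2,q_2}(\rd)$ with $\varphi_1=\varphi$, to which Theorem~\ref{E-E-case1} (if $p_1\ge p_2$) and Theorem~\ref{E-E-case2} (if $p_1<p_2$) apply. It then remains to translate the abstract conditions into the stated ones.

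For part (i), $p_1\ge p_2$ forces $\varrho=1$. Inserting $\varphi_2(t)=t^{d/p_2}$, condition \eqref{embee0_1} becomes $\sup_{\nu\ge 0}2^{\nu d/p_2}\varphi(2^\nu)^{-1}<\infty$, i.e. $\varphi(t)\gtrsim t^{d/p_2}$ for $t\ge 1$, which is \eqref{E-F-1} (the converse bound $\varphi(t)\lesssim t^{d/p_2}$ being automatic from $\varphi\in\mathcal{G}_{p_1}\subset\mathcal{G}_{p_2}$). Likewise \eqref{embee00_1} turns into $\{2^{j(s_2-s_1-d/p_2)}\varphi(2^{-j})^{-1}\}_j\in\ell_\infty$, and the remaining restrictions \eqref{cond-s-1}, \eqref{cond-s-2} are transcribed unchanged. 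This part is pure bookkeeping.

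The substantive point sits in part (ii), where $\varrho=p_1/p_2<1$. First I would show that \eqref{embee0_2} is equivalent to \eqref{e-f-c1}: the displayed supremum yields $\varphi(t)\gtrsim t^{d/p_1}$ for $t\ge1$, and $\varphi\in\mathcal{G}_{p_1}$ supplies the matching upper bound $\varphi(t)\le t^{d/p_1}$, forcing $\varphi(t)\sim t^{d/p_1}$. The key simplification is that under \eqref{e-f-c1} the sequence $\alpha_j=\sup_{\nu\le j}\varphi_2(2^{-\nu})\varphi(2^{-\nu})^{-\varrho}$ satisfies $\alpha_j\sim1$ uniformly in $j$. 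Indeed, writing $t=2^{-\nu}$ one computes $\varphi_2(2^{-\nu})\varphi(2^{-\nu})^{-\varrho}=(t^{d/p_1}\varphi(t)^{-1})^{\varrho}$, and since $t^{-d/p_1}\varphi(t)$ is non-increasing by \eqref{Gp-def}, the factor $t^{d/p_1}\varphi(t)^{-1}$ is non-decreasing in $t$; hence the supremum over $\nu\le j$ equals the limit as $t\to\infty$, which is $\sim1$ by \eqref{e-f-c1}. With $\alpha_j\sim1$, condition \eqref{embe-a-2} collapses to $s_2-s_1+\frac{d}{\rphi{\varphi}}(1-\varrho)\le0$, i.e. \eqref{e-f-c2}, and \eqref{embe-a-3} collapses, via $\varphi(2^{-j})^{\varrho}/\varphi(2^{-j})=\varphi(2^{-j})^{p_1/p_2-1}$, to \eqref{e-f-c3}; this yields (a) and (b). Finally (c) is immediate from Theorem~\ref{E-E-case2}(iii), since \eqref{embe-a-4'} is precisely \eqref{embe-a-4} for $\varphi_1=\varphi$, so that \eqref{embe-a-2} and \eqref{embe-a-3}---hence \eqref{e-f-c2} and \eqref{e-f-c3}---are equivalent.

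The only genuinely non-routine step is the identity $\alpha_j\sim1$, which rests on the monotonicity built into the class $\mathcal{G}_{p_1}$ together with the forced asymptotics \eqref{e-f-c1}; once this is in hand, everything else is a specialisation of the already established Theorems~\ref{E-E-case1} and~\ref{E-E-case2}.
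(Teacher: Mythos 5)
Your proposal is correct and follows essentially the same route as the paper: specialise $\varphi_2(t)=t^{d/p_2}$, invoke Theorem~\ref{E-E-case1} for $p_1\geq p_2$ and Theorem~\ref{E-E-case2} for $p_1<p_2$, identify \eqref{embee0_2} with \eqref{e-f-c1} via the two-sided bound coming from $\varphi\in\mathcal{G}_{p_1}$, and reduce everything to the key observation $\alpha_j\sim 1$ (the paper bounds each term $\varphi(2^{-\nu})^{-p_1/p_2}2^{-\nu d/p_2}\leq 1$ for $0\leq\nu\leq j$ directly from the $\mathcal{G}_{p_1}$ inequality, while you argue via monotonicity of $t^{d/p_1}\varphi(t)^{-1}$ — the same fact in different clothing). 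No gaps.
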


\begin{proof}
  {Let $\varphi_2(t)=t^\frac{d}{p_2}$ and  $\varphi_1(t)=\varphi(t)$, $t>0$. Then}
    (i) is directly {concluded} from  Theorem~\ref{E-E-case1}.  We now deal with (ii) {and apply}  Theorem~\ref{E-E-case2}. Then \eqref{embee0_2} is equivalent to $\varphi(t)\gtrsim t^\frac{d}{p_1}$, $t\geq 1$. On the other hand, $\varphi\in\mathcal{G}_{p_1}$ implies $\varphi(t)\leq t^\frac{d}{p_1}$, $t\geq 1$. Thus, we have \eqref{e-f-c1}. Note that $\alpha_j\sim 1$, since
    \begin{align*}
        1\leq \alpha_j=\sup_{\nu\leq j}\varphi(2^{-\nu})^{-\frac{p_1}{p_2}}2^{-\nu\frac{d}{p_2}}
        \sim\max_{\nu=0,\cdots,j}\varphi(2^{-\nu})^{-\frac{p_1}{p_2}}2^{-\nu\frac{d}{p_2}}
        \leq\max_{\nu=0,\cdots,j}2^{\nu\frac{d}{p_2}}2^{-\nu\frac{d}{p_2}}=1
    \end{align*}
    by $\varphi(2^{-\nu})\geq 2^{-\nu\frac{d}{p_1}}$, $\nu\in\no$, due to $\varphi\in\mathcal{G}_{p_1}$. Hence, \eqref{e-f-c2} and \eqref{e-f-c3} are obtained. 
\end{proof}

{
For convenience, we formulate the special cases given by \eqref{E=Mf} and \eqref{E-Lp} in the source and target space, respectively.

\begin{corollary}\label{Cor-Ef-F-spec}
  Let $s\in\real$, $0<p_i<\infty$, $i=1,2$, and $0<q \leq\infty$. Assume that $\varphi\in\mathcal{G}_{p_1}$ and $\varphi$ satisfies \eqref{intc} when $q<\infty$.
    \begin{enumerate}[\bfseries\upshape  (i)]
    \item If $p_1\geq p_2$ and $1 < p_2<\infty$, then 
\begin{align}\label{E-Lr}
        F^{s,\varphi}_{p_1,q}(\rd) =  \mathcal{E}^{s}_{\varphi, p_1,q}(\rd) \eb L_{p_2}(\rd),
    \end{align}
if, and only if, \eqref{E-F-1} is satisfied and
 \begin{align*}
        \left\{2^{-j(s+\frac{d}{p_2})}\varphi(2^{-j})^{-1}\right\}_{j\in\nat}\in\ell_\infty
    \end{align*}
    with
    \begin{align*}
      s>0\qquad\text{or}\qquad s=0\quad\text{and}\quad q\leq 2.
    \end{align*}
    \item If $p_1\geq p_2$ and $1 < p_1<\infty$, then 
\begin{align}\label{Mf-F}
        \mathcal{M}_{\varphi,p_1}(\rd) \eb F^{s}_{p_2,q}(\rd),
    \end{align}
if, and only if, \eqref{E-F-1} is satisfied and
 \begin{align*}
        \left\{2^{j(s-\frac{d}{p_2})}\varphi(2^{-j})^{-1}\right\}_{j\in\nat}\in\ell_\infty
    \end{align*}
    with
    \begin{align*}
      s<0\qquad\text{or}\qquad s=0\quad\text{and}\quad q\geq 2.
    \end{align*}
  \item Assume that $p_1<p_2$, $1<p_2<\infty$, and \eqref{embe-a-4'} holds. 
Then
        \eqref{E-Lr} is continuous if, and only if, \eqref{e-f-c1} and $s_1\geq \frac{d}{\rphi{\varphi}}\left(1-\frac{p_1}{p_2}\right)$ are satisfied.
        \item
    Assume that $p_1<p_2$, $1<p_1<\infty$, and \eqref{embe-a-4'} are satisfied. Then \eqref{Mf-F} is continuous  if, and only if, \eqref{e-f-c1} and $s_2\leq -\frac{d}{\rphi{\varphi}}\left(1-\frac{p_1}{p_2}\right)$ are satisfied.
    \end{enumerate}
\end{corollary}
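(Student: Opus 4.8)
The plan is to recognise all four assertions as direct specialisations of Corollary~\ref{Cor-Ef-F}, obtained by rewriting the Lebesgue or Morrey space that occurs as source or target in terms of a generalised Triebel--Lizorkin space. Concretely, I would use the two identities recorded earlier: on the one hand $L_{p_2}(\rd)=F^0_{p_2,2}(\rd)=F^{0,\varphi_2}_{p_2,2}(\rd)$ with $\varphi_2(t)=t^{d/p_2}$, valid for $1<p_2<\infty$ by \eqref{E-Lp}; on the other hand $\mathcal{M}_{\varphi,p_1}(\rd)=\mathcal{E}^0_{\varphi,p_1,2}(\rd)=F^{0,\varphi}_{p_1,2}(\rd)$, valid for $1<p_1<\infty$ under \eqref{intc} by \eqref{E=Mf} combined with Theorem~\ref{rswm}(i). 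After these substitutions each embedding takes the form $F^{s_1,\varphi_1}_{p_1,q_1}(\rd)\eb F^{s_2}_{p_2,q_2}(\rd)$ already treated in Corollary~\ref{Cor-Ef-F}, so the only remaining task is to read off and simplify the resulting conditions.

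For parts (i) and (ii), where $p_1\geq p_2$, I would invoke Corollary~\ref{Cor-Ef-F}(i). In (i) I keep $s_1=s$, $q_1=q$ and replace the target $L_{p_2}(\rd)$ by $F^0_{p_2,2}(\rd)$, i.e.\ $s_2=0$, $q_2=2$: condition \eqref{E-F-1} is unchanged, the $\ell_\infty$-condition becomes $\{2^{j(0-s-d/p_2)}\varphi(2^{-j})^{-1}\}_j=\{2^{-j(s+d/p_2)}\varphi(2^{-j})^{-1}\}_j\in\ell_\infty$, and the dichotomy ``$s_1>s_2$, or $s_1=s_2$ and $q_1\leq q_2$'' specialises to ``$s>0$, or $s=0$ and $q\leq 2$''. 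In (ii) I instead rewrite the source $\mathcal{M}_{\varphi,p_1}(\rd)=F^{0,\varphi}_{p_1,2}(\rd)$, i.e.\ $s_1=0$, $q_1=2$, keeping $s_2=s$, $q_2=q$: then \eqref{E-F-1} is again inherited, the $\ell_\infty$-condition reads $\{2^{j(s-d/p_2)}\varphi(2^{-j})^{-1}\}_j\in\ell_\infty$, and the dichotomy yields ``$s<0$, or $s=0$ and $q\geq 2$''. These match the stated conditions verbatim.

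For parts (iii) and (iv), where $p_1<p_2$, I would apply the equivalence part Corollary~\ref{Cor-Ef-F}(ii) under the additional hypothesis \eqref{embe-a-4'}, which is exactly the assumption promoting the separate sufficiency and necessity statements to an ``if and only if''. The same two substitutions reduce \eqref{E-Lr} and \eqref{Mf-F} to $F^{s,\varphi}_{p_1,q}(\rd)\eb F^0_{p_2,2}(\rd)$ and $F^{0,\varphi}_{p_1,2}(\rd)\eb F^s_{p_2,q}(\rd)$, respectively. Condition \eqref{e-f-c1} is inherited directly, while \eqref{e-f-c2}, namely $s_2\leq s_1-\tfrac{d}{\rphi{\varphi}}(1-\tfrac{p_1}{p_2})$, specialises to $s\geq \tfrac{d}{\rphi{\varphi}}(1-\tfrac{p_1}{p_2})$ in (iii) (with $s_1=s$, $s_2=0$) and to $s\leq -\tfrac{d}{\rphi{\varphi}}(1-\tfrac{p_1}{p_2})$ in (iv) (with $s_1=0$, $s_2=s$).

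The argument is thus bookkeeping rather than fresh analysis, and I expect no genuine obstacle. The only points needing care are to confirm that the applicability hypotheses of the underlying identities hold --- $1<p<\infty$ for the relevant $p$, and, for (ii) and (iv), the validity of \eqref{intc} for the index $2$ so that the Morrey space may legitimately be identified with $\mathcal{E}^0_{\varphi,p_1,2}(\rd)$ --- and to check that the abstract parameter dichotomy of Corollary~\ref{Cor-Ef-F}(i) translates without loss into the sign conditions on $s$ together with the comparisons of $q$ with $2$. Once these are verified the four statements follow immediately.
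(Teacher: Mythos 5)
Your proposal is correct and coincides with the paper's own proof: the paper likewise applies Corollary~\ref{Cor-Ef-F}, using \eqref{E-Lp} (i.e.\ $s_2=0$, $q_2=2$) for \eqref{E-Lr} and \eqref{E=Mf} (i.e.\ $s_1=0$, $q_1=2$) for \eqref{Mf-F}, with the same parameter bookkeeping you carried out. The care points you flag (the range $1<p<\infty$ and the need of \eqref{intc} for the fine index $2$) are exactly the applicability conditions the paper relies on as well.
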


\begin{proof}
We apply Corollary~\ref{Cor-Ef-F}. 
Here the result for \eqref{E-Lr} is a consequence of \eqref{E-Lp}, that is, when $s_2=0$ and $q_2=2$. Again the extension to \eqref{E-Lr} relies on \eqref{E-Lp} (and obvious counterparts for parts (a) and (b) in Corollary~\ref{Cor-Ef-F}(ii) could be given). The argument for \eqref{Mf-F} is parallel, now based on \eqref{E=Mf}, choosing then $s_1=0$ and $q_1=2$ in Corollary~\ref{Cor-Ef-F}.
\end{proof}

\begin{remark}
Let $1<p_2\leq p_1<\infty$. Then parts (i) and (ii) of the above corollary imply, in particular, that $\mathcal{M}_{\varphi,p_1}(\rd)\eb L_{p_2}(\rd)$ if, and only, $\varphi(t) \gtrsim t^{d/p_2}$, $t>0$, in good coincidence with \eqref{elme2}. Again, parts (iii) and (iv) amount to the finding that there is no embedding $\mathcal{M}_{\varphi,p_1}(\rd)\eb L_{p_2}(\rd)$ in case of $1<p_1<p_2<\infty$ if \eqref{embe-a-4'} and $\rphi{\varphi}<\infty$, recall also our discussion in Remark~\ref{R-M-M-emb-2}. 
\end{remark}

}

\begin{corollary}\label{Cor-F-Ef}
    Let $s_i\in\real$, $0<p_i<\infty$ and $0<q_i\leq\infty$, $i=1,2$. Assume that $\varphi\in\mathcal{G}_{p_2}$ and $\varphi$ satisfies \eqref{intc} when $q_2<\infty$. We consider the embedding
    \begin{align}\label{F-E}
        F_{p_1,q_1}^{s_1}(\rd)\eb \mathcal{E}^{s_2}_{\varphi, p_2,q_2}(\rd)=F^{s_2,\varphi}_{p_2,q_2}(\rd).
    \end{align}
    \begin{enumerate}[\bfseries\upshape  (i)]
    \item If $p_1\geq p_2$, then \eqref{F-E} is continuous if, and only if,
    \begin{align*}
        \sup_{t\geq 1}t^{-\frac{d}{p_1}}\varphi(t)<\infty
    \end{align*}
    and
    \begin{align*}
        \left\{2^{j(s_2-s_1+\frac{d}{p_1})}\varphi(2^{-j})\right\}_{j\in\nat}\in\ell_\infty
    \end{align*}
    with
    \begin{align*}
        s_1>s_2\qquad\text{or}\qquad s_1=s_2\quad\text{and}\quad q_1\leq q_2.
    \end{align*}
    \item If $p_1<p_2$, then \eqref{F-E} is continuous if, and only if,
    \begin{align}\label{f-e-c}
        \left\{2^{j(s_2-s_1+\frac{d}{p_1})}\varphi(2^{-j})\right\}_{j\in\nat}\in\ell_\infty.
    \end{align}
    \end{enumerate}
\end{corollary}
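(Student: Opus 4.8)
The plan is to realise both spaces in \eqref{F-E} as generalised Triebel--Lizorkin-type spaces and then read off the result from Corollary~\ref{main-fw}. First I would set $\psi(t):=t^{d/p_1}$ and record that $F_{p_1,q_1}^{s_1}(\rd)=F_{p_1,q_1}^{s_1,\psi}(\rd)$ (Remark~\ref{rmk-tau-spaces}), that $\psi\in\mathcal{G}_{p_1}$ by Examples~\ref{ex-phi}(i) with the endpoint exponent $u=d/p_1$, that $\psi$ satisfies \eqref{intc} for every $q_1$ (take $0<\varepsilon<d/p_1$), and that $\rphi{\psi}=p_1$, whence $\lim_{t\to0}\psi(t)t^{-d/\rphi{\psi}}=1$ so that $\psi$ also fulfils \eqref{embe-a-4}. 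On the target side Theorem~\ref{rswm}(i) gives $\mathcal{E}^{s_2}_{\varphi,p_2,q_2}(\rd)=F^{s_2,\varphi}_{p_2,q_2}(\rd)$. Thus \eqref{F-E} is precisely \eqref{efwvp} with $\varphi_1=\psi$ and $\varphi_2=\varphi$, and Corollary~\ref{main-fw} applies (the normalisation $\psi(1)=\varphi(1)=1$ being automatic).

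In part (i), $p_1\geq p_2$, hence $\varrho=1$; Corollary~\ref{main-fw}(i) states that \eqref{efwvp} holds iff \eqref{embee0_2} and \eqref{embe-a-1} hold, with $q_1\leq q_2$ when $s_1=s_2$. Since $\psi(2^\nu)^\varrho=2^{\nu d/p_1}$, condition \eqref{embee0_2} reads $\sup_{t\geq1}t^{-d/p_1}\varphi(t)<\infty$, the first condition of the statement. With $\alpha_j=\sup_{\nu\le j}\varphi(2^{-\nu})2^{\nu d/p_1}$ I would then show that \eqref{embe-a-1} is equivalent to the two requirements $s_1\geq s_2$ and $\{2^{j(s_2-s_1+d/p_1)}\varphi(2^{-j})\}_j\in\ell_\infty$. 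Indeed $\alpha_j\geq\varphi(1)=1$ forces $s_1\geq s_2$, and $\alpha_j\geq\varphi(2^{-j})2^{jd/p_1}$ gives the $\ell_\infty$-condition; conversely, once $s_1\geq s_2$ the factor $2^{j(s_2-s_1)}$ is non-increasing, so for every $\nu\le j$ one has $2^{j(s_2-s_1)}\varphi(2^{-\nu})2^{\nu d/p_1}\le 2^{\nu(s_2-s_1)}\varphi(2^{-\nu})2^{\nu d/p_1}$, and the supremum defining $\alpha_j$ collapses onto the diagonal. Combined with the $q$-condition, this is exactly the statement of (i).

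In part (ii), $p_1<p_2$ and $\varrho=p_1/p_2<1$; because $\psi$ satisfies \eqref{embe-a-4}, Corollary~\ref{main-fw}(ii) yields that \eqref{efwvp} holds iff \eqref{embee0_2} and \eqref{embe-a-2} hold. Here $\psi(2^\nu)^\varrho=2^{\nu d/p_2}$, so \eqref{embee0_2} becomes $\sup_{t\geq1}t^{-d/p_2}\varphi(t)<\infty$, which is automatic since $\varphi\in\mathcal{G}_{p_2}$ forces $t^{-d/p_2}\varphi(t)\le\varphi(1)=1$ for $t\geq1$; no constraint survives. The point is that the envelope $\alpha_j=\sup_{\nu\le j}\varphi(2^{-\nu})2^{\nu d/p_2}$ now collapses to the diagonal by itself: $t\mapsto t^{-d/p_2}\varphi(t)$ is non-increasing, so $\varphi(2^{-\nu})2^{\nu d/p_2}$ is non-decreasing in $\nu$ and $\alpha_j=\varphi(2^{-j})2^{jd/p_2}$. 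Using $\rphi{\psi}=p_1$, the exponent in \eqref{embe-a-2} equals $s_2-s_1+\tfrac d{p_1}(1-\varrho)=s_2-s_1+\tfrac d{p_1}-\tfrac d{p_2}$, and multiplying by $\alpha_j$ turns \eqref{embe-a-2} into exactly \eqref{f-e-c}.

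The only real work---and the main obstacle---is the translation of the abstract conditions \eqref{embe-a-1} and \eqref{embe-a-2}, which are phrased through the monotone envelope $\alpha_j$, into the explicit diagonal conditions of the corollary. In case (ii) this is immediate from the defining monotonicity of $\mathcal{G}_{p_2}$, which makes $\alpha_j$ coincide with its diagonal and even renders \eqref{embee0_2} vacuous; in case (i) one must invoke the sign condition $s_1\geq s_2$ (forced by $\alpha_j\geq1$) to absorb the tail of the supremum, which is precisely why the statement records the $s$-comparison as a separate clause. No new sequence-space constructions are needed, since both directions of the equivalence are already contained in Corollary~\ref{main-fw}.
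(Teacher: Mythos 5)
Your overall route is the paper's own: realise $F^{s_1}_{p_1,q_1}(\rd)$ as a generalised space with $\varphi_1(t)=t^{d/p_1}$ and quote the general two-scale embedding results (the paper writes the source as $\mathcal{E}^{s_1}_{\varphi_1,p_1,q_1}(\rd)$ and cites Theorems~\ref{E-E-case1} and \ref{E-E-case2}; you write it as $F^{s_1,\psi}_{p_1,q_1}(\rd)$ and cite Corollary~\ref{main-fw}, which is the same pair of results transported by Theorem~\ref{rswm}(i)). Your part (ii) is correct and coincides with the paper's computation: \eqref{embee0_2} is vacuous because $\varphi\in\mathcal{G}_{p_2}$, the envelope is exactly $\alpha_j=2^{jd/p_2}\varphi(2^{-j})$ by monotonicity, $\rphi{\psi}=p_1$ makes \eqref{embe-a-4} automatic, and \eqref{embe-a-2} turns into \eqref{f-e-c}.

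In part (i), however, your converse step fails as written. You bound, for $\nu\le j$, the quantity $2^{j(s_2-s_1)}\varphi(2^{-\nu})2^{\nu d/p_1}$ by $2^{\nu(s_2-s_1+d/p_1)}\varphi(2^{-\nu})$ and claim the supremum defining $\alpha_j$ collapses onto the diagonal; but that supremum runs over \emph{all} integers $\nu\le j$, and for $\nu<0$ your majorant $2^{\nu(s_2-s_1)}\bigl(2^{\nu d/p_1}\varphi(2^{-\nu})\bigr)$ is not controlled by the diagonal condition, which only concerns indices in $\nat$. Indeed, if $s_2<s_1$ the factor $2^{\nu(s_2-s_1)}$ blows up as $\nu\to-\infty$; for instance, for a $\varphi$ with $\varphi(t)=t^{d/p_1}$ for $t\geq 1$ your majorant is $+\infty$ for every $j$, even though $2^{j(s_2-s_1)}\alpha_j$ is in fact bounded there. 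The repair is one line and uses only hypotheses you already carry: split the supremum at $\nu=0$; for $\nu\le 0$ keep the factor $2^{j(s_2-s_1)}\le 1$ and bound $\varphi(2^{-\nu})2^{\nu d/p_1}$ by \eqref{embee0_2}, while for $1\le \nu\le j$ your inequality together with the diagonal condition applies. Alternatively you can avoid this translation altogether, as the paper does, by invoking Theorem~\ref{E-E-case1} instead of the $\alpha_j$-formulation: its conditions \eqref{embee0_1}, \eqref{embee00_1}, \eqref{cond-s-1}, \eqref{cond-s-2} are already in the diagonal form of the statement, so part (i) follows with no further work.
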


\begin{proof}
    Let $\varphi_1(t)=t^\frac{d}{p_1}$, $t>0$, then $\MfFa(\rd)= F^{s_1}_{p_1,q_1}(\rd)$. Now (i) is a consequence of Theorem~\ref{E-E-case1}, while (ii) follows from  Theorem~\ref{E-E-case2}: note that \eqref{embee0_2} reads as
    \begin{align*}
        \sup_{\nu\leq 0}2^{\nu\frac{d}{p_2}}\varphi(2^{-\nu})\leq C,
    \end{align*}
    but this is always true due to $\varphi\in\mathcal{G}_{p_2}$. Similarly, we have
    \begin{align*}
        \alpha_j\sim\max_{\nu=0,\cdots,j}2^{\nu\frac{d}{p_2}}\varphi(2^{-\nu})=2^{j\frac{d}{p_2}}\varphi(2^{-j}),
    \end{align*}
    such that \eqref{embe-a-3} can be rewritten as \eqref{f-e-c}.     
    Moreover, \eqref{embe-a-4} always holds in this setting since $\rphi{\varphi_1}=p_1$. Hence, we finish the proof.
\end{proof}

\begin{remark}  One could explicate the counterpart of Corollary~\ref{Cor-F-Ef} in the sense of Corollary~\ref{Cor-Ef-F-spec}, using $L_{p_1}(\rd) = F^0_{p_1,2}(\rd)$, $1<p_1<\infty$, and $\mathcal{M}_{\varphi,p_2}(\rd)=\mathcal{E}^0_{\varphi,p_2,2}(\rd)$, $1<p_2<\infty$, in \eqref{F-E}, respectively. We leave it for the reader.
 \end{remark}

 \begin{corollary}
   Let $0<p_i<\infty$, $s_i\in\real$, $0<q_i\leq\infty$, $i=1,2$, and $\varphi\in \mathcal{G}_{\max(p_1,p_2)}$. We assume without loss of generality that $\varphi(1)=1$. Assume in addition that \eqref{intc}  holds when $q_i<\infty$.
   \begin{enumerate}[\bfseries\upshape  (i)]
\item     Let $p_1\geq p_2$, $\varphi\in \mathcal{G}_{p_1}$.    Then
    	\begin{align}\label{vf1=vf2}
			\mathcal{E}^{s_1}_{\varphi, p_1,q_1}(\rd)\eb  \mathcal{E}^{s_2}_{\varphi, p_2,q_2}(\rd)
		\end{align}
		if, and only if, 
		\begin{align*}                  & s_1 >  s_2,  \qquad \text{or} \\
& s_1=s_2\quad \text{and}\quad q_1\le q_2. 
		\end{align*}
\item
                Let $p_1 < p_2$, $\varphi\in \mathcal{G}_{p_2}$, and $\varrho = \frac{p_1}{p_2}<1$.
    \begin{enumerate}[\bfseries\upshape  (a)]
\item   
  Then \eqref{vf1=vf2} holds if
		\begin{align}\label{embe-a-2'}
             \left\{2^{j(s_2-s_1+\frac{\nd}{\rphi{\varphi}}(1-\varrho))}\varphi(2^{-j})^{1-\varrho}\right\}_{j\in\nat} \in \ell_\infty.
                  \end{align}
 \item If  \eqref{vf1=vf2} holds, then $s_1\geq s_2$.
   \item
                  Assume that \eqref{embe-a-4'} is satisfied. Then the embedding \eqref{vf1=vf2} holds if, and only if,  $s_1\geq s_2$.
    \end{enumerate}
    \end{enumerate}
 \end{corollary}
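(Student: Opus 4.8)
The plan is to obtain the whole statement by specialising Theorem~\ref{E-E-case1} (for part~(i)) and Theorem~\ref{E-E-case2} (for part~(ii)) to the diagonal choice $\varphi_1=\varphi_2=\varphi$, so that $\rphi{\varphi_1}=\rphi{\varphi}$, and then simplifying the resulting conditions. In part~(i) we have $p_1\ge p_2$, hence $\varrho=1$; in part~(ii) we have $p_1<p_2$, hence $\varrho=\tfrac{p_1}{p_2}<1$. Observe that the embedding \eqref{vf1=vf2} is precisely the instance $\varphi_1=\varphi_2=\varphi$ of \eqref{emff_1}, so no machinery beyond the two master theorems is needed.

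For part~(i) I would invoke Theorem~\ref{E-E-case1}. With $\varphi_1=\varphi_2=\varphi$ and $\varrho=1$ the global condition \eqref{embee0_1} reads $\sup_{\nu\in\no}\varphi(2^{\nu})/\varphi(2^{\nu})=1<\infty$ and is therefore automatic, while \eqref{embee00_1} collapses to $\{2^{j(s_2-s_1)}\}_{j}\in\ell_\infty$, i.e.\ $s_1\ge s_2$, which is already contained in \eqref{cond-s-1}--\eqref{cond-s-2}. Thus the only surviving restrictions are \eqref{cond-s-1}--\eqref{cond-s-2}, which is the asserted criterion. For part~(ii) I would appeal to Theorem~\ref{E-E-case2} under the same substitution. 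The key computation is that of $\alpha_j=\sup_{\nu\le j}\varphi(2^{-\nu})^{1-\varrho}$: splitting the supremum into the range $\nu\le 0$ (large arguments $2^{-\nu}\ge1$), which reproduces exactly the global quantity controlled by \eqref{embee0_2}, and the range $0\le\nu\le j$ (small arguments) shows how the behaviour of $\varphi$ near $0$ and near $\infty$ enters separately. Parts~(a), (b) and~(c) then mirror, respectively, the sufficiency, necessity and equivalence statements of Theorem~\ref{E-E-case2}(i)--(iii). In particular, under the local power control \eqref{embe-a-4'} one has, via Lemma~\ref{Gop}, $\varphi(t)\sim t^{d/\rphi{\varphi}}$ as $t\to0$, so that $\varphi(2^{-j})^{1-\varrho}\sim 2^{-jd(1-\varrho)/\rphi{\varphi}}$ and the sequences appearing in \eqref{embe-a-2}/\eqref{embe-a-3} become explicit powers of $2$, collapsing the smoothness requirement to a single inequality between $s_1$ and $s_2$.

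The step I expect to be the main obstacle is the careful bookkeeping of the global condition \eqref{embee0_2}, which in the diagonal case becomes $\sup_{\nu\in\no}\varphi(2^{\nu})^{1-\varrho}<\infty$, a genuine boundedness requirement on $\varphi$ near $\infty$; it must be carried along throughout (and is automatically necessary by Theorem~\ref{E-E-case2}(ii)) and controls the behaviour at $\infty$ independently of the local quantity $\rphi{\varphi}$ governing the $s$- and $q$-conditions. Concretely, one must keep the supremum $\alpha_j$ and the pointwise quotient $\varphi(2^{-j})^{1-\varrho}$ apart, since they agree only when $\varphi$ is comparable to a constant near $0$. Once these reductions are in place, parts~(a)--(c) follow by transcribing Theorem~\ref{E-E-case2}, using the identity $\ftt(\rd)=\MfF(\rd)$ from Theorem~\ref{rswm}(i) to argue on whichever side of the two scales is more convenient.
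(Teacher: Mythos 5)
Your reduction of part (i) to Theorem~\ref{E-E-case1} is correct: with $\varphi_1=\varphi_2=\varphi$ and $\varrho=1$, condition \eqref{embee0_1} is trivially satisfied, \eqref{embee00_1} collapses to $s_1\geq s_2$, and what survives is exactly \eqref{cond-s-1}--\eqref{cond-s-2}. Part (ii)(b) also follows as you indicate, since \eqref{embe-a-3} together with $\alpha_j\geq 1$ and $\varphi(2^{-j})^{\varrho-1}\geq \varphi(1)^{\varrho-1}=1$ forces $s_1\geq s_2$. Note that the paper states this corollary without any proof, so there is no argument of the authors to compare against; the specialisation route you chose is the only one available.

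The gap is in parts (ii)(a) and (ii)(c). You correctly warn that one must keep the supremum $\alpha_j$ and the pointwise quotient $\varphi(2^{-j})^{1-\varrho}$ apart and that \eqref{embee0_2} ``must be carried along throughout'', but you then assert that (a)--(c) simply mirror Theorem~\ref{E-E-case2}(i)--(iii); that is precisely where the argument breaks. In the diagonal case the quotient $\varphi(2^{-\nu})^{1-\varrho}$ is non-increasing in $\nu$, so $\alpha_j=\sup_{\nu\le j}\varphi(2^{-\nu})^{1-\varrho}=\lim_{t\to\infty}\varphi(t)^{1-\varrho}$ is a constant, finite exactly when $\varphi$ is bounded, i.e.\ exactly when \eqref{embee0_2} holds. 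Hence a faithful transcription of Theorem~\ref{E-E-case2}(i) yields sufficiency of ``$\varphi$ bounded and $s_1-s_2\ge\frac{d}{\rphi{\varphi}}(1-\varrho)$'', which is strictly stronger than \eqref{embe-a-2'}, and under \eqref{embe-a-4'} part (iii) of that theorem yields the criterion ``$\varphi$ bounded and $s_1-s_2\ge\frac{d}{\rphi{\varphi}}(1-\varrho)$'', not ``$s_1\ge s_2$''. Your plan therefore cannot produce (a) and (c) as printed; in fact these parts conflict with the very theorems you invoke. Take $\varphi(t)=t^{d/u}$ with $p_1<p_2\le u$: then \eqref{embe-a-4'} holds, $\rphi{\varphi}=u$, and \eqref{embe-a-2'} reduces to $s_2\le s_1$, so (a) and (c) would give $\mathcal{E}^{s_1}_{u,p_1,q_1}(\rd)\hookrightarrow\mathcal{E}^{s_2}_{u,p_2,q_2}(\rd)$; but \eqref{embee0_2} fails, so the necessity part, Theorem~\ref{E-E-case2}(ii) (equivalently, for $p_2<u$, Theorem~\ref{at2} via \eqref{eeu2}), says no such embedding exists for any $s_1,s_2$. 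So either the corollary needs the extra global hypothesis \eqref{embee0_2} with the threshold shifted to $s_1-s_2\ge\frac{d}{\rphi{\varphi}}(1-\varrho)$, or \eqref{embe-a-2'} must be read with $\alpha_j$ in place of $\varphi(2^{-j})^{1-\varrho}$; as stated, no transcription of Theorem~\ref{E-E-case2}, yours or any other, can close this gap.
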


We add an immediate consequence of our above findings in terms of the spaces $\btt(\rd)$ and discuss it below.

  \begin{corollary}\label{Cor-B-B-1}
Let $0<p_i<\infty$, $s_i\in\real$, $0<q_i\leq \infty$ and $\varphi_i\in\mathcal{G}_{p_i}$, $i=1,2$.  
    We assume without loss of generality that $\varphi_1(1)=\varphi_2(1)=1$. Let $\alpha_j=\sup_{\nu\leq j}\frac{\varphi_2(2^{-\nu})}{\varphi_1(2^{-\nu})^\varrho}$, $j\in\no$, and $\varrho=\min\left(1,\frac{p_1}{p_2}\right)$. 
    \begin{enumerate}[\bfseries\upshape  (i)]
      \item
        Then we have the continuous embedding 
    \begin{align}\label{bfbf}
        \btta(\rd)\eb\bttb(\rd)
    \end{align}
    if \eqref{embee0_2} and
    \begin{equation}
      \label{suff-B-B}
      \left\{ 2^{j(s_2-s_1) }\alpha_j \varphi_1(2^{-j})^{\varrho-1}\right\}_{j\in\nat} \in \ell_{q_2}.
    \end{equation}
  \item
    If \eqref{bfbf} is satisfied, then \eqref{embee0_2} is satisfied and
     \begin{equation}
      \label{nec-B-B}
      \left\{ 2^{j(s_2-s_1)} \alpha_j \varphi_1(2^{-j})^{\varrho-1}\right\}_{j\in\nat} \in \ell_{\infty}.
    \end{equation}
    In particular, when $q_2=\infty$, then \eqref{bfbf} holds if, and only if, 
    \eqref{embee0_2} and \eqref{nec-B-B} are satisfied.
  \item
  The embedding  \eqref{bfbf} is never compact.
\end{enumerate}
  \end{corollary}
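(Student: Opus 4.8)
The plan is to move everything to the wavelet coefficient spaces and then read off the two directions from the Besov--Morrey embedding theorem (Theorem~\ref{main-n}) together with the relations of Theorem~\ref{rswm}(ii). By the wavelet isomorphism of Theorem~\ref{bwd}, the embedding \eqref{bfbf} is equivalent to $\tilde{b}^{s_1,\varphi_1}_{p_1,q_1}(\rd)\eb\tilde{b}^{s_2,\varphi_2}_{p_2,q_2}(\rd)$. The coarse-scale (father-wavelet) part of the $\tilde{b}$-norm is the Morrey norm of a function constant on unit cubes; for such functions the cubes of side $\le 1$ only produce $\sup_m|\lambda_m|$, while for cubes of side $2^\nu\ge 1$ a H\"older/Jensen comparison between the $p_1$- and $p_2$-averages (inserting $(\sup_m|\lambda_m|)^{1-\varrho}$) reduces the claim to $\varphi_2(2^\nu)\varphi_1(2^\nu)^{-\varrho}\lesssim 1$, i.e. exactly \eqref{embee0_2}. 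Thus the whole matter reduces to the fine part $b^{s_1,\varphi_1}_{p_1,q_1}(\rd)\eb b^{s_2,\varphi_2}_{p_2,q_2}(\rd)$, and I recall from the proof of Theorem~\ref{rswm}(ii) the bridges $n^{s}_{\varphi,p,q}\eb b^{s,\varphi}_{p,q}$, with equality for $q=\infty$.

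For part (ii) I would run a chain of embeddings. Assuming \eqref{bfbf}, elementary monotonicity in the fine index gives $\bttb\eb B^{s_2,\varphi_2}_{p_2,\infty}(\rd)$, and combining this with $B^{s_2,\varphi_2}_{p_2,\infty}(\rd)=\mathcal{N}^{s_2}_{\varphi_2,p_2,\infty}(\rd)$ and $\MfBa(\rd)\eb\btta(\rd)$ (both from Theorem~\ref{rswm}(ii)) yields $\MfBa(\rd)\eb\mathcal{N}^{s_2}_{\varphi_2,p_2,\infty}(\rd)$. Feeding this into the Besov--Morrey characterisation (Theorem~\ref{main-n}, in its limiting case of target fine index $\infty$, so that $q^\ast=\infty$) turns \eqref{emfb2} and \eqref{emfb3} into precisely \eqref{embee0_2} and the $\ell_\infty$-condition \eqref{nec-B-B}, upon writing $\varphi_1(2^{-j})^{\varrho}/\varphi_1(2^{-j})=\varphi_1(2^{-j})^{\varrho-1}$. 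Since for $q_2=\infty$ the sufficient condition \eqref{suff-B-B} reads literally as \eqref{nec-B-B}, this already gives the stated equivalence in that case.

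Part (i) is the genuine work, because no inequality between the $n$- and $b$-norms reduces it to Theorem~\ref{main-n}: the reverse embedding $B^{s,\varphi}_{p,q}\hookrightarrow\mathcal{N}^{s}_{\varphi,p,q}$ fails in general. I would therefore prove $b^{s_1,\varphi_1}_{p_1,q_1}\eb b^{s_2,\varphi_2}_{p_2,q_2}$ directly, by a computation parallel to the sufficiency part of Theorem~\ref{main-n}. Fixing a target dyadic cube $P$ and estimating its inner $(q_2,p_2)$-functional scale by scale, one passes in the space variable from the $p_1$- to the $p_2$-(quasi-)norm via $\ell_{p_1}\eb\ell_{p_2}$ when $p_1\le p_2$ and via a finite-dimensional H\"older estimate (producing powers of $|P|2^{jd}$) when $p_1\ge p_2$; this is exactly where the exponent $\varrho=\min(1,p_1/p_2)$ and the factor $\varphi_1(2^{-j})^{\varrho-1}$ arise. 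The cubes with $\ell(P)\ge 1$ are absorbed by \eqref{embee0_2}, while the summation over scales $j$ is carried out with H\"older's inequality and the $\ell_{q_2}$-summability \eqref{suff-B-B}, with $\alpha_j$ packaging the worst local-to-global ratio up to scale $j$. I expect this scale-by-scale balancing of the three parameters $(s,p,\varphi)$ --- keeping the supremum over $P$ outside the $\ell_{q_2}$-sum --- to be the main obstacle; the appearance of $\ell_{q_2}$ (rather than the sharp $\ell_{q^\ast}$ of Theorem~\ref{main-n}) is the price of working in the larger $b$-space and accounts for the gap between \eqref{suff-B-B} and \eqref{nec-B-B}.

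Finally, for part (iii) I would reuse the sandwich: whenever \eqref{bfbf} holds, the composition $\MfBa(\rd)\eb\btta(\rd)\eb\bttb(\rd)\eb\mathcal{N}^{s_2}_{\varphi_2,p_2,\infty}(\rd)$ is continuous, and its outer embedding $\MfBa(\rd)\eb\mathcal{N}^{s_2}_{\varphi_2,p_2,\infty}(\rd)$ is never compact by Theorem~\ref{main-n}. If the middle embedding \eqref{bfbf} were compact, the whole composition would be compact, a contradiction; hence \eqref{bfbf} is never compact.
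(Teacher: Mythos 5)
Your parts (ii) and (iii) reproduce the paper's argument exactly: the chain $\MfBa(\rd)\eb\btta(\rd)\eb\bttb(\rd)\eb B^{s_2,\varphi_2}_{p_2,\infty}(\rd)=\mathcal{N}^{s_2}_{\varphi_2,p_2,\infty}(\rd)$ fed into Theorem~\ref{main-n} (with $q^\ast=\infty$), and non-compactness by composition. The problem is part (i). Your premise that ``no inequality between the $n$- and $b$-norms reduces it to Theorem~\ref{main-n}'' is mistaken, and it makes you replace a two-line argument by a computation. One does not need the (indeed false, for finite $q$) embedding $B^{s,\varphi}_{p,q}(\rd)\eb\mathcal{N}^{s}_{\varphi,p,q}(\rd)$; the paper uses the elementary relaxation $\btta(\rd)\eb B^{s_1,\varphi_1}_{p_1,\infty}(\rd)$ followed by the coincidence $B^{s_1,\varphi_1}_{p_1,\infty}(\rd)=\mathcal{N}^{s_1}_{\varphi_1,p_1,\infty}(\rd)$ from Theorem~\ref{rswm}(ii). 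Applying Theorem~\ref{main-n} with source fine index $\infty$ gives $q^\ast=q_2$, so \eqref{emfb2} and \eqref{emfb3} become precisely \eqref{embee0_2} and \eqref{suff-B-B}, and one concludes with $\MfBb(\rd)\eb\bttb(\rd)$. The $\ell_{q_2}$ you flagged as ``the price of working in the larger $b$-space'' is exactly the price of relaxing $q_1$ to $\infty$ in this soft chain; noticing that would have pointed you to the intended proof.

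Your substitute for (i) — a direct proof of $b^{s_1,\varphi_1}_{p_1,q_1}(\rd)\eb b^{s_2,\varphi_2}_{p_2,q_2}(\rd)$ — has a genuine gap in the case $p_1<p_2$. Fix a dyadic cube $P$ and let $A_j$, $B_j$ denote the inner $\ell_{p_1}$- and $\ell_{p_2}$-functionals at scale $j$ inside $P$ (weighted by $2^{j(s_i-d/p_i)}$). The step you invoke, $\ell_{p_1}\eb\ell_{p_2}$, only yields $B_j\le 2^{j(s_2-s_1)}2^{jd(\frac{1}{p_1}-\frac{1}{p_2})}A_j$, and after inserting the weights $\varphi_i(\ell(P))|P|^{-1/p_i}$ one is left with an excess factor $(2^j\ell(P))^{d(1-\varrho)/p_1}$ that would have to be dominated by $\bigl(\varphi_1(\ell(P))/\varphi_1(2^{-j})\bigr)^{1-\varrho}$; but \eqref{Gp-def} gives precisely the reverse inequality, $\varphi_1(\ell(P))/\varphi_1(2^{-j})\le(2^j\ell(P))^{d/p_1}$ for $2^{-j}\le\ell(P)$, with equality only for $\varphi_1(t)=t^{d/p_1}$. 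In particular, the factor $\varphi_1(2^{-j})^{\varrho-1}$ cannot ``arise'' from this step, which never sees $\varphi_1$ at all. The repair is the interpolation you did use for the coarse-scale part but not here: test the source norm on the single cube $P'=Q_{j,m}$ to get $\sup_m|\lambda_{j,m}|\le 2^{-js_1}\varphi_1(2^{-j})^{-1}\|\lambda\mid b^{s_1,\varphi_1}_{p_1,q_1}(\rd)\|$, and write $|\lambda_{j,m}|^{p_2}\le|\lambda_{j,m}|^{p_1}\,\bigl(\sup_m|\lambda_{j,m}|\bigr)^{p_2-p_1}$; this simultaneously cancels the bad power $2^{jd(1-\varrho)/p_1}$ and produces $\varphi_1(2^{-j})^{\varrho-1}$, after which the $\ell_{q_2}$-summation against \eqref{suff-B-B} closes the estimate. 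As written, your sketch of (i) does not go through.
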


  \begin{proof}
    We combine Theorems~\ref{rswm} and \ref{main-n} to get the sufficiency {of \eqref{embee0_2} and} \eqref{suff-B-B} via
    \begin{equation*}
      \btta(\rd) \eb B^{s_1, \varphi_1}_{p_1,\infty}(\rd) = \mathcal{N}^{s_1}_{\varphi_1, p_1,\infty}(\rd) \eb \MfBb(\rd) \eb \bttb(\rd),
  \end{equation*}
  where \eqref{emfb3} coincides with \eqref{suff-B-B}, while the necessity comes from
  \begin{equation*}
    \MfBa(\rd) \eb \btta(\rd) \eb \bttb(\rd) \eb B^{s_2,\varphi_2}_{p_2,\infty}(\rd)\eb \mathcal{N}^{s_2}_{\varphi_2,p_2,\infty}(\rd)
    \end{equation*}
and the coincidence of \eqref{nec-B-B} with \eqref{emfb3} in this case. The non-compactness is concluded in the same way by Theorem~\ref{main-n}.  
  \end{proof}

  \begin{remark}
Obviously the result above is not yet sharp and complete. Apart from the cases when $p_i=q_i$, $i=1,2$, and thus Corollary~\ref{main-fw} can be rewritten for $\btt$ instead of $\ftt$, only some more special cases seem to be known in literature, cf. \cite[Thm.~4.14]{hl23} related to the cases $s_1-s_2 = \nd\left(\frac{1}{p_1} - \frac{1}{p_2}\right)>0$,  $\varphi_1(t) t^{-\frac{\nd}{p_1}} =\varphi_2(t) t^{-\frac{\nd}{p_2}}$, $t>0$, and \cite[Prop.~4.20]{hl23} related to the setting $s_1=s_2$, $p_1\geq p_2$,  $q_1\leq q_2$, $\varphi_1 =\varphi_2$. A more detailed discussion, like in       \cite[Theorem~2.5]{YHSY} dealing with the special case of spaces of type $B^{s, \tau}_{p, q}(\rd)$, recall Remark~\ref{rmk-tau-spaces}, seems not yet available. We postpone this question to a later occasion.
  \end{remark}


\subsection{Embeddings into $L_\infty(\rd)$}

We first recall some known results.

\begin{theorem}\label{eb-lif}
    \begin{enumerate}[\bfseries\upshape  (i)]
        \item Let $s\in\real$, $0<p\leq \infty$ and $0<q\leq \infty$. Then 
        \begin{align*}
            F_{p,q}^s(\rd)\eb L_\infty(\rd)
        \end{align*}
        if, and only if,
        \begin{align*}
            s>\frac{d}{p} \qquad\quad \text{or}\qquad\quad s=\frac{d}{p}\quad\text{and}\quad 0<p\leq 1.
        \end{align*}
        \item Let $s\in\real$, $0<p< u<\infty$ and $0<q\leq \infty$. Then
        \begin{align*}
            \MF(\rd)\eb L_\infty(\rd)
        \end{align*}
        if, and only if,
        \begin{align*}
            s>\frac{d}{u}.
        \end{align*}
        \item Let $s\in\real$, $0<p<\infty$, $0<q\leq \infty$ and $\varphi\in\Gp$. 
        Let $q'$ be given by $\frac{1}{q'}=\left(1-\frac{1}{q}\right)_+$, as usual. Then
        \begin{align*}
            \MfB(\rd)\eb L_\infty(\rd)
        \end{align*}
        if, and only if,
        \begin{align*}
            \left\{2^{-js}\varphi(2^{-j})^{-1}\right\}_{j\in\no}\in\ell_{q'}.
        \end{align*}
    \end{enumerate}
\end{theorem}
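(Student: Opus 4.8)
The plan is to reduce all three embeddings to their companion sequence spaces via the wavelet isomorphisms already at hand, and then to read off the $L_\infty$-condition from a single pointwise estimate on the wavelet expansion. The unifying observation is that for the Daubechies system in \eqref{wave2} one has $\|2^{-jd/2}\psi_{j,m}^G\mid L_\infty(\rd)\|\sim 1$ with supports of diameter $\sim 2^{-j}$, so for fixed $x\in\rd$ and fixed $j$ only boundedly many indices $m$ contribute. Hence for $f=\sum_m\lambda_m\psi_m+\sum_{G,j,m}\lzjm^G2^{-jd/2}\psi_{j,m}^G$ one obtains
\[
\|f\mid L_\infty(\rd)\|\lesssim \sup_{m\in\zd}|\lambda_m|+\sum_{G\in G^\ast}\sum_{j\in\no}\sup_{m\in\zd}|\lzjm^G|,
\]
so the whole problem becomes controlling $\sum_j\sup_m|\lzjm|$ by the relevant sequence-space quasi-norm, and conversely saturating it.

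I would treat part (iii) first, since it is the genuinely new ingredient and the other two parts specialise it. By Theorem~\ref{nwdp} it suffices to work in $\mfb(\rd)$. For sufficiency, testing the $\Mf$-norm of $\sum_m\lzjm\chi_{\qjm}$ on the single cube $P=\qjm$ with $|\lzjm|$ maximal yields the lower bound
\[
\sup_{m\in\zd}|\lzjm|\le \varphi(2^{-j})^{-1}\Big\|\sum_{m\in\zd}\lzjm\chi_{\qjm}\mid\Mf(\rd)\Big\|,
\]
so that, writing $a_j=2^{js}\|\sum_m\lzjm\chi_{\qjm}\mid\Mf(\rd)\|$ (whose $\ell_q$-quasi-norm is exactly $\|\lambda\mid\mfb(\rd)\|$), the pointwise estimate and Hölder's inequality give
\[
\|f\mid L_\infty(\rd)\|\lesssim\sum_{j}2^{-js}\varphi(2^{-j})^{-1}a_j\le\big\|\{2^{-js}\varphi(2^{-j})^{-1}\}_j\mid\ell_{q'}\big\|\;\|\lambda\mid\mfb(\rd)\|.
\]
For necessity I would concentrate the mass at one spatial index, $\lzjm=c_j$ for $m=0$ and $0$ otherwise, with $\{c_j\}$ an extremiser for the failure of $\{2^{-js}\varphi(2^{-j})^{-1}\}_j\in\ell_{q'}$ (the standard $\ell_q$–$\ell_{q'}$ duality extremiser, with the usual separate treatment of the degenerate range $q\le 1$, where $q'=\infty$); a coherent sign/point choice then makes the expansion unbounded near the origin while $\|\lambda\mid\mfb(\rd)\|<\infty$, contradicting boundedness. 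This is exactly the $\mathcal N$-type argument of \cite{hms23}.

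For (ii) I would sandwich $\MF$ by the Besov–Morrey scale via \eqref{se-au}, $\mathcal N^s_{u,p,\min(p,q)}(\rd)\eb\MF(\rd)\eb\mathcal N^s_{u,p,\infty}(\rd)$. The right embedding together with part (iii) applied to $\varphi(t)=t^{d/u}$, $q=\infty$ (so $q'=1$) gives sufficiency, since $\{2^{-js}2^{jd/u}\}_j=\{2^{-j(s-d/u)}\}_j\in\ell_1$ exactly when $s>d/u$. For necessity the left embedding reduces matters to $\mathcal N^s_{u,p,\min(p,q)}\eb L_\infty$, which by part (iii) forces $\{2^{-j(s-d/u)}\}_j\in\ell_{r'}$ with $r=\min(p,q)$; this already excludes $s<d/u$, while the borderline $s=d/u$ (admitted by (iii) alone when $r\le1$) must be ruled out by an explicit unbounded element of $\mathcal E^{d/u}_{u,p,q}(\rd)$. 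Part (i) is the classical statement $F^s_{p,q}(\rd)\eb L_\infty(\rd)$; here the boundary case $s=d/p$, $0<p\le1$ survives precisely because of the $F$-structure, and I would either quote it from \cite{SiT} or re-derive it from $f^s_{p,q}(\rd)$ using $\sup_m|\lzjm|\le(\sum_m|\lzjm|^p)^{1/p}$ for $p\le1$.

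The main obstacle is the endpoint analysis: correctly identifying the conjugate exponent $q'$ in (iii) (including the degenerate range $q\le1$), and above all explaining the mismatch between (i) and (ii) at critical smoothness — why $s=d/p$ is admissible for $F^s_{p,q}$ when $p\le1$ but $s=d/u$ is never admissible for $\MF$ when $p<u$. This is exactly why the borderline in (ii) cannot be settled by the sandwich alone and requires the separate counterexample construction.
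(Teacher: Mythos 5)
First, a point of reference: the paper itself gives \emph{no} proof of Theorem~\ref{eb-lif} --- it is a collection of known results quoted with citations ((i) from \cite{t20}, (ii) from \cite[Proposition~3.8]{hs14}, (iii) from \cite[Theorem~3.4]{hms23}, with forerunner \cite{hms22}). So your proposal has to be measured against those cited proofs and against the closely related self-contained arguments the paper does contain (Lemma~\ref{E-C-nec} and Proposition~\ref{Prop-E-Linf}). Your overall mechanism is the right one and essentially coincides with that of \cite{hms23}: reduction to sequence spaces via Theorem~\ref{nwdp}, the pointwise bound $\|f\mid L_\infty(\rd)\|\lesssim \sup_m|\lambda_m|+\sum_{G}\sum_j\sup_m|\lzjm^G|$, the lower bound $\varphi(2^{-j})\sup_m|\lzjm|\le\|\sum_m\lzjm\chi_{j,m}\mid\Mf(\rd)\|$ obtained by testing on $P=\qjm$, and H\"older in $j$ (correctly handled also for $q\le 1$, where $\ell_q\eb\ell_1$). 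Your necessity argument for (iii) (single spatial column, duality extremiser, coherent sign/point choice along nested cubes) is the standard one and is at the same level of rigour as the paper's own ``appropriately chosen'' nested construction in Lemma~\ref{E-C-nec}; likewise the deduction of (ii) for $s\neq d/u$, and of the necessity of $s>d/u$ when $\min(p,q)>1$, via the sandwich \eqref{se-au}, is sound.

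The genuine gap is the borderline case of (ii): $s=d/u$ with $r:=\min(p,q)\le 1$. You name it but do not close it. Here the sandwich plus (iii) is provably silent --- the constant sequence $\{2^{-j(s-d/u)}\}_j=\{1\}_j$ does lie in $\ell_{r'}=\ell_\infty$ --- and no soft embedding argument can replace it (Jawerth--Franke-type embeddings into $\mathcal{E}^{d/u}_{u,p,q}(\rd)$ from a space failing the $L_\infty$-embedding require a strict smoothness gap). One must construct $f\in \mathcal{E}^{d/u}_{u,p,q}(\rd)\setminus L_\infty(\rd)$ explicitly: take nested dyadic cubes $Q_{j,m_j}\ni x_0$, atoms $a_{j,m_j}$ with $a_{j,m_j}(x_0)=1$, and $\lzjm=1$ for $m=m_j$, zero otherwise; then $f(x)\to\infty$ as $x \to x_0$, while $\lambda\in e^{d/u}_{u,p,q}(\rd)$ \emph{precisely because} $p<u$ makes $\sum_{\ell\ge\nu}2^{-\ell d}\,2^{\ell d p/u}$ geometric. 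This is \cite[Proposition~3.8]{hs14}, and the paper reruns exactly this computation in Step~(iii) of the proof of Proposition~\ref{Prop-E-Linf}; it is the one ingredient of the theorem that explains the mismatch you point out between (i) and (ii), and without it your argument proves (ii) only for $\min(p,q)>1$. A secondary flaw: your proposed re-derivation of the borderline in (i) via $\sup_m|\lzjm|\le\bigl(\sum_m|\lzjm|^p\bigr)^{1/p}$ only bounds $\sum_j\sup_m|\lzjm|$ by the $b^{d/p}_{p,1}$-quasi-norm, and $F^{d/p}_{p,q}(\rd)$ does not embed into $B^{d/p}_{p,1}(\rd)$ when $q>1$ (the fine index $\max(p,q)$ in \eqref{se-bf} is sharp); a correct wavelet proof must restrict, for each $x$ and $j$, to the boundedly many cubes near $x$ and exploit their geometry. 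Your alternative of simply quoting \cite{SiT}/\cite{t20} for (i) is unobjectionable --- it is what the paper does.
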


\begin{remark}
\begin{enumerate}[\bfseries\upshape  (i)]
    \item One may find (i) in \cite{t20} and references therein. (ii) has been proved in \cite[Proposition 3.8]{hs14}. (iii) is a very recent result in \cite[Theorem 3.4]{hms23} with forerunner in \cite[Corollary~5.10]{hms22}. 
    \item In (i) and (ii), $L_\infty(\rd)$ can be replaced by $C(\rd)$, where $C(\rd)$ stands for the Banach space consisting of bounded uniformly continuous functions. As for (iii), this is also true, if one replaces $L_\infty(\rd)$ in the proof of \cite[Theorem 3.4]{hms23} by $C(\rd)$.
    \end{enumerate}
\end{remark}

We have the following Proposition in \cite[Lemma~4.12]{hl23} (note the different definition in view of $\varphi$, recall Remark~\ref{rmk-tau-spaces}).

\begin{proposition}[{\cite[Lemma~4.12, Prop.~4.17]{hl23}}]\label{af-f}
  Let $s\in\real$, $0<p<\infty$, $0<q\leq\infty$ and $\varphi\in\Gp$. Assume in addition that $\varphi$ satisfies \eqref{intc} when $q<\infty$ and $A_{p,q}^{s,\varphi}(\rd)$ denotes $F_{p,q}^{s,\varphi}(\rd)$. Assume that $\{2^{-js} \varphi(2^{-j})^{-1}\}_{j\in\nat} \in \ell_1$. Then 
    \begin{align}\label{awvp-f}
        A_{p,q}^{s,\varphi}(\rd)\eb C(\rd).
    \end{align}
    In particular, when $s>\frac{\nd}{p}$, then \eqref{awvp-f} is satisfied.
\end{proposition}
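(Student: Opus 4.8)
The plan is to read the embedding off the wavelet characterisations in Theorems~\ref{bwd} and \ref{fwd}. Writing $f\in \Att(\rd)$ through its Daubechies expansion
\[
f=\sum_{m\in\zd}\lambda_m\psi_m+\sum_{G\in G^\ast}\sum_{j\in\no}\sum_{m\in\zd}\lzjm^{G}2^{-\frac{jd}{2}}\psi_{j,m}^G,
\]
with coefficient sequence $\lambda$ of finite $\tilde{a}_{p,q}^{s,\varphi}(\rd)$-norm, the point is that every building block is a bounded, compactly supported, continuous function (the wavelets are $C^L$ with $L\ge 1$ by \eqref{lsp}/\eqref{lspq}). Thus, if I can show that the series converges absolutely and uniformly on $\rd$ with a bound by $\|f\mid\Att(\rd)\|$, the limit will automatically be a bounded uniformly continuous function, i.e. $f\in C(\rd)$, and the embedding constant will be controlled. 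So it suffices to estimate $\|f\mid L_\infty(\rd)\|$ by the sequence-space norm and to verify uniform convergence of the tail.

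The decisive ingredient is a sharp scale-localised bound on the wavelet coefficients. First I would test the sequence norms against a single dyadic cube: choosing $P=\qjm$ in Definition~\ref{def-seq-b-f} and keeping only the summand attached to this cube (all others being nonnegative), one gets in both the $b$- and the $f$-case
\[
\varphi(2^{-j})\,2^{js}\,|\lzjm|\le \|\{\lzjm\}\mid \atts(\rd)\|,
\]
hence $\sup_m|\lzjm^G|\le 2^{-js}\varphi(2^{-j})^{-1}\,\|\{\lzjm^G\}\mid\atts(\rd)\|$ for every $j\in\no$ and $G\in G^\ast$. The same single-cube test applied to a unit cube $P=Q_{0,m}$ gives $\sup_m|\lambda_m|\le \|\sum_m\lambda_m\chi_{Q_m}\mid\Mf(\rd)\|$, using $\varphi(1)=1$. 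Together these bound every piece of the expansion by the corresponding summand of $\|\lambda\mid\tilde{a}_{p,q}^{s,\varphi}(\rd)\|$.

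Next I would pass to the pointwise estimate. For fixed $x$ and fixed $j$ the functions $\psi_{j,m}^G(\cdot)$ have supports of bounded overlap, so only boundedly many indices $m$ contribute; since $2^{-jd/2}\|\psi_{j,m}^G\mid L_\infty(\rd)\|\lesssim 1$ and analogously for the $\psi_m$, I obtain
\[
|f(x)|\lesssim \sup_m|\lambda_m|+\sum_{G\in G^\ast}\sum_{j\in\no}\sup_m|\lzjm^G|
\lesssim \Big(1+\sum_{j\in\no}2^{-js}\varphi(2^{-j})^{-1}\Big)\,\|\lambda\mid\tilde{a}_{p,q}^{s,\varphi}(\rd)\|,
\]
uniformly in $x$. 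The hypothesis $\{2^{-js}\varphi(2^{-j})^{-1}\}_{j\in\nat}\in\ell_1$ makes the last series finite (the $j=0$ term equals $\varphi(1)^{-1}=1$), which simultaneously yields the uniform bound $\|f\mid L_\infty(\rd)\|\lesssim\|f\mid\Att(\rd)\|$ and, applied to the tails $\sum_{j\ge N}$, the uniform Cauchy property of the partial sums. As each partial sum is continuous, the limit lies in $C(\rd)$, proving \eqref{awvp-f}. For the $F$-scale one may alternatively invoke \eqref{elem-gen} to reduce to $B^{s,\varphi}_{p,\infty}(\rd)$, but the argument above works verbatim for both choices of $\Att(\rd)$.

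Finally, for the ``in particular'' assertion, $\varphi\in\Gp$ together with $\varphi(1)=1$ forces $2^{jd/p}\varphi(2^{-j})\ge 1$ via \eqref{Gp-def}, so $2^{-js}\varphi(2^{-j})^{-1}\le 2^{-j(s-d/p)}$; this is summable whenever $s>d/p$, reducing the claim to the first part. There is no serious obstacle: the only point requiring care is the sharpness of the coefficient bound---the crude $\varkappa$-sequence estimate \eqref{vke3} with its factor $2^{J\varkappa}$ would be useless, whereas the single-cube test produces exactly the decay $2^{-js}\varphi(2^{-j})^{-1}$ to which the $\ell_1$-hypothesis is tailored---and the upgrade from an $L_\infty$-bound to genuine continuity, which is furnished by the uniform convergence.
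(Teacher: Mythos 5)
Your proof is correct, but it is not the route the paper takes: Proposition~\ref{af-f} is imported from \cite[Lemma~4.12, Prop.~4.17]{hl23} without proof, and the only argument indicated in the paper itself is the remark right after it, which for the $F$-case chains $\MfF(\rd)=\ftt(\rd)\eb \mathcal{N}^{s}_{\varphi,p,\infty}(\rd)$ (Theorem~\ref{Nphi-Ephi}) with the sharp embedding $\mathcal{N}^{s}_{\varphi,p,\infty}(\rd)\eb C(\rd)$, which by Theorem~\ref{eb-lif}(iii) holds exactly when $\{2^{-js}\varphi(2^{-j})^{-1}\}_{j}\in\ell_{1}$. Your argument is instead self-contained on top of Theorems~\ref{bwd} and \ref{fwd}: the single-cube test $P=\qjm$ in Definition~\ref{def-seq-b-f} does give $\varphi(2^{-j})2^{js}|\lzjm|\le\|\lambda\mid\atts(\rd)\|$ for both the $b$- and $f$-norms and all $0<q\le\infty$ (all terms are nonnegative, and the single retained term already produces this value), the bounded overlap of the wavelet supports at each level is the right way to sum over $m$, and the $\ell_1$ hypothesis then yields both the $L_\infty$-bound and the uniform Cauchy property of the tails; your observation that the crude $\varkappa$-sequence bound \eqref{vke3} is useless here and must be replaced by the single-cube test is exactly the correct point of care. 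What each approach buys: yours treats $\btt(\rd)$ and $\ftt(\rd)$ simultaneously and makes transparent why $\ell_1$-summability of the layer-wise coefficient bounds is the natural hypothesis, whereas the paper's route leans on the deeper (two-sided) result of \cite{hms23} and directly covers only the $F$-case (for the $B$-case one would additionally use $\btt(\rd)\eb B^{s,\varphi}_{p,\infty}(\rd)=\mathcal{N}^{s}_{\varphi,p,\infty}(\rd)$ from Theorem~\ref{rswm}(ii)). Two small points you should spell out to be complete: first, since $C(\rd)$ here means bounded \emph{uniformly} continuous functions, note that each partial sum is globally Lipschitz (each level is a locally finite sum of translates of a fixed compactly supported $C^1$ function with uniformly bounded coefficients), so the uniform limit is indeed uniformly continuous; second, the identification of that uniform limit with $f$ uses that uniform convergence with a uniform bound implies convergence in $\sdd$, so it coincides with the distributional limit provided by the wavelet theorem.
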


In case of $\MfF(\rd)=\ftt(\rd)$ this could also be obtained using Theorem~\ref{Nphi-Ephi} and Theorem~\ref{eb-lif}(iii) with $q=\infty$.

\begin{lemma}\label{E-C-nec}
  Let $s\in\real$, $0<p<\infty$, $0<q\leq\infty$ and $\varphi\in\Gp$. Assume in addition that $\varphi$ satisfies \eqref{intc} when $q<\infty$. Then
  \begin{equation}\label{mf-lif}
\MfF(\rd) \eb C(\rd)
  \end{equation}
implies $\{2^{-js} \varphi(2^{-j})^{-1}\}_{j\in\nat} \in \ell_{p'}$.
  \end{lemma}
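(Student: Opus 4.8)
The plan is to pass to the sequence side via the wavelet isomorphism of Theorem~\ref{mew}: since $C(\rd)\hookrightarrow L_\infty(\rd)$, the hypothesis \eqref{mf-lif} forces the synthesis map $\lambda\mapsto f=\sum_m\lambda_m\psi_m+\sum_{G\in G^\ast}\sum_{j,m}\lzjm^{G}2^{-jd/2}\psi_{j,m}^G$ to satisfy $\|f\mid L_\infty(\rd)\|\lesssim\|\lambda\mid\tilde{e}^{s}_{\varphi,p,q}(\rd)\|$. Writing $\beta_j:=2^{-js}\varphi(2^{-j})^{-1}$, I would prove the claim by exhibiting, for each $N$, an admissible coefficient sequence (with vanishing father part, so that its $\tilde{e}^{s}_{\varphi,p,q}$-norm reduces to an $e^{s}_{\varphi,p,q}$-norm) whose synthesis is large in $L_\infty$ but small in $e^{s}_{\varphi,p,q}$, and then reading off $\{\beta_j\}_{j\le N}\in\ell_{p'}$ with a bound uniform in $N$. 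The case $p\le 1$ (where $p'=\infty$) is immediate: testing with a single wavelet supported near one cube $Q_{j,m}$ gives $\|f\|_\infty\sim 1$ while $\|\lambda\mid e^{s}_{\varphi,p,q}(\rd)\|\sim 2^{js}\varphi(2^{-j})$, so boundedness yields $\sup_j\beta_j<\infty$, i.e. $\{\beta_j\}\in\ell_\infty=\ell_{p'}$.

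The main case is $p>1$. Here I would fix one $G\in G^\ast$ and a point $x_0$ whose dyadic orbit, namely the fractional parts of $2^jx_0$, stays in a region where $\prod_r\psi_{G_r}>\delta>0$ (such $x_0$ exists by prescribing its binary digits), and use the ``tube'' of nested cubes $Q_{j,m_j}$ with $m_j=\lfloor 2^jx_0\rfloor$ shrinking to $x_0$. Given a nonnegative, finitely supported sequence $\{d_j\}$ with $\|d\mid\ell_p\|\le 1$, I set $\lzjm^{G}=c_j:=\beta_j d_j$ for $m=m_j$ and $j\le N$, and $0$ otherwise. Because only the tube cube carries mass at scale $j$ and the wavelets are aligned at $x_0$, the synthesis satisfies $|f(x_0)|\ge\delta\sum_{j\le N}c_j=\delta\sum_{j\le N}\beta_j d_j$, hence $\|f\|_\infty\gtrsim\sum_{j\le N}\beta_j d_j$.

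It remains to bound the sequence norm, and this is the crux. The square function of the tube equals $\Sigma_l:=\big(\sum_{j\le l}(2^{js}c_j)^q\big)^{1/q}=\big(\sum_{j\le l}(\varphi(2^{-j})^{-1}d_j)^q\big)^{1/q}$ on the dyadic shell $Q_{l,m_l}\setminus Q_{l+1,m_{l+1}}$, so that a direct computation of the Morrey norm over the concentric cubes $P=Q_{L,m_L}$ gives $\|\lambda\mid e^{s}_{\varphi,p,q}(\rd)\|\sim\sup_{0\le L\le N}\varphi(2^{-L})\big(\sum_{l=L}^N 2^{-(l-L)d}\Sigma_l^p\big)^{1/p}$. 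The key observation is that the outer integration over all finer shells produces an $L_p$-summation across scales, and condition \eqref{intc}, which yields $\varphi(2^{-l})/\varphi(2^{-j})\le C\,2^{-(l-j)\varepsilon}$ for $j\le l$, forces the inner $\ell_q$-sum $\Sigma_l$ to be dominated by its top term up to a summable geometric tail. Feeding this in and summing the resulting geometric convolutions (using $q\le p$ where needed, via Young's and Minkowski's inequalities) yields $\|\lambda\mid e^{s}_{\varphi,p,q}(\rd)\|\lesssim\|d\mid\ell_p\|$. Combining the two estimates, the embedding \eqref{mf-lif} gives $\sum_{j\le N}\beta_j d_j\lesssim\|d\mid\ell_p\|$ for every nonnegative $d$; taking the supremum over $\|d\mid\ell_p\|\le 1$ and invoking $\ell_p$–$\ell_{p'}$ duality shows $\|(\beta_j)_{j\le N}\mid\ell_{p'}\|\lesssim 1$ uniformly in $N$, whence $\{\beta_j\}_{j\in\nat}\in\ell_{p'}$.

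The main obstacle is precisely the sharp evaluation of the tube's $e^{s}_{\varphi,p,q}$-norm: one must see that the operative exponent is $p$ rather than $q$, which is exactly the Triebel–Lizorkin phenomenon distinguishing this result from its Besov counterpart Theorem~\ref{eb-lif}(iii), where the analogous construction only sees the inner index and produces $\ell_{q'}$. Technically this improvement rests on using \eqref{intc} to convert the inner $\ell_q$-accumulation into the $\ell_p$-summation coming from the spatial integration; without it (e.g. for $\varphi$ locally constant) the tube norm is only controlled by $\|d\mid\ell_q\|$ and the exponent would degrade to $q'$. A secondary, more routine point is arranging the alignment of the wavelets at $x_0$ so as to secure the pointwise lower bound $|f(x_0)|\gtrsim\sum_j c_j$ with nonnegative coefficients.
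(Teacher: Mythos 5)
Your proposal is correct and follows essentially the same route as the paper's proof: for $p>1$ the paper likewise builds, for each $\gamma\in\ell_p$ with $\|\gamma\mid\ell_p\|\leq 1$, the coefficients $\lambda_{j,m_j}=\gamma_j\,2^{-js}\varphi(2^{-j})^{-1}$ on a nested tube of dyadic cubes shrinking to a point $x_0$, bounds $\|\lambda\mid e^{s}_{\varphi,p,q}(\rd)\|\lesssim\|\gamma\mid\ell_p\|$ by exactly the shell decomposition and geometric-decay estimates you describe, evaluates the wavelet synthesis at $x_0$, and concludes by $\ell_p$--$\ell_{p'}$ duality. The only cosmetic differences are that the paper settles the case $p\le 1$ by embedding into $F^0_{\infty,\infty}(\rd)=\mathcal{E}^0_{\varphi_0,p,\infty}(\rd)$ and quoting Theorem~\ref{E-E-case1} rather than your single-wavelet test, and that it organises the sequence-norm estimate by splitting into the scales $j\le\nu$ (using monotonicity of $\varphi$) and $j>\nu$ (using \eqref{Gp-def}) instead of invoking \eqref{intc} throughout; like you, it leaves the wavelet alignment at $x_0$ to an ``appropriately chosen'' sequence of cubes.
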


  \begin{proof}
Here we use ideas of the proofs of \cite[Prop. 3.8]{hs14} and \cite[Thm.~3.4]{hms23}. 
    Let first $0<p\leq 1$, that is $p'=\infty$. By Remark~\ref{equnv}, $\mathcal{E}_{\varphi_0,p,\infty}^0(\rd)=F_{\infty,\infty}^0(\rd)$ when $\varphi_0\equiv1$. Thus \eqref{mf-lif} implies that
    \begin{align*}
        \MfF(\rd)\eb C(\rd)\eb F_{\infty,\infty}^0(\rd)=\mathcal{E}_{\varphi_0,p,\infty}^0(\rd).
    \end{align*}
    Thus, $\left\{2^{-js}\varphi(2^{-j})^{-1}\right\}_{j\in\no}\in\ell_{\infty}$ is necessary for the embedding \eqref{mf-lif} by Theorem~\ref{E-E-case1}.

Now assume $p>1$, that is $p'<\infty$. Let $\gamma=\{\gamma_j\}_{j\in\no}\in\ell_{p}$, $\|\gamma\mid\ell_p\|\leq 1$. 
    For an appropriately chosen sequence $\{m_j\}_{j\in\no}\in\mathbb{Z}$, we consider a shrinking sequence of cubes $Q_{0,m_0}\supsetneq Q_{1,m_1}\supsetneq Q_{2,m_2}\supsetneq\cdots$ with $x_0\in\bigcap_j Q_{j,m_j}$. 
    We define
    \begin{align*}
        \lz_{j,m_j}(\gamma)=\gamma_j 2^{-j s}\varphi(2^{-j})^{-1},\quad j\in\no,
    \end{align*}
    and
    \begin{align*}
        \lzjm(\gamma)=\left\{
        \begin{array}{lcl}
        \lz_{j,m_j}(\gamma),      &     & {\text{if}\quad j\in\no\quad \text{and}\quad m=m_j,}\\
        0,    &     & {\text{otherwise}.}
        \end{array} \right.
    \end{align*}
    By Theorem~\ref{nwdp}, we put 
    \begin{align*}
        f_\gamma=\sum_{j=0}^\infty\sum_{m\in\zd}\lzjm(\gamma) 2^{-\frac{j\nd}{2}} \psi_{j,m}^G,
    \end{align*}
    for some fixed multi-index $G$. We claim that 
    \begin{align*}
        \|f_\gamma\mid\MfF(\rd)\|\lesssim\|\gamma\mid\ell_p\|.
    \end{align*}
    In fact, 
    \begin{align*}
        &\varphi(2^{-\nu})2^{\nu\frac{d}{p}}\left[\int_{Q_{\nu,k}}\left(
        \sum_{j=0}^\infty\sum_{m\in\zd}(2^{j s}|\lzjm|\chi_{j,m}(x))^q\right)^\frac{p}{q}\dint x\right]^\frac{1}{p}\\
        &\qquad =\varphi(2^{-\nu})2^{\nu\frac{d}{p}}\left[\int_{Q_{\nu,k}}\left(
        \sum_{j=0}^\infty2^{j s q}|\lz_{j,m_j}|\chi_{j,m_j}(x)^q\right)^\frac{p}{q}\dint x\right]^\frac{1}{p}\\
        &\qquad =\varphi(2^{-\nu})2^{\nu\frac{d}{p}}\left[\int_{Q_{\nu,k}}\left(
          \sum_{j=0}^\nu 2^{j s q}2^{-j s q}\varphi(2^{-j})^{-q}|\gamma_j|^q\chi_{j,m_j}(x)^q + \right.\right.\\
      &\qquad\qquad\qquad\qquad \left.\left. +\sum_{j=\nu+1}^\infty 2^{j s q}2^{-j s q}\varphi(2^{-j})^{-q}|\gamma_j|^q\chi_{j,m_j}(x)^q\right)^\frac{p}{q}\dint x\right]^\frac{1}{p}\\
        &\qquad \lesssim I_1 + I_2.
          \end{align*}
          Using the condition \eqref{Gp-def} with $t=2^{-j}$, $s=2^{-\nu}$, $j\geq \nu$, we can further estimate $I_2$ by
      
          \begin{align*}
        I_2 & \leq \varphi(2^{-\nu})2^{\nu\frac{\nd}{p}}\left[\int_{Q_{\nu,k}}\left(
        \sum_{j=\nu}^\infty\varphi(2^{-\nu})^{-q}2^{(j-\nu)\frac{dq}{p}}|\gamma_j|^q\chi_{j,m_j}(x)^q\right)^\frac{p}{q}\dint x\right]^\frac{1}{p}\\
        &\qquad = \left[\int_{Q_{\nu,m_\nu}}\left(
        \sum_{j=\nu}^\infty2^{j\frac{dq}{p}}|\gamma_j|^q\chi_{j,m_j}(x)^q\right)^\frac{p}{q}\dint x\right]^\frac{1}{p}\\
        &\qquad \lesssim \left(\sum_{\ell=\nu}^\infty 2^{-\ell d}\left(\sum_{j=\nu}^\ell 2^{j d \frac{q}{p}}|\gamma_j|^q\right)^\frac{p}{q}\right)^\frac{1}{p}\\
        &\qquad \lesssim \left(\sum_{j=\nu}^\infty |\gamma_j|^p \left(\sum_{\ell=j}^\infty 2^{-(\ell-j)\nd\frac{q}{p}}\right)^{\frac{p}{q}} \right)^\frac{1}{p}\\
        &\qquad \lesssim \|\gamma\mid\ell_p\|.
          \end{align*}
Here we used the special nested construction of the cubes $\{Q_{j, m_j}\}_{j\in\no}$.
          
          Concerning $I_1$ we use the monotonicity of $\varphi$, that is, $\varphi(2^{-j}) \geq \varphi(2^{-\nu})$, $j\leq \nu$, to argue that
\begin{align*}
  I_1 & \lesssim    \varphi(2^{-\nu})2^{\nu\frac{d}{p}}\left[\int_{Q_{\nu,k}}\left(
          \sum_{j=0}^\nu \varphi(2^{-j})^{-q}|\gamma_j|^q\chi_{j,m_j}(x)^q \right)^\frac{p}{q}\dint x\right]^\frac{1}{p}\\
& \lesssim 2^{\nu\frac{d}{p}}\left[\int_{Q_{\nu,k}}\left(
          \sum_{j=0}^\nu |\gamma_j|^q\chi_{j,m_j}(x)^q \right)^\frac{p}{q}\dint x\right]^\frac{1}{p}\\
& \lesssim  \left(\sum_{j=0}^\nu |\gamma_j|^p \right)^{\frac1p} \ \lesssim \ \|\gamma| \ell_p\|.        
\end{align*}

    Thus, by \eqref{mf-lif},
    \begin{align*}
        \left|\sum_{j=0}^\infty\lz_{j,m_j}(\gamma)\right|\leq c\, \|f_\gamma\mid C(\rd)\|
        \leq c'\, \|\gamma\mid \ell_p\|\leq c'.
    \end{align*}
    In consequence, since this holds for all $\gamma\in\ell_p$ with $\|\gamma\mid \ell_p\|\leq 1$,
    \begin{align*}
        \left\|\left\{2^{-j s}\varphi(2^{-j})^{-1}\right\}_{j\in\no}\mid \ell_{p'}\right\|
        &=\sup\left\{\left|\sum_{j=0}^\infty\gamma_j2^{-j s}\varphi(2^{-j})^{-1}\right|:
        \|\gamma\mid \ell_p\|\leq 1\right\}\\
        &=\sup\left\{\left|\sum_{j=0}^\infty\lz_{j,m_j}(\nu)\right|:
        \|\gamma\mid \ell_p\|\leq 1\right\}\leq C<\infty.
    \end{align*}    
    \end{proof}

    \begin{proposition}\label{Prop-E-Linf}
      Let $s\in\real$, $0<p<\infty$, $0<q\leq\infty$ and $\varphi\in\Gp$. Assume in addition that $\varphi$ satisfies \eqref{intc} when $q<\infty$. Assume that $\rphi{\varphi}<\infty$.
      \begin{enumerate}[\bfseries\upshape  (i)]
      \item
              Then \eqref{mf-lif} holds if 

              \begin{equation}\label{E-Linf}
      s> \frac{\nd}{\rphi{\varphi}}.
              \end{equation}
              In case of $s=\frac{\nd}{\rphi{\varphi}}$, the embedding \eqref{mf-lif} holds if $0<p=\rphi{\varphi}\leq 1$ and
              \begin{equation}\label{global-cond}
                \lim_{t\to\infty} t^{-\frac{\nd}{\rphi{\varphi}}} \varphi(t) \geq c>0.
                \end{equation}
  \item
    Conversely,  { under the assumption }
\begin{equation}\label{local-cond}
                \lim_{t\to 0} t^{-\frac{\nd}{\rphi{\varphi}}} \varphi(t) \leq C<\infty
\end{equation}
    the embedding \eqref{mf-lif} implies $s\geq \frac{\nd}{\rphi{\varphi}}$ if $0<p\leq 1$, and $s>\frac{\nd}{\rphi{\varphi}}$ if $p> 1$.
  \item
    If  \eqref{global-cond}  and \eqref{local-cond}
are satisfied, then
\begin{equation}
  \MfF(\rd) \eb C(\rd) \qquad \text{if, and only if,}\qquad \begin{cases} s>\frac{\nd}{\rphi{\varphi}}, & \text{or} \\
     s=\frac{\nd}{\rphi{\varphi}}, & \text{and}\quad 0<p=\rphi{\varphi}\leq 1.\end{cases}
\end{equation}
    \end{enumerate}
    \end{proposition}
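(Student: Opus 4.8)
The plan is to handle the three parts in turn, leaning on the results already available and using that $\rphi{\varphi}<\infty$ forces $\varphi\in\mathcal{G}^{\mathrm{loc}}_{\rphi{\varphi}}$ by Lemma~\ref{Gop}(i). Taking the two arguments in \eqref{Gp-def-local} to be $2^{-j}$ and $\frac12$ gives the one-sided bound $\varphi(2^{-j})\gtrsim 2^{-j\nd/\rphi{\varphi}}$ for $j\in\nat$, hence $2^{-js}\varphi(2^{-j})^{-1}\lesssim 2^{-j(s-\nd/\rphi{\varphi})}$. For the first assertion of (i), if $s>\frac{\nd}{\rphi{\varphi}}$ this sequence lies in $\ell_1$, so Proposition~\ref{af-f} immediately yields \eqref{mf-lif}; alternatively one may combine Lemma~\ref{inftyinfty} with $F^{s-\nd/\rphi{\varphi}}_{\infty,\infty}(\rd)=B^{s-\nd/\rphi{\varphi}}_{\infty,\infty}(\rd)\eb C(\rd)$ for positive smoothness.

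For the boundary case $s=\frac{\nd}{\rphi{\varphi}}$ with $0<p=\rphi{\varphi}\le1$, I would exploit \eqref{global-cond}. Since $\varphi\in\Gp$ makes $t\mapsto t^{-\nd/p}\varphi(t)$ non-increasing, \eqref{global-cond} (read with $p=\rphi{\varphi}$) forces $t^{-\nd/p}\varphi(t)\geq c$ for all $t$, while $\varphi(1)=1$ gives $t^{-\nd/p}\varphi(t)\le 1$ for $t\ge1$; thus $\varphi(t)\sim t^{\nd/p}$ for $t\ge1$, which is precisely \eqref{E-F-1}, and $\varphi(2^{-j})\gtrsim 2^{-j\nd/p}$ makes $\{2^{-j\nd/p}\varphi(2^{-j})^{-1}\}_j\in\ell_\infty$. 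Hence Corollary~\ref{Cor-Ef-F}(i), applied with $p_1=p_2=p$, $s_1=s_2=\frac{\nd}{p}$ and $q_1=q_2=q$, yields $\MfF(\rd)\eb F^{\nd/p}_{p,q}(\rd)$, and Theorem~\ref{eb-lif}(i) together with the subsequent remark gives $F^{\nd/p}_{p,q}(\rd)\eb C(\rd)$ for $0<p\le1$. This settles (i).

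For (ii) I would invoke the necessity Lemma~\ref{E-C-nec}: \eqref{mf-lif} forces $\{2^{-js}\varphi(2^{-j})^{-1}\}_j\in\ell_{p'}$. Under \eqref{local-cond}, combined with the non-increasing monotonicity of $t^{-\nd/\rphi{\varphi}}\varphi(t)$ on $(0,1)$ (Lemma~\ref{Gop}(i)) and the lower bound above, one obtains $\varphi(2^{-j})\sim 2^{-j\nd/\rphi{\varphi}}$, so the tested sequence behaves like $2^{-j(s-\nd/\rphi{\varphi})}$. Its membership in $\ell_{p'}$ then forces $s\geq\frac{\nd}{\rphi{\varphi}}$ when $p'=\infty$ (i.e.\ $0<p\le1$) and $s>\frac{\nd}{\rphi{\varphi}}$ when $p'<\infty$ (i.e.\ $p>1$), which is exactly (ii).

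Finally (iii) follows by assembling (i) (sufficiency) and (ii), together with one extra counterexample covering the remaining boundary case $s=\frac{\nd}{\rphi{\varphi}}$, $p<\rphi{\varphi}$. Via the wavelet isomorphism $\MfF(\rd)\cong\tilde{e}^{s}_{\varphi,p,q}(\rd)$ of Theorem~\ref{mew}, I would test the sequence supported on the nested cubes $Q_{j,0}$ by $\lambda_{j,0}:=1/j$ (all other entries zero); its reconstruction $f=\sum_{j}\frac1j\,2^{-jd/2}\psi_{j,0}^{G}$ is unbounded, the normalised wavelets having uniformly bounded height while the coefficients $1/j$, summed along a point common to all $Q_{j,0}$, give a divergent series (as in the Morrey case of Theorem~\ref{eb-lif}(ii)). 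The \emph{crux, and the main obstacle, is to verify} $\|\lambda\mid\mff(\rd)\|<\infty$: writing $u:=\rphi{\varphi}$ and using $\varphi(2^{-\nu})\sim 2^{-\nu\nd/u}$ on the relevant small cubes $P=Q_{\nu,0}$ (and $t^{-\nd/p}\varphi(t)\le1$ to dispose of cubes with $\ell(P)>1$), the inner sum over the annulus $Q_{l,0}\setminus Q_{l+1,0}$ is dominated by its top term $\sim 2^{l\nd q/u}l^{-q}$ (with the obvious modification for $q=\infty$), so that the weighted integral is $\sim 2^{\nu\nd(1-p/u)}\sum_{l\ge\nu}2^{-l\nd(1-p/u)}l^{-p}\sim \nu^{-p}$, which stays bounded precisely because the Morrey gap $u=\rphi{\varphi}>p$ gives $1-p/u>0$. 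This is exactly the step that breaks down in the classical situation $p=\rphi{\varphi}$, and it shows $\MfF(\rd)\not\eb C(\rd)$ at the critical smoothness whenever $p<\rphi{\varphi}$, completing (iii).
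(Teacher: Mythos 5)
Your parts (i) and (ii) are correct and essentially coincide with the paper's own proof: the same ingredients are used in the same way (Proposition~\ref{af-f} for $s>\nd/\rphi{\varphi}$; the factorisation through $F^{\nd/p}_{p,q}(\rd)$ via Theorem~\ref{E-E-case1} -- your route through Corollary~\ref{Cor-Ef-F}(i) is the same statement -- together with Theorem~\ref{eb-lif}(i) and \eqref{global-cond} in the limiting case; Lemma~\ref{E-C-nec} combined with \eqref{local-cond} for the necessity). The logical skeleton of (iii), namely sufficiency from (i), most of the necessity from (ii), plus one counterexample excluding $s=\nd/\rphi{\varphi}$, $p<\rphi{\varphi}$, also matches the paper, which reaches the same reduction via $\mathcal{E}^{s}_{\varphi,\rphi{\varphi},q}(\rd)=F^{s}_{\rphi{\varphi},q}(\rd)\eb\MfF(\rd)$.

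The genuine gap is in that counterexample. You test the wavelet isomorphism of Theorem~\ref{mew} with $\lambda_{j,0}=1/j$ on the nested cubes $Q_{j,0}$ and claim the reconstruction $f=\sum_j \frac1j\,2^{-j\nd/2}\psi^{G}_{j,0}$ is unbounded because the coefficients ``sum divergently along the common point''. But $2^{-j\nd/2}\psi^{G}_{j,0}(0)=\prod_{r}\psi_{G_r}(0)$ for \emph{every} $j$, and nothing guarantees this value is nonzero: the Daubechies wavelets are fixed functions which you cannot normalise at a prescribed point, and for standard support conventions the mother wavelet may well vanish at $0$. If $\prod_{r}\psi_{G_r}(0)=0$, then near $0$, writing $N(x)\sim\log_2(1/|x|)$ for the number of active scales, the Lipschitz bound $\bigl|\prod_r\psi_{G_r}(2^jx_r)\bigr|\lesssim 2^{j}|x|$ gives $|f(x)|\lesssim \sum_{j\le N(x)+c}\frac1j\,2^{j-N(x)}\lesssim 1/N(x)\to 0$, so your $f$ is in fact bounded and continuous -- the construction does not merely lack a justification, it can fail. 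This is precisely why the paper switches to the \emph{atomic} decomposition (Definition~\ref{defi-atom}, Theorems~\ref{atde}, \ref{mfad}) for this step: atoms may be chosen freely subject to the size/smoothness/moment conditions, in particular with $a_{j,m_j}(x_0)=1$ on a shrinking nested family $Q_{j,m_j}\ni x_0$, and then with constant coefficients $\lambda_{j,m_j}=1$ the sum genuinely diverges at (and near) $x_0$. Note also that the $1/j$ decay is unnecessary: your Morrey-norm computation -- which is correct, and indeed hinges on $1-p/\rphi{\varphi}>0$ -- goes through verbatim with constant coefficients, exactly as in the paper. So the part you identified as the crux (membership in $\mff(\rd)$) is fine; the unproved, and with your choice possibly false, step is $f\notin C(\rd)$. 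To close it you must either replace the wavelets by prescribed atoms, or add an argument producing a point $x_0$ and shifts $m_j$ along which the wavelet values $2^{-j\nd/2}\psi^{G}_{j,m_j}(x_0)$ stay bounded away from zero with a fixed sign.
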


    \begin{proof}
      Assume first that $s>\frac{\nd}{\rphi{\varphi}}$, then $\varphi(2^{-j})^{-1}  \leq c\ 2^{j \frac{\nd}{\rphi{\varphi}}}$ implies that $\{2^{-js} \varphi(2^{-j})^{-1}\}_{j\in\nat} \in \ell_1$ which by Proposition~\ref{af-f}  yields \eqref{mf-lif}. In case of $s=\frac{\nd}{\rphi{\varphi}}$ and $0<p=\rphi{\varphi}\leq 1$ the sufficiency of \eqref{mf-lif} is verified by the embeddings
      \[
        \mathcal{E}^{\frac{\nd}{\rphi{\varphi}}}_{\varphi,p,q}(\rd) \eb \mathcal{E}^{\frac{\nd}{\rphi{\varphi}}}_{\rphi{\varphi},p,q}(\rd) = F^{\frac{\nd}{\rphi{\varphi}}}_{\rphi{\varphi},q}(\rd) \eb C(\rd),         \]
where the first embedding is covered by Theorem~\ref{E-E-case1} with $s_1=s_2=s$, $q_1=q_2=q$, $p_1=p_2=p$, $\varphi_1=\varphi$ and $\varphi_2(t)=t^{\nd/\rphi{\varphi}}$. Then \eqref{embee00_1} is always covered by the definition of $\rphi{\varphi}$, while \eqref{embee0_1} benefits from \eqref{global-cond}. The second embedding { uses 
$p=\rphi{\varphi}$,} and the last embedding is the classical result  recalled in Theorem~\ref{eb-lif}(i).  This proves part (i).

Part (ii) is obtained by Lemma~\ref{E-C-nec} {and \eqref{local-cond}}, {since then $\varphi(2^{-j}) \sim 2^{-j\frac{\nd}{\rphi{\varphi}}}$, $j\in\nat$, which leads to $\{ 2^{-j(s -\frac{\nd}{\rphi{\varphi}})} \}_{j\in\nat} \in\ell_{p'}$, as explicated in (ii).}

Finally, concerning (iii), the sufficiency is clear by (i). On the other hand, our assumptions \eqref{local-cond} and \eqref{global-cond} imply that $\varphi(t) \sim t^{\nd/\rphi{\varphi}}$, $t>0$, such that $\mathcal{E}^{s}_{\varphi,\rphi{\varphi},q}(\rd) = F^s_{\rphi{\varphi},q}(\rd)$. Moreover, $\mathcal{E}^{s}_{\varphi,\rphi{\varphi},q}(\rd)\eb \MfF(\rd) $ since $\rphi{\varphi}\geq p$. Thus the continuity of $\MfF(\rd) \eb C(\rd)$ implies $\mathcal{E}^{s}_{\varphi,\rphi{\varphi},q}(\rd) = F^s_{\rphi{\varphi},q}(\rd) \eb C(\rd)$ and the rest is covered by Theorem~\ref{eb-lif}(i) again, leading to $s>\frac{\nd}{\rphi{\varphi}}$ or $s=\frac{\nd}{\rphi{\varphi}}$ and $0<p\leq \rphi{\varphi}\leq 1$.\\

{It remains to prove that $p=\rphi{\varphi}$ is necessary for the embedding in case of $s=\frac{\nd}{\rphi{\varphi}}$. We follow the idea in the proof of Proposition 3.8 in \cite{hs14} and construct an $f_0\in \mathcal{E}^{\frac{\nd}{\rphi{\varphi}}}_{\varphi,p,q}(\rd)$, which is not bounded, if $0<p<\rphi{\varphi}\leq 1$. Consider an appropriately  chosen  shrinking sequence  of dyadic cubes $Q_{0,m_0} \varsupsetneq Q_{1,m_1}  \varsupsetneq Q_{2,m_2} \varsupsetneq \dots $ with $x_0\in \bigcap_{j} Q_{j,m_j}$. For any $j\in\no$, let $a_{j,m_j}$ be an atom  for $\mathcal{E}^{\frac{\nd}{\rphi{\varphi}}}_{\varphi,p,q}(\rd)$, according to Definition \ref{defi-atom} and Theorem \ref{atde},  supported in $Q_{j,m_j}$ with $a_{j,m_j}(x_0)=1$, $j\in \no$, and define 
$$
f_0:=\sum_{j=0}^{\infty} \sum_{m\in\zd}\lambda_{j,m}a_{j,m} \quad \text{with} \quad 
\lambda_{j,m}:=\left\{ \begin{array}{ll} 1, & \text{if} \;\; m=m_j, \;\; j\in \no, \\
 0, & \text{otherwise}. \end{array}
 \right.
$$
Then clearly $f_0\not\in C(\rd)$, but $f_0\in \mathcal{E}^{\frac{\nd}{\rphi{\varphi}}}_{\varphi,p,q}(\rd)$. This is a consequence of $\lambda=\{\lambda_{j,m}\}_{j\in\no,m\in\zd}\in {e}^{\frac{\nd}{\rphi{\varphi}}}_{\varphi,p,q}(\rd)$ and can be seen as follows. For convenience, let us assume $q<\infty$, otherwise the modifications of the argument below are obvious.
For  dyadic cubes $P=Q_{\nu,m}$ with $\nu\in \no$, only the case $m=m_{\nu}$ contributes to the supremum in the calculation of $\|\lambda \mid {e}^{\frac{\nd}{\rphi{\varphi}}}_{\varphi,p,q}(\rd)\|$, and we have
\begin{align*}
\varphi(2^{-\nu})\,  2^{\nu\frac{d}{p}}
\left(\int_{Q_{\nu,m_{\nu}}}\left(\sum_{j=0}^{\infty} 2^{j  \frac{\nd}{\rphi{\varphi}}q}
\chi_{j,m_j}(x)\right)^{\frac{p}{q}}\dint x\right)^{\frac{1}{p}} 
& \leq  \varphi(2^{-\nu}) \,2^{\nu\frac{d}{p}}
\left(\sum_{\ell=\nu}^{\infty} 2^{-\ell d}  \left(\sum_{j=0}^{\ell}  2^{j\frac{\nd}{\rphi{\varphi}}q}\right)^{\frac{p}{q}}\right)^{\frac{1}{p}}  \\
&\leq  \varphi(2^{-\nu}) \,2^{\nu\frac{d}{p}}
\left(\sum_{\ell=\nu}^{\infty} 2^{-\ell d}  2^{\ell\frac{\nd}{\rphi{\varphi}} p}   \right)^{\frac{1}{p}}\\   
&\leq  \varphi(2^{-\nu}) \,2^{\nu\frac{d}{p}} 2^{-\nu  d\left(\frac1p-\frac{1}{\rphi{\varphi}}\right)} \leq  \varphi(2^{-\nu})  \,2^{\nu\frac{d}{\rphi{\varphi}}}\leq c,
\end{align*}
where we have used the assumption $p<\rphi{\varphi}$  and \eqref{local-cond}.
Since $\varphi \in \Gp$ and the supremum for the integral is taken when $P=Q_{0,m_0}$, the case of $P=Q_{\nu,m}$ with $-\nu \in \mathbb{N}$ can be estimated from above by the case $\nu=0$ is the above calculations. Therefore we conclude that 
$$
\bigg\| \biggl(\sum_{j=0}^{\infty} 2^{j  \frac{\nd}{\rphi{\varphi}}q}\sum_{m\in\zd} |\lambda_{j,m}|^q \chi_{j,m}(\cdot)  \biggr)^{\frac{1}{q}}  \,\Big|\, \mathcal{M}_{\varphi, p}(\rd) \bigg\|  <\infty,
$$
which completes the proof.
}
\end{proof}

    \begin{remark}
      One can replace $C(\rd)$ in the above results by $L_\infty(\rd)$.
    \end{remark}

    \begin{remark}
        We would like to comment on the `global' condition \eqref{global-cond} in Proposition~\ref{Prop-E-Linf}. It is only needed for the sufficiency result (i) in the limiting case. Moreover, compared with our previous result for the embedding  $\MfB(\rd)\eb C(\rd)$ recalled in Theorem~\ref{eb-lif}(iii), it seems likely that one can remove it in the end. But we were not able to do so at the moment. Moreover, in all the other sharp results recalled in Theorem~\ref{eb-lif} above for convenience, this additional assumption \eqref{global-cond} is always satisfied. Concerning the local condition \eqref{local-cond} we rather suppose that an assumption of that type should appear, as in that respect the (embedding) behaviour of Besov and Triebel-Lizorkin spaces differs occasionally.
    \end{remark}



\bigskip~

{\small
\noindent
Dorothee D. Haroske\\
Institute of Mathematics \\
Friedrich Schiller University Jena\\
07737 Jena\\
Germany\\
{\tt dorothee.haroske@uni-jena.de}\\[4ex]
\noindent
Zhen Liu\\
Institute of Mathematics \\
Friedrich Schiller University Jena\\
07737 Jena\\
Germany\\
{\tt zhen.liu@uni-jena.de}\\[4ex]
\noindent
Susana D. Moura\\
University of Coimbra\\
CMUC, Department of Mathematics\\
EC Santa Cruz\\
3001-501 Coimbra\\
Portugal\\
{\tt smpsd@mat.uc.pt}\\[4ex]
%
Leszek Skrzypczak\\
Faculty of Mathematics \& Computer Science\\
Adam  Mickiewicz University\\
ul. Uniwersytetu Pozna\'nskiego 4\\
61-614 Pozna\'n\\
Poland\\
{\tt lskrzyp@amu.edu.pl}
}

\end{document}